\newtheorem{thm}{Theorem}[section]
\newtheorem{lem}[thm]{Lemma}
\newtheorem{cor}[thm]{Corollary}
\newtheorem{defi}[thm]{Definition}
\newtheorem{prop}[thm]{Proposition}
\newtheorem{rem}[thm]{Remark}
\numberwithin{equation}{section}
\begin{document}
\begin{frontmatter}
\title{Almost global existence and nonlinear asymptotic stability for bubble dynamics in inviscid compressible liquid}
\author[author1]{Liangchen Zou}
\address[author1]{School of Mathematical Sciences, University of Science and Technology of China, Hefei, Anhui, 230026, PR China, zlc0601@mail.ustc.edu.cn}
\date{}
\begin{abstract}
The present paper considers the full nonlinear dynamics of a homogeneous bubble inside an unbounded isentropic compressible inviscid liquid. This model is described by a free-boundary problem of compressible Euler equations with nonlinear boundary conditions. The liquid is governed by the compressible Euler equation, while the bubble surface is determined by the kinematic and dynamic boundary conditions on the bubble-liquid interface. This classical model is of great concern in physics due to its wide applications. \newline
We begin by proving the local existence and uniqueness using energy methods under an iteration scheme. For long-time behavior, we developed a generalized weighted space-time estimate, which extends the Keel-Smith-Sogge estimate to nonlinear wave equations regardless of the boundary conditions, at the cost of the appearance of a boundary term with only lowest-order derivatives. This term is handled by using characteristics to track the backward pressure wave. Then the almost global existence and nonlinear radiative decay are proved through a bootstrap argument, which encompasses the energy estimate, the generalized Keel-Smith-Sogge estimate, and the analysis of backward pressure waves. \newline
The analysis of the backward pressure wave by characteristics involves a loss of derivative due to the quasilinear nature of the system. This is overcome by the above generalized weighted spacetime estimate with the lowest-order boundary term, which actually provides a mechanism to gain derivatives back. The coupling of these two methods is the novelty of the present paper and can not only be used for the current question but is expected to be applied to other questions regarding nonlinear wave equations with complicated boundary conditions.
\end{abstract}
\begin{keyword}
compressible Euler equation \sep free boundary \sep almost global existence \sep  radiative decay \sep pressure wave 
\end{keyword}
\end{frontmatter}
\section{Introduction}
\subsection{Equations governing the liquid-bubble system}
We consider a spherical gas bubble immersed in a compressible inviscid liquid. This physically relevant model plays an important role in a wide variety of fields, including underwater explosion \cite{1956Damping}, ultrasound imaging \cite{FRINKING2020892}, medicine delivery \cite{KOOIMAN201428}, acoustic blocking \cite{Acousticblocking}, acoustic communication across the water-air interface \cite{Acousticcommunication}, in which cases acoustic waves generated by compressibility and bubble oscillation are essential. For other applications of bubble dynamics, one can refer to \cite{FBPGB,  FSTS, RDBO}, and the references therein. \\
\indent  The liquid under consideration is assumed to be inviscid, compressible and spherically symmetric relative to the bubble. Meanwhile, due to the conservation laws of mass and momentum, by passing to the center of mass coordinate, we can assume without loss of generality that the center of mass of the bubble is located at the origin, see \cite[Appendix C]{RDBO}. Therefore, the dynamics of liquid is governed by the compressible Euler equations, which, written in radial coordinates, are given by 
\begin{equation}
\begin{cases}
{} \partial_t\rho+r^{-2}\partial_r(r^2\rho u)=0, &r>R(t),\;t>0,\\
\rho\partial_t u+\rho u\partial_r u+\partial_r p=0, &r>R(t),\;t>0.\label{1.1}
\end{cases} 
\end{equation}
Here $\rho$, $u$, $p$ are the density, velocity and pressure of liquid, respectively, and $R(t)$ is the bubble radius. On the other hand, the bubble surface is determined by the kinematic condition and the stress balance on the interface:  
\begin{equation}
\begin{cases}
{} \frac{dR}{dt}=u|_{r=R(t)}, &t>0,\\
p|_{r=R(t)}=p_b-2\sigma R^{-1}, &t>0,
\end{cases} \label{1.2}
\end{equation}
where $p_b$ is the pressure at the inner bubble surface, and $\sigma$ denotes the surface tension.  \\
\indent We further make the assumptions that the liquid is isentropic, so $p=C_0\rho^\gamma$ for some positive constants $C_0$ and $\gamma>1$, and that the bubble is homogeneous and satisfies a polytropic gas law, which means that the bubble pressure is uniform in space and is proportional to a power of bubble volume, say $p_b(t)=C_1|R(t)|^{-3\gamma_0}$ for positive constants $C_1$ and $\gamma_0>1$.
By nondimensionalization \cite[Appendix C]{RDBO}, we can assume without loss of generality that the liquid pressure $p=\frac{Ca}{2}\rho^{\gamma}$, the bubble pressure $p_b=\left(\frac{Ca}{2}+\frac{2}{We}\right)R^{-3\gamma_0}$, and the surface tension $\sigma=\frac{1}{We}$,  where $Ca$ and $We$ are dimensionless constants, namely the Cavitation number and the Weber number, respectively. Therefore, the system (\ref{1.1})(\ref{1.2}) admits an equilibrium state $(u,\,\rho,\,R)=(0,1,0)$. We also assume that  the pressure at infinity is equal to the pressure at this equilibrium state, or equivalently, $\rho=1$ at infinity. \\
\indent To deal with this free boundary problem, a natural way is to introduce the Lagrangian coordinate $x:=\int_{R(t)}^r\rho(t,s)s^2dx$, which corresponds to the mass of the liquid within a spherical domain of radius $r$ and exterior to the bubble. Using the continuity equation and that $\frac{dR}{dt}=u|_{r=R(t)}$, there is $\partial_t x(r,t)=-(r^2\rho u)(r,t)$, therefore, the Jacobian $\frac{\partial(x,t)}{\partial(r,t)}$ as well as its inverse are given by
\begin{equation}
\left[\begin{matrix}
\frac{\partial x}{\partial r} & \frac{\partial x}{\partial t}\\
\frac{\partial t}{\partial r} & \frac{\partial t}{\partial t}
\end{matrix}\right]
=\left[\begin{matrix}
\rho r^2 & -\rho r^2 u\\
0 & 1
\end{matrix}\right],\;
\left[\begin{matrix}
\frac{\partial r}{\partial x} & \frac{\partial r}{\partial t}\\
\frac{\partial t}{\partial x} & \frac{\partial t}{\partial t}
\end{matrix}\right]
=\left[\begin{matrix}
(\rho r^2)^{-1} & u\\
0 & 1
\end{matrix}\right]\label{Jacob}.
\end{equation}
By a change of variable using (\ref{Jacob}), equations (\ref{1.1})(\ref{1.2}) are transformed into 
\begin{numcases}
{} \partial_t\rho+\rho^2\partial_x(r^2u)=0, &$x>0,\;t>0$,\label{cnte0}\\
\partial_t u+\frac{Ca}{2}r^2\partial_x(\rho^\gamma) =0, &$x>0,\;t>0$,\label{mmte0}\\
\frac{dR}{dt}=u|_{x=0}, &$t>0$,\label{kbcd0}\\
\frac{Ca}{2}\rho^\gamma|_{x=0}=\left(\frac{Ca}{2}+\frac{2}{We}\right)R^{-3\gamma_0}-\frac{2}{We}R^{-1}=:\tilde{f}(R), &$t>0$,\label{dbcb0}\\
r=\left(R(t)^3+3\int_0^x\rho^{-1}(y,t)dy\right)^{\frac{1}{3}}=r(x,0)+\int_0^t u(y,\tau)d\tau, &$x>0,\;t>0$\label{radius0}.
\end{numcases}
Next, we define $q=\rho^{-1}-1$, and denote the specific sound speed by $c=\left(\frac{Ca\gamma}{2}\rho^{\gamma+1}\right)^{\frac{1}{2}}$ and the value of $c$ at the equilibrium state by $c_0:=\left(\frac{Ca\gamma}{2}\right)^\frac{1}{2}$. Then the fourth equation implies 
$$q|_{x=0}=\left[\left(1+\frac{2}{Ca}\frac{2}{We}\right)R^{-3\gamma_0}-\frac{2}{Ca}\frac{2}{We}R^{-1}\right]^{-\frac{1}{\gamma}}-1=:f(R).$$
$f(R)$ is smooth and strictly increasing for $R\in(0,\overline{R})$ with $\overline{R}=\left(1+\frac{Ca}{2}\frac{We}{2}\right)^{\frac{1}{3\gamma_0-1}}$, and a simple computation gives
$c_0^2f^\prime(1)=-\tilde{f}^\prime(1)$. Hence $f$ admits a smooth inverse function $f^{-1}$ defined on the range $f((0,\overline{R}))$. Replacing $\rho$ by $(1+q)^{-1}$ in the above equations yields
\begin{numcases}
{} \partial_t q=\partial_x(r^2u), &$x>0,\;t>0,$\label{cnte}\\
\partial_t u=c^2r^2\partial_x q, &$x>0,\;t>0,$\label{mmte}\\
\frac{dR}{dt}=u|_{x=0}, &$t>0$,\label{kbce}\\
q|_{x=0}=f(R), &$t>0,$\label{dbce}\\
r=\left(R(t)^3+3x+3\int_0^xq(y,t)dy\right)^{\frac{1}{3}}=r(x,0)+\int_0^t u(y,\tau)d\tau, &$x>0,\;t>0.$\label{radius}
\end{numcases}
We now impose the initial value at $t=0$:
\begin{equation}
(u,\,q,\, R)|_{t=0}=(u_{in},\, q_{in},\, R_{in}),\label{initial}
\end{equation}
and compatibly $r(x,0)=\left(R_{in}^3+3x+\int_0^xq_{in}(y)dy\right)^{\frac{1}{3}}$. Moreover, we assume that the initial value satisfies the compatible conditions up to first order. The $0$-th order compatible condition is obtained by passing $t$ to $0$ in (\ref{dbce}):
\begin{equation}
q_{in}(0)=f(R_{in}).
\label{CC0}\end{equation}
To obtain the first-order compatible condition, we use (\ref{kbce}) and take the time derivative in (\ref{dbce}) to get
$\partial_tq|_{x=0}=f^\prime(R)u|_{x=0}$. Then passing $t$ to $0$ and using (\ref{cnte}) yields
\begin{equation}
    f^\prime(R_{in})u_{in}(0)=\partial_x(r_{in}^2u_{in})(0)=R^2_{in}\partial_xu_{in}(0)+2R_{in}^{-1}(0)(1+q_{in}(0))u_{in}(0).
\label{CC1}\end{equation}
The bubble-liquid system is then determined by equations (\ref{cnte}-\ref{radius}) with the initial value (\ref{initial}) satisfying the compatible conditions (\ref{CC0})(\ref{CC1}).\\
\indent In addition to the above equations of $(u,q,R)$, for later use, we hereby derive the equation governing the velocity potential $\varphi$, which is defined by $\rho r^2\partial_x\varphi=u$ and $\varphi=0$ at infinity. Substituting $u$ by $\rho r^2\partial_x\varphi$ in (\ref{mmte0}) and using (\ref{Jacob})(\ref{cnte0}), there is
$$\begin{aligned}
&-\frac{Ca\gamma}{2(\gamma-1)}\rho r^2\partial_x\left(\rho^{\gamma-1}\right)=\partial_tu\\
=&\rho r^2\partial_x\partial_t\varphi+\partial_t(\rho r^2)\partial_x\varphi=\rho r^2\partial_x\partial_t\varphi-\rho^2\partial_x(r^2u)r^2\partial_x\varphi+2r\rho u\partial_x\varphi\\
=&\rho r^2\partial_x\partial_t\varphi-\rho^2r^4\partial_x u\partial_x\varphi=\rho r^2\partial_x\left(\partial_t\varphi-\frac{u^2}{2}\right).
\end{aligned}$$
Hence by canceling $\rho r^2\partial_x$ on each side, we obtain the relation between $\rho$ and $\varphi$, namely the Bernoulli equation: 
\begin{equation}
-\frac{Ca}{2}\frac{\gamma}{\gamma-1}(\rho^{\gamma-1}-1)=\partial_t\varphi-\frac{u^2}{2}.\label{rho-phi}
\end{equation}
Multiplying (\ref{cnte0}) by $\rho^{\gamma-2}$ and replacing $\rho$ using (\ref{rho-phi}) yields
$$
0=\partial_t(\partial_t\varphi-\frac{u^2}{2})-\frac{Ca\gamma}{2}\rho^\gamma\partial_x(\rho r^4\partial_x\varphi).
$$
Since $u\cdot\partial_tu=-\rho r^2\partial_x\varphi\cdot \frac{Ca\gamma}{2}\rho^{\gamma-1}r^2\partial_x\rho$, $\varphi$ satisfies the following  quasilinear wave equation on $(x,t)\in\mathbb{R}_+\times\mathbb{R}_+$, 
\begin{equation}
\partial_t^2\varphi-c^2\partial_x(r^4\partial_x\varphi)=0. \label{phiwave}
\end{equation}
\subsection{Observation from linear approximation}
To illustrate the damping mechanism of bubble radius, we look at the linearization of (\ref{phiwave}) at the equilibrium state. Denote by $\xi(x)=(1+3x)^{\frac{1}{3}}$ the radius function and by $c_0=\left(\frac{Ca\gamma}{2}\right)^{\frac{1}2}$ the sound speed at equilibrium state, then the linearized equation reads
$$\partial_t^2\varphi-c_0^2(\partial_\xi^2+\frac{2}{\xi}\partial_\xi)\varphi=0.$$
In view of (\ref{rho-phi}), the boundary conditions (\ref{kbce})(\ref{dbce}) under linearization become $\frac{dR}{dt}=\partial_\xi\varphi|_{\xi=1}$, and $\partial_t\varphi|_{\xi=1}=c_0^2f'(1)(R-1)$. Eliminating $R$ by taking $t$-derivative in the latter yields $$\partial_t^2\varphi|_{\xi=1}-c_0^2f'(1)\partial_{\xi}\varphi|_{\xi=1}=0.$$
Let $\varphi=\frac{\psi}{\xi}$, then $\psi$ satisfy the one-dimensional wave equation $\partial_t^2\psi-c_0^2\partial_\xi^2\psi=0$, while the above equation of boundary value becomes 
$$
\partial_t^2\psi|_{\xi=1}+c_0f'(1)\partial_t\psi|_{\xi=1}+c_0^2f'(1)\psi|_{\xi=1}=c_0f'(1)(\partial_t\psi+c_0\partial_\xi\psi)|_{\xi=1}.
$$
This is a second-order inhomogeneous ODE of $\psi|_{\xi=1}$ with the source term $c_0f'(1)(\partial_t\psi+c_0\partial_\xi\psi)|_{\xi=1}$, which corresponds to the backward pressure wave. Since $\psi$ satisfies the one-dimensional wave equation, we have
$$\left.\left(\partial_t\psi+c_0\partial_\xi\psi\right)\right|_{\xi=1}(t)=\left(\partial_t\psi+c_0\partial_\xi\psi\right)(1+c_0t,0).$$
The two eigenvalues, denoted by $\Lambda_1$, $\Lambda_2$, have negative real parts since $f$ is strictly increasing. Moreover, the two eigenvalues coincide if and only if $f^\prime(1)=4$. In this paper, we will focus on the nonlinear stability, so for simplicity we exclude the special case $f^\prime(1)=4$.  Therefore, $\Lambda_1\neq\Lambda_2$, and $(\psi|_{\xi=1},\partial_t\psi|_{\xi=1})$ as well as $R-1$ enjoy an exponential decay in time if we put the source term aside. In fact, solving the above ODE gives
$$\begin{aligned}
\partial_t\psi|_{\xi=1}(t)
=&\frac{\Lambda_1}{\Lambda_1-\Lambda_2}e^{\Lambda_1t}\left.\left(\partial_t\psi-\Lambda_2\psi\right)\right|_{\xi=1}(0)-\frac{\Lambda_2}{\Lambda_1-\Lambda_2}e^{\Lambda_2t}\left.\left(\partial_t\psi-\Lambda_1\psi\right)\right|_{\xi=1}(0)\\
&+c_0f^\prime(1)\int_0^t\left(\frac{\Lambda_1}{\Lambda_1-\Lambda_2}e^{\Lambda_1(t-s)}-\frac{\Lambda_2}{\Lambda_1-\Lambda_2}e^{\Lambda_2(t-s)}\right)(\partial_t\psi+c_0\partial_\xi\psi)(1+c_0s,0)ds.
\end{aligned}$$
Heuristically, for the nonlinear equations, $(\partial_t\psi+c_0\partial_\xi\psi)$ will be replaced by  $$w_B:=\partial_t\psi+cr^2\partial_x\psi$$ since it stands for the backward pressure wave, and $(1+c_0s,0)$ should be replaced by $(\xi_0(s),0)$, where $\xi_0(s)$ is defined such that the characteristic starting from $(\xi_0(s),0)$ intersects $\xi=1$ at $(1,s)$. This leads to defining
\begin{equation}\begin{aligned}
\mathcal{R}(t):=&\frac{\Lambda_1}{\Lambda_1-\Lambda_2}e^{\Lambda_1t}\left.\left(\partial_t\psi-\Lambda_2\psi\right)\right|_{\left\{x=0,t=0\right\}}-\frac{\Lambda_2}{\Lambda_1-\Lambda_2}e^{\Lambda_2t}\left.\left(\partial_t\psi-\Lambda_1\psi\right)\right|_{\left\{x=0,t=0\right\}}\\
&+c_0f^\prime(1)\int_0^t\left(\frac{\Lambda_1}{\Lambda_1-\Lambda_2}e^{\Lambda_1(t-s)}-\frac{\Lambda_2}{\Lambda_1-\Lambda_2}e^{\Lambda_2(t-s)}\right)w_B(\xi_0(s),0)ds,
\end{aligned}\label{calR}\end{equation}
where we also modify the definition of $\psi$ to $\psi=r\varphi.$ In the course of later proof, $\mathcal{R}(t)$ will be regarded as the principle part of $\partial_t\psi|_{\xi=1}(t)$.  Meanwhile, we denote the forward pressure wave by $w_F:=\partial_t\psi-cr^2\partial_x\psi.$ 
We will derive the precise nonlinear boundary ODE in Section \ref{sec5} and specify the source term thereby, while the control of this source term is given in Section \ref{sec6}. 
\subsection{Review on previous results regarding bubble dynamics and free-boundary Euler equations}
The bubble dynamics was first studied by L. Rayleigh \cite{RAY} in 1917, which describes how the liquid pressure develops in the process of collapse of a vacuum cavity in water. This first attempt considered the most simplified model. The cavity and liquid are assumed to be spherical, and other factors including compressibility, viscosity, and thermal effects are omitted, while the pressure at infinity is assumed to be constant. Plesset (1949) \cite{10.1115/1.4009975} extended Rayleigh's work to the case where the pressure at infinity is time-dependent, which also established the well-known Plesset-Rayleigh equation. Plesset-Rayleigh equations have been widely applied in numerical simulations of bubble-related models in which compressibility is less important. \\
\indent To describe the pressure wave transmitted from an underwater explosion into the surrounding liquid and to explain the damping of the bubble oscillation which the Plesset-Rayleigh model failed to predict, Keller (1956) \cite{1956Damping} modified Plesset's theory by taking compressibility into consideration and assumed the velocity potential satisfies a wave equation, which, as explained in a later work (1980) \cite{Ke-Mi}, is derived from the linear approximation of Euler equations. It was also in \cite{Ke-Mi} that Keller and Miksis established another frequently used ordinary differential equation (Keller-Miksis equation) regarding bubble dynamics when pressure wave emission has to be taken into account. Numerical simulations using the Keller-Miksis model indicate radiation decay due to acoustic wave emission by the bubble oscillation \cite{smith_wang_2018, wang_2016}, which also agrees well with the experimental results.\\
\indent Prosperetti and Lezzi (1987) \cite{lezzi_prosperetti_1987, prosperetti_lezzi_1986} developed a second-order theory (in the sense of asymptotic expansion) in terms of the bubble-wall Mach number, which significantly increased the determinacy compared with the previous first-order models. There are also many other generalizations of the Plesset-Rayleigh equation regarding additional physical factors, including viscosity \cite{1952Gil}, the loss of spherical symmetry \cite{RDBO, KLASEBOER200659}, the presence of mass exchange between bubble and liquid \cite{Prosperettigeneralization}, thermal effects \cite{Franc2007}. For a history of the understanding of bubbles, the reader can refer to the review by Prosperetti in the book \cite[Page 735]{PrnstCp}. Furthermore, numerical simulations have been carried out to identify the mechanisms and features of non-spherical dynamics of acoustic cavitation bubbles \cite{wang_blake_2010, wang_blake_2011}.\\
\indent There are also many results on the bubble dynamics in the recent years. When thermal effects are considered and the outside liquid is assumed to be unbounded and incompressible, Lai and Weinstein \cite{FBPGB, LAI2024113397} proved exponential asymptotic stability for a family of spherically symmetric equilibrium states with respect to spherically symmetric perturbations. When the outside liquid is bounded, Hao-Luo-Yang \cite{HLY2024expstab} proved the nonlinear exponential asymptotic stability of equilibrium for a gas bubble. In linear approximation, Weinstein and his collaborators in \cite{doi:10.1137/120892659, RDBO} proved exponential stability for a non-spherical bubble in compressible liquid under the assumption that the velocity is $0$ and the liquid pressure is a constant at the initial time. As to nonlinear questions regarding the bubble in a compressible liquid, our previous work \cite{2022arXiv221200299Z} studied the model of a spherical gas bubble in a compressible viscous fluid and proved the nonlinear viscous damping for the bubble oscillation. In this work, we study the full nonlinear free-boundary question for the bubble dynamics in a compressible inviscid liquid. Moreover, we admit non-constant initial values of velocity and density.\\
\indent As presented in (\ref{1.1}-\ref{1.2}), the bubble dynamics is described by compressible Euler equations on a free-boundary domain. There have been numerous studies on well-posedness and singularity formation for free-boundary problems of Euler equations. In the incompressible setting, Coutand-Shkoller \cite{https://doi.org/10.1090/S0894-0347-07-00556-5, 10.3934/dcdss.2010.3.429} proved the local well-posedness for inviscid fluids on bounded free-boundary domains with or without surface tension. On singularity formation, Castro-C\'{o}rdoba-Fefferman-Gancedo-G\'{o}mez-Serrano \cite{https://doi.org/10.4007/annals.2013.178.3.6} constructed finite-time splash singularity solutions for inviscid fluid on free-boundary domains. Meanwhile, the study of free-boundary compressible Euler equations has been developing rapidly in the past decade. Local well-posedness has been established for gas in a physical vacuum via Lagrangian coordinates by many authors \cite{10.1007/s00220-010-1028-5, 10.1007/s00205-012-0536-1, 10.3934/dcdsb.2015.20.2885, https://doi.org/10.1002/cpa.21517} and via Eulerian coordinates by Ifrim-Tataru \cite{10.4171/AIHPC/91}. For the liquid case, an a priori estimate is established by Lindblad-Luo \cite{10.1002/cpa.21734}. Local well-posedness is also obtained for other Euler-based free-boundary problems. Luo-Xin-Zeng \cite{10.1007/s00205-014-0742-0} proved local well-posedness for self-gravitating gas. Lindblad-Zhang \cite{10.1007/s00205-023-01917-1} proved local well-posedness for free-boundary magnetohydrodynamics; see also \cite{10.1007/s00526-023-02462-1}.
\subsection{Statement of the main result}
The goal of the present paper is to rigorously study the full nonlinear dynamics of the acoustic bubble in a compressible, inviscid liquid without any approximation or asymptotic expansion. From (\ref{cnte}-\ref{initial}) and (\ref{phiwave}), we see that this question corresponds to a quasilinear wave equation on a free-boundary exterior domain and nonlinear boundary conditions. The following is the statement of results in this work. Since we will work with the $(x,t)$ variables instead of the usual Euclidean coordinates, we first introduce the functional spaces that we will work in.
\begin{defi}
Write $\xi(x)=(1+3x)^{\frac{1}{3}}$, and thus $x=\frac{\xi^3-1}{3}$, $\partial_\xi=\xi^2\partial_x$. For $j\geq 0$, define $H^j:=\{v\in L^2(0,+\infty): (\xi^2\partial_x)^iv\in L^2(0,+\infty) \text{ for } i\in [0,j]\cap\mathbb{Z} \}$. For each $M\in(1,+\infty]$, denote by $L^2_\xi(1,M):=\{\tilde{v}\in L^1_{loc}(1,M):\int_1^M|\tilde{v}(\xi)|^2\xi^2d\xi<+\infty\}$ the space of 3-dimensional radial square integrable functions on $B(0,M)\setminus\overline{B(0,1)}$, and $H_\xi^j(1,M):=\{\tilde{v}\in L^2_\xi(1,M):\partial_\xi^i\tilde{v}\in L_\xi^2(1,M)\text{ for } i\in[0,j]\cap\mathbb{Z}\}$ the corresponding Sobolev spaces. 
\end{defi}
From the definition, we immediately see that if $v\in L^2(0,+\infty)$, then $\tilde{v}(\xi):=v(\frac{\xi^3-1}{3})\in L^2_\xi(1,+\infty)$, and $v\in H^j$ is equivalent to $\tilde{v}\in H_\xi^j(1,+\infty)$. We also define the operators $L_0^j$, $\tilde{L}_0^j$, $j=1,2$ by
$$L_0^1q:=c_0^2\partial_\xi q,\quad L_0^2q:=c_0^2\left(\partial_\xi^2+\frac{2}{\xi}\partial_\xi\right)q,$$
$$\tilde{L}_0^1u=\left(\partial_\xi+\frac{2}{\xi}\partial_\xi\right)u,\quad \tilde{L}_0^2u=c_0^2\left(\partial_\xi^2+\frac{2}{\xi}\partial_\xi-\frac{2}{\xi^2}\right)u.$$
The main results are the local existence for general regular data and almost global existence for small data. 
\begin{thm}[Local existence and uniqueness]
Let the initial value $(u_{in},\,q_{in},\,R_{in})\in H^2\times H^2\times\mathbb{R}_+$ be such that $-1<\underline{q}\leq q_{in}\leq\overline{q}<+\infty$ for all $x>0$ and constants $\underline{q}$, $\overline{q}$, and that $0<R_{in}<\overline{R}$. Moreover, we suppose that the initial value satisfies the compatible conditions (\ref{CC0})(\ref{CC1}). Then there exists a time $T>0$ and a unique solution
$$(u,\,q,\,R)\in L^\infty([0,T];H^2)\cap C([0,T];H^1)\times L^\infty([0,T];H^2)\cap C([0,T];H^1)\times C^1[0,T]$$
to (\ref{cnte}-\ref{initial}) with initial value $(u_{in},\,q_{in},\,R_{in})$. Moreover, a lower bound of $T$ can be given in terms of $\|u_{in}\|_{H^2}$, $\|q_{in}\|_{H^2}$, $\underline{q}$, $\overline{q}$ and $|f(R_{in})|$.
\label{thm 1.2}\end{thm}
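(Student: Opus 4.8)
I would follow the standard scheme for quasilinear hyperbolic initial--boundary value problems: a Picard iteration freezing the nonlinear coefficients, energy estimates uniform along the iteration on a short time interval, convergence in a norm one order below the energy norm, and a difference estimate for uniqueness. The genuinely new feature is the boundary at $x=0$, where the conditions are the nonlinear relation $q|_{x=0}=f(R)$ dynamically coupled to $\tfrac{dR}{dt}=u|_{x=0}$; note also that the claimed $C([0,T];H^1)$ regularity forces the corner compatibility $q_{in}(0)=f(R_{in})$, which I take as part of the admissibility of the data.

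First I would fix constants with $\underline q<\underline q_0\le q_{in}\le\overline q_0<\overline q$ and $0<\underline R_0\le R_{in}\le\overline R_0<\overline R$. Given an iterate $(u^{(n)},q^{(n)},R^{(n)})$ obeying these pointwise bounds, set $r^{(n)}=\big((R^{(n)})^3+3x+\int_0^x q^{(n)}\big)^{1/3}$, $c^{(n)}=\big(\tfrac{Ca\gamma}{2}(1+q^{(n)})^{-(\gamma+1)}\big)^{1/2}$, $R^{(n+1)}(t)=R_{in}+\int_0^t u^{(n)}(0,s)\,ds$, and let $(u^{(n+1)},q^{(n+1)})$ solve the \emph{linear} system $\partial_t q^{(n+1)}=\partial_x\big((r^{(n)})^2 u^{(n+1)}\big)$, $\partial_t u^{(n+1)}=(c^{(n)})^2(r^{(n)})^2\partial_x q^{(n+1)}$ on $x>0$ with initial data $(u_{in},q_{in})$ and Dirichlet data $q^{(n+1)}|_{x=0}=f\big(R^{(n+1)}(t)\big)$. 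This linear problem is non-characteristic with $x$-speeds $\pm c^{(n)}(r^{(n)})^2$, so it has exactly one incoming characteristic at $x=0$ and a single scalar boundary condition is the right count; prescribing $q$ at $x=0$ is maximally dissipative (the boundary flux vanishes for homogeneous data), the symmetrizer being $\operatorname{diag}\big(1,(c^{(n)})^{-2}\big)$, and well-posedness in $H^2$ on a fixed interval $[0,T]$ is then standard for linear symmetric hyperbolic systems, using that $r^{(n)},c^{(n)}$ are $H^2$-regular and uniformly nondegenerate and that the data are compatible at $(x,t)=(0,0)$.

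For the uniform estimates I would propagate an energy $E(t)$ comparable to $\|u(t)\|_{H^2}^2+\|q(t)\|_{H^2}^2+F(R(t))$, built from $\tfrac12\int_0^\infty\big((c^{(n)})^{-2}u^2+q^2\big)dx$ together with its analogues obtained by applying $\partial_\xi=\xi^2\partial_x$ once and twice (with positive weights chosen to symmetrize the first-order part of each identity), where $F(R)=\int_1^R s^2 f(s)\,ds\ge0$ since $f(1)=0$ and $f$ is strictly increasing. The only delicate contributions are the boundary terms at $x=0$: at zeroth order this term is $-(r^2uq)|_{x=0}$, which for the nonlinear solution equals $-R^2 f(R)\tfrac{dR}{dt}=-\tfrac{d}{dt}F(R)$ and is absorbed exactly (along the linear iteration it is instead estimated crudely by $|u^{(n+1)}(0,t)|\lesssim\sqrt{E}$, which still closes the estimate), while at higher order one uses the equations repeatedly, together with $\partial_t r=u$, to express the normal-derivative traces $\partial_x^k q(0,t),\partial_x^k u(0,t)$ through time derivatives of the boundary traces, which are then read off from $\tfrac{dR}{dt}=u(0,t)$ and $q(0,t)=f(R)$. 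All interior commutator and geometric terms (involving $\partial_\xi r,\partial_\xi^2 r,\partial_\xi c,\dots$) are lower order by Moser-type products and the radial embedding $H^2\hookrightarrow L^\infty$. A Gr{\"o}nwall argument then gives $\sup_{[0,T]}E(t)\le 2E(0)$ and preservation of the pointwise bounds $\underline q_0\le q^{(n+1)}\le\overline q_0$, $\underline R_0\le R^{(n+1)}\le\overline R_0$, for a time $T>0$ depending only on $\|u_{in}\|_{H^2}$, $\|q_{in}\|_{H^2}$, $\underline q$, $\overline q$ and $|f(R_{in})|$, uniformly in $n$.

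Finally, subtracting consecutive iterates, the differences solve a linear system of the same type whose right-hand sides are linear in the differences with $H^2$-bounded coefficients; an energy estimate one order lower shows that, after shrinking $T$ if necessary, the iteration map is a contraction in $C([0,T];H^1)\times C([0,T];H^1)\times C^1[0,T]$, whose fixed point $(u,q,R)$ solves (\ref{cnte})--(\ref{initial}). It lies in $L^\infty([0,T];H^2)$ by weak-$\ast$ lower semicontinuity of the norm, in $C([0,T];H^1)$ from the Cauchy property together with $\partial_t q,\partial_t u\in L^\infty([0,T];H^1)$, and $R\in C^1[0,T]$ since $R'=u(0,\cdot)$ with $u\in C([0,T];H^1)$. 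Uniqueness follows by applying the same difference estimate to two solutions with identical data. The step I expect to be the real obstacle is closing the $H^2$ boundary terms: because $q|_{x=0}=f(R)$ is nonlinear and dynamically coupled to the $R$-equation one cannot discard the boundary contributions, and one must systematically convert normal derivatives at $x=0$ into tangential ones and then bound the latter through the $R$-ODE, choosing the symmetrizing weights so that the resulting boundary quadratic forms are sign-favorable or exact time derivatives; a secondary technical nuisance is matching the $r$-weights with the $\xi$-weights so that the energy neither degenerates nor grows as $x\to\infty$.
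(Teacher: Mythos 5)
There is a genuine gap, and it sits exactly where you predicted the obstacle would be: your choice of decoupling in the iteration destroys the boundary structure needed to close the $H^2$ energy estimate. You update the radius from the \emph{previous} iterate, $R^{(n+1)}(t)=R_{in}+\int_0^t u^{(n)}(0,s)\,ds$, and then impose the resulting known Dirichlet trace $q^{(n+1)}|_{x=0}=f(R^{(n+1)})$. The boundary flux in the $j$-th order energy identity is then $-(c^2r^2)|_{x=0}\,\partial_t^j u^{(n+1)}(0,t)\,\partial_t^j f(R^{(n+1)}(t))$, and since $\partial_t^j f(R^{(n+1)})$ contains $f'(R^{(n+1)})\partial_t^{j-1}u^{(n)}(0,t)$, the top-order ($j=2$) contribution is a product of the \emph{new} iterate's trace $\partial_t^2u^{(n+1)}(0,t)$ with the \emph{old} iterate's trace $\partial_t u^{(n)}(0,t)$. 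This is not a perfect time derivative (the two factors belong to different iterates), and neither factor order can be reduced: $\partial_t^2u^{(n+1)}(0,t)$ is not controlled pointwise in time by the $H^2$-level energy of the new iterate (by the trace/elliptic estimate (\ref{ptwc}) it would require a third-order derivative in $L^2_x$), and integrating by parts in time merely trades it for $\partial_t^2u^{(n)}(0,t)$, which is equally out of reach for the previous iterate. This is precisely the ``unclosure'' the paper warns about (the appearance of $\partial_t^{j+1}R_{k}\cdot\partial_t^iR_{k-1}$ terms when one linearises the boundary condition naively), and it is also why Lemma \ref{Lem 2.2} does not allow an inhomogeneity in $q|_{x=0}=f$. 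Your proposed remedy --- converting normal to tangential derivatives and choosing symmetrizers so the boundary forms are ``sign-favorable or exact time derivatives'' --- cannot repair this, because the failure comes from mixing iterates, not from the choice of weights.

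The paper's scheme is designed around exactly this point: in (\ref{ikbce})--(\ref{idbce}) the quantity $f(R_k)$ is kept as an \emph{unknown of the current linear step}, coupled to the new trace through $u_k|_{x=0}=(f'(R_{k-1}))^{-1}\frac{d}{dt}f(R_k)$, so that the boundary flux becomes, up to controllable commutators, $\frac{1}{2}\frac{d}{dt}\bigl[(c^2r^2)|_{x=0}f'(R_{k-1})^{-1}(\partial_t^jf(R_k))^2\bigr]$, i.e.\ an exact derivative of a positive boundary energy ($f'>0$), cf.\ (\ref{mdq}) and (\ref{kee}). A second structural difference: the paper's higher-order energies are built from \emph{time} derivatives only (which preserve the boundary conditions), and the full $H^2$ bound is recovered afterwards from the equations (Lemma \ref{Lem 2.6}, Proposition \ref{prop 2.4}); your plan to apply $\partial_\xi$ directly generates boundary terms with unprescribed normal traces, which is the harder route. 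Your remaining ingredients (freezing coefficients, Gr\"onwall on a short interval, contraction in $C([0,T];H^1)$ instead of the paper's Aubin--Lions compactness, difference estimate for uniqueness, and the corner compatibility remark) are fine and standard, but as written the iteration step itself does not close at the $H^2$ level.
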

\begin{thm}[Almost global existence]
Suppose $f^\prime(1)\neq 4$, and $(u_{in},q_{in},R_{in})$ satisfies the compatible condition (\ref{CC0})(\ref{CC1}). Define velocity potential $\varphi$ by $u=\rho r^2\partial_x\varphi,$ and let $\psi:=r\varphi$. \\
(\romannumeral1) There exist small constants $\epsilon_0,\tilde{\epsilon}_0,\kappa_0>0$ such that for initial value  $(u_{in},\,q_{in},\,R_{in})\in H^2\times H^2\times \mathbb{R}_+$ with the the smallness
$$\epsilon^2:=\sum_{j=0}^2\|L_0^jq_{in}\|_{L^2}^2+\sum_{j=0}^2\|\tilde{L}_0^ju_{in}\|_{L^2}^2+|f(R_{in})|^2\leq\epsilon_0^2,$$
\begin{equation}
\tilde{\epsilon}:=\|\xi^2u_{in}\|_{L^2}+\|\xi^2q_{in}\|_{L^2}\leq\tilde{\epsilon}_0,
\label{1.18}\end{equation}
the system (\ref{cnte}-\ref{initial}) has a unique solution $(u,\,q,\,R)$ on $[0,T_\epsilon]$ satisfying
$$(u,\,q)\in L^\infty\left([0,T_\epsilon]; H^2\right)\cap C\left([0,T_\epsilon];H^1\right)\times L^\infty\left([0,T_\epsilon]; H^2\right)\cap C\left([0,T_\epsilon];H^1\right),\;R\in C^1[0, T_\epsilon].$$
Moreover, the sound speed can be bounded from below and above by $\underline{c}$ and $\overline{c}$, which are small perturbations of $c_0$:
$$-\epsilon\lesssim\underline{c}-c_0\leq c-c_0\leq\overline{c}-c_0\lesssim\epsilon,$$
and the lifespan can be bounded below by 
$T_\epsilon\geq T_0:=\overline{c}^{-1}\left(\exp\left(\frac{\kappa_0}{\epsilon}\right)-1\right).$\\
(\romannumeral2) 
For $t\in[0,T_0]$, the bubble radius $R$ satisfies the decay estimate 
$$\left|F(R)-\mathcal{R}(t)\right|\lesssim\frac{\log(1+\overline{c}t)}{1+\underline{c}t}\epsilon^\frac{3}{2}\tilde{\epsilon}^\frac{1}{2}+\frac{\epsilon^\frac{3}{2}}{1+\underline{c}t}\left(\epsilon^\frac{1}{2}+\tilde{\epsilon}^\frac{1}{2}\right)+\frac{\epsilon}{(1+\underline{c}t)^2}(\epsilon+\tilde{\epsilon}),$$
where $F$ is an explicit smooth function defined around $R=1$, cf. (\ref{7.11})(\ref{7.13}), and $\mathcal{R}(t)$ is as in (\ref{calR}).
If we assume additionally $w_B|_{t=0}=0$, where $w_B:=\partial_t\psi+cr^2\partial_x\psi$, then the above estimate can be improved to
$$\left|F(R)-\mathcal{R}(t)\right|\lesssim\frac{\epsilon^\frac{3}{2}}{1+\underline{c}t}\left(\epsilon^\frac{1}{2}+\tilde{\epsilon}^\frac{1}{2}\right).$$
\label{thm 1.3}\end{thm}
If we assume that the initial value is compactly supported in $[0, x_b)$, so that the entire backward pressure wave at $t=0$ arrives at the bubble surface in finite time $T_b:=\underline{c}^{-1}(\xi_b-1)$, where $\xi_b=(1+3x_b)^\frac{1}{3}$, then we can also derive a pointwise decay estimate for the velocity and density in a set slightly smaller than the backward acoustic cone determined by the points $(1, T_b)$ and $(1,T_0)$, which is characterized by
\begin{equation}
D_b:=\left\{(x,t)\mid\xi-1\leq\underline{c}\min\left\{t-T_b,T_0-t\right\},\,\xi=(1+3x)^\frac{1}{3},\,T_b\leq t\leq T_0\right\}.\label{1.19}\end{equation}
\begin{cor}[Pointwise decay near the surface]
Let $(u_{in},\,q_{in},\,R_{in})$ be as in Theorem \ref{thm 1.3} with $\epsilon\ll\kappa_0$. Suppose $u_{in}$, $q_{in}$ are compactly supported in $[0,x_b)$ with $\xi_b\ll\exp\left(\frac{\kappa_0}{\epsilon}\right)$. Then for each point $(x,t)\in D_b$, the solution $(u,\,q,\,R)$ given by Theorem \ref{thm 1.3} with the initial value $(u_{in},\,q_{in},\,R_{in})$ satisfies the following pointwise decay for $t\gtrsim1$:
$$\begin{aligned}
|u(x,t)|+|q(x,t)|\lesssim&\xi^{-1}e^{\Lambda(t-T_b)}\left(|Y(0)|+\frac{|\tilde{f}^\prime(1)|}{c_0}\int_0^{T_b}e^{-\Lambda s}|w_B(\xi_0(s),0)|ds\right)\\
&+\xi^{-1}(2+\underline{c}t-\xi)^{-1}\log\left(1+\overline{c}\left(t+\frac{\xi-1}{\underline{c}}\right)\right)\epsilon^2\xi_b,
\end{aligned}$$
where $\Lambda:=\max\left\{\text{Re}\Lambda_1,\text{Re}\Lambda_2\right\}$. If assume additionally $w_B|_{t=0}=0$, then the above bound can be improved to
$$\begin{aligned}
|u(x,t)|+|q(x,t)|\lesssim&\xi^{-1}e^{\Lambda(t-T_b)}\left(|Y(0)|+\frac{|\tilde{f}^\prime(1)|}{c_0}\int_0^{T_b}e^{-\Lambda s}|w_B(\xi_0(s),0)|ds\right)\\
&+\xi^{-1}(2+\underline{c}t-\xi)^{-1}\epsilon^2\xi_b.
\end{aligned}$$
\label{cor 1.4}\end{cor}
\subsection{Comments on the results}
\noindent 1. The maximal lifespan. The bound on the maximal lifespan given in Theorem \ref{thm 1.3} is sharp in three dimensions, as shown by finite propagation speed and the counterexample of Sideris \cite{Sideris1985475} on singularities formation of boundaryless compressible fluids, which shows that a general small data may lead to blow up after a time of size $\exp\frac{\kappa}{\epsilon}$ . Meanwhile, as shown in (\ref{phiwave}), the equations (\ref{cnte}-\ref{initial}) can be written as a quasilinear wave equation, and the bound on the maximal lifespan in Theorem \ref{thm 1.3} also agrees with the results of Keel, Smith, and Sogge's studies on nonlinear wave equations in exterior domains \cite{KSS2001, MR2015331, MR2217314}. Counterexamples of Sideris \cite{doi:10.1080/03605308308820304} and John \cite{John1985} on nonlinear wave equations also indicate this optimal lifespan.  \vspace{0.3cm}\\
2. The KSS type estimate (\ref{4.1}). Our techniques in Section \ref{sec4} are in some way an extension to the KSS (Keel-Smith-Sogge) estimate developed in \cite{MR2015331} for wave equations in exterior domains with Dirichlet conditions and can be applied to general quasilinear wave equations in exterior domains regardless of boundary conditions. In fact, our calculation in Section \ref{sec4} does not rely on any boundary conditions over the velocity potential $\varphi$, which satisfies the quasilinear wave equation (\ref{phiwave}) in the exterior domain. The main feature of this extension can be illustrated by (\ref{4.22}). We have the boundary values of the higher order space-time derivatives on the left-hand side of (\ref{4.22}) and the boundary value of a lower order time derivative on the right. Hence (\ref{4.22}) actually provides an approach to controlling higher regularities in terms of the lower ones on the boundary, which is unusual in any analysis. Moreover, by taking a delicate linear combination, all the boundary values of derivatives of $\varphi$ can be controlled by barely the boundary value of $\varphi$ on the $L^2_t$ level, cf. (\ref{4.1}). We also note that the spherical symmetry assumption is not necessary for this extension. In fact, the only assumption regarding the boundary we need is a uniform distance from the origin, which we used in (\ref{4.6}) and (\ref{4.7}). \vspace{0.3cm}\\
3. The method of characteristics. The estimate of $w_B|_{\xi=1}$ in Proposition \ref{prop 6.0} is obtained by solving the hyperbolic system (\ref{5.1})(\ref{5.2}) using characteristics. We note that the treatment in Section \ref{sec6} can be extended to hyperbolic systems with variant coefficients on $(\xi,t)\in[1,+\infty)\times[0,+\infty)$ of the form 
$$\begin{aligned}
(\partial_t-c(x,t)\partial_\xi)w_B+\epsilon f_1(\xi,t)w_F+\epsilon g_1(\xi,t)w_B=0,
\end{aligned}$$
$$\begin{aligned}
(\partial_t+c(x,t)\partial_\xi)w_F+\epsilon f_2(\xi,t)w_B+\epsilon g_2(\xi,t)w_F=0,
\end{aligned}$$ 
where $|c-c_0|\lesssim\epsilon\xi^{-1}$, $|f_i(\xi,t)|\lesssim\xi^{-1}$, $|g_i(\xi,t)|\leq g(\xi)$ for a given $g\in L^1(1,+\infty)$, and $\epsilon$ is sufficiently small. This will result in a Volterra inequality that holds for $t\lesssim\exp\left(\frac{\kappa}{\epsilon}\right)$ and $\kappa\lesssim1$:
$$\begin{aligned}
v_B(\xi_0(t),0)\leq&|w_B(\xi_0(t),0)|+C\epsilon\mathcal{V}(\xi_0(t))+C\epsilon\int_0^{t}\left(1+\frac{\underline{c}}{2}(t-s)\right)^{-1}|w_F(1,t)|dt\\&+C\epsilon\kappa\int_1^{\xi_0(t)}\left(2\mu+\xi_0(t)-\eta\right)^{-1}v_B(\eta,0)d\eta,
\end{aligned}$$
$$\mathcal{V}(\xi)=|w_B(\xi,0)|+\xi^{-1}\int_1^\xi |w_F(\eta,0)|d\eta.$$
Here $\xi_0(t)$ is the $\xi$ coordinate of the intersection of $t=0$ with the backward characteristic starting from $(1,t)$, and $v_B(\xi_0(t))$ is the maximum of $|w_B|$ along this characteristic. This Volterra inequality shall give a bound on $|w_B|$, and thus can play a role in decoupling the above hyperbolic system. 
\vspace{0.3cm}\\
4. The decay rate in second-order terms. The difference between $F(R)$ and its linear part $\mathcal{R}(t)$ is second-order in terms of the size of the initial value and decays at a rate of $(1+\underline{c}t)^{-1}\log(1+\overline{c}t)$ or $(1+\underline{c}t)^{-1}$ depending on whether the backward pressure wave $w_B$ exists at the initial time. This second-order error comes from the reflection of pressure waves due to the variation of sound speed, or by formula from the terms $$\left(\frac{\partial_tc+cr^2\partial_xc}{2c}\right)\frac{w_B}{c^\frac{1}{2}}, \quad \left(\frac{\partial_tc-cr^2\partial_xc}{2c}\right)\frac{w_F}{c^\frac{1}{2}}$$ 
in (\ref{5.1})(\ref{5.2}). As proved in Section \ref{sec6}, these two terms will result in the error estimate (\ref{6.41}). We note that the decay rate $(1+\underline{c}t)^{-1}$ in (\ref{6.41}) of second-order terms is optimal, which can be deduced from the Volterra inequality (\ref{6.24}). In fact, if we regard $w_B|_{t=0}\neq0$ as first order, then $\epsilon\mathcal{V}(\xi_0(t_*))$ and $\epsilon\Psi(t_*)$ should be viewed as second order perturbations, so we may extract the main part of (\ref{6.24}) as
$$v_B(\xi_0(t),0)\leq\left|\frac{w_B}{c^\frac{1}{2}}(\xi_0(t),0)\right|+C\epsilon\int_1^{\xi_0(t)}\left(2\mu+\xi_0(t)-\eta\right)^{-1}v_B(\eta,0)d\eta.$$
Even if we assume $\left(c^{-\frac{1}{2}}w_B\right)(\xi,0)=\mathds{1}_{[1,\xi_b]}(\xi)$ for a constant $\xi_b\simeq 1$ so that it is compactly supported, solving the above Volterra equation in the standard way gives that
$$\begin{aligned}
v_B(\xi,0)=&\mathds{1}_{[1,\xi_b]}(\xi)+\sum_{k=1}^\infty(C\epsilon)^k\int_1^\xi\cdots\int_1^\xi\prod_{i=1}^k\mathds{1}_{\left\{\xi_{i-1}>\xi_i\right\}}(2\mu+\xi_{i-1}-\xi_i)^{-1}\mathds{1}_{[1,\xi_b]}(\xi_n)d\xi_n\cdots d\xi_1\\
\geq&\mathds{1}_{[1,\xi_b]}(\xi)+C\epsilon\log\left(1+\frac{\xi_b-1}{2\mu+\xi-\xi_b}\right)
\end{aligned}$$
with $\xi_0:=\xi$. Hence for large $\xi$ we see that $v_B(\xi,0)\geq C\epsilon\frac{\xi_b-1}{2\mu+\xi-\xi_b}$, and one cannot expect a faster decay rate than $(1+\underline{c}t)^{-1}$ in the second-order part of $v_B(\xi_0(t),0)$. \vspace{0.3cm}\\
5. The condition (\ref{1.18}). Generally, the norms of the form in (\ref{1.18}) will grow quadratically in time even for linear wave equations, so it may seem unnatural to impose (\ref{1.18}) as a condition on the initial value. However, we will only use this condition at $t=0$.  In fact, (\ref{1.18}) is only used in Lemma \ref{lem 7.1} to deduce the controls of $w_B|_{t=0}$ and $w_F|_{t=0}$ at the initial time. While the norms in (\ref{1.18}) may increase quadratic in time, the controls in Lemma \ref{lem 7.1} can propagate uniformly to $t>0$, although we do not need this throughout our proofs. In other words, condition (\ref{1.18}) is not a necessary but convenient condition to deduce the bounds in Lemma \ref{lem 7.1}. \vspace{0.3cm}\\
6. Expected results in higher dimensions or with null conditions. On the one hand, the three-dimensional isentropic compressible Euler equation is a quasilinear wave equation without any null condition. On the other hand, it is known that the solutions to quasilinear wave equations in exterior domains are global for small initial values in higher dimensions ($n\geq 4$)  \cite{MR2244378, MR2679723}, or with additional null conditions in three dimensions \cite{MR2198183, MR2299569}. In particular, if we consider a Chaplygin fluid (in which case the pressure $p$ is linear in $\rho^{-1}$, or linear in $q$ in our notation) outside the bubble instead of an isentropic fluid, our techniques can be easily modified to yield a global solution. In that case, the specific sound speed $c$ is a constant, and our analysis would be greatly simplified. \vspace{0.3cm}
\subsection{Strategy of the proof}
 We provide in this section an overview of our proof.\\
\noindent 1.Local existence. We construct an approximate solution sequence by iteration. In each step of this iteration, we define the approximate solution $(u_k,\,q_k,\,f(R_k))$ by solving the linear system (\ref{icnte}-\ref{iinitial}) in which the parameters $r_{k-1}$, $c_{k-1}$, $f^\prime(R_{k-1})$ are determined by $(u_{k-1},\,q_{k-1},\,f(R_{k-1}))$ from the previous step. It is worth noting that we choose $f(R_k)$ as the unknown in the linear equations (\ref{idbce}) instead of naively linearizing $f(R)$ at $R=1$ and setting $q_k|_{x=0}=f^\prime(R_{k-1})(R_k-1)$. The latter choice will result in unclosure in the energy identity due to the appearance of the terms involving $$\partial_t^{j+1}R_{k}\cdot\partial_t^i R_{k-1}.$$ This can also be observed from Lemma \ref{Lem 2.2}, where we do not allow an inhomogeneity in $q|_{x=0}=f$. We prove by induction local-in-time uniform $L^2$ bounds on all the space-time derivatives of order no larger than two of each term in the approximate solution sequence, cf. Proposition \ref{prop2.3}. Due to the presence of the boundary, the higher-order energy functionals $e_k^j$ are defined by taking time derivatives, which preserve the boundary. The method is to prove the local-in-time bounds on $e_k^j$, then convert these bounds on time derivatives to all space-time derivatives using equations (\ref{icnte})(\ref{immte}), cf. Lemma \ref{Lem 2.6}. Using the uniform bounds obtained in Proposition \ref{prop2.3}, the Aubin-Lions compactness theorem is employed to yield a convergent subsequence and a limit, which is verified to be the solution of (\ref{cnte}-\ref{initial}).\vspace{0.3cm}\\
2. Bootstrap scheme. The almost global existence is proved under a bootstrap setting. We consider an interval $[0,T]$ on which the energy functionals satisfy the bound (\ref{3.1}). It also follows that the parameters $r$, $c$ and the solution itself satisfy the pointwise control (\ref{3.6}-\ref{3.10}). We aim at strictly improving (\ref{3.1}) on the interval $[0,T]$, so the solution can be continued beyond $[0,T]$ until $T$ reaches the bound (\ref{3.11}). \vspace{0.3cm}\\
3. Control of nonlinearities via KSS estimate. To improve the bound (\ref{3.1}), the first step is to control the nonlinearities in the energy identity (\ref{kee}) (removing the iteration subscripts $k$) or (\ref{7.1}). In view of the pointwise controls (\ref{3.6}-\ref{3.10}), the integrands in these nonlinearities are of the size $\epsilon\xi^{-1}\left((\partial_t^ju)^2+(\partial_t^jq)^2\right)$. Since the equations (\ref{cnte}-\ref{initial}) can be transformed to the quasilinear wave equation (\ref{phiwave}), the estimate of these nonlinearities can hopefully be achieved by the said KSS (Keel-Smith-Sogge) estimate, which has been applied to control such nonlinearities for nonlinear wave equations in exterior domains with Dirichlet condition in \cite{KSS2001, MR2015331}. The difference is that we need additional control over $\int_0^T\varphi^2|_{x=0}dt$ due to the absence of a vanishing boundary condition.\vspace{0.3cm}\\
4. Hyperbolic system (\ref{5.1})(\ref{5.2}) coupling boundary ODE (\ref{5.3}). To control $\int_0^T\varphi^2|_{x=0}dt$, we begin by looking at how $\varphi|_{x=0}$ is determined. By taking time derivative in (\ref{dbcb0}) and substituting $\rho$ using (\ref{rho-phi}), we arrive at the ODE (\ref{5.3}) of $\psi|_{x=0}$ with $\psi=r\varphi$. On one hand, the backward pressure wave $w_B|_{x=0}$ acts as an inhomogeneity in the ODE (\ref{5.3}), so we have to carry out the characteristic method to estimate $w_B|_{x=0}$, which is initialized in (\ref{5.1})(\ref{5.2}), or equivalently (\ref{6.1})(\ref{6.2}). On the other hand, to solve the hyperbolic system (\ref{5.1})(\ref{5.2}) one needs the data of $w_F$ on the boundary $\xi=1$, which is in fact given by the ODE (\ref{5.3}) and $w_F=2\partial_t\psi-w_B$.\vspace{0.3cm}\\ 
5. Decoupling (\ref{5.1})(\ref{5.2})(\ref{5.3}). We first integrate (\ref{5.1})(\ref{5.2}) on each characteristic to obtain inequalities (\ref{6.5})(\ref{6.6})(\ref{6.7}) of $v_B$ and $v_F$, which are the maximum of $|c^{-\frac{1}{2}}w_B|$ along the backward characteristic and the maximum of $|c^{-\frac{1}{2}}w_F|$ along the forward characteristic. To combine these inequalities, we study the regularity of the coordinate transforms between the backward and forward characteristics in different regions through a geometric approach and simplify these inequalities to (\ref{6.16})(\ref{6.17})(\ref{6.18}). Then from (\ref{6.16})(\ref{6.17})(\ref{6.18}), we derive a Volterra inequality (\ref{6.24}) on $v_B$ which then deduces an estimate (\ref{6.30}) of $w_B|_{\xi=1}$. With the estimate (\ref{6.30}) we are able to decouple (\ref{5.3}) from (\ref{5.1})(\ref{5.2}) and obtain the estimate (\ref{6.41}) for $\psi|_{x=0}$ and $\partial_t\psi|_{x=0}$.  \vspace{0.3cm}\\ 
6. Closing the bootstrap. Combining the KSS estimate (\ref{4.1}) and (\ref{6.41}), the bound (\ref{3.1}) is then strictly improved on $[0,T]$ for all $T$ satisfying (\ref{3.11}), which finishes the bootstrap argument. A by-product of (\ref{6.41}) is an error estimate of the bubble radius $R$ from the solution of the linearized equation as shown in Theorem \ref{thm 1.3}. Moreover, a pointwise decay-in-time estimate of $(u,q)$ as shown in Corollary \ref{cor 1.4} is recovered from the estimates of $w_F$ and $w_B$ obtained in Section \ref{sec6}. 
\subsection{Organization of the sections}
In Section \ref{sec2}, we prove the local existence and uniqueness under the iteration scheme. In Section \ref{sec3}, we state the bootstrap assumptions and deduce the resulting pointwise bounds on $c$, $r$ and the solution itself. In Section \ref{sec4}, we derive the generalized KSS estimate without boundary conditions. In Section \ref{sec5}, we derive the boundary ODE determining $\psi|_{x=0}$ and the hyperbolic system of the forward and backward pressure waves $w_F$, $w_B$. In Section \ref{sec6}, we first study the regularity of the coordinate transforms between the forward and backward characteristics. Using these coordinate transforms, we prove the Volterra inequality (\ref{6.24}), and use (\ref{6.24}) to obtain the control on $\psi|_{x=0}$. In Section \ref{sec7}, we close the bootstrap argument by combining the KSS estimate and the control on $\psi|_{x=0}$, and prove the  pointwise estimate of $(u,\,q,\,R)$. \vspace{3mm}\\
\textbf{Acknowledgements.} To do this study has been caused by helpful discussions with Associate Professor Lifeng Zhao (University of Science and Technology of China). This work was supported by National Natural Science Foundation of China Grant No. 12271497.
\section{Energy estimate and Local existence}\label{sec2}
In this section, we shall prove Theorem \ref{thm 1.2} by means of an iterative scheme to solve (\ref{cnte}-\ref{initial}) and to show a uniform bound for each iterative, then proceed by compactness argument.
Before the analysis of iteration, we look at an elliptic estimate and a model question, which will provide the energy identity. 
\begin{lem}[Weighted elliptic estimate]
Let $v\in H^1$ and $r\in AC[0,+\infty)$ be such that $r(0)>0$ and $+\infty>\overline{\theta}\geq r^2\partial_xr\geq\underline{\theta}>0$ for some constants $\underline{\theta}$ and $\overline{\theta}$. Then we have the identity
\begin{equation}\begin{aligned}
\int_0^\infty(r^2\partial_xr)^{-1}(\partial_x(r^2v))^2dx+2rv^2|_{x=0}
=\int_0^\infty(r^2\partial_xr)^{-1}(r^2\partial_xv)^2dx+2\int_0^\infty(r^2\partial_xr)r^{-2}v^2dx,
\end{aligned}\label{div-grd}\end{equation}
and the pointwise control
\begin{equation}\begin{aligned}
&|v(x)|^2\leq r^{-2}(x)(\|v\|_{L^2}^2+\|r^2\partial_x v\|_{L^2}^2)\\
&|v(x)|^2\leq r^{-2}(x)((8r(0)^{-2}\overline{\theta}^2+1)\|v\|_{L^2}^2+2\|\partial_x(r^2v)\|_{L^2}^2)
\label{ptwc}
\end{aligned}\end{equation}
\label{Lem 2.1}\end{lem}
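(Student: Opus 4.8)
The plan is to prove the identity \eqref{div-grd} by expanding the square $(\partial_x(r^2v))^2$ and integrating by parts, exploiting the product rule $\partial_x(r^2v)=r^2\partial_xv+2r(\partial_xr)v$. First I would write
$$(r^2\partial_xr)^{-1}(\partial_x(r^2v))^2=(r^2\partial_xr)^{-1}(r^2\partial_xv)^2+4r^{-1}(r^2\partial_xv)\,v+4(r^2\partial_xr)r^{-2}v^2,$$
so the only nontrivial term to handle is the cross term $\int_0^\infty 4r^{-1}(r^2\partial_xv)\,v\,dx=\int_0^\infty 4r(\partial_xv)\,v\,dx=\int_0^\infty 2r\,\partial_x(v^2)\,dx$. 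Integrating this by parts gives $-2r v^2|_{x=0}-\int_0^\infty 2(\partial_xr)v^2\,dx$; since $\partial_x r=(r^2\partial_xr)r^{-2}$, the integral term is $-2\int_0^\infty (r^2\partial_xr)r^{-2}v^2\,dx$, and combining everything yields \eqref{div-grd}. The boundary term at $x=+\infty$ vanishes because $r\to\infty$ forces $v\to0$ along a subsequence while $v\in L^2$ and $rv^2$ is controlled; more carefully, one uses $v\in H^1$ together with the bound $r^2\partial_xr\geq\underline\theta>0$ to justify the integration by parts and the vanishing of the boundary contribution at infinity, e.g. by an approximation argument with a cutoff. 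The hypotheses $r\in AC[0,\infty)$, $r(0)>0$, and $0<\underline\theta\le r^2\partial_xr\le\overline\theta$ guarantee all quantities appearing are well defined (note $r^2\partial_xr\ge\underline\theta$ gives $r(x)^3\ge r(0)^3+3\underline\theta x$, so $r(x)$ grows and all the weights are finite a.e.).

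For the pointwise controls \eqref{ptwc}, the idea is to write $v(x)^2$ as a boundary term plus an integral. Using $v(x)^2 r(x)^2 = \int_x^\infty -\partial_y(r^2v^2)\,dy$ (again $r^2v^2\to0$ at infinity), expand $\partial_y(r^2v^2)=2r^2v\,\partial_yv+2r(\partial_yr)v^2$, so
$$r(x)^2 v(x)^2=-\int_x^\infty 2r^2 v\,\partial_yv\,dy-\int_x^\infty 2r(\partial_yr)v^2\,dy.$$
For the first estimate, I would apply Cauchy–Schwarz to $\int 2r^2v\partial_yv\,dy=\int 2(r^2\partial_yv)v\,dy$... wait, this has a weight $r^2$ that is not obviously bounded; instead note $\int_x^\infty 2 r^2 v\,\partial_y v\,dy$ should be paired as $v\cdot(r^2\partial_yv)$ with $r^2\partial_yv\in L^2$ only if we absorb correctly — actually the cleanest route is: $r^2v\partial_yv = v\cdot(r^2\partial_yv)$ and Cauchy–Schwarz gives $\le \|v\|_{L^2}\|r^2\partial_yv\|_{L^2}\le\tfrac12(\|v\|_{L^2}^2+\|r^2\partial_yv\|_{L^2}^2)$, while for the second term $\partial_yr = (r^2\partial_yr)r^{-2}\le\overline\theta r^{-2}$, but then $r(\partial_yr)v^2\le\overline\theta r^{-1}v^2$ which is not integrable against $dy$ without more care. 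The honest approach for the second term is to instead not separate it but keep $\int_x^\infty 2r\partial_yr\,v^2\,dy$ and handle it together, or better: multiply the target inequality through by $r(x)^2$ and use that $r$ is increasing so $r(y)\ge r(x)$ for $y\ge x$ — this is where the factor $r^{-2}(x)$ on the right comes from. For the second pointwise bound I would similarly start from $r(x)^2v(x)^2=-\int_x^\infty\partial_y(r^2v^2)\,dy$ but rewrite $r^2v^2 = (r^2v)\cdot v$ and use $\partial_y((r^2v)v)=(\partial_y(r^2v))v+(r^2v)\partial_yv$, expressing $\partial_yv$ in terms of $\partial_y(r^2v)$ and $v$ via $r^2\partial_yv=\partial_y(r^2v)-2r(\partial_yr)v$, then bounding $\partial_yr\le\overline\theta r^{-2}\le\overline\theta r(0)^{-2}$ to extract the constant $8r(0)^{-2}\overline\theta^2+1$ and the factor $2$ in front of $\|\partial_x(r^2v)\|_{L^2}^2$ by Cauchy–Schwarz with appropriately chosen weights.

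The main obstacle I anticipate is the rigorous justification of the boundary terms at $x=+\infty$ and the correct bookkeeping of the weights $r^{\pm 2}$ in the pointwise estimates — since $r$ is unbounded, one must be careful that every integration by parts and every Cauchy–Schwarz splitting uses only weights that are genuinely $L^2$-paired (the combinations $r^2\partial_xv$, $v$, and $\partial_x(r^2v)$ are the "good" quantities, while bare $r^2v$ or $r^{-1}v$ are not controlled). I expect the proof to proceed by first establishing everything for compactly supported (or Schwartz) $v$ where all manipulations are legitimate, then passing to general $v\in H^1$ by density, using the lower bound $r^2\partial_xr\ge\underline\theta$ to ensure the relevant norms are equivalent and the limit passes. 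The algebraic identity \eqref{div-grd} itself is then just the product-rule-plus-integration-by-parts computation sketched above, and \eqref{ptwc} follows by the fundamental theorem of calculus from $x$ to $\infty$ combined with Cauchy–Schwarz, using monotonicity of $r$ to pull $r^{-2}(x)$ out.
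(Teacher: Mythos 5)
Your derivation of the identity \eqref{div-grd} is essentially the paper's own argument: expand $(\partial_x(r^2v))^2$, divide by $r^2\partial_xr$, and integrate the cross term $\int 2r\,\partial_x(v^2)\,dx$ by parts, the boundary contribution at infinity vanishing. That part is fine (to make the vanishing rigorous, note $\partial_x(rv^2)\in L^1$ because $|r\partial_xr|\le\overline{\theta}/r(0)$ and $|2rv\partial_xv|\le 2r(0)^{-1}|v||r^2\partial_xv|$, so $rv^2$ has a limit at $+\infty$, and a nonzero limit would contradict $v\in L^2$ since $r^{-1}\simeq x^{-1/3}$ is not integrable).

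The genuine gap is in the pointwise bounds \eqref{ptwc}, where your sketch neither closes nor reaches the stated constants. First, your claim that $\int_x^\infty r(\partial_yr)v^2\,dy$ is ``not integrable without more care'' is wrong: $r\partial_yr=(r^2\partial_yr)/r\le\overline{\theta}/r(0)$, so this term is perfectly finite; the actual problem is that keeping it gives $|v(x)|^2\le r^{-2}(x)\bigl((1+2\overline{\theta}r(0)^{-1})\|v\|_{L^2}^2+\|r^2\partial_xv\|_{L^2}^2\bigr)$, which is not the first inequality of \eqref{ptwc}. The observation your route is missing is the sign: in $r(x)^2v(x)^2=-\int_x^\infty\partial_y(r^2v^2)\,dy=-\int_x^\infty 2r(\partial_yr)v^2\,dy-\int_x^\infty 2v\,(r^2\partial_yv)\,dy$, the first term is nonpositive (as $\partial_yr>0$) and can simply be dropped, after which Cauchy--Schwarz gives exactly the stated bound; alternatively, the paper sidesteps all decay-at-infinity issues by writing $v^2(x)-v^2(y)$ as an integral, averaging over $y\in[x,\tilde{x}]$, using $r(z)\ge r(x)$ inside Cauchy--Schwarz, and letting $\tilde{x}\to\infty$. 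Second, for the second line of \eqref{ptwc}, redoing the computation with $\partial_y((r^2v)v)$ and invoking $\partial_yr\le\overline{\theta}r(0)^{-2}$ does not produce the constants $8r(0)^{-2}\overline{\theta}^2+1$ and $2$ (it yields constants of the form $\tfrac12+2\overline{\theta}r(0)^{-1}$, and you would still need a separate comparison to conclude); the intended two-line derivation is to substitute $|r^2\partial_xv|\le|\partial_x(r^2v)|+2|r\partial_xr||v|$ with $|r\partial_xr|\le\overline{\theta}/r(0)$ into the first bound, so that $\|r^2\partial_xv\|_{L^2}^2\le 2\|\partial_x(r^2v)\|_{L^2}^2+8r(0)^{-2}\overline{\theta}^2\|v\|_{L^2}^2$. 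With these two fixes your plan becomes a complete proof; as written, the pointwise estimates are not established in the form stated.
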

\begin{proof}
The assumption of $r$ gives $r(0)^3+3\overline{\theta}x\geq r(x)^3\geq r(0)^3+3\underline{\theta}x$, so $r^2\partial_xv\in L^2$.
Note that $(\partial_x(r^2v))^2=(r^2\partial_xv)^2+2(r^2\partial_xr)^2r^{-2}v^2+2(r^2\partial_xr)\partial_x(rv^2)$, therefore, (\ref{div-grd}) follows from integration by parts. For any $y>x>0$, there is $v^2(x)-v^2(y)=\int_x^y2v(z)\partial_xv(z)dz$.
Integrating in $y$ on $I:=[x,\tilde{x}]$ yields $|I|v^2(x)=\int_Iv^2(y)dy+2\int_I\int_x^yv(z)\partial_xv(z)dzdy$. Then by Cauchy-Schwartz
$$
v^2(x)\leq(\tilde{x}-x)^{-1}\int_Iv^2(y)dy+r^{-2}(x)\int_Iv^2(z)dz+r^{-2}(x)\int_I(r^2\partial_xv(z))^2dz
$$
The first inequality of (\ref{ptwc}) follows by passing $\tilde{x}$ to infinity. Noticing that $|r^2\partial_xv|\leq|\partial_x(r^2v)|+2|r\partial_xr||v|$ and $|r\partial_xr|\leq r(0)^{-1}\overline{\theta}$, the second line in (\ref{ptwc}) follows from using Cauchy-Schwartz.
\end{proof}
\begin{lem}[Model question]
Let $(u,\,q,\,f)$ satisfy the following equations
$$\begin{cases}
{} \partial_t u-c^2r^2\partial_xq=F_1,&x>0,\;t>0,\\ 
\partial_t q-\partial_x(r^2u)=F_2,&x>0,\;t>0,\\
u|_{x=0}=g\frac{d}{dt}f+F_3,&t>0,\\
q|_{x=0}=f,&t>0.
\end{cases}$$
Then the energy identity reads
\begin{equation}\begin{aligned}
&\frac{1}{2}\frac{d}{dt}\left[\int_0^\infty\left(u^2+c^2q^2\right)dx+(c^2r^2)|_{x=0}gf^2\right]\\
=&\int_0^\infty\left(\frac{1}{2}(\partial_tc^2)q^2-(r^2\partial_xc^2)uq+F_1 u+c^2F_2q\right)dx
+\frac{1}{2}\partial_t[(c^2r^2)|_{x=0}g]f^2-(c^2r^2)|_{x=0}F_3f.
\end{aligned}\label{mdq}\end{equation}
\label{Lem 2.2}\end{lem}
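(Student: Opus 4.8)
The plan is a standard energy argument adapted to the boundary: pair the first equation with $u$ and the second with $c^2q$, add them, integrate in $x$ over $(0,\infty)$, and reorganize the resulting boundary flux at $x=0$ into a time derivative of a boundary energy plus genuinely lower-order terms.

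First I would multiply $\partial_tu-c^2r^2\partial_xq=F_1$ by $u$ and $\partial_tq-\partial_x(r^2u)=F_2$ by $c^2q$ and sum. Using $u\partial_tu=\tfrac12\partial_t(u^2)$, $c^2q\partial_tq=\tfrac12\partial_t(c^2q^2)-\tfrac12(\partial_tc^2)q^2$, and the elementary product rule $r^2u\,\partial_xq+q\,\partial_x(r^2u)=\partial_x(r^2uq)$, the spatial contribution $-c^2r^2u\partial_xq-c^2q\partial_x(r^2u)$ collapses to $-c^2\partial_x(r^2uq)=-\partial_x(c^2r^2uq)+(\partial_xc^2)r^2uq$. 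This gives the pointwise identity
$$\tfrac12\partial_t(u^2+c^2q^2)-\partial_x(c^2r^2uq)=\tfrac12(\partial_tc^2)q^2-(r^2\partial_xc^2)uq+F_1u+c^2F_2q.$$

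Next I would integrate over $(0,\infty)$. The divergence term produces $(c^2r^2uq)|_{x=0}$ minus its value at $x=\infty$, and the term at infinity vanishes under the natural integrability hypotheses: when $u,q$ belong to the energy class $H^1$, then $r^2\partial_xu$ and $r^2\partial_xq$ are also in $L^2(dx)$ (since $r$ is comparable to $\xi$), so $r^2v^2$ has $L^1(dx)$ derivative $2r(\partial_xr)v^2+2v(r^2\partial_xv)$, hence a limit at $\infty$, which must be $0$ because otherwise $v^2$ would decay only like $r^{-2}$ and fail to be integrable; Cauchy--Schwarz together with the boundedness of $c$ then kills the flux at $x=\infty$ (alternatively one first proves the identity for smooth compactly supported data and passes to the limit). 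One thereby gets $\tfrac12\frac{d}{dt}\int_0^\infty(u^2+c^2q^2)\,dx$ equal to the claimed bulk integral minus $(c^2r^2uq)|_{x=0}$. Finally, inserting $q|_{x=0}=f$ and $u|_{x=0}=g\frac{d}{dt}f+F_3$ turns this boundary flux into $-(c^2r^2)|_{x=0}\big(g\,f\,\tfrac{d}{dt}f+F_3f\big)$; writing $g\,f\,\tfrac{d}{dt}f=\tfrac12g\,\tfrac{d}{dt}(f^2)$ and expanding by Leibniz splits off the exact derivative $-\tfrac12\frac{d}{dt}\big[(c^2r^2)|_{x=0}gf^2\big]$ together with the lower-order term $+\tfrac12\partial_t\big[(c^2r^2)|_{x=0}g\big]f^2$, and moving the exact derivative to the left-hand side yields precisely (\ref{mdq}).

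The computation is essentially routine; the only places that need care are the vanishing of the flux at $x=\infty$ (equivalently, the reduction by density to smooth compactly supported data), and the tacit requirement that $c$, $r$ and $g$ be smooth enough for $\partial_tc^2$, $\partial_xc^2$ and $\partial_t[(c^2r^2)|_{x=0}g]$ to make sense — which is guaranteed in the iteration scheme, where these coefficients come frozen from the previous iterate.
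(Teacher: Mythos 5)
Your proof is correct and follows essentially the same route the paper intends: multiply the first equation by $u$ and the second by $c^2q$, integrate by parts, and absorb the boundary flux $-(c^2r^2uq)|_{x=0}$ into the time derivative of $(c^2r^2)|_{x=0}gf^2$ via the boundary conditions. Your extra remarks on the vanishing of the flux at $x=\infty$ and the required regularity of the frozen coefficients are sensible but not a departure from the paper's argument.
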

\noindent The proof is straightforward by multiplying $u$ on the first equation and integrating by parts.\\
\indent We now begin with the construction of the iteration sequence. Define 
$$c_0^2=\frac{Ca\gamma}{2},\quad r_0=(1+3x)^\frac{1}{3},$$
$$c_1^2=\frac{Ca\gamma}{2}(1+q_{in})^{-1-\gamma},\quad r_1(x)=r_{in}(x)=\left(R_{in}^3+3x+3\int_0^xq_{in}(y)dy\right)^{\frac{1}{3}},\quad R_1=R_{in}.$$ 
Since $\underline{q}\leq q_{in}\leq \overline{q}$, it follows
\begin{equation}\min\left\{R_{in}^3, 1+\underline{q}\right\}\leq\frac{r_{in}^3}{r_0^3}\leq\max\left\{R_{in}^3,1+\overline{q}\right\},\quad\partial_xr_{in}=r_{in}^{-2}(1+q_{in})\simeq r_0^{-2}.\label{rinr0}\end{equation}
Then define $(u_k,\,q_k,\,R_k)$ , $k\geq 2$ inductively by solving the linear equations of $(u_k,\, q_k,\, f(R_k))$
\begin{numcases}
{} \partial_t q_k=\partial_x(r_{k-1}^2 u_k), &$x>0,\;t>0$,\label{icnte}\\
\partial_t u_k=c_{k-1}^2r_{k-1}^2\partial_x q_k, &$x>0,\;t>0$, \label{immte}\\
(f'(R_{k-1}))^{-1}\frac{d}{dt}f(R_k)=u_k|_{x=0}, &$t>0$,\label{ikbce}\\
q_k|_{x=0}=f(R_k), &$t>0$,\label{idbce}\\
(u_k,\,q_k,\, f(R_k))|_{t=0}=(u_{in},\,q_{in},\,f(R_{in})).\label{iinitial}
\end{numcases}
Here the parameters $r_{k-1}$ and $c_{k-1}$, $k\geq 3$ are determined by the previous step:
$$\partial_tr_{k-1}=u_{k-1},\quad r_{k-1}(x,0)=r_{in}(x),\quad c_{k-1}^2=\frac{Ca\gamma}{2}(1+q_{k-1})^{-1-\gamma}.$$
For convenience, denote $q_1:=q_{in}$, $u_1:=0$, so the above identities also hold for $k=2$.
We note that the compatible conditions up to first order of the initial-boundary value problem (\ref{icnte}-\ref{iinitial}) are
$$q_{in}(0)=f(R_{in}),\quad f^\prime(R_{k-1}(0))u_{in}(0)=\partial_x\left((r_{k-1}
|_{t=0})^2u_{in}\right)(0),$$
which are fulfilled due to (\ref{CC0})(\ref{CC1}) and $r_{k-1}|_{t=0}=r_{in}$, $R_{k-1}|_{t=0}=R_{in}$. We introduce the time-dependent operators 
$$\begin{aligned}
&L_{k-1}^{2j}:=\left[\partial_x(r_{k-1}^4c_{k-1}^2\partial_x)\right]^j,\quad L_{k-1}^{2j+1}:=c_{k-1}^2r_{k-1}^2\partial_xL_{k-1}^{2j},\\
&\tilde{L}_{k-1}^{2j}:=\left[c_{k-1}^2r_{k-1}^2\partial_x^2r_{k-1}^2\right]^j,\quad\tilde{L}_{k-1}^{2j+1}:=\partial_x(r_{k-1}^2\tilde{L}_{k-1}^{2j}).
\end{aligned}$$ 
Equations (\ref{icnte})(\ref{immte}) can then be written as $\partial_tq_k=\tilde{L}_{k-1}^1u_k$ and $\partial_tu_k=L_{k-1}^1q_k$. Moreover, for each $j$, it holds 
$$\tilde{L}_{k-1}^{j+1}=L_{k-1}^{j}\tilde{L}_{k-1}^1,\quad L_{k-1}^{j+1}=\tilde{L}_{k-1}^jL_{k-1}^1.
$$
The key point in order to construct the local solution is to prove a uniform bound for each $(u_k,\,q_k,\,f(R_k))$ on a common interval $[0,T]$. To begin with, we define for $j=0,1,2$ and $k\geq 2$ that
$$\begin{aligned}
e_k^j(t):=\frac{1}{2}\int_0^\infty\left((\partial_t^ju_k(t))^2+c_{k-1}^2(t)(\partial_t^jq_k(t))^2\right)dx
+\frac{1}{2}c_{k-1}(t)|_{x=0}^2f'(R_{k-1}(t))^{-1}\left(\partial_t^jf(R_k)(t)\right)^2,
\end{aligned}$$
$$E_k(t):=\sum_{j=0}^2|\partial_t^jf(R_k)(t)|^2+\sum_{j_1+j_2\leq2}\left(\|L_{k-1}^{j_2}\partial_t^{j_1}q_k(t)\|_{L^2}^2+\|\tilde{L}_{k-1}^{j_2}\partial_t^{j_1}u_k(t)\|_{L^2}^2\right),$$
$$\epsilon^2:=\sum_{j=0}^2\|L_0^jq_{in}\|_{L^2}^2+\sum_{j=0}^2\|\tilde{L}_0^ju_{in}\|_{L^2}^2+|f(R_{in})|^2.
$$
In order to obtain the uniform bound in $H^2$, we will apply an induction argument. One of the steps in this induction argument is to prove the lemma below.
\begin{lem}[Uniform bound for mixed derivatives]
There exists $T>0$ and a constant $A>0$ such that for each $k\geq2$,
\begin{equation}\begin{aligned}
&\sum_{j=0}^2|\partial_t^jf(R_k)|^2+\sum_{j_1+j_2\leq2}\left(\|L_{k-1}^{j_2}\partial_t^{j_1}q_k\|_{L^2}^2+\|\tilde{L}_{k-1}^{j_2}\partial_t^{j_1}u_k\|_{L^2}^2\right)\\
\leq &A\left(\sum_{j=0}^2\|L_0^jq_{in}\|_{L^2}^2+\sum_{j=0}^2\|\tilde{L}_0^ju_{in}\|_{L^2}^2+|f(R_{in})|^2\right),
\end{aligned}\label{kme}\end{equation}
holds on $t\in[0,T]$.
\label{prop 2.4}\end{lem}
Now we state the uniform bound on the approximate solutions. 
\begin{prop}[Uniform bounds in $H^2$]
There exists $T>0$ such that for all $k\geq2$ and $t\in[0,T]$, $j=0,1,2$ that 
$$\|\partial_t^ju_k(t)\|_{H^{2-j}}+\|\partial_t^jq_k(t)\|_{H^{2-j}}\lesssim A^\frac{1}{2}(1+A^\frac{1}{2}\epsilon)\epsilon,$$
and $\tilde{u}_k(\xi,t):=u_k\left(\frac{\xi^3-1}{3},t\right)$, $\tilde{q}_k(\xi,t):=q_k\left(\frac{\xi^3-1}{3},t\right)$ satisfy 
$$\|\partial_t^j\tilde{u}_k(t)\|_{H^{2-j}_{\xi}(1,+\infty)}+\|\partial_t^j\tilde{q}_k(t)\|_{H^{2-j}_{\xi}(1,+\infty)}\lesssim A^\frac{1}{2}(1+A^\frac{1}{2}\epsilon)\epsilon.$$
\label{prop2.3}\end{prop}
The proof will be completed by induction in $k$ with the assistance of the following lemmas.
\begin{lem}[Control of parameters]
There exists $T>0$ and constant $C>0$ dependent on $\underline{q}$, $\overline{q}$ and $R_{in}$ such that for each $k\geq2$ and $t\in[0,T]$
\begin{equation}
\frac{3}{2}\partial_xr_{in}\geq\partial_xr_{k-1}\geq\frac{1}{2}\partial_xr_{in},\quad \frac{3}{2}r_{in}(x)\geq r_{k-1}(x,t)\geq\frac{1}{2}r_{in}(x),
\label{kqb}\end{equation}
\begin{equation}
\frac{Ca\gamma}{2}\left(\frac{3}{2}+\frac{3}{2}\overline{q}\right)^{-\gamma-1}\leq c_{k-1}^2(x,t)\leq\frac{Ca\gamma}{2}\left(\frac{\underline{q}+1}{2}\right)^{-\gamma-1},
\label{kcb}\end{equation}
\begin{equation}
\frac{1}{2}(\underline{q}-1)\leq f(R_{k-1})\leq\frac{3\overline{q}+1}{2},
\label{kRb}\end{equation}
\begin{equation}
\left\|\frac{\partial_tr_{k-1}}{r_{k-1}}\right\|_{L^\infty}+\left\|r_{k-1}^2\partial_x\left(\frac{\partial_tr_{k-1}}{r_{k-1}}\right)\right\|_{L^\infty}+\left\|\frac{\partial_tc_{k-1}}{c_{k-1}}\right\|_{L^\infty}\leq CA^\frac{1}{2}\epsilon,
\label{kfb}\end{equation}
\begin{equation}
\left\|r_0^2\partial_x\left(\frac{r_{k-1}^2}{r_0^2}\right)\right\|_{L^\infty}\leq C,\quad \left\|r_0^2\partial_x(c_{k-1}^2)\right\|_{L^\infty}\leq C\|L_0^1q_{k-1}\|_{L^\infty},
\label{krb}\end{equation}
\begin{equation}
\left\|r_0^2r_{k-1}^2\partial_x^2\left(\frac{r_{k-1}^2}{r_0^2}\right)\right\|_{L^2}\leq C\left(1+\|L_0^1q_{in}\|_{L^2}
\right).\label{embed}\end{equation}
\label{Lem 2.5}\end{lem}
\begin{lem}
There exist constants $B>0$ and $\tilde{B}>0$ such that for each $k\geq 2$ 
\begin{equation}
E_k(t)\leq B\sum_{j=0}^2e_k^j(t),\quad t\in[0,T],\label{Etoe}
\end{equation}
\begin{equation}
\sum_{j=0}^2e_k^j(0)\leq\tilde{B}\epsilon^2.
\label{etoep}
\end{equation}
\label{Lem 2.6}\end{lem}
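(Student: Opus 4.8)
The plan is to handle the two inequalities separately: (\ref{etoep}) is a direct computation at $t=0$, reducing everything to $u_{in},q_{in},f(R_{in})$; (\ref{Etoe}) is obtained by converting the pure-time-derivative information contained in the $e_k^j$ into bounds on all the mixed derivatives appearing in $E_k$, using the linear equations (\ref{icnte})(\ref{immte}) and the operator identities $\partial_t u_k=L_{k-1}^1q_k$, $\partial_t q_k=\tilde L_{k-1}^1u_k$, $L_{k-1}^{j+1}=\tilde L_{k-1}^jL_{k-1}^1$, $\tilde L_{k-1}^{j+1}=L_{k-1}^j\tilde L_{k-1}^1$, exactly as in the proof of Proposition \ref{prop 2.3} but now keeping track of commutator errors because, for $k\geq 2$, $L_{k-1}^1$ and $\tilde L_{k-1}^1$ no longer commute with $\partial_t$. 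Throughout, the a priori bound $E_{k-1}(t)\leq A\epsilon^2$ on $[0,T]$ that is available inside the induction of Proposition \ref{prop 2.4} (trivially for $k\leq1$), together with Lemma \ref{Lem 2.5}, supplies the pointwise controls $|u_{k-1}(x,t)|+|q_{k-1}(x,t)|+|\partial_t q_{k-1}(x,t)|\lesssim \xi(x)^{-1}$ via (\ref{embed}) and (\ref{ptwc}), with constants depending only on $A\epsilon^2$ and on $\underline q,\overline q,R_{in}$.

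For (\ref{etoep}): since $(u_k,q_k,f(R_k))|_{t=0}=(u_{in},q_{in},f(R_{in}))$ and $r_{k-1}|_{t=0}=(R_{in}^3+3x+3\int_0^xq_{in})^{1/3}$, $c_{k-1}^2|_{t=0}=\tfrac{Ca\gamma}{2}(1+q_{in})^{-1-\gamma}$ depend only on the data, the number $\sum_{j=0}^2 e_k^j(0)$ is in fact independent of $k\geq1$ (and vanishes for $k=0$). Differentiating (\ref{icnte})(\ref{immte})(\ref{ikbce}) in $t$ and restricting to $t=0$ expresses $\partial_t^{j_1}u_k|_{t=0}$, $\partial_t^{j_1}q_k|_{t=0}$ ($j_1\leq2$) as bounded coefficients times $L_{k-1}^{i}|_{t=0}$ or $\tilde L_{k-1}^{i}|_{t=0}$ ($i\leq j_1$) applied to $u_{in},q_{in}$, while $\partial_t f(R_k)|_{t=0}=f'(R_{in})u_{in}|_{x=0}$ and $\partial_t^2 f(R_k)|_{t=0}=f''(R_{in})(u_{in}|_{x=0})^2+f'(R_{in})\partial_t u_k|_{x=0}|_{t=0}$ are controlled by the trace estimate (\ref{ptwc})/(\ref{embed}) in terms of $\|u_{in}\|_{L^2}+\|\tilde L_0^1u_{in}\|_{L^2}$, $\|q_{in}\|_{L^2}+\|L_0^jq_{in}\|_{L^2}$, the quadratic term contributing at most $C\epsilon^2(1+\epsilon^2)$. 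Because $\underline q>-1$ and $0<R_{in}<\overline R$ force $r_{k-1}|_{t=0}\simeq\xi$, and $c_{k-1}|_{t=0}$ is bounded above and below, each $L_{k-1}^i|_{t=0}$, $\tilde L_{k-1}^i|_{t=0}$ is, as a map into $L^2$, comparable to $L_0^i$, $\tilde L_0^i$ up to lower-order terms with bounded coefficients; hence $\sum_{j=0}^2 e_k^j(0)\lesssim \sum_{j=0}^2(\|L_0^jq_{in}\|_{L^2}^2+\|\tilde L_0^ju_{in}\|_{L^2}^2)+|f(R_{in})|^2+(\text{lower order})\lesssim\epsilon^2$, which is (\ref{etoep}).

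For (\ref{Etoe}): the boundary contribution $|\partial_t^jf(R_k)|^2$ to $E_k$ is comparable to that of $e_k^j$, because $c_{k-1}^2|_{x=0}$ (by (\ref{kcb})) and $f'(R_{k-1})^{-1}$ (by (\ref{kfb}) and strict monotonicity of $f$) are bounded above and below by positive constants. For the bulk terms with $j_1+j_2\leq2$: the case $j_2=0$ is immediate; for $j_2=1$ one has $L_{k-1}^1q_k=\partial_t u_k$, $\tilde L_{k-1}^1u_k=\partial_t q_k$ and, when $j_1=1$, $L_{k-1}^1\partial_t q_k=\partial_t^2u_k-[\partial_t,L_{k-1}^1]q_k$, $\tilde L_{k-1}^1\partial_t u_k=\partial_t^2q_k-[\partial_t,\tilde L_{k-1}^1]u_k$; the cases $j_2=2$, $j_1=0$ reduce to the same two expressions via $L_{k-1}^2q_k=\tilde L_{k-1}^1L_{k-1}^1q_k=\tilde L_{k-1}^1\partial_t u_k$ and $\tilde L_{k-1}^2u_k=L_{k-1}^1\tilde L_{k-1}^1u_k=L_{k-1}^1\partial_t q_k$. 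What remains besides commutators is $\|\partial_t^ju_k\|_{L^2}^2$, $\|\partial_t^jq_k\|_{L^2}^2$ for $j\leq2$, which are controlled by $e_k^j$ since $c_{k-1}^2\gtrsim1$ by (\ref{kcb}).

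The main obstacle is the commutators $[\partial_t,L_{k-1}^1]=\partial_t(c_{k-1}^2r_{k-1}^2)\,\partial_x$ and $[\partial_t,\tilde L_{k-1}^1]=\partial_x(\partial_t(r_{k-1}^2)\,\cdot\,)$: a naive $L^\infty$ bound on the coefficient $\partial_t(c_{k-1}^2r_{k-1}^2)$ fails because $r_{k-1}^2\simeq\xi^2$ grows. The fix is to use the equations once more to trade the growing factor for a bounded one. Writing $\partial_x q_k=(c_{k-1}^2r_{k-1}^2)^{-1}\partial_t u_k$ gives $[\partial_t,L_{k-1}^1]q_k=\partial_t\ln(c_{k-1}^2r_{k-1}^2)\cdot\partial_t u_k$ with $\partial_t\ln(c_{k-1}^2r_{k-1}^2)=-(1+\gamma)\tfrac{\partial_t q_{k-1}}{1+q_{k-1}}+\tfrac{2\partial_t r_{k-1}}{r_{k-1}}$ bounded — the last quotient being bounded by differentiating $r_{k-1}^3=R_{k-1}^3+3x+3\int_0^xq_{k-1}$ in $t$, using (\ref{icnte}), (\ref{krb}), and the pointwise controls above — so $\|[\partial_t,L_{k-1}^1]q_k\|_{L^2}\lesssim\|\partial_t u_k\|_{L^2}\lesssim\sqrt{e_k^1}$; similarly $\partial_t(r_{k-1}^2)$ and $\partial_x\partial_t(r_{k-1}^2)$ are bounded, and using $r_{k-1}^2\partial_x u_k=\partial_t q_k-2r_{k-1}^{-1}(1+q_{k-1})u_k$ one gets $\|[\partial_t,\tilde L_{k-1}^1]u_k\|_{L^2}\lesssim\|\partial_t q_k\|_{L^2}+\|u_k\|_{L^2}\lesssim\sqrt{e_k^0}+\sqrt{e_k^1}$. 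Assembling all cases, every term of $E_k(t)$ is bounded by a multiple of $\sum_{j=0}^2 e_k^j(t)$ with the multiple $B$ depending only on the constants of Lemma \ref{Lem 2.5} and on $A\epsilon^2$, hence independent of $k$; this is (\ref{Etoe}).
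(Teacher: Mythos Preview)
Your proposal is correct and follows essentially the same approach as the paper: both reduce (\ref{Etoe}) to bounding the two commutators $\tilde L_{k-1}^1\partial_t u_k-\partial_t^2 q_k$ and $L_{k-1}^1\partial_t q_k-\partial_t^2 u_k$ (the paper's (\ref{exchq})--(\ref{erexchu})) via the equations and the pointwise controls from the induction hypothesis, and handle (\ref{etoep}) by restricting to $t=0$ and comparing $L_{k-1}^i|_{t=0}$, $\tilde L_{k-1}^i|_{t=0}$ with $L_0^i$, $\tilde L_0^i$ (the paper carries this out explicitly in (\ref{LktoL0u1})--(\ref{LktoL0q2})). One small correction: $\sum_j e_k^j(0)$ is only literally $k$-independent for $k\geq 2$, since $r_0,c_0,R_0$ are not $r_{in},c_{in},R_{in}$; this is harmless for the bound.
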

By Lemma \ref{Lem 2.6}, we see that if $\sum_{j=0}^2e_k^j(t)\leq2\sum_{j=0}^2e_k^j(0)$  holds on $t\in [0,T]$ for some $T$ independent on $k$, then we obtain $E_k(t)\leq 2B\tilde{B}\epsilon^2$, i.e. (\ref{kme}) holds for $A\geq2B\tilde{B}$. \\
\indent Now we suppose that Lemma \ref{prop 2.4}, Propostion \ref{prop2.3}, Lemma \ref{Lem 2.5} and Lemma \ref{Lem 2.6} hold for all $k'\leq k-1$ and Lemma \ref{Lem 2.5} also holds for $k$. Note that Lemma \ref{Lem 2.5} holds for $k=2$ since we have selected $r_1=r_{in}$, $c_{1}^2=c_0^2(1+q_{in})^{-1-\gamma}$, $R_1=R_{in}$ while $q_{in}$ satisfies the regularity condition as stated in Theorem \ref{thm 1.2}. The first step is to show Lemma \ref{Lem 2.6} holds for $k$. \vspace{0.3cm}\\
\textit{Proof of Lemma \ref{Lem 2.6}.}
Equations (\ref{icnte})(\ref{immte}) are equivalent to $\partial_tu_k=L_{k-1}^1q_k$ and $\partial_tq_k=\tilde{L}_{k-1}^1u_k$, and also yield that 
\begin{equation}
L_{k-1}^2q_k=\tilde{L}_{k-1}^{1}\partial_tu_k,\quad \tilde{L}_{k-1}^2u_k=L_{k-1}^1\partial_tq_k.
\label{L2-L1t}\end{equation}
According to the construction of $E_k$, in order to show (\ref{Etoe}) it suffices to control $\|L_{k-1}^1\partial_tq_k\|_{L^2}$ and $\|\tilde{L}_{k-1}^1\partial_t u_k\|_{L^2}$. By differentiating (\ref{icnte})(\ref{immte}) we obtain for $t\in[0,T]$
\begin{equation}\begin{aligned}
\partial_t^2q_{k}-\tilde{L}_{k_1}^1\partial_tu_{k}=&2\partial_x(r_{k-1}\partial_tr_{k-1}u_{k})
=2\frac{\partial_tr_{k-1}}{r_{k-1}}\tilde{L}^1_{k-1}u_k+2r_{k-1}^2\partial_x\left(\frac{\partial_tr_{k-1}}{r_{k-1}}\right)u_k,
\end{aligned}\label{exchq}\end{equation}
\begin{equation}\begin{aligned}
\partial_t^2u_k-L_{k-1}^1\partial_tq_k=&\frac{\partial_t(c_{k-1}^2r_{k-1}^2)}{c_{k-1}^2r_{k-1}^2}L^1_{k-1}q_k.
\label{exchu}\end{aligned}\end{equation}
Therefore, by Lemma \ref{Lem 2.5}, for $t\in[0,T]$ there are
\begin{equation}\begin{aligned}
&\|\partial_t^2q_k-\tilde{L}_{k-1}^1\partial_tu_k\|_{L^2}\\
\leq &C\left(\left\|\frac{\partial_tr_{k-1}}{r_{k-1}}\right\|_{L^\infty}+\left\|r_{k-1}^2\partial_x\left(\frac{\partial_tr_{k-1}}{r_{k-1}}\right)\right\|_{L^\infty}\right)\left(\|\partial_tq_{k}\|_{L^2}+\|u_k\|_{L^2}\right)\\
\leq &CA^{\frac{1}{2}}\epsilon\left(\|\partial_tq_k\|_{L^2}+\|u_k\|_{L^2}\right)\\
=& CA^{\frac{1}{2}}\epsilon\left(\|\tilde{L}_{k-1}^1u_k\|_{L^2}+\|u_k\|_{L^2}\right),
\label{erexchq}\end{aligned}\end{equation}
\begin{equation}\begin{aligned}
\|\partial_t^2u_k-L_{k-1}^1\partial_tq_k\|_{L^2}
\leq &C\left\|\frac{\partial_t(c_{k-1}^2r_{k-1}^2)}{c_{k-1}^2r_{k-1}^2}\right\|_{L^\infty}\|\partial_tu_k\|_{L^2}
\leq CA^\frac{1}{2}\epsilon\|\partial_tu_k\|_{L^2},
\end{aligned}\label{erexchu}\end{equation}
which complete the proof of (\ref{Etoe}) for $B=C(1+A^\frac{1}{2}\epsilon)$.\\
To show (\ref{etoep}), restricting $t$ to $0$ in (\ref{exchq})(\ref{exchu}) and using (\ref{L2-L1t}) gives 
\begin{equation}\begin{aligned}
\partial_t^2q_k|_{t=0}-L_{k-1}^2q_{in}=2\left.\frac{\partial_tr_{k-1}}{r_{k-1}}\right|_{t=0}\tilde{L}^1_{k-1}u_{in}+2\left[\left.r_{k-1}^2\partial_x\left(\frac{\partial_tr_{k-1}}{r_{k-1}}\right)\right]\right|_{t=0}u_{in},
\end{aligned}\label{2.24}\end{equation}
\begin{equation}\begin{aligned}
\partial_t^2u_k|_{t=0}-\tilde{L}_{k-1}^2u_{in}=\left.\left[\frac{\partial_t(c_{k-1}^2r_{k-1}^2)}{c_{k-1}^2r_{k-1}^2}\right]\right|_{t=0}L_{k-1}^1q_{in}.
\end{aligned}\label{2.25}\end{equation}
If $k=2$, then all the terms on the right-hand side of (\ref{2.24})(\ref{2.25}) vanish. If $k\geq3$, we have
$$\begin{aligned}
\left\|\left.\frac{\partial_tr_{k-1}}{r_{k-1}}\right|_{t=0}\right\|_{L^\infty}\leq C\|u_{in}\|_{L^\infty},\quad\left\|\left.\frac{\partial_tc_{k-1}}{c_{k-1}}\right|_{t=0}\right\|_{L^\infty}\leq C\|\partial_x(r_{in}^2u_{in})\|_{L^\infty},
\end{aligned}$$
$$\begin{aligned}
\left\|\left.r_{k-1}^2\partial_x\left(\frac{\partial_tr_{k-1}}{r_{k-1}}\right)\right|_{t=0}\right\|_{L^2}=\|r_{in}^{-1}\tilde{L}_{k-1}^1u_{in}-3u_{in}\partial_xr_{in}\|_{L^2}\leq C\left(\|\tilde{L}_{k-1}^1u_{in}\|_{L^2}+\|u_{in}\|_{L^2}\right),
\end{aligned}$$
where we used the fact $r_{in}(x)\geq r_{in}(0)>0.$. Moreover, we can re-express $L_{k-1}^j$ and $\tilde{L}_{k-1}^j$ by $L_0^j$ and $\tilde{L}_0^j$ as follows. 
\begin{equation}\begin{aligned}
\tilde{L}_{k-1}^1u_k=\frac{r_{k-1}^2}{r_0^2}\tilde{L}_0^1u_k+r_0^2\partial_x\left(\frac{r_{k-1}^2}{r_0^2}\right)u_k,
\end{aligned}\label{LktoL0u1}\end{equation}
\begin{equation}\begin{aligned}
L_{k-1}^1q_k=\frac{c_{k-1}^2}{c_0^2}\frac{r_{k-1}^2}{r_0^2}L_0^1q_k,
\end{aligned}\label{LktoL0q1}\end{equation}
\begin{equation}\begin{aligned}
\tilde{L}_{k-1}^2u_k=\frac{c_{k-1}^2r_{k-1}^4}{c_0^2r_{0}^4}\tilde{L}_0^2u_k+2c_{k-1}^2r_{k-1}^2\partial_x\left(\frac{r_{k-1}^2}{r_0^2}\right)\tilde{L}_0^1u_{k}+c_{k-1}^2r_{k-1}^2r_0^2\partial_x^2\left(\frac{r_{k-1}^2}{r_0^2}\right)u_k,
\end{aligned}\label{LktoL0u2}\end{equation}
\begin{equation}\begin{aligned}
L_{k-1} ^2q_k=&\frac{c_{k-1}^2}{c_0^2}\frac{r_{k-1}^4}{r_0^4}L_0^2q_k+r_0^2\partial_x\left(\frac{r_{k-1}^4}{r_0^4}\frac{c_{k-1}^2}{c_0^2}\right)L_0^1q_k.
\end{aligned}\label{LktoL0q2}\end{equation}
Hence by (\ref{LktoL0u1}-\ref{LktoL0q2}) and (\ref{kqb}-\ref{embed}) , it holds
\begin{equation}
C^{-1}\|L_0^1q_k\|_{L^2}\leq\|L_{k-1}^1q_k\|_{L^2}\leq C\|L_0^1q_k\|_{L^2},
\label{2.30}\end{equation}
\begin{equation}
\|\tilde{L}_{k-1}^1u_k-r_0^{-2}r_{k-1}^2\tilde{L}_0^1u_k\|_{L^2}\leq C\|u_k\|_{L^2},
\label{2.31}\end{equation}
\begin{equation}
\left\|L_{k-1}^2q_k-\frac{c_{k-1}^2}{c_0^2}\frac{r_{k-1}^4}{r_0^4}L_0^2q_k\right\|_{L^2}\leq C\left(1+\|L_0^1q_{k-1}\|_{L^\infty}\right)\|L_0^1q_k\|_{L^2},
   \label{2.32}\end{equation}
\begin{equation}
\left\|\tilde{L}_{k-1}^2u_k-\frac{c_{k-1}^2}{c_0^2}\frac{r_{k-1}^4}{r_0^4}\tilde{L}_0^2u_k
\right\|_{L^2}\leq C\left(1+\|L_0^1q_{in}\|_{L^2}\right)\left(\|u_k\|_{L^2}+\|\tilde{L}_0^1u_k\|_{L^2}\right).
\label{2.33}\end{equation}
Then using (\ref{2.30}-\ref{2.33}) in (\ref{2.24})(\ref{2.25}) yields
$$\begin{aligned}
&\|\partial_t^2q_k|_{t=0}\|_{L^2}+\|\partial_t^2u_k|_{t=0}\|_{L^2}\\
\leq&\|L_{k-1}^2q_{in}\|_{L^2}+\|\tilde{L}_{k-1}^2u_{in}\|_{L^2}\\
&+C\left(\|u_{in}\|_{L^\infty}+\|\partial_x(r_{in}^2u_{in})\|_{L^\infty}\right)\left(\|\tilde{L}_{k-1}^1u_{in}\|_{L^2}+\|u_{in}\|_{L^2}+\|L_{k-1}^1q_{in}\|_{L^2}\right)
\\
\leq&C(1+\|u_{in}\|_{L^\infty}+\|\partial_x(r_{in}^2u_{in})\|_{L^\infty}+\|L_0^1q_{in}\|_{L^\infty\cap L^2})\sum_{j=0}^2\left(\|L_0^jq_{in}\|_{L^2}+\|\tilde{L}_0^ju_{in}\|_{L^2}\right).
\end{aligned}$$
By Lemma \ref{Lem 2.1}, we have
$$\|u_{in}\|_{L^\infty}+\|\partial_x(r_{in}^2u_{in})\|_{L^\infty}+\|L_0^1q_{in}\|_{L^\infty\cap L^2}\lesssim\sum_{j=0}^2\left(\|\tilde{L}_0^ju_{in}\|_{L^2}+\|L_0^jq_{in}\|_{L^2}\right)\leq\epsilon.$$
It follow that 
$$\|\partial_t^2q_k|_{t=0}\|_{L^2}+\|\partial_t^2u_k|_{t=0}\|_{L^2}\leq C\left(1+\epsilon\right)\sum_{j=0}^2\left(\|\tilde{L}_0^ju_{in}\|_{L^2}+\|L_0^jq_{in}\|_{L^2}\right).$$
Inequalities (\ref{2.30})(\ref{2.31}) also show that the first-order $t$-derivatives can be controlled by the above bound. It remains to bound $\partial_t^jf(R_{k})|_{t=0}$, for which we have
$$\begin{aligned}
&\left|\partial_tf(R_k)|_{t=0}\right|=f'(R_{k-1})|_{t=0}\left|u_k|_{(x=0,t=0)}\right|\\
=&f'(R_{in})\left|u_{in}|_{x=0}\right|\leq C\left(\|\tilde{L}_0^1u_{in}\|_{L^2}+\|u_{in}\|_{L^2}\right),
\end{aligned}$$
$$\begin{aligned}
&\left|\partial_t^2f(R_k)|_{t=0}\right|\\
=&\left|f'(R_{k-1})\partial_tu_k|_{(x=0,t=0)}+\partial_t(f'(R_{k-1}))u_k|_{(x=0,t=0)}\right| \\
=&\left|f'(R_{k-1})L_{k-1}^1q_k|_{(x=0,t=0)}+\left(f^\prime\circ f^{-1}\right)'(f(R_{k-1}))\partial_tf(R_{k-1})u_k|_{(x=0,t=0)}\right|\\
=&\left|f'(R_{in})\frac{c_{in}^2}{c_0^2}\frac{r_{in}^2}{r_0^2}L_0^1q_{in}+\left(f^\prime\circ f^{-1}\right)'(f(R_{in}))f'(R_{in})u_{in}^2|_{x=0}\right|\\
\leq & C\left(\|L_0^2q_{in}\|_{L^2}+\|L_0^1q_{in}\|_{L^2}\right)+C\|u_{in}\|_{L^\infty}\left(\|u_{in}\|_{L^2}+\|\tilde{L}_0^1u_{in}\|\right).
\end{aligned}$$
Collecting the above bounds gives
$$\begin{aligned}
\sum_{j=0}^2e_k^j(0)\leq &C\sum_{j=0}^2\left(\|\partial_t^jq_k|_{t=0}\|_{L^2}^2+\|\partial_t^ju_k|_{t=0}\|_{L^2}^2+|\partial_t^jf(R_{k})|_{t=0}|^2\right)\\
\leq &C(1+\epsilon)\left[\sum_{j=0}^2\left(\|L_0^jq_k\|_{L^2}^2+\|\tilde{L}_0^ju_k\|_{L^2}^2\right)+|f(R_{in})|^2\right].
\end{aligned}$$
Therefore, by choosing $\tilde{B}=C(1+\epsilon)$, we arrive at (\ref{etoep}).$\hfill\qedsymbol$\vspace{0.3cm}\\
The second step is to prove Lemma \ref{prop 2.4} for $k$, which is achieved by propagating $e_k^j$ through an energy method and applying Lemma \ref{Lem 2.6}.\vspace{0.3cm}\\
\textit{Proof of Lemma \ref{prop 2.4}.} Applying $\partial_t^j$ to (\ref{icnte}-\ref{idbce}) yields
$$\begin{cases}
{} \partial_t\partial_t^ju_k-c_{k-1}^2r_{k-1}^2\partial_x\partial_t^jq_{k}=[\partial_t^j,c_{k-1}^2r_{k-1}^2\partial_x]q_k, &x>0,\;t>0,\\
\partial_t\partial_t^jq_k-\partial_x(r_{k-1}^2\partial_t^ju_k)=[\partial_t^j,\partial_xr_{k-1}^2]u_k, &x>0,\;t>0,\\
\partial_t^jq_{k}|_{x=0}=\partial_t^jf(R_k),&t>0,\\
\partial_t^ju_k|_{x=0}=f'(R_{k-1})^{-1}\partial_t^{j+1}f(R_k)+[\partial_t^j,f'(R_{k-1})^{-1}]\partial_tf(R_k),&t>0.
\end{cases}$$
(\ref{mdq}) gives
\begin{equation}\begin{aligned}
&\frac{1}{2}\frac{d}{dt}\left[\int_0^\infty((\partial_t^ju_k)^2+c_{k-1}^2(\partial_t^jq_k)^2)dx+(c_{k-1}^2r_{k-1}^2)|_{x=0}f'(R_{k-1})^{-1}|\partial_t^jf(R_k)|^2\right]\\
=&\int_0^\infty\left(\frac{1}{2}\partial_tc_{k-1}^2(\partial_t^jq_k)^2-r_{k-1}^2\partial_xc_{k-1}^2\partial_t^ju_k\partial_t^jq_k\right)dx\\
&+\frac{1}{2}\partial_t\left[(c_{k-1}^2r_{k-1}^2)|_{x=0}f'(R_{k-1})^{-1}\right]|\partial_t^jf(R_k)|^2\\
&+\int_0^\infty\left([\partial_t^j,c_{k-1}^2r_{k-1}^2\partial_x]q_k\partial_t^ju_k+c_{k-1}^2[\partial_t^j,\partial_xr_{k-1}^2]u_k\partial_t^jq_k\right)dx\\
&-(c_{k-1}^2r_{k-1}^2)|_{x=0}[\partial_t^j,f'(R_{k-1})^{-1}]\partial_tf(R_k)\partial_t^jf(R_k).
\end{aligned}\label{kee}\end{equation}
For the first term on the right-hand side, Lemma \ref{Lem 2.5} gives
\begin{equation}\left\|\frac{\partial_tc_{k-1}^2}{c_{k-1}^2}\right\|_{L^\infty}+\|c_{k-1}^{-1}r_{k-1}^2\partial_xc_{k-1}^2\|_{L^\infty}\leq CA^\frac{1}{2}\epsilon.\label{NL0k1}\end{equation}
For the second term, it holds 
\begin{equation}\begin{aligned}
\left|\partial_t[(c_{k-1}^2r_{k-1}^2)|_{x=0}f'(R_{k-1})^{-1}]\right|
=&c_0^2\left|\partial_t[r_{k-1}^2|_{x=0}(f(R_{k-1})+1)^{-\gamma-1}f'(R_{k-1})^{-1}]\right|\\
\leq &C\left(|\partial_tf(R_{k-1})|+|u_{k-1}|_{x=0}|\right)\\
\leq &CA^\frac{1}{2}\epsilon. 
\end{aligned}\label{NL0k3}\end{equation}
Therefore, the first and second terms are bounded by $CA^\frac{1}{2}\epsilon e_k^j(t)$.
For the commutators, we introduce the notations $NL_{\alpha,k}^j$ ,$\alpha=1,2,3,4,5$ as follows:
$$\begin{aligned}
\relax &[\partial_t^j, c_{k-1}^2r_{k-1}^2\partial_x]q_{k}\partial_t^ju_k\\
=&\sum_{i=1}^j\binom{j}{i}\partial_t^i(r_{k-1}^2c_{k-1}^2)\partial_t^{j-i}\partial_xq_k\partial_t^ju_k\\
=&\sum_{i=1}^j\binom{j}{i}\partial_t^i(r_{k-1}^2c_{k-1}^2)\partial_t^{j-i}(c_{k-1}^{-2}r_{k-1}^{-2}\partial_tu_k)\partial_t^ju_k\\
=&\sum_{i=1}^j\sum_{l=0}^{j-i}\binom{j}{i}\binom{j-i}{l}\partial_t^i(r_{k-1}^2c_{k-1}^2)\partial_t^l(c_{k-1}^{-2}r_{k-1}^{-2})\partial_t^{j+1-i-l}u_k\partial_t^ju_k\\
=:&NL_{1,k}^j,
\end{aligned}$$
$$\begin{aligned}
&c_{k-1}^2[\partial_t^j,\partial_xr_{k-1}^2]u_k\partial_t^jq_k\\
=&c_{k-1}^2\sum_{i=1}^j\binom{j}{i}\left(\partial_x\partial_t^i r_{k-1}^2\partial_t^{j-i}u_k\partial_t^jq_k+\partial_t^ir_{k-1}^2\partial_t^{j-i}\partial_xu_{k}\partial_t^jq_k\right)\\
=&c_{k-1}^2\sum_{i=1}^j\binom{j}{i}\partial_x\partial_t^ir_{k-1}^2\partial_t^{j-i}u_k\partial_t^jq_k\\
&+c_{k-1}^2\sum_{i=1}^j\binom{j}{i}\partial_t^ir_{k-1}^2\partial_t^{j-i}\left(r_{k-1}^{-2}\partial_tq_k
+\partial_x(r_{k-1}^{-2})r_{k-1}^2u_k\right)\partial_t^jq_k\\
=&c_{k-1}^2\sum_{i=1}^j\binom{j}{i}\partial_x\partial_t^ir_{k-1}^2\partial_t^{j-i}u_k\partial_t^jq_{k}\\
&+c_{k-1}^2\sum_{i=1}^j\binom{j}{i}\partial_t^ir_{k-1}^2\sum_{l=0}^{j-i}\binom{j-i}{l}\partial_t^l(r_{k-1}^{-2})\partial_t^{j+1-i-l}q_k\partial_t^jq_k\\
&+c_{k-1}^2\sum_{i=1}^j\binom{j}{i}\partial_t^ir_{k-1}^2\sum_{l=0}^{j-i}\binom{j-i}{l}\partial_t^l\left(r_{k-1}^2\partial_x(r_{k-1}^{-2})\right)\partial_t^{j-i-l}u_k\partial_t^jq_k\\
=:& NL_{2,k}^j+NL_{3,k}^j+NL_{4,k}^j,
\end{aligned}$$
$$\begin{aligned}
NL_{5,k}^j:=&\left(c_{k-1}^2 r_{k-1}^2\right)|_{x=0}[\partial_t^j, f'(R_{k-1})^{-1}]\partial_tf(R_k)\partial_t^jf(R_k)\\
=&\sum_{i=1}^j\binom{j}{i}(c^2_{k-1}r_{k-1}^2)|_{x=0}\partial_t^i\left(f'(R_{k-1})^{-1}\right)\partial_t^{j+1-i}f(R_{k})\partial_t^jf(R_k).
\end{aligned}$$
$\bullet$\textbf{ $NL_{1,k}^j$ terms.} We expand the coefficients as 
$$\partial_t(r_{k-1}^2c_{k-1}^2)=2c^2_{k-1}r_{k-1}u_{k-1}-(\gamma+1)c_{k-1}^2(1+q_{k-1})^{-1}r_{k-1}^2\partial_tq_{k-1},$$
$$\begin{aligned}
&\partial_t^2(r_{k-1}^2c_{k-1}^2)\\
=&2c_{k-1}^2r_{k-1}\partial_tu_{k-1}+2c_{k-1}^2u_{k-1}^2+(\gamma+1)(\gamma+2)c_{k-1}^2(1+q_{k-1})^{-2}r_{k-1}^2(\partial_tq_{k-1})^2\\
&-(\gamma+1)c_{k-1}^2(1+q_{k-1})^{-1}r_{k-1}^2\partial_t^2q_{k-1}-4(\gamma+1)c_{k-1}^2(1+q_{k-1})^{-1}r_{k-1}u_{k-1}\partial_tq_{k-1},
\end{aligned}$$
$$\begin{aligned}
\partial_t(c_{k-1}^{-2}r_{k-1}^{-2})=-2c_{k-1}^{-2}r_{k-1}^{-3}u_{k-1}+(\gamma+1)c_{k-1}^{-2}(1+q_{k-1})^{-1}r_{k-1}^{-2}\partial_tq_{k-1}. 
\end{aligned}$$
Hence except for the case $(i,l)=(2,0)$, it holds
$$\begin{aligned}
\left|\partial_t^i(r_{k-1}^2c_{k-1}^2)\partial_t^l(c_{k-1}^{-2}r_{k-1}^{-2})\right|\leq &C\left(1+|u_{k-1}|+|
\partial_tq_{k-1}|\right)\left(|u_{k-1}|+|\partial_t q_{k-1}|\right)\\
\leq &CA^\frac{1}{2}\epsilon(1+A^\frac{1}{2}\epsilon),
\end{aligned}$$
while for $(i,l)=(2,0)$ we have
$$\begin{aligned}
&\left|c_{k-1}^{-2}r_{k-1}^{-2}\left(\partial_t^2(r_{k-1}^2c_{k-1}^2)+(\gamma+1)c_{k-1}^2(1+q_{k-1})^{-1}r_{k-1}^2\partial_t^2q_{k-1}\right)\right|\\
\leq& C\left(|\partial_tu_{k-1}|+|u_{k-1}|^2+|\partial_tq_{k-1}|^2\right)\\
\leq &CA^{\frac{1}{2}}\epsilon(1+A^\frac{1}{2}\epsilon),
\end{aligned}$$
and 
$$\begin{aligned}
&\left|\int_0^\infty(\gamma+1)(1+q_{k-1})^{-1}\partial_t^2q_{k-1}\partial_tu_k\partial_t^2u_kdx\right|\\
\leq & C\|\partial_t^2q_{k-1}\|_{L^2}\|\partial_t^2u_k\|_{L^2}\|\partial_tu_k\|_{L^\infty}\\
\leq & CA^\frac{1}{2}\epsilon\|\partial_t^2u_k\|_{L^2}\left(\|\partial_tu_k\|_{L^2}+\|\tilde{L}_0^1\partial_tu_k\|_{L^2}\right)\\
\leq & CA^\frac{1}{2}\epsilon\|\partial_t^2u_k\|_{L^2}\left(\|\partial_tu_k\|_{L^2}+\|\partial_t^2q_k\|_{L^2}+CA^\frac{1}{2}\epsilon\left(\|\partial_tq_k\|_{L^2}+\|u_k\|_{L^2}\right)\right),
\end{aligned}$$
where in the last step we used (\ref{erexchq}).
Collecting the above bounds for all possible pairs $(i,l)$, we obtain 
\begin{equation}\int_0^\infty |NL_{1,k}^1|dx\leq CA^\frac{1}{2}\epsilon\|\partial_t u_k\|_{L^2}^2,\label{NL1k1}\end{equation}
\begin{equation}\begin{aligned}
\int_0^\infty |NL_{1,k}^2|dx\leq &CA^\frac{1}{2}\epsilon(1+A^\frac{1}{2}\epsilon)\|\partial_t^2u_k\|_{L^2}\left(\|\partial_tu_k\|_{L^2}+\|\partial_t^2u_k\|_{L^2}+\|\partial_t^2q_k\|_{L^2}\right)\\
&+CA\epsilon^2\|\partial_t^2u_k\|_{L^2}\left(\|\partial_tq_k\|_{L^2}+\|u_k\|_{L^2}\right).
\end{aligned}\label{NL1k2}\end{equation}
$\bullet$ $NL_{2,k}^j$ \textbf{terms.} We expand the coefficients $\partial_x\partial_t^ir_{k-1}^2$ as follows: 
$$\begin{aligned}
\partial_x\partial_tr_{k-1}^2=&2r_{k-1}\partial_xu_{k-1}+2\partial_xr_{k-1}u_{k-1},\end{aligned}$$
$$\begin{aligned}
\partial_x\partial_t^2r_{k-1}^2=&\partial_x(2u_{k-1}^2+2r_{k-1}\partial_tu_{k-1})
=4u_{k-1}\partial_xu_{k-1}+2\partial_xr_{k-1}\partial_tu_{k-1}+2r_{k-1}\partial_t\partial_xu_{k-1}.
\end{aligned}$$
With the help of the bounds in Lemma \ref{Lem 2.5} and Proposition \ref{prop2.3} for $k-1$, applying Lemma \ref{Lem 2.1} upon $u_{k-1}$ and $r_0^2\partial_xu_{k-1} $ gives
$$\|r_{k-1}\partial_xu_{k-1}\|_{L^\infty}+\|\partial_xr_{k-1}u_{k-1}\|_{L^\infty}\lesssim\|r_0^2\partial_xu_{k-1}\|_{L^\infty}+\|u_{k-1}\|_{L^\infty}\lesssim\|u_{k-1}\|_{H^2}\lesssim A^\frac{1}{2}\epsilon(1+A^\frac{1}{2}\epsilon).$$
It follows that
\begin{equation}\begin{aligned}
\int_0^\infty|NL_{2,k}^1|dx\leq &C\left(\|r_{k-1}\partial_xu_{k-1}\|_{L^\infty}+\|\partial_xr_{k-1}u_{k-1}\|_{L^\infty}\right)\|u_k\|_{L^2}\|\partial_tq_k\|_{L^2}\\
\leq &C(1+A^\frac{1}{2}\epsilon)A^\frac{1}{2}\epsilon\|u_k\|_{L^2}\|\partial_tq_k\|_{L^2},
\end{aligned}\label{NL2k1}\end{equation}
\begin{equation}\begin{aligned}
\int_0^\infty|NL_{2,k}^2|dx
\leq& C\left(\|r_{k-1}\partial_xu_{k-1}\|_{L^\infty}+\|\partial_xr_{k-1}u_{k-1}\|_{L^\infty}\right)\|\partial_tu_k\|_{L^2}\|\partial_t^2q_k\|_{L^2}\\
&+C\|r_{k-1}\partial_t\partial_xu_{k-1}\|_{L^2}\|u_k\|_{L^\infty}\|\partial_t^2q_{k}\|_{L^2}\\
&+C\left(\|u_{k-1}\|_{L^\infty}\|\partial_xu_{k-1}\|_{L^\infty}+\|\partial_tu_{k-1}\|_{L^\infty}\right)\|u_k\|_{L^2}\|\partial_t^2q_k\|_{L^2}\\
\leq&C(1+A^\frac{1}{2}\epsilon)^2A^\frac{1}{2}\epsilon\left(\|u_k\|_{L^2}+\|\partial_tu_{k}\|_{L^2}+\|\partial_tq_{k}\|_{L^2}\right)\|\partial_t^2q_k\|_{L^2}.
\end{aligned}\label{NL2k2}\end{equation}
Here we used 
$$\|r_{k-1}\partial_t\partial_xu_{k-1}\|_{L^2}\lesssim\|r_0^2\partial_x\partial_tu_{k-1}\|_{L^2}\lesssim\|\partial_tu_{k-1}\|_{H^1}\lesssim A^\frac{1}{2}(1+A^\frac{1}{2}\epsilon)\epsilon,$$
$$\|u_k\|_{L^\infty}\lesssim\|u_k\|_{L^2}+\|\tilde{L}_{k-1}^1u_k\|_{L^2}=\|u_k\|_{L^2}+\|\partial_tq_k\|_{L^2}.$$
$\bullet$ $NL_{3,k}^j$\textbf{ terms.} Note that
$$\left|\partial_t(r_{k-1}^{-2})\right|=\left|-2r_{k-1}^{-3}u_{k-1}\right|\leq CA^\frac{1}{2}\epsilon r_{k-1}^{-3},$$
$$\left|\partial_t(r_{k-1}^2)\right|=2\left|r_{k-1}u_{k-1}\right|\leq CA^\frac{1}{2}\epsilon r_{k-1},$$
$$\left|\partial_t^2(r_{k-1}^2)\right|=\left|2u_{k-1}^2+2r_{k-1}\partial_t u_{k-1}\right|\leq CA^\frac{1}{2}\epsilon(1+A^\frac{1}{2}\epsilon)r_{k-1}.$$
Collecting the bounds of $\left|\partial_t^i(r_{k-1}^2)\partial_t^l(r_{k-1}^{-2})\right|$ for each possible pair $(i,l)$ gives
\begin{equation}
\int_0^\infty|NL_{3,k}^1|dx\leq CA^\frac{1}{2}\epsilon\|\partial_tq_k\|_{L^2}^2,
\label{NL3k1}\end{equation}
\begin{equation}
\int_0^\infty|NL_{3,k}^2|dx\leq CA^\frac{1}{2}\epsilon(1+A^\frac{1}{2}\epsilon)\left(\|\partial_tq_k\|_{L^2}+\|\partial_t^2q_k\|_{L^2}\right)\|\partial_t^2q_k\|_{L^2}.    
\label{NL3k2}\end{equation}
$\bullet$ $NL_{4,k}^j$ \textbf{ terms.} The controls of $|\partial_t^ir_{k-1}^2|$ have been obtained in the previous step. To bound $\partial_t^l\left(r_{k-1}^2\partial_x(r_{k-1}^{-2})\right)$, we use (\ref{rinr0})(\ref{kqb}) to obtain
$$\left|r_{k-1}^2\partial_x(r_{k-1})^{-2}\right|=\left|2r_{k-1}^{-1}\partial_xr_{k-1}\right|\leq Cr_{in}^{-1}\partial_xr_{in}\leq Cr_0^{-3},$$
$$\begin{aligned}
\left|\partial_t\left(r_{k-1}^2\partial_x(r_{k-1}^{-2})\right)\right|=&\left|2\partial_t\left(r_{k-1}^{-1}\partial_xr_{k-1}\right)\right|=2\left|r_{k-1}^{-1}\partial_xu_{k-1}-r_{k-1}^{-2}\partial_xr_{k-1}u_{k-1}\right|\leq CA^\frac{1}{2}\epsilon r_{0}^{-3}.
\end{aligned}$$
Hence 
\begin{equation}
\int_0^\infty|NL_{4,k}^1|dx\leq CA^\frac{1}{2}\epsilon\|u_k\|_{L^2}\|\partial_tq_k\|_{L^2},
\label{NL4k1}\end{equation}
\begin{equation}
\int_0^\infty|NL_{4,k}^2|dx\leq CA^\frac{1}{2}\epsilon(1+A^\frac{1}{2}\epsilon)\left(\|u_k\|_{L^2}+\|\partial_tu_k\|_{L^2}\right)\|\partial_t^2q_k\|_{L^2}.
\label{NL4k2}\end{equation}
$\bullet$ $NL_{5,k}^j$ \textbf{ terms.}
Write $\left(f'(R_{k-1})\right)^{-1}$ as $\left(\frac{1}{f'}\circ f^{-1}\right)\circ f(R_{k-1})$, then
$$\partial_t\left(f'(R_{k-1})^{-1}\right)=\left(\frac{1}{f'}\circ f^{-1}\right)'\left(f(R_{k-1})\right)\partial_tf(R_{k-1}),$$
$$\begin{aligned}
\partial_t^2\left(f'(R_{k-1})^{-1}\right)=&\left(\frac{1}{f'}\circ f^{-1}\right)^{\prime\prime}(f(R_{k-1}))(\partial_tf(R_{k-1}))^2\\
&+\left(\frac{1}{f'}\circ f^{-1}\right)'(f(R_{k-1}))\partial_t^2f(R_{k-1}).
\end{aligned}$$
Since $\frac{1}{f^\prime}\circ f^{-1}$ is smooth in the range of $f$, and $f(R_{k-1})$ satisfies the bound (\ref{kRb}), $NL_{5,k}^j$ can be controlled as follows:
\begin{equation}\begin{aligned}
|NL_{5,k}^1|\leq CA^\frac{1}{2}\epsilon|\partial_tf(R_{k})|^2,
\end{aligned}\label{NL5k1}\end{equation}
\begin{equation}\begin{aligned}
|NL_{5,k}^2|\leq CA^\frac{1}{2}\epsilon(1+A^\frac{1}{2}\epsilon)\left(|\partial_tf(R_k)|+|\partial_t^2f(R_k)|\right)|\partial_t^2f(R_k)|.
\end{aligned}\label{NL5k2}\end{equation}
$\bullet$ \textbf{ Final estimate of $e_k^j$.} Collecting the bounds (\ref{NL0k1}-\ref{NL5k2}) for each term on the right-hand side of (\ref{kee}), we obtain
$$\frac{d}{dt}\sum_{j=0}^2e_k^j(t)\leq CA^\frac{1}{2}\epsilon(1+A^\frac{1}{2}\epsilon)^2\sum_{j=0}^2e_k^j(t).$$
Hence $\sum_{j=0}^2e_k^j(t)\leq\sum_{j=0}^2e_k^j(0)\exp\left\{CA^\frac{1}{2}\epsilon(1+A^\frac{1}{2}\epsilon)^2t\right\}$, and by Lemma \ref{Lem 2.6}
$$E_k(t)\leq B\tilde{B}\exp\left\{CA^\frac{1}{2}\epsilon(1+A^\frac{1}{2}\epsilon)^2t\right\}\epsilon^2=C(1+\epsilon)(1+A^\frac{1}{2}\epsilon)\exp\left\{CA^\frac{1}{2}\epsilon(1+A^\frac{1}{2}\epsilon)^2t\right\}\epsilon^2.$$
Now we choose $A$ large enough such that 
$$A\geq 2C(1+\epsilon)(1+A^\frac{1}{2}\epsilon),$$
and choose a small $T$ satisfying
$$\exp\left\{CA^\frac{1}{2}\epsilon(1+A^\frac{1}{2}\epsilon)^2T\right\}\leq 2,$$
then for all $t\in[0,T]$ we have $E_k(t)\leq A\epsilon^2$, which is (\ref{kme}).$\hfill\qedsymbol$\vspace{0.3cm}\\
\indent The next step to close the induction is to prove Lemma \ref{Lem 2.5} for $k$. \vspace{0.3cm}\\
\textit{Proof of Lemma \ref{Lem 2.5}.}
We first show (\ref{kqb}). The case $k=2$ is trivial since $r_{1}=r_{in}$. For $k\geq 3$, by the construction of $r_{k-1}$we have
$$\begin{aligned}
\partial_xr_{k-1}(x,t)=&\partial_xr_{in}(x)+\int_0^t\partial_xu_{k-1}(x,s)ds.
\end{aligned}$$
By Proposition \ref{prop2.3} and Sobolev embedding, $$\left|\int_0^t\partial_xu_{k-1}ds\right|\leq r_0^{-2}(x)\int_0^t\|r_0^2\partial_xu_{k-1}(t)\|_{L^\infty}ds\leq CA^\frac{1}{2}\left(1+A^\frac{1}{2}\epsilon\right)\epsilon t\cdot r_0^{-2}(x).$$
Note that $r_{in}^3\leq R_{in}^3+3(1+\overline{q})x\leq\max\left\{R_{in}^3,1+\overline{q}\right\}r_0^3$ and $\max\left\{R_{in}^3,1+\overline{q}\right\}^{\frac{2}{3}}r_{in}^{-2}\geq r_0^{-2}.$
Hence for $T$ small enough such that $$C\max\left\{R_{in}^3,1+\overline{q}\right\}^\frac{2}{3}A^\frac{1}{2}\left(1+A^\frac{1}{2}\epsilon\right)\epsilon T\leq \frac{1}{2}(1+\underline{q}),$$
it holds for $t\in[0,T]$ that $\left|\int_0^t\partial_xu_{k-1}ds\right|\leq\frac{1}{2}r_{in}^{-2}(1+\underline{q})\leq\frac{1}{2}\partial_xr_{in}$ and thus
$$\frac{3}{2}\partial_xr_{in}(x)\geq\partial_xr_{k-1}(x,t)\geq\frac{1}{2}\partial_xr_{in}(x).$$
Meanwhile, since $$r_{k-1}(0,t)=r_{in}(0)+\int_0^tu_{k-1}(0,s)ds, \quad \left|\int_0^tu_{k-1}(0,s)ds\right|\leq CA^\frac{1}{2}(1+A^\frac{1}{2}\epsilon)t,$$
for $T$ small enough such that $CA^\frac{1}{2}(1+A^\frac{1}{2}\epsilon)\epsilon T\leq\frac{1}{2}r_{in}(0)$, we have $\frac{3}{2}r_{in}(0)\geq r_{k-1}(0,t)\geq\frac{1}{2}r_{in}(0)$ for $t\in[0,T]$. Together with $\frac{3}{2}\partial_xr_{in}\geq\partial_xr_{k-1}\geq\frac{1}{2}\partial_xr_{in}$, we obtain $\frac{3}{2}r_{in}\geq r_{k-1}\geq\frac{1}{2}r_{in}$.\\
\indent Next, (\ref{kcb})(\ref{kRb}) is trivial for $k=2$. If $k\geq3$, for $x>0$ we write $$|q_{k-1}(x,t)-q_{in}(x)|\leq\int_0^t\|\partial_tq_{k-1}(s)\|_{L^\infty}ds.$$
Using Lemma \ref{Lem 2.1} with $r=r_{k-2}$ yields $\|\partial_tq_{k-1}\|_{L^\infty}\leq C\left(\|\partial_tq_{k-1}\|_{L^2}+\|L_{k-2}^1\partial_tq_{k-1}\|_{L^2}\right)\leq CA^\frac{1}{2}\epsilon.$ Therefore, for $T$ small enough such that $CA^\frac{1}{2}\epsilon T\leq\frac{1}{2}(\underline{q}+1)$, we obtain for $t\in[0,T]$ that
$\int_0^t\|\partial_tq_{k-1}(s)\|_{L^\infty}ds\leq\frac{1}{2}(\underline{q}+1)$ and thus
$\frac{\underline{q}-1}{2}\leq q_{k-1}(x,t)\leq\frac{1}{2}(3\overline{q}+1).$
The bound (\ref{kcb}) follows immediately by the construction of $c_{k-1}$. To show (\ref{kRb}), we simply use $|u_{k-1}|_{x=0}|\leq C\left(\|u_{k-1}\|_{L^2}+\|\tilde{L}_{k-2}^1u_{k-1}\|_{L^2}\right)$ to get 
$$\left|\frac{d}{dt}f(R_{k-1})\right|\leq C|f^\prime(R_{k-2})|u_{k-1}|_{x=0}|\leq CA^\frac{1}{2}\epsilon.$$
Hence for $T$ small enough such that $CA^\frac{1}{2}\epsilon T\leq
\frac{1}{2}f(R_{in})$, it follows $\frac{1}{2}f(R_{in})\leq f(R_{k-1})\leq\frac{3}{2}f(R_{in})$, $t\in[0,T]$.\\
\indent To show (\ref{kfb}), for $k\geq3$, we write
$$\begin{aligned}
r_{k-1}^2\partial_x\left(\frac{\partial_tr_{k-1}}{r_{k-1}}\right)=&r_{k-1}^2\partial_x\left(\frac{1}{r_{k-1}r_{k-2}^2}r_{k-2}^2u_{k-1}\right)\\
=&\frac{r_{k-1}}{r_{k-2}^2}\tilde{L}^1_{k-2}u_{k-1}-\left(\frac{2r_{k-1}}{r_{k-2}}\partial_xr_{k-2}+\partial_xr_{k-1}\right)u_{k-1}.
\end{aligned}$$
By (\ref{kqb}) for $r_{k-1}$ and $r_{k-2}$, we have 
$\frac{r_{k-1}}{r_{k-2}^2}\leq Cr_{in}(0)^{-1}$ and
$$\left|\frac{2r_{k-1}}{r_{k-2}}\partial_xr_{k-2}+\partial_xr_{k-1}\right|\leq C\partial_xr_{in}\leq Cr_{in}(0)^{-2}(1+\overline{q}).$$
Meanwhile, applying Lemma \ref{Lem 2.1} upon $\tilde{L}_{k-2}^1u_{k-1}$ and $u_{k-1}$ with $r=r_{k-2}$ yields $\|\tilde{L}_{k-2}^1u_{k-1}\|_{L^\infty}+\|u_{k-1}\|_{L^\infty}\leq CA^\frac{1}{2}\epsilon$.
Therefore, we obtain $\left\|r_{k-1}^2\partial_x\left(\frac{\partial_tr_{k-1}}{r_{k-1}}\right)\right\|_{L^\infty}\leq CA^\frac{1}{2}\epsilon$ and $\left\|\frac{\partial_tr_{k-1}}{r_{k-1}}\right\|_{L^\infty}\leq Cr_{in}(0)^{-1}\|u_{k-1}\|_{L^\infty}\leq CA^\frac{1}{2}\epsilon.$ For the third term in (\ref{kfb}), notice that $$\left|\frac{\partial_tc_{k-1}}{c_{k-1}}\right|=(\gamma+1)(1+q_{k-1})^{-1}|\partial_tq_{k-1}|.$$ 
By Lemma \ref{Lem 2.1}
$\|\partial_tq_{k-1}\|_{L^\infty}\leq C\left(\|\partial_tq_{k-1}\|_{L^2}+\|L_{k-2}^1\partial_tq_{k-1}\|_{L^2}\right)\leq CA^\frac{1}{2}\epsilon.$
Hence we also have $\left\|\frac{\partial_tc_{k-1}}{c_{k-1}}\right\|_{L^\infty}\leq CA^\frac{1}{2}\epsilon.$\\
\indent By (\ref{kqb}), it holds $$\left|r_0^2\partial_x\left(\frac{r_{k-1}^2}{r_0^2}\right)\right|=\left|2r_{k-1}\partial_xr_{k-1}-2r_0^{-1}r_{k-1}^2\partial_xr_0\right|\leq C\left(r_{in}\partial_xr_{in}+r_0^{-3}r_{in}^2\right)\leq C.$$ 
Since $$r_0^2\partial_x(c_{k-1}^2)=(\gamma+1)r_0^2c_{k-1}^2(1+q_{k-1})^{-1}\partial_xq_{k-1}=(\gamma+1)\frac{c_{k-1}^2}{c_0^2}(1+q_{k-1})^{-1}L_0^1q_{k-1},$$
we have $\|r_0^2\partial_x(c_{k-1}^2)\|_{L^\infty}\leq C\|L_0^1q_{k-1}\|_{L^\infty}$ by (\ref{kcb}), and we get (\ref{krb}).\\
\indent To show (\ref{embed}), recalling that $\partial_xr_0=r_0^{-2}$, a direct computation gives
$$r_{k-1}^2r_0^2\partial_x^2\left(\frac{r_{k-1}^2}{r_0^2}\right)=2r_{k-1}^3\partial_x^2r_{k-1}+2r_{k-1}^2(\partial_xr_{k-1})^2-8r_{k-1}^3r_0^{-3}\partial_xr_{k-1}+10r_0^{-6}r_{k-1}^4.$$
From (\ref{kqb}) and $1+\underline{q}\leq r_{in}^2\partial_xr_{in}\leq1+\overline{q}$, it follows 
$$\left\|r_{k-1}^2r_0^2\partial_x^2\left(\frac{r_{k-1}^2}{r_0^2}\right)\right\|_{L^2}\leq C(1+\|r_{k-1}^3\partial_x^2r_{k-1}\|_{L^2}).$$
If $k=2$, we have $\|r_{k-1}^3\partial_x^2r_{k-1}\|_{L^2}=\|r_{k-1}^3\partial_x^2r_{in}\|_{L^2}$. Using (\ref{kqb}), it holds 
$$\|r_{k-1}^3\partial_x^2r_{in}\|_{L^2}=\left\|-2r_{in}^{-5}r_{k-1}^3(1+q_{in})^2+r_{in}^{-2}r_{k-1}^3\partial_xq_{in}\right\|_{L^2}\leq C(1+\|L_0^1q_{in}\|_{L^2}).$$
For $k\geq3$, we write $\partial_x^2r_{k-1}(x,t)=\partial_x^2r_{in}(x)+\int_0^t\partial_x^2u_{k-1}(x,s)ds$.
By (\ref{kqb}) again, we also have
$$\begin{aligned}
\left\|r_{k-1}^3\int_0^t\partial_x^2u_{k-1}(\cdot,s)ds\right\|_{L^2}\leq C\int_0^t\|r_0^3\partial_x^2u_{k-1}(\cdot,s)\|_{L^2}ds\leq C\int_0^t\sum_{j=0}^2\|\tilde{L}_0^ju_{k-1}(s)\|_{L^2}ds.
\end{aligned}$$
Hence for $T$ small enough such that $CA^\frac{1}{2}(1+A^\frac{1}{2}\epsilon)\epsilon T\leq1$, it follows
$\left\|r_{k-1}^3\int_0^t\partial_x^2u_{k-1}(\cdot,s)ds\right\|_{L^2}\leq1$, and we arrive at (\ref{embed}). 
$\hfill\qedsymbol$\vspace{0.3cm}\\
\indent Finally, we prove Proposition \ref{prop2.3} for $k$.\\
\textit{Proof of Proposition \ref{prop2.3}.}
The case $j=2$ is obvious. Using inequalities (\ref{2.30}-\ref{2.33}), (\ref{kme}) and the bounds in Lemma \ref{Lem 2.5} for $c_k$ and $r_k$, we obtain
$$\|q_k(t)\|_{H^2}+\|u_k(t)\|_{H^2}\lesssim\sum_{j=0}^2\left(\|L_0^jq_k(t)\|_{L^2}+\|\tilde{L}_0^ju_k(t)\|_{L^2}\right)\lesssim A^\frac{1}{2}(1+A^\frac{1}{2}\epsilon)\epsilon.$$
Since $L_0^1\partial_tq_k=\frac{c_0^2}{c_{k-1}^2}\frac{r_0^2}{r_{k-1}^2}L_{k-1}^1\partial_tq_k$, $\frac{r_{k-1}^2}{r_0^2}\tilde{L}_0^1\partial_tu_k=\tilde{L}_{k-1}^1\partial_tu_k-2(r_{k-1}\partial_xr_{k-1}-r_0^{-3} r_{k-1}^2)\partial_tu_k$ and $|r_{k-1}\partial_xr_{k-1}|+|r_0^{-3}r_{k-1}^2|\leq C$ which can be deduced from (\ref{rinr0})(\ref{kqb}), there is
$$\|\partial_tq_k(t)\|_{H^1}+\|\partial_tu_k(t)\|_{H^1}\lesssim\sum_{j=0}^1\left(\|L_0^j\partial_tq_k(t)\|_{L^2}+\|\tilde{L}_0^j\partial_tu_k(t)\|_{L^2}\right)\lesssim A^\frac{1}{2}\epsilon.$$
The bounds on $\tilde{u}_k$ and $\tilde{q}_k$ follow by a change of variable.$\hfill\qedsymbol$\vspace{0.3cm}
\begin{rem}
From the proof above, we see that the restriction on $T$ is
$$A^\frac{1}{2}\epsilon(1+A^\frac{1}{2}\epsilon)^2T\leq c$$ for a small constant $c$ dependent on the initial value. Hence the maximal lifespan admits a lower bound in the form of $c\left[A^\frac{1}{2}\epsilon(1+A^\frac{1}{2}\epsilon)^2\right]^{-1}.$
\end{rem}
\indent At this point, the $k$ -th step of induction in Lemma \ref{prop 2.4}, Proposition \ref{prop2.3}, Lemma \ref{Lem 2.5} and Lemma \ref{Lem 2.6} has been complete and therefore Lemma \ref{prop 2.4}, Proposition \ref{prop2.3}, Lemma \ref{Lem 2.5} and Lemma \ref{Lem 2.6} hold for all $k\geq2$. \\
\indent With the uniform bound in Proposition \ref{prop2.3}, we are able to construct the solution to (\ref{cnte}-\ref{initial}). For arbitrary $M>1$, since we have the compact embedding $H_{\xi}^2(1,M)\subset\subset H_{\xi}^1(1,M)\subset\subset L^2_{\xi}(1,M)$, applying the Aubin-Lions compactness theorem twice yields a subsequence, still denoted by $\{\tilde{u}_k\}$, $\{\tilde{q_k}\}$, and limit points $\tilde{u}$, $\tilde{q}\in L^\infty\left([0,T],H^2_{\xi}(1,M)\right)$ such that 
\begin{equation}\begin{aligned}
\tilde{u}_k\rightharpoonup\tilde{u} \text{ weakly* in }L^\infty\left([0,T]; H^2_{\xi}(1,M)\right)&,\\
\tilde{u}_k\rightarrow\tilde{u} \text{ strongly in }C\left([0,T];H^1_{\xi}(1,M)\right),&
\end{aligned}\label{2.48}\end{equation}
\begin{equation}\begin{aligned}
\tilde{q}_k\rightharpoonup\tilde{q} \text{ weakly* in }L^\infty\left([0,T]; H^2_{\xi}(1,M)\right)&,\\
\tilde{q}_k\rightarrow\tilde{q} \text{  strongly in }C\left([0,T];H^1_{\xi}(1,M)\right),&
\end{aligned}\label{2.49}\end{equation}
\begin{equation}\begin{aligned}
\partial_t\tilde{u}_k\rightharpoonup\partial_t\tilde{u}\text{ weakly* in }L^\infty\left([0,T];H^1_{\xi}(1,M)\right)&,\\ \partial_t\tilde{u}_k\rightarrow\partial_t\tilde{u} \text{ strongly in }C\left([0,T];L^2_\xi(1,M)\right),&
\end{aligned}\label{2.50}\end{equation}
\begin{equation}\begin{aligned}
\partial_t\tilde{q}_k\rightharpoonup\partial_t\tilde{q}\text{ weakly* in }L^\infty\left([0,T];H^1_{\xi}(1,M)\right)&,\\ \partial_t\tilde{q}_k\rightarrow\partial_t\tilde{q} \text{ strongly in }C\left([0,T];L^2_\xi(1,M)\right).&
\end{aligned}\label{2.51}\end{equation}
Moreover, using a diagonal argument and further passing to a subsequence, we can assume that 
(\ref{2.48}-\ref{2.51}) hold for each $M>1$.
Meanwhile, applying Azela-Ascoli theorem on $\{f(R_k)\}$ and $\{\partial_tf(R_k)\}$ yields the existence of a subsequence of $\{f(R_k)\}$, still denoted by $\{f(R_k)\}$, and a limit point $\tilde{f}$ such that
$$f(R_k)\rightarrow \tilde{f}\text{ in $C^1[0,T]$}.$$
Then we construct the solution $(u,\,q,\,R)$ as follows:
$$R(t):=f^{-1}\tilde{f}(t),\quad r(x,t):=r_{in}(x)+\int_0^t u(x,s)ds,$$ $$u(x,t):=\tilde{u}((1+3x)^{\frac{1}{2}},t),\quad q(x,t):=\tilde{q}((1+3x)^\frac{1}{3},t).$$
To verify $(u,\,q,\,R)$ is a solution to (\ref{cnte}-\ref{initial}), we denote $\tilde{r}_k(\xi,t):=r_k\left(\frac{\xi^3-1}{3},t\right)$, and $\tilde{r}(\xi,t):=r\left(\frac{\xi^3-1}{3},t\right)$.
From the inequalities
$$|\tilde{r}_k-\tilde{r}|(\xi,t)\leq\int_0^t|\tilde{u}_k-\tilde{u}|(\xi,s)ds\leq\int_0^t\|\tilde{u}_k(s)-\tilde{u}(s)\|_{L^\infty_\xi}ds,$$
$$\|\partial_\xi\tilde{r}_k-\partial_\xi\tilde{r}\|_{L^2}\leq\int_0^t\|\partial_\xi\tilde{u}_k(s)-\partial_\xi\tilde{u}(s)\|_{L^2_\xi}ds,$$
it follows that $\tilde{r}_k$ converges point-wise to $\tilde{r}$, and $\partial_\xi\tilde{r}_{k}-\partial_\xi\tilde{r}$ converges to $0$ in $L^2_\xi$ due to (\ref{2.48}) and the embedding $\|\cdot\|_{L^\infty}\lesssim\|\cdot\|_{H^1}$. Meanwhile, (\ref{2.49}) implies that $(1+\tilde{q}_k)^{-\gamma-1}$ converges to $(1+\tilde{q})^{-\gamma-1}$ in $L^\infty_\xi$. By equation (\ref{immte}), we see that $\tilde{u}_k$, $\tilde{q}_k$ satisfy $$\partial_t\tilde{u}_k=c_0^2(1+\tilde{q}_{k-1})^{-\gamma-1}\frac{\tilde{r}_{k-1}^2}{\xi^2}\partial_\xi\tilde{q}_k.$$
Noting that $\tilde{u}_k$ converges to $\tilde{u}$ in $L^\infty$, passing $k$ to infinity and using (\ref{2.49})(\ref{2.50}) yields (\ref{mmte}). Next, (\ref{icnte}) is equivalent to $$\partial_t\tilde{q}_k=\xi^{-2}\partial_\xi(\tilde{r}_{k-1}^2\tilde{u}_k)=\xi^{-2}\tilde{r}_{k-1}^2\partial_\xi\tilde{u}_k+2\xi^{-2}\tilde{r}_{k-1}\partial_\xi\tilde{r}_{k-1}\tilde{u}_k.$$
Passing $k$ to infinity and using (\ref{2.48})(\ref{2.51}) yields (\ref{cnte}). For boundary conditions, passing $k$ to infinity in the equation $$(\tilde{u}_k,\tilde{q}_k)|_{\xi=1}=(u_k,q_k)|_{x=0}=(f'(R_{k-1})\frac{d}{dt}f(R_k),f(R_k))$$
and applying the trace theorem yields $(u,q)|_{x=0}=(\tilde{u},\tilde{q})|_{\xi=1}=(\frac{dR}{dt}, f(R))$, which is (\ref{kbce})(\ref{dbce}). Now that $\frac{dR}{dt}=u|_{x=0}=\partial_tr|_{x=0}$, we recover $r|_{x=0}=R$. Moreover, (\ref{cnte})(\ref{mmte}) gives $\partial_t\partial_xr^3=3\partial_x(r^2u)=3\partial_tq$ and thus 
$$\partial_xr^3-3q=\partial_xr_{in}^3-3q_{in}=3x.$$
Together with $r|_{x=0}=R$, this implies (\ref{radius}). At last, (\ref{initial}) holds since $\tilde{u}_k,\,\tilde{q}_k,\,f(R_k)$ converge to
$\tilde{u},\,\tilde{q},\,f(R)$ in $C\left([0,T];H^1_\xi(1,M)\right)$ or $C^1$ as $k\rightarrow+\infty$ and $(\tilde{u}_k,\,\tilde{q}_k,\,f(R_k))|_{t=0}=(\tilde{u}_{in},\,\tilde{q}_{in},\,f(R_{in}))$. Therefore, we conclude that $(u,\,q,\,R)$ is a solution to (\ref{cnte}-\ref{initial}) with the desired regularity of Theorem 1.2. In routine, the uniqueness can be shown in a standard way by taking the difference.
\begin{prop}[Uniqueness]
Let $(u_i,\,q_,\,R_i)$, $i=1,2$ be two solutions to (\ref{cnte}-\ref{initial}) with $u_i,q_i\in L^\infty\left([0,T];H^2\right)\cap C\left([0,T];H^1\right)$ , $R_i\in C^1([0,T])$ and $-1<\underline{q}\leq q_i\leq\overline{q}<+\infty$, $0<R_i<\overline{R}$, then $(u_1,\,q_1,\,R_1)=(u_2,\,q_2,\,R_2)$.
\end{prop}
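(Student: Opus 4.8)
The plan is to carry out a standard difference-and-energy argument, using the Model Question (Lemma \ref{Lem 2.2}) to provide the energy identity. First I fix the two solutions on $[0,T]$ and record the a priori bounds they satisfy: since $R_i\in C^1[0,T]$ and $0<R_i<\overline R$, each $R_i$ ranges in a fixed compact subinterval $[R_-,R_+]\subset(0,\overline R)$; together with $-1<\underline q\le q_i\le\overline q$ and $r_i^3=R_i^3+3x+3\int_0^xq_i$, this yields that $c_i^2$, $r_i/\xi$, $f'(R_i)$, $\|u_i(t)\|_{L^\infty}$, and — from $u_i,q_i\in L^\infty_tH^2_x$ — also $\|\xi^2\partial_xu_i(t)\|_{L^\infty}$, $\|\xi^2\partial_xq_i(t)\|_{L^\infty}$, $\|\partial_tq_i(t)\|_{L^\infty}$ are bounded above, and $c_i^2,r_i/\xi,f'(R_i)$ bounded below by positive constants, uniformly on $[0,T]$; moreover $|u_i(x,t)|\lesssim\xi^{-1}$ by Lemma \ref{Lem 2.1}. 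Set $\delta u=u_1-u_2$, $\delta q=q_1-q_2$, $\delta R=R_1-R_2$, $\delta F_R:=f(R_1)-f(R_2)$; since $f$ is a smooth diffeomorphism on $[R_-,R_+]$ one has $|\delta F_R|\simeq|\delta R|$ and $|f'(R_1)-f'(R_2)|\lesssim|\delta F_R|$. From $r_1^3-r_2^3=(R_1^3-R_2^3)+3\int_0^x\delta q\,dy$ and $r_i^2\partial_xr_i=1+q_i$ one gets the pointwise bounds $|r_1^2-r_2^2|\lesssim\xi^{-1}(|\delta R|+\int_0^x|\delta q|\,dy)$, $|\partial_x(r_1^2-r_2^2)|\lesssim\xi^{-1}|\delta q|+\xi^{-4}(|\delta R|+\int_0^x|\delta q|\,dy)$, and $|c_1^2-c_2^2|\lesssim|\delta q|$.

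Subtracting (\ref{cnte})(\ref{mmte}) for the two solutions, $(\delta u,\delta q,\delta F_R)$ satisfies the system of Lemma \ref{Lem 2.2} with $c^2=c_1^2$, $r^2=r_1^2$, $f=\delta F_R$, $g=f'(R_1)^{-1}$ and source terms
$$F_1=(c_1^2r_1^2-c_2^2r_2^2)\partial_xq_2,\qquad F_2=\partial_x\big((r_1^2-r_2^2)u_2\big),\qquad F_3=-\frac{f'(R_1)-f'(R_2)}{f'(R_1)}\frac{dR_2}{dt},$$
the boundary relation following from $\delta u|_{x=0}=\tfrac{d}{dt}\delta R$, $\delta q|_{x=0}=\delta F_R$ and $\tfrac{d}{dt}\delta F_R=f'(R_1)\tfrac{d}{dt}\delta R+(f'(R_1)-f'(R_2))\tfrac{dR_2}{dt}$. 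Define
$$\mathcal E(t):=\tfrac12\int_0^\infty\big((\delta u)^2+c_1^2(\delta q)^2\big)dx+\tfrac12(c_1^2r_1^2)|_{x=0}\,f'(R_1)^{-1}(\delta F_R)^2,$$
so that by the lower bounds above $\mathcal E(t)\simeq\|\delta u(t)\|_{L^2}^2+\|\delta q(t)\|_{L^2}^2+|\delta R(t)|^2$, and $\mathcal E(0)=0$ since the data agree.

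I then apply the energy identity (\ref{mdq}) and bound every term on the right by $\lesssim\mathcal E(t)$. The coefficient terms $\tfrac12(\partial_tc_1^2)(\delta q)^2$ and $(r_1^2\partial_xc_1^2)\delta u\,\delta q$ use $\|\partial_tc_1^2\|_{L^\infty},\|r_1^2\partial_xc_1^2\|_{L^\infty}\lesssim1$; the boundary terms $\tfrac12\partial_t[(c_1^2r_1^2)|_{x=0}f'(R_1)^{-1}](\delta F_R)^2$ and $(c_1^2r_1^2)|_{x=0}F_3\,\delta F_R$ are $\lesssim|\delta F_R|^2\lesssim\mathcal E$, since the relevant functions of $R_1$ are smooth on $[R_-,R_+]$ and $|\tfrac{dR_i}{dt}|=|u_i|_{x=0}|\lesssim1$. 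For $\int F_1\delta u\,dx$ and $\int c_1^2F_2\,\delta q\,dx$ (the latter expanded via $F_2=(r_1^2-r_2^2)\partial_xu_2+\partial_x(r_1^2-r_2^2)u_2$) I insert the pointwise bounds together with $|\partial_xq_2|,|\partial_xu_2|\lesssim\xi^{-2}$, $|u_2|\lesssim\xi^{-1}$: the local pieces reduce to integrals of the form $\int\xi^{-2j}|\delta q||\delta u|$ or $\int\xi^{-2j}(\delta q)^2$, bounded by $\mathcal E$ by Cauchy–Schwarz, and the $|\delta R|$-pieces give $|\delta R|\,\|\xi^{-3}\|_{L^2}\,\|(\delta u,\delta q)\|_{L^2}\lesssim\mathcal E$ (note $\xi^{-3}=(1+3x)^{-1}\in L^2$). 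The only delicate contributions are the nonlocal ones carrying a factor $\int_0^x|\delta q|\,dy$ against a weight $\lesssim(1+x)^{-1}$; these I absorb into $\mathcal E$ using Hardy's inequality $\int_0^\infty(1+x)^{-2}\big(\int_0^x|\delta q|\,dy\big)^2dx\lesssim\|\delta q\|_{L^2}^2$. Altogether $\tfrac{d}{dt}\mathcal E(t)\le C\,\mathcal E(t)$ on $[0,T]$ with $C$ depending only on the two fixed solutions, so Grönwall and $\mathcal E(0)=0$ force $\mathcal E\equiv0$, hence $\delta u=\delta q=0$ a.e. and $\delta R=0$; using the $C([0,T];H^1)$ and $C^1$ regularity this gives $(u_1,q_1,R_1)=(u_2,q_2,R_2)$.

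The main obstacle is twofold. First, justifying the energy identity (\ref{mdq}) for the difference: the solutions are only $L^\infty_tH^2_x\cap C_tH^1_x$, so $\partial_t\delta u,\partial_t\delta q$ and the manipulations in (\ref{mdq}) sit at the edge of the available regularity; I would handle this by a routine mollification in $x$ (or by testing the weak formulation and passing to the limit). Second, closing the estimate of the nonlocal source terms generated by the constraint $r^3=R^3+3x+3\int_0^xq$ — this is exactly where the Hardy inequality enters, and one must check that the weight from $|r_1^2-r_2^2|$ and the $\xi^{-2}$-decay of $\partial_xu_2,\partial_xq_2$ combine to a weight $\lesssim(1+x)^{-1}$, which is the borderline case that Hardy can still absorb.
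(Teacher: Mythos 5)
Your proposal is correct and follows essentially the same route as the paper: take the difference, use the energy of Lemma \ref{Lem 2.2} with the weighted boundary term $(c_1^2r_1^2)|_{x=0}f'(R_1)^{-1}(f(R_1)-f(R_2))^2$, identify the same source terms $F_1$, $F_2$, $F_3$, control the nonlocal contribution of $r_1-r_2$ through $\int_0^x|\delta q|\,dy$ (your Hardy inequality is exactly the paper's $L^2$-boundedness of the maximal function), and conclude by Gr\"onwall.
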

\begin{proof}
By taking difference, we obtain
$$\partial_t(u_1-u_2)=c_0^2(1+q_1)^{-\gamma-1}r_1^2\partial_x(q_1-q_2)+c_0^2[(1+q_{1})^{-\gamma-1}r_1^2-(1+q_2)^{-\gamma-1}r_2^2]\partial_xq_2,$$
$$\partial_t(q_1-q_2)=\partial_x[r_1^2(u_1-u_2)+(r_1^2-r_2^2)u_2],$$
$$\frac{d}{dt}(f(R_1)-f(R_2))=f'(R_1)(u_1-u_2)|_{x=0}+(f'(R_1)-f'(R_2))u_2|_{x=0},$$
$$(q_1-q_2)|_{x=0}=f(R_1)-f(R_2).$$
Multiplying the first two equations by $(u_1-u_2)$ and $(q_1-q_2)$ respectively and integrating by parts yields
\begin{equation}\begin{aligned}
&\frac{1}{2}\frac{d}{dt}\left[\int_0^\infty\left((u_1-u_2)^2+c_0^2(1+q_1)^{-\gamma-1}(q_1-q_2)^2\right)dx\right]\\
&+\frac{1}{2}\frac{d}{dt}\left[c_0^2R_1^2(1+q_1)^{-\gamma-1}|_{x=0}(f'(R_1))^{-1}(f(R_1)-f(R_2))^2\right]\\
=&\int_0^\infty\left[\frac{c_0^2}{2}\partial_t(1+q_1)^{-\gamma-1}(q_1-q_2)^2-c_0^2r_1^2\partial_x(1+q_1)^{-\gamma-1}(u_1-u_2)(q_1-q_2)\right]dx\\
&+\int_0^\infty c_0^2[(1+q_1)^{-\gamma-1}r_1^2-(1+q_2)^{-\gamma-1}r_2^2]\partial_xq_2(u_1-u_2)dx\\
&+\int_0^\infty c_0^2(1+q_1)^{-\gamma-1}\partial_x((r_1^2-r_2^2)u_2)(q_1-q_2)dx\\
&+\frac{c_0^2}{2}\partial_t\left[(1+q_1)^{-\gamma-1}|_{x=0}R_1^2(f'(R_1)^{-1})\right](f(R_1)-f(R_2))^2\\
&+c_0^2(1+q_1)^{-\gamma-1}|_{x=0}R_1^2(f'(R_1))^{-1}\left(f'(R_1)-f'(R_2)\right)u_2|_{x=0}(f(R_1)-f(R_2)).
\end{aligned}\label{difference}\end{equation}
To control the right-hand side, write
$$\begin{aligned}
\frac{r_2}{r_1}-1=&\frac{r_2^3-r_1^3}{r_1(r_2^2+r_1r_2+r_1^2)}\\
=&\left[r_1(r_2^2+r_2r_1+r_1^2\right]^{-1}\left[R_2^3-R_1^3+\int_0^x(q_2-q_1)(y,t)dy\right].
\end{aligned}$$
From the $L^2-L^2$ boundedness of the maximal functional, it follows
$$\begin{aligned}
\left\|\frac{r_2}{r_1}-1\right\|_{L^2}\leq &C|f(R_2)-f(R_1)|+\left\|\frac{x}{r_1(r_2^2+r_2r_1+r_2^2)}\right\|_{L^\infty}\left\|\frac{1}{x}\int_0^x(q_2-q_1)(y,t)dy\right\|_{L^2}\\
\leq &C\left(|f(R_2)-f(R_1)|+\|q_2-q_1\|_{L^2}\right),
\end{aligned}$$
and similarly $$\left\|\frac{r_1}{r_2}-1\right\|_{L^2}\leq C\left(|f(R_2)-f(R_1)|+\|q_1-q_2\|_{L^2}\right).$$
The first term is bounded by multiple of $\|q_1-q_2\|_{L^2}+\|u_1-u_2\|_{L^2}$ since $\partial_tq_1=\partial_x(r_1^2u_1)$ and $\|\partial_x(r_1^2u_1)\|_{L^\infty}+\|r_1^2\partial_xq_1\|_{L^\infty}\leq\|u_1\|_{H^2}+\|q_1\|_{H^2}\leq C.$ 
For the second term on the right, we compute
$$\begin{aligned}
&(1+q_1)^{-\gamma-1}\frac{r_1^2}{r_2^2}-(1+q_2)^{-\gamma-1}\\
=&(q_1-q_2)\frac{(1+q_1)^{-\gamma-1}-(1+q_2)^{-\gamma-1}}{q_1-q_2}+(1+q_1)^{-\gamma-1}(\frac{r_1}{r_2}-1)(\frac{r_1}{r_2}+1),
\end{aligned}$$
and thus by Sobolev embedding
$$\begin{aligned}
\left\|(1+q_1)^{-\gamma-1}\frac{r_1^2}{r_2^2}-(1+q_2)^{-\gamma-1}\right\|_{L^2}\leq C\left(|(f(R_1)-f(R_2))|+\|q_1-q_2\|_{L^2}\right),\end{aligned}$$
$$\begin{aligned}
&\left|\int_0^\infty c_0^2[(1+q_1)^{-\gamma-1}r_1^2-(1+q_2)^{-\gamma-1}r_2^2]\partial_xq_2(u_1-u_2)dx\right|\\
\leq &C\|r_2^2\partial_xq_2\|_{L^\infty}\left(|f(R_1)-f(R_2)|+\|q_1-q_2\|_{L^2}\right)\|u_1-u_2\|_{L^2}\\
\leq &C\left(|f(R_1)-f(R_2)|+\|q_1-q_2\|_{L^2}\right)\|u_1-u_2\|_{L^2}.
\end{aligned}$$
For the third term, it holds
$$\begin{aligned}
\partial_x[(r_1^2-r_2^2)u_2]=2r_1^{-1}(q_1-q_2)u_2+2(r_1^{-1}-r_2^{-1})(1+q_2)u_2+\left(1-\frac{r_1^2}{r_2^2}\right)r_2^2\partial_xu_2,
\end{aligned}$$
and thus
$$\begin{aligned}
&\left\|\partial_x[(r_1^2-r_2^2)u_2]\right\|_{L^2}\\
\leq &C\left(\|\frac{u_2}{r_1}\|_{L^\infty}(1+\|q_2\|_{L^\infty})+\left(1+\|r_2^{-1}r_1\|_{L^\infty}\right)\|r_2^2\partial_xu_2\|_{L\infty}\right)\\
&\cdot\left(|f(R_1)-f(R_2)|+\|q_1-q_2\|_{L^2}\right)\\
\leq &C\left(|f(R_1)-f(R_2)|+\|q_1-q_2\|_{L^2}\right).
\end{aligned}$$
Since $q_1|_{x=0}=f(R_1)$ is a $C^1$ function and $f$ is invertible, the last two boundary terms can be controlled by multiples of $|f(R_1)-f(R_2)|^2$. Therefore, we conclude from (\ref{difference}) that 
$$\frac{d}{dt}e\leq C\left(\|u_1-u_2\|_{L^2}^2+\|q_1-q_2\|_{L^2}^2+|f(R_1)-f(R_2)|^2\right)\leq Ce$$
where we denote 
$$\begin{aligned}
e:=&\int_0^\infty\left((u_1-u_2)^2+c_0^2(1+q_1)^{-\gamma-1}(q_1-q_2)^2\right)dx
\\&+c_0^2R_1^2(1+q_1)^{-\gamma-1}|_{x=0}(f'(R_1))^{-1}(f(R_1)-f(R_2))^2.
\end{aligned}$$
Gronwall's inequality then gives $e=0$ on $[0,T]$, which shows $(u_1,\,q_1,\,R_1)=(u_2,\,q_2,\,R_2)$.
\end{proof}
\section{Setup of Bootstrap}\label{sec3}
The following several sections are devoted to the proof of Theorem \ref{thm 1.3}. The strategy is to use a bootstrap argument encompassing the norms in energy spaces and parameters. In this section, we detail the set-up of the bootstrap. \\
We inherit the notations from Section \ref{sec2} and drop the subscript $k$: 
$$\begin{aligned}
e^j(t):=&\frac{1}{2}\int_0^\infty\left(\left(\partial_t^ju(t)\right)^2+c^2(t)\left(\partial_t^jq(t)\right)^2\right)dx\\
&+\frac{1}{2}c(t)|_{x=0}^2f'(R(t))^{-1}\left(\partial_t^jf(R)(t)\right)^2,
\end{aligned}$$
$$E(t):=\sum_{j=0}^2|\partial_t^jf(R)(t)|^2+\sum_{j_1+j_2\leq2}\left(\|L^{j_2}\partial_t^{j_1}q(t)\|_{L^2}^2+\|\tilde{L}^{j_2}\partial_t^{j_1}u(t)\|_{L^2}^2\right),$$
$$L^1q=c^2r^2\partial_xq,\,L^2q=\partial_x(c^2r^4\partial_xq),\,\tilde{L}^1u=\partial_x(r^2u),\,\tilde{L}^2u=c^2r^2\partial_x^2(r^2u).$$
Assume the initial value $(u_{in},\,q_{in},\,R_{in})$ satisfies the assumption of Theorem \ref{thm 1.3}:
$$\epsilon^2:=\sum_{j=0}^2\|L_0^jq_{in}\|_{L^2}^2+\sum_{j=0}^2\|\tilde{L}_0^ju_{in}\|_{L^2}^2+|f(R_{in})|^2\leq\epsilon_0^2,$$
where the constant $\epsilon_0^2$ will be determined later. Taking $r=\xi(x)=(1+3x)^{\frac{1}{3}}$ in Lemma \ref{Lem 2.1} yields $\|q_{in}\|_{L^\infty}\leq C\epsilon$, and by choosing $\epsilon_0$ small, it holds $$-\frac{1}{4}<-C\epsilon\leq q_{in}\leq C\epsilon\leq\frac{1}{4}.$$
Meanwhile, since $f$ is smooth and strictly increasing near $R=1$, we have $|R_{in}-1|\leq C\epsilon$ and thus $0<1-C\epsilon\leq R_{in}\leq 1+C\epsilon<\overline{R}$. Then Theorem \ref{thm 1.2} gives the existence of a local solution. Note that the inequality (\ref{etoep}) is essentially independent of $k$, therefore $\sum_{j=0}^2e^j(0)\leq\tilde{B}\epsilon^2$ for some constant $\tilde{B}$. We now consider the time interval $[0,T]$ such that the following bounds hold on $[0,T]$:
\begin{equation}
\sum_{j=0}^2e^j(t)\leq 4\sum_{j=0}^2e^j(0)\leq 4\tilde{B}\epsilon^2,
\label{3.1}\end{equation}
\begin{equation}
-\frac{1}{2}\leq q(x,t)\leq\frac{1}{2},\;x>0,
\label{3.2}\end{equation}
and consequently for $x>0$, $t\in[0,T]$ 
$$-\frac{1}{2}\leq f(R)\leq\frac{1}{2},$$
$$\frac{Ca\gamma}{2}\left(\frac{2}{3}\right)^{\gamma+1}\leq c^2(x,t)=\frac{Ca\gamma}{2}(1+q(x,t))^{-\gamma-1}\leq\frac{Ca\gamma}{2}2^{\gamma+1},$$
$$R^3(t)+\frac{3}{2}x\leq r^3(x,t)=R^3(t)+3\int_0^x(1+q(y,t))dy\leq R^3(t)+\frac{9}{2}x.$$
Such an interval exists due to continuity and by choosing $\epsilon_0$ further small. 
By dropping the subscript in (\ref{exchq})(\ref{exchu}), we obtain
$$\partial_t^2q-\tilde{L}^1\partial_tu=2r^{-1}u\partial_tq+2r^2\partial_x(r^{-1}u)u=4r^{-1}u\partial_tq-6r^{-2}(1+q)u^2,$$
$$\partial_t^2u-L^1\partial_tq=\left(-(\gamma+1)(1+q)^{-1}\partial_tq+2r^{-1}u\right)\partial_tu.$$
Hence by Lemma \ref{Lem 2.1} the following bounds hold on $[0,T]$:
\begin{equation}\begin{aligned}
\|L^2q\|_{L^2}=\|\tilde{L}^1\partial_tu\|_{L^2}\leq&\|\partial_t^2q\|_{L^2}+C\left(\|u\|_{L^2}+\|\tilde{L}^1u\|_{L^2}\right)\left(\|\partial_tq\|_{L^2}+\|u\|_{L^2}\right)\\
=&\|\partial_t^2q\|_{L^2}+C\left(\|u\|_{L^2}+\|\partial_tq\|_{L^2}\right)^2\\
\leq& C\sum_{j=0}^2e^j(t)^\frac{1}{2},
\end{aligned}\label{3.3}\end{equation}
\begin{equation}\begin{aligned}
\|\tilde{L}^2u\|_{L^2}=\|L^1\partial_tq\|_{L^2}\leq&\|\partial_t^2u\|_{L^2}+C\left(\|\partial_tu\|_{L^2}+\|\tilde{L}^1\partial_tu\|_{L^2}\right)\left(\|u\|_{L^2}+\|\partial_tq\|_{L^2}\right)\\
\leq&\|\partial_t^2q\|_{L^2}+C\epsilon\left(\|u\|_{L^2}+\|\partial_tq\|_{L^2}\right)\\
\leq& C\sum_{j=0}^2e^j(t)^\frac{1}{2},
\end{aligned}\label{3.4}\end{equation}
where we have used that $\sum_{j=0}^2e^j(t)\leq 4\tilde{B}\epsilon^2$ on $[0,T]$ and $\epsilon<\epsilon_0\lesssim1$. Therefore, there exists a constant $A>0$ such that on $[0,T]$
\begin{equation}
E(t)\leq A\sum_{j=0}^2e^j(t).
\label{3.5}\end{equation}
Moreover, applying Lemma \ref{Lem 2.1} again gives that for $x>0$, $t\in[0,T]$
\begin{equation}\begin{aligned}
&|q(x,t)|+|\partial_\xi q(x,t)|+|\partial_t q(x,t)|\lesssim\epsilon\xi^{-1},\\
&|u(x,t)|+|\partial_\xi u(x,t)|+|\partial_t u(x,t)|\lesssim\epsilon\xi^{-1},
\end{aligned}\label{3.6}\end{equation}
where we used that $\xi^{-1}\lesssim r^{-1}\lesssim\xi^{-1}$ since $R(t)$ is bounded from below and above, and consequently
\begin{equation}\begin{aligned}
|c(x,t)-c_0|+|\partial_\xi c(x,t&)|+|\partial_t c(x,t)|\lesssim \epsilon\xi^{-1},\\
|R-&1|\lesssim\epsilon.
\end{aligned}\label{3.7}\end{equation}
By H{\"o}lder and (\ref{radius}), we also compute
\begin{equation}\begin{aligned}
\left|\frac{r(x,t)^2}{\xi^2}-1\right|=&\frac{\xi r(x,t)+\xi^2}{r(x,t)^2+\xi r(x,t)+\xi^2}\xi^{-3}\left|R(t)^3-1+3\int_0^xq(y,t)dy\right|\\
\lesssim& \xi^{-3}\left(|R(t)^3-1|+x^\frac{1}{2}\|q\|_{L^2}\right)\\
\lesssim& \epsilon\xi^{-\frac{3}{2}}.
\end{aligned}\label{3.8}\end{equation}
Combining (\ref{3.7})(\ref{3.8}) yields
\begin{equation}
\left|cr^2\xi^{-2}-c_0\right|\lesssim\epsilon\xi^{-1},
\label{3.9}\end{equation}
and thus there exist constants $\overline{c}$ and $\underline{c}$ such that
\begin{equation}
-\epsilon\lesssim\underline{c}-c_0\leq cr^2\xi^{-2}-c_0\leq\overline{c}-c_0\lesssim\epsilon.
\label{3.10}\end{equation}
If $T$ is large enough so that $1+\overline{c}T\geq\exp\left(\frac{\kappa_0}{\epsilon}\right)$, then we have obtained the almost global existence. Hence we assume without loss of generality that
\begin{equation}
1+\overline{c}T\leq\exp\left(\frac{\kappa}{\epsilon}\right)
\label{3.11}\end{equation}
holds for a constant $\kappa\leq\kappa_0$ with $\kappa_0$ to be determined. 
The heart of the proof to the almost global existence is the following:
\begin{prop}
Assume that (\ref{3.1})(\ref{3.2}) and consequently (\ref{3.3}-\ref{3.10}) hold on $[0,T]$ with $T$ satisfying (\ref{3.11}). Then the following holds:\\
1. The bounds (\ref{3.1})(\ref{3.2}) can be strictly improved on $[0,T]$. To be specific, we will prove
\begin{equation}\sum_{j=0}^2e^j(t)\leq 3\tilde{B}\epsilon^2,\; t\in[0,T],\label{3.12}\end{equation}
and thus $|q(x,t)|\lesssim\epsilon$ by (\ref{3.6}), which improves (\ref{3.2}) in turn for suitable $\epsilon_0$.\\
2. (Control of error). The following estimate holds on $[0,T]$:
$$\left|F(R)-\mathcal{R}(t)\right|\lesssim\frac{\log(1+\overline{c}t)}{1+\underline{c}t}\epsilon^\frac{3}{2}\tilde{\epsilon}^\frac{1}{2}+\frac{\epsilon^\frac{3}{2}}{1+\underline{c}t}\left(\epsilon^\frac{1}{2}+\tilde{\epsilon}^\frac{1}{2}\right)+\frac{\epsilon}{(1+\underline{c}t)^2}(\epsilon+\tilde{\epsilon}).$$
\end{prop}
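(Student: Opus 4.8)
The plan is to run the bootstrap engine by combining three ingredients that are already in place: the energy identity \eqref{kee} with the iteration subscript dropped (this is \eqref{7.1}); the generalized KSS estimate \eqref{4.1} for the velocity potential $\varphi$, which solves the quasilinear wave equation \eqref{phiwave}; and the characteristic analysis of the backward pressure wave in Section~\ref{sec6}, whose output is the estimate \eqref{6.41} on $\psi|_{x=0}$ and $\partial_t\psi|_{x=0}$. Both assertions will follow once the boundary quantity $\int_0^T\varphi^2|_{x=0}\,dt$ is controlled \emph{uniformly in $T$}, and this uniform control is the whole point of the argument.

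First I would integrate \eqref{kee}, summed over $j=0,1,2$, in time. By the pointwise bounds \eqref{3.6}--\eqref{3.10}, every interior nonlinear term is dominated by $C\epsilon\int_0^T\!\!\int_0^\infty\xi^{-1}\big((\partial_t^ju)^2+(\partial_t^jq)^2\big)\,dx\,dt$ with $j\le2$. Using $u=\rho r^2\partial_x\varphi$, the Bernoulli relation \eqref{rho-phi}, and the elliptic bound \eqref{ptwc}, this is in turn bounded by $C\epsilon\int_0^T\!\!\int_0^\infty\xi^{-1}|\partial^{\le3}\varphi|^2\,dx\,dt$, which is precisely the weighted $L^2_tH^j_x$ quantity the generalized KSS estimate \eqref{4.1} is designed to control; since \eqref{4.1} needs no boundary condition on $\varphi$, it bounds this by $C\epsilon\log(2+\overline{c}T)\big(E(0)+\int_0^T\varphi^2|_{x=0}\,dt\big)$ with $E(0)\lesssim\epsilon^2$. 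The boundary nonlinearities of $NL_5$-type in \eqref{kee} I would handle through the boundary ODE \eqref{5.3}, writing $\partial_tf(R)$ and $\partial_t^2f(R)$ in terms of $u|_{x=0}$ and $L^1q|_{x=0}$ and absorbing them into $C\epsilon\sum_je^j$ by \eqref{ptwc}. This yields
$$
\sum_{j=0}^2 e^j(t)\ \le\ \tilde B\epsilon^2 + C\epsilon\log(2+\overline{c}T)\Big(\epsilon^2+\int_0^T\varphi^2|_{x=0}\,dt\Big),\qquad t\in[0,T].
$$

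Next I would estimate $\int_0^T\varphi^2|_{x=0}\,dt$. Since $\varphi|_{x=0}=R^{-1}\psi|_{x=0}$ and $R$ is bounded above and below by \eqref{3.7}, it suffices to bound $\int_0^T\psi^2|_{x=0}\,dt$. The trace $\psi|_{x=0}$ obeys the ODE \eqref{5.3} with inhomogeneity $w_B|_{\xi=1}$, and $w_B|_{\xi=1}$ is estimated in Proposition~\ref{prop 6.0}/\eqref{6.30} via the Volterra inequality \eqref{6.24}, whose data at $t=0$ are furnished by Lemma~\ref{lem 7.1} (this is the only place the auxiliary bound \eqref{1.18}, and hence $\tilde\epsilon$, enters). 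Feeding this into \eqref{5.3} produces \eqref{6.41}: $\partial_t\psi|_{x=0}(t)=\mathcal R(t)+(\text{error})$ with $\mathcal R$ as in \eqref{calR}. The two exponentials in \eqref{calR} have $\mathrm{Re}\,\Lambda_i<0$, and the convolution term has exponentially decaying kernel acting on $w_B(\xi_0(\cdot),0)$, whose $L^2_s$ norm is controlled by the initial energy through the change of variables $s\mapsto\xi_0(s)$; hence $\|\mathcal R\|_{L^2(0,T)}\lesssim\epsilon$ with a constant independent of $T$. The error in \eqref{6.41} decays at least like $(1+\underline{c}t)^{-1}\log(1+\overline{c}t)$, whose square is integrable on $(0,\infty)$, so $\int_0^T\psi^2|_{x=0}\,dt\lesssim\epsilon^2$ uniformly in $T$. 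Combining with the previous display and using \eqref{3.11} to replace $\log(2+\overline{c}T)$ by $C\kappa/\epsilon$ gives
$$
\sum_{j=0}^2 e^j(t)\ \le\ \tilde B\epsilon^2+C\kappa\epsilon^2\ \le\ 3\tilde B\epsilon^2,\qquad t\in[0,T],
$$
once $\kappa_0$ and $\epsilon_0$ are chosen small; this is \eqref{3.12}, and then \eqref{3.6} forces $|q(x,t)|\lesssim\epsilon\le\tfrac12$, strictly improving \eqref{3.2}. For part~2, the Bernoulli relation \eqref{rho-phi} at $x=0$ together with \eqref{dbce} identifies a smooth function $F$ of $R$ — the one in \eqref{7.11}, \eqref{7.13} — with $\partial_t\psi|_{x=0}$ up to lower-order terms, arranged so that the $\mathcal R$-part of $\partial_t\psi|_{x=0}$ appears undisturbed on the left; hence $|F(R)-\mathcal R(t)|$ is exactly the error bounded in \eqref{6.41} by the right-hand side claimed in the statement, and the refinement when $w_B|_{t=0}=0$ is the corresponding refinement there.

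The hard part is the uniform-in-$T$ control of $\int_0^T\varphi^2|_{x=0}\,dt$. In the Dirichlet setting of Keel--Smith--Sogge this term vanishes identically; here it does not, and the KSS machinery alone only bounds it by itself. Breaking that loop is what forces the backward-wave characteristics of Section~\ref{sec6} and the decoupling of \eqref{5.3} from the hyperbolic pair \eqref{5.1}, \eqref{5.2}; it is also where one must check that the logarithmic loss $\log(2+\overline{c}T)$ inherited from \eqref{4.1} is beaten by the smallness of $\kappa$ permitted by \eqref{3.11} — the balance that pins the almost-global lifespan at $\exp(\kappa_0/\epsilon)$.
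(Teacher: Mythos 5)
Your overall architecture is the paper's: integrate the energy identity (\ref{7.1}), control the interior nonlinearities through the KSS estimate (\ref{4.1}) after converting $(u,q)$-derivatives to $\varphi$-derivatives, and close the loop on $\int_0^T\varphi^2|_{x=0}dt$ via the boundary ODE and the backward-wave analysis, with Lemma \ref{lem 7.1} supplying the $t=0$ data and Young's inequality giving $\int_0^T|Y_0|^2dt\lesssim\epsilon^2$. The genuine gap is your treatment of the boundary nonlinearities $nl_4^j$, $nl_5^j$ in (\ref{7.1}). You propose to bound them pointwise in time by $C\epsilon\sum_j e^j(t)$ using the trace inequality (\ref{ptwc}). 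That pointwise bound is true, but after time integration it contributes $C\epsilon\int_0^T\sum_j e^j(s)\,ds$, and the resulting Gr\"onwall factor $e^{C\epsilon T}$ is useless on the almost-global scale $1+\overline{c}T=\exp(\kappa/\epsilon)$ of (\ref{3.11}); in particular your intermediate display $\sum_j e^j(t)\le\tilde B\epsilon^2+C\epsilon\log(2+\overline{c}T)\bigl(\epsilon^2+\int_0^T\varphi^2|_{x=0}dt\bigr)$ does not follow from the argument you describe. Unlike the interior terms, which carry the weight $\xi^{-1}$ that the KSS spacetime integral exploits, the boundary terms have no decay to use, so they must be routed through the $L^2_t$ boundary controls that were deliberately kept on the left-hand side of (\ref{4.1}): one converts $\partial_t^jf(R)$ into the traces $\partial_t^{1+j}\varphi|_{x=0}$ and $\bigl(cr^2\partial_x\partial_t^j\varphi+\frac{c}{\rho r}\partial_t^j\varphi\bigr)|_{x=0}$ (this is (\ref{7.7})) and then obtains $\int_0^T\bigl(|nl_4^j|+|nl_5^j|\bigr)dt\lesssim\kappa\sum_j\max e^j+\epsilon\int_0^T\varphi^2|_{x=0}dt$ as in (\ref{7.8})(\ref{7.9}). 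Without this step the bootstrap does not close.

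Two smaller points. For part 2, the exact identity is $F(R)=\partial_t\psi|_{x=0}-\bigl(\frac{u}{r}\psi+\frac12 ru^2\bigr)|_{x=0}$, cf. (\ref{7.12}), so $|F(R)-\mathcal R(t)|$ is \emph{not} exactly the error in (\ref{6.41}): the quadratic trace term must be estimated separately, and its bound $\frac{\epsilon}{(1+\underline{c}t)^2}(\epsilon+\tilde\epsilon)$ — the third term in the claimed estimate — requires pointwise decay of $w_B(1,t)$ and of $Y(t)$ (the paper's (\ref{7.15})(\ref{7.17}), built from (\ref{6.29})(\ref{6.30})(\ref{6.38})), while the logarithmic factor arises from the choice $T=t$, i.e. $\kappa=\epsilon\log(1+\overline{c}t)$, which you allude to but should make explicit. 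Finally, $\int_0^T\varphi^2|_{x=0}dt\lesssim\epsilon^2$ is slightly too strong as stated, since the error of (\ref{6.41}) contributes terms like $\kappa^2\epsilon\tilde\epsilon$ with $\tilde\epsilon$ independent of $\epsilon$; what is actually needed, and what holds after choosing $\epsilon_0,\tilde\epsilon_0,\kappa_0$ small, is $C\epsilon\int_0^T\varphi^2|_{x=0}dt\le\tilde B\epsilon^2$ — this imprecision is harmless, unlike the first issue.
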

\begin{rem}
In fact, system (\ref{cnte0}-\ref{radius0}) admits a conserved energy. We have by (\ref{cnte0}) and (\ref{mmte0}) that
$$\begin{aligned}
0=&u\left(\partial_tu+\frac{Ca}{2}r^2\partial_x(\rho^\gamma)\right)+\frac{Ca}{2}\rho^{-2}\left(\rho^\gamma-1\right)\left(\partial_t\rho+\rho^2\partial_x(r^2u)\right)\\
=&\partial_t\left(\frac{1}{2}u^2+\frac{Ca}{2}\left(\frac{1}{\gamma-1}\rho^{\gamma-1}-\frac{\gamma}{\gamma-1}+\rho^{-1}\right)\right)+\frac{Ca}{2}\partial_x\left(r^2u\left(\rho^\gamma-1\right)\right).
\end{aligned}$$
Integrating the above identity on the strip $\mathbb{R}_+\times [0,T]$ and using the boundary conditions gives
$$\begin{aligned}
0=&\int_0^\infty\left(\frac{1}{2}u^2+H(\rho)\right)(T)dx-\int_0^\infty\left(\frac{1}{2}u^2+H(\rho)\right)(0)dx\\
&-\int_0^T\left(R^2\left(\left(\frac{Ca}{2}+\frac{2}{We}\right)R^{-3\gamma_0}-\frac{2}{We}R^{-1}-\frac{Ca}{2}\right)\frac{dR}{dt}\right)dt,
\end{aligned}$$
where $H(\rho):=\frac{Ca}{2(\gamma-1)}\left(\rho^{\gamma-1}-\gamma+(\gamma-1)\rho^{-1}\right)$ denotes the entropy. Note that 
$$\begin{aligned}
&-R^2\left(\left(\frac{Ca}{2}+\frac{2}{We}\right)R^{-3\gamma_0}-\frac{2}{We}R^{-1}-\frac{Ca}{2}\right)\frac{dR}{dt}\\
=&\frac{d}{dt}\left\{\frac{1}{3\gamma_0-3}\left(\frac{Ca}{2}+\frac{2}{We}\right)\left(R^{-3\gamma_0+3}-1\right)+\frac{1}{We}\left(R^2-1\right)+\frac{Ca}{6}\left(R^3-1\right)\right)\\
=:&\frac{d}{dt}W(R).
\end{aligned}$$
$W(R)$ is actually the work done by the surface tension and the pressure at both the bubble surface and the infinity. Since $W(R)$ is convex in $R$, we obtain the conserved energy:
$$\int_0^\infty\left(\frac{1}{2}u^2+H(\rho)\right)dx+W(R).$$
This conserved energy can give a uniform in time the upper and lower bound of $R$ as well as the control of $L^2$ norms of $u$ and $q$. However, since $L^2$ is subcritical compared with the existence space, such controls are of less importance, and we will not use this in the proceeding proof.
\end{rem}
\section{KSS type estimate with boundary terms}\label{sec4}
In this section, we establish a KSS (Keel-Smith-Sogge) type estimate regarding the quasilinear wave equation (\ref{phiwave}). This will provide a long-time control over the weighted norms of the solution, which will be used later to improve (\ref{3.1}). However, due to the non-vanishing boundary value, we will obtain boundary terms in this KSS type estimate. Nevertheless, these boundary terms have positive signs in higher derivatives and negative signs in lower derivatives. Therefore, by taking a delicate linear combination, the only left bad term is the one with the lowest regularity, whose treatment will be given in the next two sections. To be specific, the goal of this section is to prove the estimate below.
\begin{prop} Let $\varphi$ be a solution to (\ref{phiwave}) on $[0,T]$, and assume that (\ref{3.1})(\ref{3.2}) hold on $[0,T]$, then the following bound holds:
\begin{equation}\begin{aligned}
&\sum_{j=0}^2\left\{\int_0^T\int_0^\infty\xi^{-1}\left(\left(\partial_t^{1+j}\varphi\right)^2+\left(cr^2\partial_x\partial_t^j\varphi\right)^2\right)dxdt+\int_0^T\int_0^\infty\xi^{-3}\left(\partial_t^j\varphi\right)^2dxdt\right.\\
&\left.+\int_0^T\left.\left(\partial_t^{1+j}\varphi\right)^2\right|_{x=0}dt+\int_0^T\left.\left(cr^2\partial_x\partial_t^j\varphi+\frac{c}{\rho r}\partial_t^j\varphi\right)^2\right|_{x=0}dt\right\}\\
\lesssim&\max\left\{\log_2 T,1\right\}\sum_{j=0}^2\max_{0\leq t\leq T}\int_0^\infty\left(\left(\partial_t^{1+j}\varphi\right)^2+\left(cr^2\partial_x\partial_t^j\varphi\right)^2\right)dx+\int_0^T\varphi^2|_{x=0}dt.
\end{aligned}\label{4.1}\end{equation}
\end{prop}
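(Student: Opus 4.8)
\emph{Proof plan.} For each $j\in\{0,1,2\}$ I would run a Morawetz--Keel--Smith--Sogge multiplier argument on the $j$-th time-differentiated form of (\ref{phiwave}) \emph{without using any boundary condition on $\varphi$}, and then combine the three resulting identities by a linear combination in $j$ that telescopes away every boundary contribution except $\int_0^T\varphi^2|_{x=0}\,dt$.

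\emph{Step 1: time-differentiated equations.} Since $\partial_t$ preserves $\{x=0\}$, setting $\varphi_j:=\partial_t^j\varphi$ and commuting $\partial_t^j$ through (\ref{phiwave}) gives $\partial_t^2\varphi_j-c^2\partial_x(r^4\partial_x\varphi_j)=\mathcal N_j$, where $\mathcal N_j$ is a finite sum of terms $\partial_t^a(c^2)\,\partial_x\!\big(\partial_t^b(r^4)\,\partial_x\varphi_{j''}\big)$ with $a+b+j''=j$, $a+b\ge1$. Using $\partial_t r=u$ together with the pointwise bounds (\ref{3.6})--(\ref{3.10}) (so that the relevant time and space derivatives of $c^2$ and of the natural coefficient $r^4\xi^{-4}$ are $O(\epsilon\xi^{-1})$), the density $|\mathcal N_j|$ is bounded by $\epsilon\xi^{-1}$ times first-order derivatives of $\varphi_0,\dots,\varphi_j$ — exactly the size needed for absorption below.

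\emph{Step 2: the multiplier identity for fixed $j$.} Put $\psi_j:=r\varphi_j$. A direct computation using $r^2\partial_xr=\rho^{-1}$ shows $\partial_t^2\psi_j-c^2\partial_\xi^2\psi_j=\mathcal E_j$, where $\mathcal E_j=r\mathcal N_j+(\text{terms }O(\epsilon\xi^{-1})\text{ times first derivatives of }\psi_0,\dots,\psi_j\text{ and of }\varphi_j)$, and one has the exact identity $cr^2\partial_x\varphi_j+\tfrac{c}{\rho r}\varphi_j=\tfrac{cr}{\xi^2}\partial_\xi\psi_j$. I would then apply to the $\psi_j$-equation the one-dimensional Morawetz multiplier $\sigma(\xi)\,\partial_\xi\psi_j$ on $\xi\in(1,+\infty)$, with $\sigma$ bounded and increasing, $\sigma(1)\gtrsim1$ and $\sigma'(\xi)\gtrsim\xi^{-1}$ on $[1,T]$ (forcing $\|\sigma\|_\infty\lesssim\log T$, which produces the factor $\max\{\log_2T,1\}$; equivalently one uses the dyadic family $\sigma_m(\xi)=\xi(\xi+2^m)^{-1}$, $0\le m\lesssim\log_2T$, each treated without a logarithm, handling the outer region $\xi\gtrsim T$ by the trivial bound $T^{-1}\cdot T\cdot\sup_tE$). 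Integrating by parts in $(\xi,t)$ over $(1,+\infty)\times(0,T)$ gives an identity whose bulk part equals $\int_0^T\!\!\int_1^\infty\sigma'\,\tfrac12\big((\partial_t\psi_j)^2+c^2(\partial_\xi\psi_j)^2\big)\,d\xi\,dt$; back in the original variables and modulo absorbable $O(\epsilon)$ errors this is $\gtrsim\int_0^T\!\!\int_0^\infty\xi^{-1}\big((\partial_t^{1+j}\varphi)^2+(cr^2\partial_x\varphi_j)^2\big)dx\,dt$, and combined with the Hardy inequality $\int_1^\infty\xi^{-3}g^2\,d\xi\lesssim g(1)^2+\int_1^\infty\xi^{-1}(\partial_\xi g)^2\,d\xi$ applied to $g=\psi_j$ it also controls $\int_0^T\!\!\int_0^\infty\xi^{-3}\varphi_j^2\,dx\,dt$ at the cost of $\int_0^T\varphi_j^2|_{x=0}\,dt$. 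The spatial boundary term at $\xi=1$ equals $\sigma(1)\int_0^T\tfrac12\big((\partial_t\psi_j)^2+c^2(\partial_\xi\psi_j)^2\big)\big|_{\xi=1}\,dt$, which — since $r|_{x=0}=R\gtrsim1$ (uniform distance of the boundary from the origin) and $\partial_tr|_{x=0}=u|_{x=0}=O(\epsilon)$, so $\partial_t\psi_j|_{\xi=1}=R\,\partial_t^{1+j}\varphi|_{x=0}+O(\epsilon)\varphi_j|_{x=0}$ and $cr\xi^{-2}\partial_\xi\psi_j|_{\xi=1}=(cr^2\partial_x\varphi_j+\tfrac{c}{\rho r}\varphi_j)|_{x=0}$ — dominates, after an $\epsilon$-weighted Cauchy--Schwarz, $\int_0^T\big((\partial_t^{1+j}\varphi)^2+(cr^2\partial_x\partial_t^j\varphi+\tfrac{c}{\rho r}\partial_t^j\varphi)^2\big)\big|_{x=0}\,dt$ minus an $O(\epsilon^2)\int_0^T\varphi_j^2|_{x=0}\,dt$ remainder. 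The time-slice terms at $t=0,T$ are $\le\|\sigma\|_\infty$ times $\sup_{[0,T]}\int_1^\infty\big((\partial_t\psi_j)^2+(\partial_\xi\psi_j)^2\big)d\xi\lesssim\log T\sum_{i}\max_{[0,T]}\int_0^\infty\big((\partial_t^{1+i}\varphi)^2+(cr^2\partial_x\partial_t^i\varphi)^2\big)dx$, and $\mathcal E_j$ paired with $\sigma\partial_\xi\psi_j$ is $\lesssim\epsilon$ times the bulk weighted norms of $\varphi_0,\dots,\varphi_j$. Rearranging and absorbing all $O(\epsilon)$ bulk terms yields, for each $j$,
\begin{equation*}
(\mathrm{LHS})_j\ \lesssim\ \log T\sum_{i=0}^{2}\max_{[0,T]}\int_0^\infty\!\big((\partial_t^{1+i}\varphi)^2+(cr^2\partial_x\partial_t^i\varphi)^2\big)dx\ +\ \int_0^T\varphi_j^2\big|_{x=0}\,dt\ +\ \epsilon\sum_{i<j}(\mathrm{LHS})_i ,
\end{equation*}
where $(\mathrm{LHS})_j$ is the $j$-th summand on the left of (\ref{4.1}).

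\emph{Step 3: linear combination in $j$.} For $j\ge1$, $\varphi_j=\partial_t^j\varphi$, so $\int_0^T\varphi_j^2|_{x=0}\,dt=\int_0^T(\partial_t^{1+(j-1)}\varphi)^2|_{x=0}\,dt$ is one of the boundary terms already present in $(\mathrm{LHS})_{j-1}$. Hence forming $(\mathrm{LHS})_0+\delta\,(\mathrm{LHS})_1+\delta^2(\mathrm{LHS})_2$ with $\delta$ small (and using smallness of $\epsilon$) absorbs the feedback terms, leaving on the right only the $j=0$ term $\int_0^T\varphi^2|_{x=0}\,dt$ and the stated logarithmic energy factor; this is (\ref{4.1}).

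\emph{Main obstacle.} The crux — and the point where this departs from the classical KSS argument — is Step 2: arranging that the \emph{higher}-order boundary quantities $\int_0^T\big((\partial_t^{1+j}\varphi)^2+(\text{good derivative})^2\big)|_{x=0}\,dt$ come out on the \emph{favourable} side of the identity while only the \emph{lower}-order boundary term $\int_0^T\varphi_j^2|_{x=0}\,dt$ is fed back, so that the telescoping of Step 3 closes. This rests on the exact algebraic identity $cr^2\partial_x\varphi_j+\tfrac{c}{\rho r}\varphi_j=\tfrac{cr}{\xi^2}\partial_\xi(r\varphi_j)$, which makes the spatial boundary term of the Morawetz identity a perfect square precisely in the derivatives appearing in (\ref{4.1}), together with the fact that all quasilinear and geometric corrections at $\xi=1$ are $O(\epsilon)$ and hence absorbable. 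No boundary condition on $\varphi$ enters anywhere; only the lower bound $R(t)\gtrsim1$ and the pointwise controls (\ref{3.6})--(\ref{3.10}) are used.
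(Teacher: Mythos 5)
Your proposal is correct and takes essentially the same route as the paper's proof: time-differentiated equations, KSS-type multipliers with dyadic/logarithmic spatial weights used without any boundary condition, the same favourable boundary square $\left(cr^2\partial_x\partial_t^j\varphi+\frac{c}{\rho r}\partial_t^j\varphi\right)^2$ at $x=0$ with only the lower-order term $\int_0^T(\partial_t^j\varphi)^2|_{x=0}dt$ fed back, absorption of the quasilinear errors via $\epsilon\max\{\log_2T,1\}\lesssim\kappa$, and a linear combination over $j=0,1,2$ that telescopes the boundary feedback down to $\int_0^T\varphi^2|_{x=0}dt$. The only difference is presentational: you reduce to the one-dimensional equation for $\psi_j=r\partial_t^j\varphi$ and recover the $\xi^{-3}$ bulk term by a Hardy inequality, whereas the paper stays in the $(x,t)$ variables and builds the zeroth-order correction $\tfrac{1}{2}N_k\partial_t^j\varphi\,\partial_t^{1+j}\varphi$ (with $N_k=c\,\partial_xr^2\,M_k$) into the momentum densities $P_{0,k}^j$, $P_{1,k}^j$ — algebraically the same multiplier computation.
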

\begin{proof}
\textbf{Step 1.} We begin with defining a sequence of modified momentum density for $j=0,1,2$ and $k\in\mathbb{N}\cup\{0\}$:
$$P_{0,k}^j:=M_k\partial_t^{1+j}\varphi\cdot cr^2\partial_x\partial_t^j\varphi+\frac{1}{2}N_k\partial_t^j\varphi\partial_t^{1+j}\varphi,$$
$$P_{1,k}^j:=\frac{1}{2}M_k\left[\left(\partial_t^{1+j}\varphi\right)^2+\left(cr^2\partial_x\partial_t^j\varphi\right)^2\right]+\frac{1}{2}N_k\partial_t^j\varphi\cdot cr^2\partial_x\partial_t^j\varphi-\frac{1}{4}cr^2(\partial_xN_k)(\partial_t^j\varphi)^2,$$
where $M_k$, $N_k$ are undetermined weight functions. 
By taking time derivative in (\ref{phiwave}), the following holds:
\begin{equation}\partial_t^2\partial_t^j\varphi-c^2\partial_x(r^4\partial_x\partial_t^j\varphi)=[\partial_t^j,c^2\partial_xr^4\partial_x]\varphi.
\label{4.2}\end{equation}
We compute by brute force and (\ref{4.2}) that
\begin{equation}\begin{aligned}
&\int_0^T\int_0^\infty\left(-\partial_tP_{0,k}^j+\partial_x(cr^2P_{1,k}^j)\right)dxdt\\
=&\int_0^T\int_0^\infty\left(\frac{r^2\partial_x(cM_k)}{2}+\frac{c\partial_xr^2M_k}{2}-\frac{N_k}{2}\right)\left(\partial_t^{1+j}\varphi\right)^2dxdt\\
&+\int_0^T\int_0^\infty\left(\frac{r^2\partial_x(cM_k)}{2}+\frac{N_k}{2}-\frac{c\partial_xr^2M_k}{2}\right)\left(cr^2\partial_x\partial_t^j\varphi\right)^2dxdt\\
&+\int_0^T\int_0^\infty-\frac{1}{4}\partial_x(c^2r^4\partial_xN_k)\left(\partial_t^j\varphi\right)^2dxdt\\
&+\left(\text{nonlinear terms}\right),
\label{4.3}\end{aligned}\end{equation}
with the nonlinearities given by
$$\begin{aligned}
&\left(\text{nonlinear terms}\right)\\
=&\int_0^T\int_0^\infty M_k(r^2\partial_xc)\left(cr^2\partial_x\partial_t^j\varphi\right)^2dxdt\\
&-\int_0^T\int_0^\infty M_k\frac{\partial_t(cr^2)}{cr^2}\partial_t^{1+j}\varphi\cdot cr^2\partial_x\partial_t^j\varphi dxdt\\
&+\int_0^T\int_0^\infty N_k(r^2\partial_xc)\partial_t^j\varphi\cdot cr^2\partial_x\partial_t^j\varphi dxdt\\
&-\int_0^T\int_0^\infty\left(M_kcr^2\partial_x\partial_t^j\varphi+\frac{1}{2}N_k\partial_t^j\varphi\right)[\partial_t^j,c^2\partial_xr^4\partial_x]\varphi dxdt\\
&-\int_0^T\int_0^\infty\partial_tM_k\partial_t^{1+j}\varphi\cdot cr^2\partial_x\partial_t^j\varphi dxdt-\int_0^T\int_0^\infty\frac{\partial_tN_k}{2}\partial_t^j\varphi\partial_t^{1+j}\varphi dxdt.
\end{aligned}$$
The construction of $P_0$'s and $P_1$'s as well as the calculation is an analogue to Section 5 of \cite{MR2217314}, which also considers a more general case without symmetry; see also \cite{MR2015331}, but for integrity, we write down the computation process of the present context in Appendix A.  
Now select $M_k:=\frac{1}{c}\frac{\xi}{2^k+\xi}$, $N_k:=c\partial_xr^2\cdot M_k=\frac{2}{\rho r}\frac{\xi}{2^k+\xi}$, and thus the coefficients of the leading-order quadratic terms are
$$\frac{r^2\partial_x(cM_k)}{2}+\frac{c\partial_xr^2 M_k}{2}-\frac{N_k}{2}=\frac{r^2\partial_x(cM_k)}{2}-\frac{c\partial_xr^2 M_k}{2}+\frac{N_k}{2}=\frac{r^2}{2\xi^2}\frac{2^k}{(2^k+\xi)^2}.$$
To compute the coefficient of the third quadratic term, we have
$$-\partial_x\left(c^2r^4\partial_x\left(\frac{1}{\rho r}\frac{\xi}{2^k+\xi}\right)\right)
=-\partial_x\left(\frac{c^2r^3}{\rho \xi^2}\frac{2^k}{(2^k+\xi)^2}\right)+\partial_x\left(\frac{c^2}{\rho^2}\frac{\xi}{2^k+\xi}\right)+\partial_x\left(\frac{c^2r^3\partial_x\rho}{\rho^2}\frac{\xi}{2^k+\xi}\right),
$$
$$-\partial_x\left(\frac{c^2r^3}{\rho \xi^2}\frac{2^k}{(2^k+\xi)^2}\right)=-\partial_x\left(\frac{c^2r^3}{\rho\xi^3}\right)\frac{2^k\xi}{(2^k+\xi)^2}-\frac{c^2r^3}{\rho\xi^5}\frac{2^k(2^k-\xi)}{(2^k+\xi)^3},$$
$$
\partial_x\left(\frac{c^2}{\rho^2}\frac{\xi}{2^k+\xi}\right)=\partial_x\left(\frac{c^2}{\rho^2}\right)\frac{\xi}{2^k+\xi}+\frac{c^2}{\rho^2\xi^2}\frac{2^k}{(2^k+\xi)^2},
$$
$$
\partial_x\left(\frac{c^2r^3\partial_x\rho}{\rho^2}\frac{\xi}{2^k+\xi}\right)=\partial_x\left(\frac{c^2r^3\partial_x\rho}{\rho^2}\right)\frac{\xi}{2^k+\xi}+\frac{c^2r^3\partial_x\rho}{\rho^2\xi^2}\frac{2^k}{(2^k+\xi)^2}.
$$
Hence the coefficient of the third term is
$$\begin{aligned}
&-\frac{1}{4}\partial_x\left(c^2r^4\partial_x\left(\frac{2}{\rho r}\frac{\xi}{2^k+\xi}\right)\right)\\
=&\frac{1}{2}\frac{c^2}{\rho^2\xi^2}\frac{2^k}{(2^k+\xi)^2}\left(1+\frac{\rho r^3}{\xi^3}\frac{\xi-2^k}{2^k+\xi}\right)-\frac{1}{2}\partial_x\left(\frac{c^2r^3}{\rho\xi^3}\right)\frac{2^k\xi}{(2^k+\xi)^2}\\
&+\frac{1}{2}\partial_x\left(\frac{c^2}{\rho^2}\right)\frac{\xi}{2^k+\xi}+\frac{1}{2}\partial_x\left(\frac{c^2r^3\partial_x\rho}{\rho^2}\right)\frac{\xi}{2^k+\xi}+\frac{1}{2}\frac{c^2r^3\partial_x\rho}{\rho^2\xi^2}\frac{2^k}{(2^k+\xi)^2}\\
=&\frac{1}{2}\frac{c^2}{\rho^2}\left(1+\frac{\rho r^3}{\xi^3}\right)\frac{2^k}{\xi(2^k+\xi)^3}+\frac{1}{2}\frac{c^2}{\rho^2}\left(1-\frac{\rho r^3}{\xi^3}\right)\frac{2^{2k}}{\xi^2(2^k+\xi)^3}-\frac{1}{2}\partial_x\left(\frac{c^2r^3}{\rho\xi^3}\right)\frac{2^k\xi}{(2^k+\xi)^2}\\
&+\frac{1}{2}\partial_x\left(\frac{c^2}{\rho^2}\right)\frac{\xi}{2^k+\xi}+\frac{1}{2}\partial_x\left(\frac{c^2r^3\partial_x\rho}{\rho^2}\right)\frac{\xi}{2^k+\xi}+\frac{1}{2}\frac{c^2r^3\partial_x\rho}{\rho^2\xi^2}\frac{2^k}{(2^k+\xi)^2}.
\end{aligned}$$
The principal part is $\frac{1}{2}\frac{c^2}{\rho^2}\left(1+\frac{\rho r^3}{\xi^3}\right)\frac{2^k}{\xi(2^k+\xi)^3}$, while the terms left will be regarded as nonlinearities. 
Next, by divergence theorem, there is
\begin{equation}\begin{aligned}
&\int_0^\infty P_{0,k}^j(x,0)dx-\int_0^\infty P_{0,k}^j(x,T)dx-\int_0^T\left(cr^2P_{1,k}^j\right)(0,t)dt\\
=&\int_0^T\int_0^\infty\left(-\partial_t P_{0,k}^j+\partial_x(cr^2 P_{1,k}^j)\right)dxdt.
\end{aligned}\label{4.4}\end{equation}
Moreover, we expand the boundary term $P_{1,k}^j(0,t)$ as
$$\begin{aligned}
&P_{1,k}^j(0,t)\\
=&\left.\left[\frac{1}{2}M_k\left(\left(\partial_t^{1+j}\varphi\right)^2+\left(cr^2\partial_x\partial_t^j\varphi\right)^2\right)+\frac{1}{2}N_k\partial_t^j\cdot cr^2\partial_x\partial_t^j\varphi\right]\right|_{x=0}\\
&-\left.\left[\frac{1}{4}cr^2\partial_xN_k\left(\partial_t^j\varphi\right)^2\right]\right|_{x=0}\\
=&\left.\left[\frac{1}{2}M_k\left(\partial_t^{1+j}\varphi\right)^2+\frac{1}{2}M_k\left(cr^2\partial_x\partial_t^j\varphi+\frac{N_k}{2M_k}\partial_t^j\varphi\right)^2\right]\right|_{x=0}\\
&-\left.\left[\left(\frac{N_k^2}{8M_k}+\frac{1}{4}cr^2\partial_xN_k\right)\left(\partial_t^j\varphi\right)^2\right]\right|_{x=0},
\end{aligned}$$
and compute the coefficients explicitly:
$$\frac{1}{2}M_k(0,t)=\frac{1}{2c(0,t)}\frac{1}{2^k+1},\quad\frac{N_k(0,t)}{2M_k(0,t)}=\left(\frac{c}{\rho r}\right)(0,t),$$
$$\frac{N_k(0,t)^2}{8M_k(0,t)}=\frac{1}{2}\left(\frac{c}{\rho^2 r^2}(0,t)\right)\frac{1}{2^k+1},$$
$$\begin{aligned}
&\frac{1}{4}\left(cr^2\partial_x N_k\right)(0,t)\\
=&\left.\frac{1}{4}\left[cr^2\partial_x\left(\frac{2}{\rho r}\frac{\xi}{2^k+\xi}\right)\right]\right|_{x=0}\\
=&\left.\frac{1}{2}\left[\frac{cr}{\rho}\frac{2^k}{\xi^2(2^k+\xi)^2}\right]\right|_{x=0}-\left.\frac{1}{2}\left[\frac{c}{\rho^2}\left(r\partial_x\rho+r^{-2}\right)\frac{\xi}{2^k+\xi}\right]\right|_{x=0}\\
=&\frac{1}{2}\left.\left(\frac{cr}{\rho}\right)\right|_{x=0}\frac{2^k}{(2^k+1)^2}-\frac{1}{2}\left.\left(\frac{c}{\rho^2 r^2}\right)\right|_{x=0}\frac{1}{2^k+1}-\frac{1}{2}\left.\left(\frac{c}{\rho^2}r\partial_x\rho\right)\right|_{x=0}\frac{1}{2^k+1},
\end{aligned}$$
$$\frac{N_k(0,t)^2}{8M_k(0,t)}+\frac{1}{4}\left(cr^2\partial_xN_k\right)(0,t)=\frac{1}{2}\left(\left.\frac{cr}{\rho}\right)\right|_{x=0}\frac{2^k}{(2^k+1)^2}-\frac{1}{2}\left.\left(\frac{c}{\rho^2}r\partial_x\rho\right)\right|_{x=0}\frac{1}{2^k+1}.$$
Combining (\ref{4.3})(\ref{4.4}) yields the identity
\begin{equation}\begin{aligned}
&\int_0^T\int_0^\infty\frac{r^2}{2\xi^2}\frac{2^k}{(2^k+\xi)^2}\left(\left(\partial_t^{1+j}\varphi\right)^2+\left(cr^2\partial_x\partial_t^j\varphi\right)^2\right)dxdt\\
&+\int_0^T\int_0^\infty\frac{c^2}{2\rho^2}\left(1+\frac{\rho r^3}{\xi^3}\right)\frac{2^k}{\xi(2^k+\xi)^3}\left(\partial_t^j\varphi\right)^2dxdt\\
&+\int_0^T\frac{R^2}{2}\frac{1}{2^k+1}\left.\left(\partial_t^{1+j}\varphi\right)^2\right|_{x=0}dt+\int_0^T\frac{R^2}{2}\frac{1}{2^k+1}\left.\left(cr^2\partial_x\partial_t^j\varphi+\frac{c}{\rho r}\partial_t^j\varphi\right)^2\right|_{x=0}dt\\
=&\int_0^\infty P_{0,k}^j(x,0)dx-\int_0^\infty P_{0,k}^j(x,T)dx+\int_0^T\left.\frac{1}{2}\left(\frac{c^2r^3}{\rho}\right)\right|_{x=0}\frac{2^k}{(2^k+1)^2}\left.\left(\partial_t^j\varphi\right)^2\right|_{x=0}dt\\
&+\sum_{l=1}^8 NL_{k,l}^j,
\end{aligned}\label{4.5}\end{equation}
with the nonlinearities given by
$$\begin{aligned}
NL_{k,1}^j:=&-\int_0^T\int_0^\infty\left[\frac{1}{2}\frac{c^2}{\rho^2}\left(1-\frac{\rho r^3}{\xi^3}\right)\frac{2^{2k}}{\xi^2(2^k+\xi)^3}-\frac{1}{2}\partial_x\left(\frac{c^2r^3}{\rho \xi^3}\right)\frac{2^k\xi}{(2^k+\xi)^2}\right.\\&
\left.+\frac{1}{2}\partial_x\left(\frac{c^2}{\rho^2}\right)\frac{\xi}{2^k+\xi}+\frac{1}{2}\partial_x\left(\frac{c^2r^3\partial_x\rho}{\rho^2}\right)\frac{\xi}{2^k+\xi}\right.\\
&\left.+\frac{1}{2}\frac{c^2r^3\partial_x\rho}{\rho^2\xi^2}\frac{2^k}{(2^k+\xi)^2}\right]\left(\partial_t^j\varphi\right)^2dxdt,\\
NL_{k,2}^j:=&-\int_0^T\int_0^\infty (r^2\partial_xc)M_k\left(cr^2\partial_x\partial_t^j\varphi\right)^2dxdt,\\
NL_{k,3}^j:=&-\int_0^T\int_0^\infty M_k\frac{\partial_t(cr^2)}{cr^2}\partial_t^{1+j}\varphi\cdot cr^2\partial_x\partial_t^j\varphi dxdt,\\
NL_{k,4}^j:=&-\int_0^T\int_0^\infty N_k (r^2\partial_xc )\partial_t^j\varphi\cdot cr^2\partial_x\partial_t^j\varphi dxdt,
\end{aligned}$$
$$\begin{aligned}
NL_{k,5}^j:=&\int_0^T\int_0^\infty\left(M_kcr^2\partial_x\partial_t^j\varphi+\frac{1}{2}N_k\partial_t^j\varphi\right)[\partial_t^j,c^2\partial_x r^4\partial_x]\varphi dxdt,\\
NL_{k,6}^j:=&\int_0^T\int_0^\infty\partial_tM_{k}\partial_t^{1+j}\varphi\cdot cr^2\partial_x\partial_t^j\varphi dxdt,\\
NL_{k,7}^j:=&\int_0^T\int_0^\infty\frac{\partial_tN_k}{2}\partial_t^j\varphi\partial_t^{1+j}\varphi dxdt,\\
NL_{k,8}^j:=&-\int_0^T\frac{1}{2}\left.\left(\frac{c^2r^3}{\rho^2}\partial_x\rho\right)\right|_{x=0}\frac{1}{2^k+1}\left.\left(\partial_t^j\varphi\right)^2\right|_{x=0}dt.
\end{aligned}$$
Note that $\frac{2^k}{(2^k+\xi)^2}\simeq\frac{1}{\xi}$ and $\frac{2^k}{\xi(2^k+\xi)^3}\simeq\frac{1}{\xi^3}$ for $\xi\in[2^k,2^{k+1}]$. Summing in $k$ with $2^k\leq T$, or equivalently, for $k\leq K$ with $K:=\max\{-1,\lfloor\log_2 T\rfloor\}$ gives 
\begin{equation}\begin{aligned}
&\int_0^T\int_0^\infty\frac{1}{\xi}\left(\left(\partial_t^{1+j}\varphi\right)^2+\left(cr^2\partial_x\partial_t^j\varphi\right)^2\right)dxdt\\
\leq& \sum_{k=0}^K
\int_0^T\int_{2^k\leq\xi<2^{k+1}}\frac{1}{\xi}\left(\left(\partial_t^{1+j}\varphi\right)^2+\left(cr^2\partial_x\partial_t^j\varphi\right)^2\right)dxdt\\
&+\int_0^T\int_{\xi\geq 2^{K+1}}\frac{1}{\xi}\left(\left(\partial_t^{1+j}\varphi\right)^2+\left(cr^2\partial_x\partial_t^j\varphi\right)^2\right)dxdt\\
\lesssim&\sum_{k=0}^K\int_0^T\int_0^\infty\frac{r^2}{2\xi^2}\frac{2^k}{(2^k+\xi)^2}\left(\left(\partial_t^{1+j}\varphi\right)^2+\left(cr^2\partial_x\partial_t^j\varphi\right)^2\right)dxdt\\
&+\max_{0\leq t\leq T}\int_0^\infty\left(\left(\partial_t^{1+j}\varphi\right)^2+\left(cr^2\partial_x\partial_t^j\varphi\right)^2\right)dx,
\end{aligned}\label{4.6}\end{equation}
where we used $2^{-(K+1)}\leq T^{-1}$ and (\ref{3.2}), and similarly, 
\begin{equation}\begin{aligned}
&\int_0^T\int_0^\infty\frac{1}{\xi^3}\left(\partial_t^j\varphi\right)^2dxdt\\
\leq&\sum_{k=0}^K\int_0^T\int_{2^k\leq\xi<2^{k+1}}\frac{1}{\xi^3}\left(\partial_t^j\varphi\right)^2dxdt+\int_0^T\int_{\xi\geq 2^{K+1}}^\infty\frac{1}{\xi^3}\left(\partial_t^j\varphi\right)^2dxdt\\
\lesssim &\sum_{k=0}^K\int_0^T\int_0^\infty\frac{1}{2}\frac{c^2}{\rho^2}\left(1+\frac{\rho r^3}{\xi^3}\right)\frac{2^k}{\xi(2^k+\xi)^3}\left(\partial_t^j\varphi\right)^2dxdt+\max_{0\leq t\leq T}\int_0^\infty\frac{1}{\xi^2}\left(\partial_t^j\varphi\right)^2dx.
\end{aligned}\label{4.7}\end{equation}
\textbf{Step 2.} In this part, we shall bound the nonlinearities by the left hand of (\ref{4.6}) and (\ref{4.7}) with the help of (\ref{3.1}-\ref{3.10}). Since we are using the new unknown $\varphi$, it is necessary to build a bridge between $\varphi$ and $u$, $\rho$. Recall $\frac{c}{\rho}u=c r^2\partial_x\varphi$ and (\ref{rho-phi}) that $\partial_t\varphi+\frac{Ca\gamma}{2(\gamma-1)}(\rho^{\gamma-1}-1)=\frac{1}{2}u^2$. By taking time derivatives, the following equations hold:
$$\partial_t^2\varphi-\frac{c^2}{\rho}\partial_tq=u\partial_tu,$$
$$\partial_t^3\varphi-\frac{c^2}{\rho}\partial_t^2 q=\partial_t\left(\frac{c^2}{\rho}\right)\partial_tq+\left(\partial_tu\right)^2+u\partial_t^2u.$$
Similarly, taking time derivatives on $\partial_x\varphi=\frac{u}{\rho r^2}$ yields
$$cr^2\partial_x\partial_t\varphi=\frac{c}{\rho}\partial_t u+cu\left(\partial_tq-\frac{2u}{\rho r}\right),$$
$$cr^2\partial_x\partial_t^2\varphi=\frac{c}{\rho}\partial_t^2u+2c\left(\partial_tq-\frac{2u}{\rho r}\right)\partial_tu+cu\left(-\frac{2}{\rho}\frac{\partial_tu}{r}+\frac{6u^2}{\rho r^2}-\frac{4u\partial_tq}{r}+\partial_t^2q\right).$$
Hence by (\ref{3.6})(\ref{3.7})
\begin{equation}\begin{aligned}
\left|\partial_t\varphi-\frac{c^2}{\rho}\left(\rho^{\gamma-1}-1\right)\right|\lesssim\epsilon\xi^{-1}|u|\simeq&\epsilon\xi^{-1}|cr^2\partial_x\varphi|,\\
\left|\partial_t^2\varphi-\frac{c^2}{\rho}\partial_tq\right|\lesssim\epsilon\xi^{-1}|u|\simeq\epsilon\xi^{-1}&|cr^2\partial_x\varphi|,\\
\left|\partial_t^3\varphi-\frac{c^2}{\rho}\partial_t^2q\right|\lesssim\epsilon\xi^{-1}(|\partial_tq|+|\partial_t&u|+|\partial_t^2u|),
\end{aligned}\label{4.8}\end{equation}
\begin{equation}\begin{aligned}
|cr^2\partial_x\partial_t\varphi-\frac{c}{\rho}\partial_tu|\lesssim\epsilon\xi^{-1}|u|\simeq\epsilon\xi^{-1}|cr^2\partial_x\varphi|&,\\
|cr^2\partial_x\partial_t^2\varphi-\frac{c}{\rho}\partial_t^2u|\lesssim\epsilon\xi^{-1}(|u|+|\partial_tu|+|\partial_t^2q|&).
\end{aligned}\label{4.9}\end{equation}
Moreover, for sufficiently small $\epsilon_0$ and $\epsilon<\epsilon_0$, (\ref{4.8})(\ref{4.9}) further implies that
\begin{equation}\begin{aligned}
|\partial_t^2u|+|\partial_t^2q|\lesssim&|cr^2\partial_x\partial_t^2\varphi|+|\partial_t^3\varphi|+\epsilon\xi^{-1}(|u|+|\partial_tq|+|\partial_tu|)\\
\lesssim&|cr^2\partial_x\partial_t^2\varphi|+|\partial_t^3\varphi|+\epsilon\xi^{-1}(|cr^2\partial_x\varphi|+|cr^2\partial_x\partial_t\varphi|+|\partial_t^2\varphi|),
\end{aligned}\label{4.10}\end{equation}
and consequently
\begin{equation}
|\partial_t^2u-\rho r^2\partial_x\partial_t^2\varphi|\lesssim\epsilon\xi^{-1}\sum_{j=0}^2\left(|cr^2\partial_x\partial_t^j\varphi|+|\partial_t^{j+1}\varphi|\right),
\label{4.11}\end{equation}
\begin{equation}
|\partial_t^2q-\frac{\rho}{c^2}\partial_t^3\varphi|\lesssim\epsilon\xi^{-1}\sum_{j=0}^2\left(|cr^2\partial_x\partial_t^j\varphi|+|\partial_t^{j+1}\varphi|\right).
\label{4.12}\end{equation}
$\bullet$ \textbf{$NL_{1,k}^j$ terms.}
We use the bootstrap bounds (\ref{3.6})(\ref{3.8}) to compute
$$\left|\frac{c^2}{\rho^2}\left(1-\frac{\rho r^3}{\xi^3}\right)\frac{2^{2k}}{\xi^2(2^k+\xi)^3}\right|\lesssim\left(|\rho-1|+\rho\left|\frac{r^3}{\xi^3}-1\right|\right)\frac{2^{2k}}{\xi^2(2^k+\xi)^3}\lesssim\epsilon\frac{2^{2k}}{\xi^3(2^k+\xi)^3},$$
$$\left|\partial_x\left(\frac{c^2r^3}{\rho\xi^3}\right)\frac{2^k\xi}{(2^k+\xi)^2}\right|\lesssim\left|\partial_x\left(\rho^\gamma\frac{r^3}{\xi^3}\right)\right|\frac{2^k\xi}{(2^k+\xi)^2}\lesssim\epsilon\frac{2^k}{\xi^2(2^k+\xi)^2},$$
$$\left|\partial_x\left(\frac{c^2}{\rho^2}\right)\frac{\xi}{2^k+\xi}\right|\lesssim|\partial_x(\rho^{\gamma-1})|\frac{\xi}{2^k+\xi}\lesssim\epsilon\frac{1}{\xi^2(2^k+\xi)},$$
$$\left|\frac{c^2r^3\partial_x\rho}{\rho^2\xi^2}\frac{2^k}{(2^k+\xi)^2}\right|=\left|c^2r^3\partial_xq\right|\frac{2^k}{\xi^2(2^k+\xi)^2}\lesssim\epsilon\frac{2^k}{\xi^2(2^k+\xi)^2}.$$
Noting that $L^2q=\partial_x\left(c^2r^4\partial_xq\right)$, it follows that
$$\begin{aligned}\left|\partial_x\left(\frac{c^2r^3\partial_x\rho}{\rho^2}\right)\frac{\xi}{2^k+\xi}\right|=&\left|\partial_x(c^2r^3\partial_xq)\right|\frac{\xi}{2^k+\xi}
\leq\left|\frac{c^2}{\rho}\partial_xq\right|\frac{\xi}{2^k+\xi}+\frac{1}{r}|L^2q|\frac{\xi}{2^k+\xi}\\
\lesssim&\frac{\epsilon}{\xi^2(2^k+\xi)}+|L^2q|\frac{1}{2^k+\xi}.
\end{aligned}$$
Using the above bounds, we estimate $NL_{1,k}^j$ except the term involving $\partial_x\left(\frac{c^2r^3\partial_x\rho}{\rho^2}\right)$:
$$\begin{aligned}
&\sum_{k=0}^K\left|\int_0^T\int_0^\infty\left[\frac{1}{2}\frac{c^2}{\rho^2}\left(1-\frac{\rho r^3}{\xi^3}\right)\frac{2^{2k}}{\xi^2(2^k+\xi)^3}-\frac{1}{2}\partial_x\left(\frac{c^2r^3}{\rho \xi^3}\right)\frac{2^k\xi}{(2^k+\xi)^2}\right.\right.\\
&\left.\left.+\frac{1}{2}\partial_x\left(\frac{c^2}{\rho^2}\right)\frac{\xi}{2^k+\xi}+\frac{1}{2}\frac{c^2r^3\partial_x\rho}{\rho^2\xi^2}\frac{2^k}{(2^k+\xi)^2}\right]\left(\partial_t^j\varphi\right)^2dxdt\right|\\
\lesssim&\epsilon\sum_{k=0}^K\int_0^T\int_0^\infty\left(\frac{2^{2k}}{\xi^3(2^k+\xi)^3}+\frac{2^k}{\xi^2(2^k+\xi)^2}+\frac{1}{\xi^2(2^k+\xi)}\right)\left|\partial_t^j\varphi\right|^2dxdt\\
\lesssim&\epsilon(1+K)\int_0^T\int_0^\infty\xi^{-3}\left|\partial_t^j\varphi\right|^2dxdt,\\
\lesssim&\kappa\int_0^T\int_0^\infty\xi^{-3}\left|\partial_t^j\varphi\right|^2dxdt.
\end{aligned}$$
The last inequality holds due to the assumption (\ref{3.11}) and that
\begin{equation}
\epsilon(1+K)\leq\epsilon\max\{0,1+\lfloor\log_2 T\rfloor\}\lesssim\kappa.
\label{4.13}\end{equation}
Next, we integrate by parts and obtain
$$\begin{aligned}
&\left|\int_0^T\int_0^\infty\frac{1}{2}\partial_x\left(\frac{c^2r^3\partial_x\rho}{\rho^2}\right)\frac{\xi}{2^k+\xi}\left(\partial_t^j\varphi\right)^2dxdt\right|\\
\leq&\left|\int_0^T\frac{1}{2}\left(c^2r^3\partial_xq\frac{\xi}{2^k+\xi}\left(\partial_t^j\varphi\right)^2\right)|_{x=0}dt\right|\\
&+\left|\int_0^T\int_0^\infty\frac{1}{2}cr^2r^3\partial_xq\left(\frac{2^k}{\xi^2(2^k+\xi)^2}\left(\partial_t^j\varphi\right)^2+\frac{2\xi}{2^k+\xi}\partial_t^j\varphi\partial_x\partial_t^j\varphi\right)dxdt\right|\\
\lesssim&\epsilon\left(\int_0^T\int_0^\infty\frac{2^k}{\xi^2(2^k+\xi)^2}\left(\partial_t^j\varphi\right)^2dxdt+\int_0^T\int_0^\infty\frac{1}{2^k+\xi}\left(cr^2\partial_x\partial_t^j\varphi\right)^2dxdt\right)\\
&+\epsilon\frac{1}{2^k+1}\int_0^T\left.\left(\partial_t^j\varphi\right)^2\right|_{x=0}dt.
\end{aligned}$$
Then sum in $k$ and use (\ref{4.13}) again:
$$\begin{aligned}
&\sum_{k=0}^K\left|\int_0^T\int_0^\infty\frac{1}{2}\partial_x\left(\frac{c^2r^3\partial_x\rho}{\rho^2}\right)\frac{\xi}{2^k+\xi}\left(\partial_t^j\varphi\right)^2dxdt\right|\\
\lesssim&\kappa\int_0^T\int_0^\infty\left[\xi^{-3}\left(\partial_t^j\varphi\right)^2+\xi^{-1}\left(cr^2\partial_x\partial_t^j\varphi\right)^2\right]dxdt+\epsilon\int_0^T\left.\left(\partial_t^j\varphi\right)^2\right|_{x=0}dt.
\end{aligned}$$
Therefore, we conclude with the control of $NL_{1,k}^j$:
\begin{equation}\begin{aligned}
&\sum_{k=0}^K\left|NL_{1,k}^j\right|\\
\lesssim&\kappa\int_0^T\int_0^\infty\left[\xi^{-3}\left(\partial_t^j\varphi\right)^2+\xi^{-1}\left(cr^2\partial_x\partial_t^j\varphi\right)^2\right]dxdt+\epsilon\int_0^T\left.\left(\partial_t^j\varphi\right)^2\right|_{x=0}dt.
\end{aligned}\label{4.14}\end{equation}
$\bullet$\textbf{ $NL_{2,k}^j$, $NL_{3,k}^j$, $NL_{4,k}^j$, $NL_{8,k}^j$ terms}. These terms can be bound easily by using (\ref{3.6})(\ref{3.7}) and (\ref{4.13}):
\begin{equation}\begin{aligned}
\sum_{k=0}^K\left|NL_{2,k}^j\right|\leq&\sum_{k=0}^K\int_0^T\int_0^\infty\left|\frac{r^2\partial_xc}{c}\right|\frac{\xi}{2^k+\xi}\left(cr^2\partial_x\partial_t^j\varphi\right)^2dxdt\\
\lesssim&\epsilon\sum_{k=0}^K\int_0^T\int_0^\infty\frac{1}{2^k+\xi}\left(cr^2\partial_x\partial_t^j\varphi\right)^2dxdt\\
\lesssim&\kappa\int_0^T\int_0^\infty\xi^{-1}\left(cr^2\partial_x\partial_t^j\varphi\right)^2dxdt,
\end{aligned}\label{4.15}\end{equation}
\begin{equation}\begin{aligned}
\sum_{k=0}^K\left|NL_{3,k}^j\right|\lesssim&\sum_{k=0}^K\int_0^T\int_0^\infty\left|\frac{\partial_t(cr^2)}{c^2r^2}\right|\frac{\xi}{2^k+\xi}\left(\left(\partial_t^{1+j}\varphi\right)^2+\left(cr^2\partial_x\partial_t^j\varphi\right)^2\right)dxdt\\
\lesssim&\epsilon\sum_{k=0}^K\int_0^T\int_0^\infty\frac{1}{2^k+\xi}\left(\left(\partial_t^{1+j}\varphi\right)^2+\left(cr^2\partial_x\partial_t^j\varphi\right)^2\right)dxdt\\
\lesssim&\kappa\left(\int_0^T\int_0^\infty\xi^{-1}\left(\partial_t^{1+j}\varphi\right)^2dxdt+\int_0^T\int_0^\infty\xi^{-1}\left(cr^2\partial_x\partial_t^j\varphi\right)^2dxdt\right),
\end{aligned}\label{4.16}\end{equation}
\begin{equation}\begin{aligned}
\sum_{k=0}^K\left|NL_{4,k}^j\right|\lesssim&\sum_{k=0}^K\int_0^T\int_0^\infty\frac{2}{\rho r}\frac{1}{2^k+\xi}|r^2\partial_xc|\left(\xi^{-2}\left(\partial_t^j\varphi\right)^2+\left(cr^2\partial_x\partial_t^j\varphi\right)^2\right)dxdt,\\
\lesssim&\epsilon\sum_{k=0}^K\int_0^T\int_0^\infty\frac{1}{2^k+\xi}\left(\xi^{-2}\left(\partial_t^j\varphi\right)^2+\left(cr^2\partial_x\partial_t^j\varphi\right)^2\right)dxdt\\
\lesssim&\kappa\left(\int_0^T\int_0^\infty\xi^{-3}\left(\partial_t^j\varphi\right)^2dxdt+\int_0^T\int_0^\infty\xi^{-1}\left(cr^2\partial_x\partial_t^j\varphi\right)^2dxdt\right),
\end{aligned}\label{4.17}\end{equation}
\begin{equation}\begin{aligned}
\sum_{k=0}^K\left|NL_{8,k}^j\right|\leq&\sum_{k=0}^K\int_0^T\frac{1}{2}\left.\left|c^2r^3\partial_xq\right|_{x=0}\right|\frac{1}{2^k+1}\left.\left(\partial_t^j\varphi\right)^2\right|_{x=0}dt\\
\lesssim&\epsilon\int_0^T\left.\left(\partial_t^j\varphi\right)^2\right|_{x=0}dt.
\end{aligned}\label{4.18}\end{equation}
$\bullet$\textbf{ $NL_{5,k}^j$ term.} We expand the commutators and use (\ref{phiwave}) to obtain:
$$\begin{aligned}
\relax&[\partial_t^j,c^2\partial_xr^4\partial_x]\varphi\\
=&[\partial_t^j,c^2]\left(\partial_x(r^4\partial_x\varphi)\right)+c^2\partial_x\left([\partial_t^j,r^4]\partial_x\varphi\right)\\
=&[\partial_t^j,c^2]\left(c^{-2}\partial_t^2\varphi\right)+c^2\partial_x\left([\partial_t^j,r^4]r^{-4}r^4\partial_x\varphi\right)\\
=&[\partial_t^j,c^2]\left(c^{-2}\partial_t^2\varphi\right)+c^2[\partial_t^j,r^4]r^{-4}\left(c^{-2}\partial_t^2\varphi\right)+c^2\left[\partial_x,[\partial_t^j,r^4]r^{-4}\right]r^4\partial_x\varphi.
\end{aligned}$$
For $j=1$, we compute:
$$\begin{aligned}
\relax[\partial_t,c^2\partial_xr^4\partial_x]\varphi=\left(\frac{\partial_tc^2}{c^2}+\frac{\partial_tr^4}{r^4}\right)\partial_t^2\varphi+c^2\partial_x\left(\frac{\partial_tr^4}{r^4}\right)r^4\partial_x\varphi
\end{aligned}$$
and using (\ref{3.6}):
$$\left|[\partial_t,c^2\partial_xr^4\partial_x]\varphi\right|\lesssim\epsilon\xi^{-1}\left|\partial_t^2\varphi\right|+\epsilon\xi^{-2}\left|cr^2\partial_x\varphi\right|.$$
For $j=2$, the commutator reads
$$\begin{aligned}
\relax&[\partial_t^2,c^2\partial_xr^4\partial_x]\varphi\\
=&\left(\partial_t^2c^2+2\partial_tc^2\partial_t\right)\left(c^{-2}\partial_t^2\varphi\right)+c^2\left(\frac{\partial_t^2r^4}{r^4}+2\frac{\partial_tr^4}{r^4}\partial_t+2\partial_tr^4\partial_tr^{-4}\right)\left(c^{-2}\partial_t^2\varphi\right)\\
&+c^2\left[\partial_x,\frac{\partial_t^2r^4}{r^4}+2\frac{\partial_tr^4}{r^4}\partial_t+2\partial_tr^4\partial_tr^{-4}\right]r^4\partial_x\varphi\\
=&\left(2\frac{\partial_tc^2}{c^2}+2\frac{\partial_tr^4}{r^4}\right)\partial_t^3\varphi\\
&+\left(\frac{\partial_t^2c^2}{c^2}+2\partial_tc^2\partial_tc^{-2}+\frac{\partial_t^2r^4}{r^4}+2\partial_tr^4\partial_tr^{-4}+2\frac{\partial_tr^4}{r^4}c^2\partial_tc^{-2}\right)\partial_t^2\varphi\\
&+c^2r^2\left(\partial_x\left(\frac{\partial_t^2r^4}{r^4}\right)+2\partial_x\left(\partial_tr^4\partial_tr^{-4}\right)\right)r^2\partial_x\varphi\\&
+2c^2r^2\partial_x\left(\frac{\partial_tr^4}{r^4}\right)\left(r^2\partial_x\partial_t\varphi+\frac{\partial_tr^4}{r^4}r^2\partial_x\varphi\right).
\end{aligned}$$
We also compute:
$$\frac{\partial_t^2c^2}{c^2}=-\left(\gamma+1\right)\frac{\partial_t(c^2\rho\partial_tq)}{c^2}=-\left(\gamma+1\right)\rho\partial_t^2q+\left(\gamma+1\right)\left(\gamma+2\right)c^2\rho^2(\partial_tq)^2,$$
$$\frac{\partial_t^2r^4}{r^4}=4\frac{\partial_tu}{r}+12\frac{u^2}{r^2},$$
$$\partial_x\left(\frac{\partial_tr^4}{r^4}\right)=4\partial_x\left(\frac{u}{r}\right)=4r^{-3}\left(\partial_tq-3\rho^{-1}u\right),$$
$$\begin{aligned}
\partial_x\left(\frac{\partial_t^2r^4}{r^4}\right)=&4\partial_x\left(\frac{\partial_t(r^2u)}{r^3}+\frac{u^2}{r^2}\right)\\
=&4r^{-3}\left(\partial_t^2q-\frac{3}{\rho r^3}\left(2ru^2+r^2\partial_tu\right)-\frac{6u^2}{\rho r^2}+\frac{2u}{r}\partial_tq\right),
\end{aligned}$$
where we used that $\partial_t\partial_x(r^2u)=\partial_t^2q$ to avoid the presence of mixed derivative.
Therefore, it follows from (\ref{3.6})(\ref{3.7}) that
$$\begin{aligned}
\left|[\partial_t^2,c^2\partial_xr^4\partial_x]\varphi\right|\lesssim&\epsilon\xi^{-1}|\partial_t^3\varphi|+\epsilon\xi^{-2}\left(|\partial_t^2\varphi|+|cr^2\partial_x\partial_t\varphi|+\xi^{-1}|cr^2\partial_x\varphi|\right)\\
&+|\partial_t^2q|\left(|\partial_t^2\varphi|+\xi^{-1}|cr^2\partial_x\varphi|\right).
\end{aligned}$$
Then we conclude in view of (\ref{4.8})(\ref{4.10}) that 
\begin{equation}\begin{aligned}
\sum_{k=0}^K\left(\left|NL_{5,k}^1\right|+\left|NL_{5,k}^2\right|\right)
\lesssim&\epsilon\sum_{k=0}^K\int_0^T\int_0^\infty\frac{1}{2^k+\xi}\sum_{i=1}^2\left(|cr^2\partial_x\partial_t^i\varphi|+\xi^{-1}|\partial_t^i\varphi|\right)\\
&\cdot\left(|\partial_t^3\varphi|+|\partial_t^2\varphi|+|cr^2\partial_x\partial_t\varphi|+|cr^2\partial_x\varphi|\right)dxdt\\
\lesssim&\kappa\sum_{i=0}^2\int_0^T\int_0^\infty\xi^{-1}\left(\left(\partial_t^{1+j}\varphi\right)^2+\left(cr^2\partial_x\partial_t^j\varphi\right)^2\right)dxdt.
\end{aligned}\label{4.19}\end{equation}
$\bullet$\textbf{ $NL_{6,k}^j$, $NL_{7,k}^j$ terms.}
We estimate by (\ref{3.6})(\ref{3.7}) that
\begin{equation}\begin{aligned}
\sum_{k=0}^K\left|NL_{6,k}^j\right|\leq&\sum_{k=0}^K\int_0^T\int_0^\infty\left|\partial_tc^{-1}\right|\frac{\xi}{2^k+\xi}|\partial_t^{1+j}\varphi||cr^2\partial_x\partial_t^j\varphi|dxdt\\
\lesssim&\epsilon\sum_{k=0}^K\int_0^T\int_0^\infty\frac{1}{2^k+\xi}|\partial_t^{1+j}\varphi||cr^2\partial_x\partial_t^j\varphi|dxdt\\
\lesssim&\kappa\left(\int_0^T\int_0^\infty\xi^{-1}\left(\partial_t^{1+j}\varphi\right)^2dxdt+\int_0^T\int_0^\infty\xi^{-1}\left(cr^2\partial_x\partial_t^j\varphi\right)^2\right)dxdt,
\end{aligned}\label{4.20}\end{equation}
\begin{equation}\begin{aligned}
\sum_{k=0}^K\left|NL_{7,k}^j\right|\leq&\sum_{k=0}^K\int_0^T\int_0^\infty\left|\partial_t\left(\frac{1}{\rho r}\right)\right|\frac{\xi}{2^k+\xi}|\partial_t^j\varphi||\partial_t^{1+j}\varphi|dxdt\\
\lesssim&\epsilon\sum_{k=0}^K\int_0^T\int_0^\infty\xi^{-1}\frac{1}{2^k+\xi}|\partial_t^j\varphi||\partial_t^{1+j}\varphi|dxdt\\
\lesssim&\kappa\left(\int_0^T\int_0^\infty\xi^{-3}\left(\partial_t^j\varphi\right)^2dxdt+\int_0^T\int_0^\infty\xi^{-1}\left(\partial_t^{1+j}\varphi\right)^2dxdt\right).
\end{aligned}\label{4.21}\end{equation}
\textbf{Step 3.} Now we combine (\ref{4.5})(\ref{4.6})(\ref{4.7}) to obtain:
\begin{equation}\begin{aligned}
&\int_0^T\int_0^\infty\xi^{-1}\left(\left(\partial_t^{1+j}\varphi\right)^2+\left(cr^2\partial_x\partial_t^j\varphi\right)^2\right)dxdt+\int_0^T\int_0^\infty\xi^{-3}\left(\partial_t^j\varphi\right)^2dxdt\\
&+\int_0^T\left.\left(\partial_t^{1+j}\varphi\right)^2\right|_{x=0}dt+\int_0^T\left.\left(cr^2\partial_x\partial_t^j\varphi+\frac{c}{\rho r}\partial_t^j\varphi\right)^2\right|_{x=0}dt\\
\lesssim&\sum_{k=0}^K\left\{\left|\int_0^\infty P_{0,k}^j(x,0)dx\right|+\left|\int_0^\infty P_{0,k}^j(x,T)dx\right|+\sum_{l=1}^8\left|NL_{l,k}^j\right|\right\}\\
&+\int_0^T\left.\left(\partial_t^j\varphi\right)^2\right|_{x=0}dt+\max_{0\leq t\leq T}\int_0^\infty\left(\left(\partial_t^{1+j}\varphi\right)^2+\left(cr^2\partial_x\partial_t^j\varphi\right)^2+\xi^{-2}\left(\partial_t^j\varphi\right)^2\right)dx.
\end{aligned}\label{4.22}\end{equation}
By the explicit expression of $P_{0,k}^j$ and Hardy's inequality, it follows that
$$\begin{aligned}
\sum_{k=0}^K\left|\int_0^\infty P_{0,k}^j(x,t)dx\right|\lesssim& (1+K)\int_0^\infty\left(\left(\partial_t^{1+j}\varphi\right)^2+\left(cr^2\partial_x\partial_t^j\varphi\right)^2+\xi^{-2}\left(\partial_t^j\varphi\right)^2\right)dx\\
\lesssim&\max\left\{\log_2T,1\right\}\int_0^\infty\left(\left(\partial_t^{1+j}\varphi\right)^2+\left(cr^2\partial_x\partial_t^j\varphi\right)^2\right)dx.
\end{aligned}$$
Moreover, by taking a suitable linear combination of (\ref{4.22}) for $j=0,1,2$, we can cancel the boundary term on the right-hand side except the one with the lowest order. Then we conclude by using the bounds (\ref{4.14}-\ref{4.21}) of nonlinearities:
$$\begin{aligned}
&\sum_{j=0}^2\left\{\int_0^T\int_0^\infty\xi^{-1}\left(\left(\partial_t^{1+j}\varphi\right)^2+\left(cr^2\partial_x\partial_t^j\varphi\right)^2\right)dxdt+\int_0^T\int_0^\infty\xi^{-3}\left(\partial_t^j\varphi\right)^2dxdt\right.\\
&+\left.\int_0^T\left.\left(\partial_t^{1+j}\varphi\right)^2\right|_{x=0}dt+\int_0^T\left.\left(cr^2\partial_x\partial_t^j\varphi+\frac{c}{\rho r}\partial_t^j\varphi\right)^2\right|_{x=0}dt\right\}\\
\lesssim&\max\left\{\log_2T,1\right\}\sum_{j=0}^2\max_{0\leq t\leq T}\int_0^\infty\left(\left(\partial_t^{1+j}\varphi\right)^2+\left(cr^2\partial_x\partial_t^j\varphi\right)^2\right)dx+\int_0^T\varphi^2|_{x=0}dt\\
&+\sum_{j=0}^2\sum_{k=0}^K\sum_{l=1}^8\left|NL_{l,k}^j\right|\\
\lesssim&\max\left\{\log_2T,1\right\}\sum_{j=0}^2\max_{0\leq t\leq T}\int_0^\infty\left(\left(\partial_t^{1+j}\varphi\right)^2+\left(cr^2\partial_x\partial_t^j\varphi\right)^2\right)dx+\int_0^T\varphi^2|_{x=0}dt\\
&+\kappa\sum_{j=0}^2\left\{\int_0^T\int_0^\infty\left[\xi^{-1}\left(\left(\partial_t^{1+j}\varphi\right)^2+\left(cr^2\partial_x\partial_t^j\varphi\right)^2\right)+\xi^{-3}\left(\partial_t^j\varphi\right)^2\right]dxdt\right\}\\
&+\epsilon\sum_{j=0}^2\int_0^T\left.\left(\partial_t^j\varphi\right)^2\right|_{x=0}dt.
\end{aligned}$$
Hence for sufficiently small $\kappa_0$ and $\epsilon_0$ we arrive at (\ref{4.1}).
\end{proof}
\section{Derivation of precise boundary ODE}\label{sec5}
In order to handle the boundary value of $\varphi$ present on the right-hand side of (\ref{4.1}), we derive in this section the equation satisfied by $\varphi|_{x=0}$. Introduce $\psi:=r\varphi$, and substitute $\varphi$ by $\frac{\psi}{r}$ in (\ref{phiwave}) yields
$$\begin{aligned}
0=&r^{-1}\partial_t^2\psi+2\partial_tr^{-1}\partial_t\psi+\partial_t^2r^{-1}\psi-c^2\partial_x\left(r^3\partial_x\psi+r^4\partial_xr^{-1}\psi\right)\\
=&r^{-1}\left(\partial_t^2\psi+2r\partial_tr^{-1}\partial_t\psi-c^2r^2\partial_x\left(r^2\partial_x\psi\right)\right)+\left(\partial_t^2r^{-1}-c^2\partial_x\left(r^4\partial_xr^{-1}\right)\right)\psi.
\end{aligned}$$
On the other hand, from (\ref{mmte0}) it follows
$$\begin{aligned}
0=\partial_tu+\frac{Ca}{2}r^2\partial_x(\rho^\gamma)
=\partial_t^2r-\frac{Ca\gamma}{2}\rho^{\gamma+1}r^2\partial_x\rho^{-1}=\partial_tr^2-c^2r^2\partial_x(r^2\partial_xr),
\end{aligned}$$
and thus
$$\partial_t^2r^{-1}-c^2\partial_x\left(r^4\partial_xr^{-1}\right)=\partial_t^2r^{-1}+c^2\partial_x(r^2\partial_xr)=\partial_t^2r^{-1}+r^{-2}\partial_t^2r=2r^{-3}(\partial_tr)^2.$$
Henceforth $$0=\partial_t^2\psi-c^2r^2\partial_x\left(r^2\partial_x\psi\right)-2\frac{u}{r}\partial_t\psi+2\frac{u^2}{r^2}\psi.$$
We rewrite the above equation to a hyperbolic system of the form
$$\begin{aligned}
0=&\left(\partial_t+cr^2\partial_x\right)\left(\partial_t-cr^2\partial_x\right)\psi+\left(\frac{\partial_tc+cr^2\partial_xc}{c}+\frac{2u}{r}\right)cr^2\partial_x\psi-\frac{2u}{r}\partial_t\psi+\frac{2u^2}{r^2}\psi\\
=&c^\frac{1}{2}\left(\partial_t+cr^2\partial_x\right)\left(c^{-\frac{1}{2}}\left(\partial_t-cr^2\partial_x\right)\psi\right)+\left(\frac{\partial_tc+cr^2\partial_xc}{2c}\right)\left(\partial_t+cr^2\partial_x\right)\psi\\
&-\frac{2u}{r}\left(\partial_t-cr^2\partial_x\right)\psi+\frac{2u^2}{r^2}\psi,
\end{aligned}$$
and similarly,
$$\begin{aligned}
0=&c^\frac{1}{2}\left(\partial_t-cr^2\partial_x\right)\left(c^{-\frac{1}{2}}\left(\partial_t+cr^2\partial_x\right)\psi\right)+\left(\frac{\partial_tc-cr^2\partial_xc}{2c}\right)\left(\partial_t-cr^2\partial_x\right)\psi\\
&-\frac{2u}{r}\left(\partial_t+cr^2\partial_x\right)\psi+\frac{2u^2}{r^2}\psi.
\end{aligned}$$
By introducing $w_F=\left(\partial_t-cr^2\partial_x\right)\psi$ and $w_B=\left(\partial_t+cr^2\partial_x\right)\psi$, which in fact stand for the forward and backward pressure wave, we see that $w_F$, $w_B$ satisfy the transport equations for $(x,t)\in\mathbb{R}_+\times [0,T]$:
\begin{equation}
0=\left(\partial_t+cr^2\partial_x\right)\frac{w_F}{c^\frac{1}{2}}+\left(\frac{\partial_tc+cr^2\partial_xc}{2c}\right)\frac{w_B}{c^\frac{1}{2}}-\frac{2u}{r}\frac{w_F}{c^\frac{1}{2}}+\frac{2u^2}{r^2}\frac{\psi}{c^\frac{1}{2}},
\label{5.1}\end{equation}
\begin{equation}
0=\left(\partial_t-cr^2\partial_x\right)\frac{w_B}{c^\frac{1}{2}}+\left(\frac{\partial_tc-cr^2\partial_xc}{2c}\right)\frac{w_F}{c^\frac{1}{2}}-\frac{2u}{r}\frac{w_B}{c^\frac{1}{2}}+\frac{2u^2}{r^2}\frac{\psi}{c^\frac{1}{2}}.
\label{5.2}\end{equation}
One can check that (\ref{5.1})(\ref{5.2}) is genuinely nonlinear in $w_F$, $w_B$, and this is actually a main feature of the isentropic compressible Euler equations. In view of Majda's conjecture \cite[Page 89]{a4a7096a663d4b1b9d1e78bc9abf6205}, it is reasonable not to expect a global solution. To solve (\ref{5.1})(\ref{5.2}) as a hyperbolic system on the half line, we shall need the boundary data of $w_F$. Recalling (\ref{dbcb0}) and taking time derivative yields
$$\left.\frac{Ca\gamma}{2}\rho^{\gamma-1}\partial_t\rho\right|_{x=0}=\tilde{f}^\prime(R)\left.\left(\rho r^2\partial_x\varphi\right)\right|_{x=0}=\tilde{f}^\prime(R)\left.\left(\rho r\partial_x\psi-r^{-2}\psi\right)\right|_{x=0}.$$
Then substituting $\rho^{\gamma}-1$ by (\ref{rho-phi}) gives
$$-\left.\partial_t\left(\partial_t\varphi-\frac{1}{2}u^2\right)\right|_{x=0}=\tilde{f}^\prime(R)\left.\left(r\partial_x\psi\right)\right|_{x=0}-\tilde{f}^\prime(R)\left.\left(\rho^{-1}r^{-2}\psi\right)\right|_{x=0}.$$
In order to obtain an equation of $\psi|_{x=0}$, we compute
$$\partial_t^2\varphi=r^{-1}\partial_t^2\psi+2\partial_tr^{-1}\partial_t\psi+\partial_t^2r^{-1}\psi=r^{-1}\partial_t^2\psi-2\frac{u}{r^2}\partial_t\psi-\left(\frac{\partial_tu}{r^2}-2\frac{u^2}{r^3}\right)\psi,$$
$$r^2\partial_x\psi=\frac{1}{c}\left(w_B-\partial_t\psi\right),$$
$$u=\rho r^2\partial_x\varphi=\rho r\partial_x\psi-r^{-2}\psi=\frac{\rho}{rc}\left(w_B-\partial_t\psi\right)-r^{-2}\psi.$$
Therefore, by writing $r|_{x=0}=R$ we obtain
$$\begin{aligned}
0=&\left\{\partial_t^2\psi-Ru\partial_tu-\left(\frac{\tilde{f}^\prime(R)}{c}+2R^{-1}u\right)\partial_t\psi\right.\\
&\left.\left.-\left(\tilde{f}^\prime(R)\rho^{-1}R^{-1}-2R^{-2}u^2+R^{-1}\partial_tu\right)\psi+\frac{\tilde{f}^\prime(R)}{c}w_B\right\}\right|_{x=0}\\
=&\left\{\partial_t^2\psi-\left(\frac{\tilde{f}^\prime(R)}{c}+2R^{-1}u-\frac{\rho\partial_tu}{c}\right)\partial_t\psi\right.\\
&\left.\left.-\left(\tilde{f}^\prime(R)\rho^{-1}R^{-1}-2R^{-2}u^2-R^{-1}\partial_tu\right)\psi+\left(\frac{\tilde{f}^\prime(R)}{c}-\frac{\rho\partial_tu}{c}\right)w_B\right\}\right|_{x=0}.
\end{aligned}$$
In short, we obtain
\begin{equation}\begin{aligned}
\partial_t^2\psi|_{x=0}+b_1\partial_t\psi|_{x=0}+b_0\psi|_{x=0}+aw_B|_{x=0}=0, 
\end{aligned}\label{5.3}\end{equation}
with
$$b_1(t):=-\left.\left(\frac{\tilde{f}^\prime(R)}{c}+2R^{-1}u-\frac{\rho\partial_tu}{c}\right)\right|_{x=0},$$
$$b_0(t):=\left.-\left(\tilde{f}^\prime(R)\rho^{-1}R^{-1}-2R^{-2}u^2-R^{-1}\partial_tu\right)\right|_{x=0},$$
$$a(t):=\left.\left(\frac{\tilde{f}^\prime(R)}{c}-\frac{\rho\partial_tu}{c}\right)\right|_{x=0}.$$
Moreover, we estimate these coefficients by using (\ref{3.6})(\ref{3.7}) and the smoothness of $\tilde{f}$ near $1$:
\begin{equation}\begin{aligned}
|b_1(t)+\frac{\tilde{f}^\prime(1)}{c_0}|\lesssim\epsilon,\;
|b_0(t)+\tilde{f}^\prime(1)|\lesssim\epsilon,\;
|a(t)-\frac{\tilde{f}^\prime(1)}{c_0}|\lesssim\epsilon.
\end{aligned}\label{5.4}\end{equation}
Let $\Lambda_1$, $\Lambda_2$ denote the two eigenvalues of the unperturbed equation, namely, the two roots of $\lambda^2-\frac{\tilde{f}^\prime(1)}{c_0}\lambda-\tilde{f}^\prime(1)$. Then we rewrite (\ref{5.3}) in the form of column vectors and diagonalize the resulted equation:
\begin{equation}\begin{aligned}
\partial_tY-\left[\begin{matrix}
\Lambda_1 &   \\
  & \Lambda_2
\end{matrix}\right]Y+\Delta Y+W=0,
\end{aligned}\label{5.5}\end{equation}
$$Y:=\left[\begin{matrix}
1 & -\Lambda_2 \\ 1 & -\Lambda_1
\end{matrix}\right]
\left[\begin{matrix}
\partial_t\psi|_{x=0} \\ \psi|_{x=0}
\end{matrix}\right],\;
\Delta=\frac{1}{\Lambda_1-\Lambda_2}\left[\begin{matrix}
\delta_1 & \delta_2\\
\delta_1 & \delta_2
\end{matrix}\right],\;
W=\left[\begin{matrix}
aw_B|_{x=0}\\ aw_B|_{x=0}
\end{matrix}\right],$$
$$\delta_1=\Lambda_1\left(b_1+\frac{\tilde{f}^\prime(1)}{c_0}\right)+\left(b_0+\tilde{f}^\prime(1)\right),\;
\delta_2=-\Lambda_2\left(b_1+\frac{\tilde{f}^\prime(1)}{c_0}\right)-\left(b_0+\tilde{f}^\prime(1)\right).$$
By solving (\ref{5.5}) it follows
\begin{equation}\begin{aligned}
Y(t)=&\exp\left[\begin{matrix}
\Lambda_1t & \\ & \Lambda_2t  
\end{matrix}\right]Y(0)
-\int_0^t\exp\left[\begin{matrix}
\Lambda_1(t-s) & \\ & \Lambda_2(t-s)
\end{matrix}\right]\left(\Delta Y\right)(s)ds\\
&-\int_0^t\exp\left[\begin{matrix}
\Lambda_1(t-s) & \\ & \Lambda_2(t-s)
\end{matrix}\right]W(s)ds.
\end{aligned}\label{5.6}\end{equation}
We remark here that from (\ref{dbcb0}) it holds
$\tilde{f}^\prime(1)=-3\gamma_0\left(\frac{Ca}{2}+\frac{2}{We}\right)+\frac{2}{We}<0$, which shows that $\text{Re}\Lambda_i<0$, $i=1,2$.
\section{The analysis of backward pressure wave}\label{sec6}
Recall the KSS type estimate (\ref{4.1}). To control the boundary term $\int_0^T\varphi^2|_{x=0}dt$, we have obtained in the previous section the integral equation (\ref{5.6}) satisfied by $\psi:=r^{-1}\varphi$ with the source term containing the backward pressure wave $w_B$. To close this argument, we will estimate $w_B$ in this section by using the equations (\ref{5.1})(\ref{5.2}), which form a hyperbolic system on the domain $\left\{(x,t):x\geq0, t\geq0\right\}$. Generally, in view of the signs of characteristic speeds, we will need the data of $w_B$ on $\{t=0\}$ and the data of $w_F$ on $\{t=0\}\cup\{x=0\}$ to solve the Cauchy problem associated with system (\ref{5.1})(\ref{5.2}), but in the present context, the data of $w_F$ on $\{x=0\}$ is instead given implicitly by equation (\ref{5.6}) and the relation $w_F=\left(\partial_t-cr^2\partial_x\right)\psi=2\partial_t\psi-w_B$.\\
Recall that $\xi=(1+3x)^\frac{1}{3}$ and we rewrite (\ref{5.1})(\ref{5.2}) as
\begin{equation}
0=\left(\partial_t+cr^2\xi^{-2}\partial_\xi\right)\frac{w_F}{c^\frac{1}{2}}+\left(\frac{\partial_tc+cr^2\xi^{-2}\partial_\xi c}{2c}\right)\frac{w_B}{c^\frac{1}{2}}-\frac{2u}{r}\frac{w_F}{c^\frac{1}{2}}+\frac{2u^2}{r^2}\frac{\psi}{c^\frac{1}{2}},
\label{6.1}\end{equation}
\begin{equation}
0=\left(\partial_t-cr^2\xi^{-2}\partial_\xi\right)\frac{w_B}{c^\frac{1}{2}}+\left(\frac{\partial_tc-cr^2\xi^{-2}\partial_\xi c}{2c}\right)\frac{w_F}{c^\frac{1}{2}}-\frac{2u}{r}\frac{w_B}{c^\frac{1}{2}}+\frac{2u^2}{r^2}\frac{\psi}{c^\frac{1}{2}}.
\label{6.2}\end{equation}
Now we begin the analysis by introducing the characteristics. Note that the bootstrap bounds (\ref{3.6})(\ref{3.7}) imply the Lipschitz continuity of $cr^2\xi^{-2}$ in both $\xi$ and $t$ argument. In fact, we have
\begin{equation}
\left|\partial_\xi(cr^2\xi^{-2})\right|+\left|\partial_t(cr^2\xi^{-2})\right|\lesssim\epsilon\xi^{-1}.
\label{lip}\end{equation}
For each $\xi\in[1,+\infty)$, $t\in[0,T]$, define $X_B$ as the unique solution (figure \ref{1}) to
$$
\left\{\begin{aligned}
&X_B(\xi,t;0)=(\xi,t),\\
&\frac{d}{ds}X_B(\xi,t;s)=\left(-\left(cr^2\xi^{-2}\right)(X_B(\xi,t;s)),1\right),
\end{aligned}\right.$$
and $X_F$ to be the unique solution to
$$
\left\{\begin{aligned}
&X_F(\xi,t;0)=(\xi,t),\\
&\frac{d}{ds}X_F(\xi,t;s)=\left(\left(cr^2\xi^{-2}\right)(X_F(\xi,t;s)),1\right).
\end{aligned}\right.$$
We then obtain the integral equation corresponding to (\ref{6.1})(\ref{6.2}): 
\begin{equation}\begin{aligned}
&\frac{w_F}{c^\frac{1}{2}}\left(X_F(\xi,t;s)\right)\\=&\exp\left\{\int_0^s\frac{2u}{r}\left(X_F(\xi,t;\sigma)\right)d\sigma\right\}\frac{w_F}{c^\frac{1}{2}}\left(X_F(\xi,t;0)\right)\\
&-\int_0^s\exp\left\{\int_0^{s-\sigma}\frac{2u}{r}\left(X_F(\xi,t;\tau)\right)d\tau\right\}\left(\frac{\partial_tc+cr^2\xi^{-2}c}{2c}\right)\frac{w_B}{c^\frac{1}{2}}\left(X_F(\xi,t;\sigma)\right)d\sigma\\
&-\int_0^s\exp\left\{\int_0^{s-\sigma}\frac{2u}{r}\left(X_F(\xi,t;\tau)\right)d\tau\right\}2r^{-2}u^2\frac{\psi}{c^\frac{1}{2}}\left(X_F(\xi,t;\sigma)\right)d\sigma,
\end{aligned}\label{6.3}\end{equation}
\begin{equation}\begin{aligned}
&\frac{w_B}{c^\frac{1}{2}}\left(X_B(\xi,t;s)\right)\\
=&\exp\left\{\int_0^s\frac{2u}{r}\left(X_B(\xi,t;\sigma)\right)d\sigma\right\}\frac{w_B}{c^\frac{1}{2}}\left(X_B(\xi,t;0)\right)\\
&-\int_0^s\exp\left\{\int_0^{s-\sigma}\frac{2u}{r}\left(X_B(\xi,t;\tau)\right)d\tau\right\}\left(\frac{\partial_tc+cr^2\xi^{-2}c}{2c}\right)\frac{w_F}{c^\frac{1}{2}}\left(X_B(\xi,t;\sigma)\right)d\sigma\\
&-\int_0^s\exp\left\{\int_0^{s-\sigma}\frac{2u}{r}\left(X_B(\xi,t;\tau)\right)d\tau\right\}2r^{-2}u^2\frac{\psi}{c^\frac{1}{2}}\left(X_B(\xi,t;\sigma)\right)d\sigma.
\end{aligned}\label{6.4}\end{equation}
For each $t\in[0,T]$, we denote $\xi_0(t):=X_B(1,t;-t)$, and thus $X_B(\xi_0(t),0;t)=(1,t)$. Meanwhile, for each $t_*\in[0,T]$,
denote by $\Omega_B(t_*)$ the backward acoustic cone $$\Omega_B(t_*):=\left\{(\xi,t):0\leq t\leq t_*,\;1\leq\xi\leq X_B(\xi(t_*),0;t)\right\}.$$ Introduce the new unknowns $v_F(\xi,t,t_*)$, $v_B(\xi,t,t_*)$ by
$$v_F(\xi,t;t_*):=\sup\left\{\left|\frac{w_F}{c^\frac{1}{2}}\left(X_F(\xi,t;s)\right)\right|:X_F(\xi,t;s)\in\Omega_B(t_*)\right\},$$
$$v_B(\xi,t;t_*):=\sup\left\{\left|\frac{w_B}{c^\frac{1}{2}}\left(X_B(\xi,t;s)\right)\right|:X_B(\xi,t;s)\in\Omega_B(t_*)\right\}.$$
\begin{figure}[H]
\center{\includegraphics[width=10cm]  {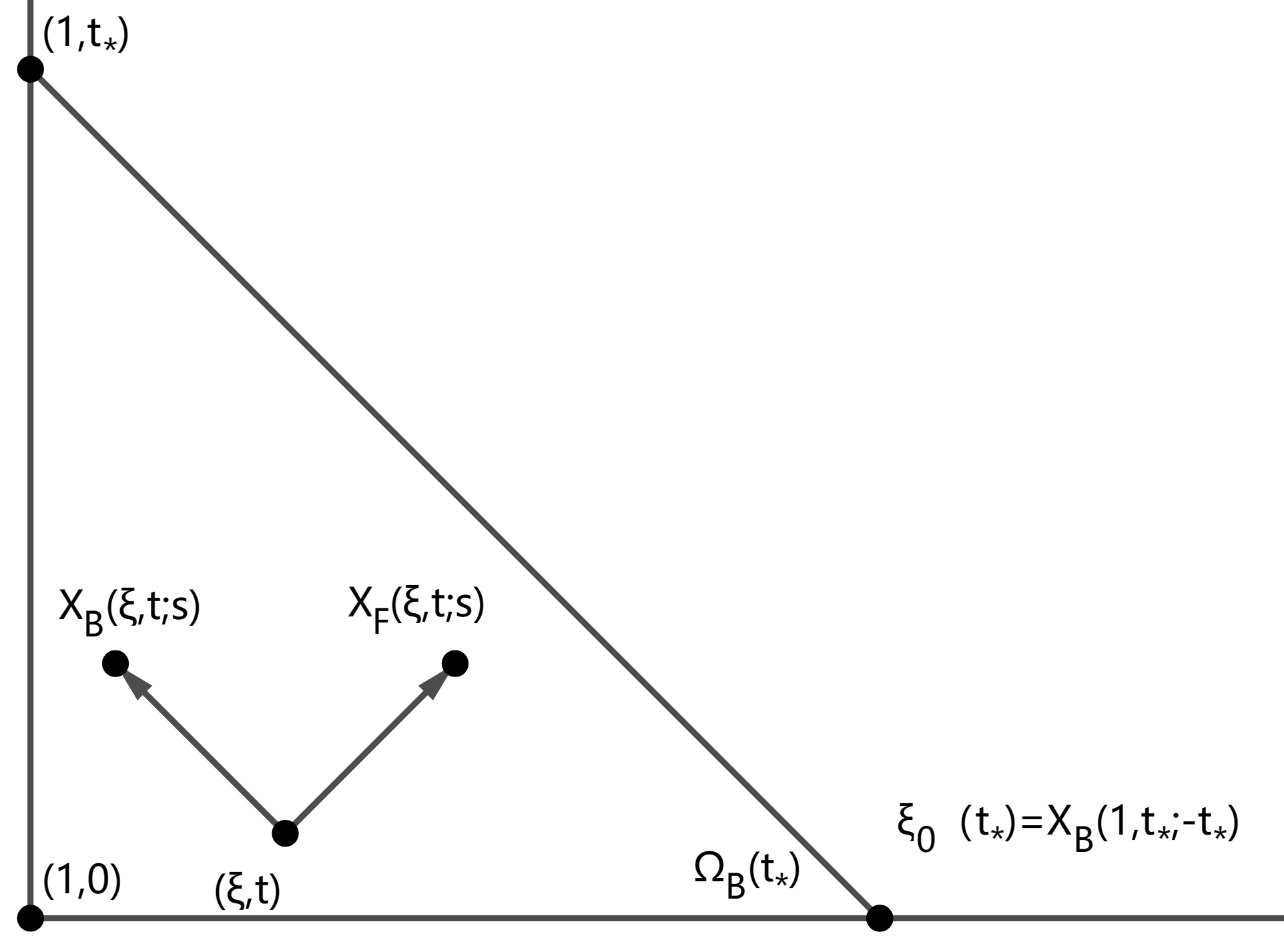}}  
\caption{The backward and forward characteristics within the domain $\Omega_B(t_*)$}\label{1} 
\end{figure}
\noindent $X_B(\xi,t;s)\in\Omega_B(t_*)$ is automatically fulfilled for each $(\xi,t)\in\Omega_B(t_*)$ due to the uniqueness of integral curves. Therefore, we abbreviate $v_B(\xi,t;t_*)$ to $v_B(\xi,t)$. Moreover, from the definition we have
$$ v_F\left(X_F(\xi,t;\tau);t_*\right)=v_F(\xi,t;t_*),\quad v_B\left(X_B(\xi,t;\tau)\right)=v_B(\xi,t).$$
It then follows from (\ref{6.3})(\ref{6.4}) that
\begin{equation}\begin{aligned}
v_F(1,t;t_*)\leq&\exp\left\{\sup_s\int_0^s\left|\frac{2u}{r}\left(X_F(1,t;\sigma)\right)\right|d\sigma\right\}\left\{\left|\frac{w_F}{c^\frac{1}{2}}(1,t)\right|\right.\\
&\left.+\sup_s\int_0^s\left|\left(\frac{\partial_tc+cr^2\xi^{-2}\partial_\xi c}{2c}\right)\left(X_F(1,t;\sigma)\right)\right|v_B\left(X_F(1,t;\sigma)\right)d\sigma\right.\\
&\left.+\sup_s\int_0^s\left|\left(2r^{-2}u^2\frac{\psi}{c^\frac{1}{2}}\right)\left(X_F(1,t;\sigma)\right)\right|d\sigma\right\},
\end{aligned}\label{6.5}\end{equation}
\begin{equation}\begin{aligned}
v_F(\xi,0;t_*)\leq&\exp\left\{\sup_s\int_0^s\left|\frac{2u}{r}\left(X_F(\xi,0;\sigma)\right)\right|d\sigma\right\}\left\{\left|\frac{w_F}{c^\frac{1}{2}}(\xi,0)\right|\right.\\
&\left.+\sup_s\int_0^s\left|\left(\frac{\partial_tc+cr^2\xi^{-2}\partial_\xi c}{2c}\right)\left(X_F(\xi,0;\sigma)\right)\right|v_B\left(X_F(\xi,0;\sigma)\right)d\sigma\right.\\
&\left.+\sup_s\int_0^s\left|\left(2r^{-2}u^2\frac{\psi}{c^\frac{1}{2}}\right)\left(X_F(\xi,0;\sigma)\right)\right|d\sigma\right\},
\end{aligned}\label{6.6}\end{equation}
\begin{equation}\begin{aligned}
v_B(\xi,0)\leq&\exp\left\{\sup_s\int_0^s\left|\frac{2u}{r}\left(X_B(\xi,0;\sigma)\right)\right|d\sigma\right\}\left\{\left|\frac{w_B}{c^\frac{1}{2}}(\xi,0)\right|\right.\\
&\left.+\sup_s\int_0^s\left|\left(\frac{\partial_tc-cr^2\xi^{-2}\partial_\xi c}{2c}\right)\left(X_B(\xi,0;\sigma)\right)\right|v_F\left(X_B(\xi,0;\sigma);t_*\right)d\sigma\right.\\
&\left.+\sup_s\int_0^s\left|\left(2r^{-2}u^2\frac{\psi}{c^\frac{1}{2}}\right)\left(X_B(\xi,0;\sigma)\right)\right|d\sigma\right\}.
\end{aligned}\label{6.7}\end{equation}
Each supremum above is taken over all $s\geq0$ such that $X_F(1,t;\sigma)$ (or $X_F(\xi,0;\sigma)$, $X_B(\xi,0;\sigma)$ respectively) remains in $\Omega_B(t_*)$ for all $\sigma\in[0,s]$.\\
The goal of this section is to obtain a decay estimate of $\psi(1,\cdot)$. To this end, we shall need an estimate of the source term $W$ in (\ref{5.6}). We begin with the estimate of the nonlinearities in (\ref{6.5}-\ref{6.7}). These estimates rely on a coordinate transform between the forward and backward characteristics, which  will be specified in the following analysis. By collecting these estimate, we will be able to measure the difference between $w_B(1,t)$ and $w_B(\xi_0(t),0)$, which further implies the estimate of $W$. Hereby we state the expected estimates of $w_B$ and $\psi$:
\begin{prop}
Introduce the notations: 
$$\mathcal{V}(\xi):=\left|w_B(\xi,0)\right|+\xi^{-1}\int_1^\xi|w_F(\eta,0)|d\eta+\epsilon\xi^{-3}|\psi(\xi,0)|+\epsilon^2\xi^{-1}\int_1^\xi\eta^{-3}|\psi(\eta,0)|d\eta,$$
$$\begin{aligned}
\Psi(t_*):=&\int_0^{t_*}\left(1+\frac{\underline{c}}{2}(t_*-t)\right)^{-1}|\partial_t\psi(1,t)|dt+\epsilon\int_0^{t_*}\left(1+\frac{\underline{c}}{2}(t_*-t)\right)^{-4}|\psi(1,t)|dt\\&+\epsilon^2\int_0^{t_*}\left(1+\frac{\underline{c}}{2}(t_*-t)\right)^{-1}|\psi(1,t)|dt,
\end{aligned}$$
then $w_B$ satisfies the following bound:
$$\begin{aligned}
&\max_{t_*\in[0,T]}\left\{\xi_0(t_*)\left(\left|\frac{w_B}{c^\frac{1}{2}}\left(1,t_*\right)-\frac{w_B}{c^\frac{1}{2}}\left(\xi_0(t_*),0\right)\right|\right)\right\}\\
\lesssim&\epsilon\max_{t_*\in[0,T]}\xi_0(t_*)\mathcal{V}(\xi_0(t_*))+\epsilon\max_{t_*\in[0,T]}\xi_0(t_*)\Psi(t_*)+\kappa\max_{t_*\in[0,T]}\left\{\xi_0(t_*)\left|\frac{w_B}{c^\frac{1}{2}}(\xi_0(t_*),0)\right|\right\}.
\end{aligned}$$
\label{prop 6.0}\end{prop}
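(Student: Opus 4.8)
The plan is to carry out the method of characteristics announced in the introduction, in four stages: integrate the transport equations (\ref{6.1})(\ref{6.2}) along the acoustic characteristics to get the coupled integral inequalities (\ref{6.5})--(\ref{6.7}); reduce them, via a change of variables between the forward and backward characteristics, to a single scalar Volterra inequality for $v_B$ restricted to $\{t=0\}$; solve that Volterra inequality, exploiting the logarithmic smallness $\log\xi_0(t_*)\lesssim\kappa/\epsilon$ coming from (\ref{3.11}); and feed the result back into the identity along the backward characteristic joining $(\xi_0(t_*),0)$ to $(1,t_*)$. At the outset we record the a priori bounds on the coefficients following from (\ref{3.6})(\ref{3.7}): $|2u/r|\lesssim\epsilon\xi^{-2}$, $\bigl|\tfrac{\partial_tc\pm cr^2\xi^{-2}\partial_\xi c}{2c}\bigr|\lesssim\epsilon\xi^{-1}$, $|2r^{-2}u^2|\lesssim\epsilon^2\xi^{-4}$; by (\ref{3.10}) the $\xi$-coordinate grows essentially linearly along every characteristic, and by (\ref{lip}) the speed $cr^2\xi^{-2}$ is Lipschitz with constant $O(\epsilon\xi^{-1})$. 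Since $\int\xi^{-2}$ along any characteristic contained in $\Omega_B(t_*)$ is $\lesssim\int_1^{\xi_0(t_*)}\eta^{-2}d\eta\lesssim 1$, every exponential weight $\exp\{\int 2u/r\}$ in (\ref{6.3})(\ref{6.4}) equals $1+O(\epsilon)$ uniformly in $t_*\le T$; this uniformity is what makes the long-time estimate possible.

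Fix $t_*$ and apply (\ref{6.4}) along $\beta(\sigma):=X_B(\xi_0(t_*),0;\sigma)$ with $s=t_*$. This expresses $c^{-1/2}w_B(1,t_*)-c^{-1/2}w_B(\xi_0(t_*),0)$ as the sum of (a) an $O(\epsilon)$ multiplicative correction times $c^{-1/2}w_B(\xi_0(t_*),0)$; (b) the coupling integral $-\int_0^{t_*}(1+O(\epsilon))\bigl(\tfrac{\partial_tc-cr^2\xi^{-2}\partial_\xi c}{2c}\bigr)c^{-1/2}w_F(\beta(\sigma))\,d\sigma$; and (c) the lower order term $-\int_0^{t_*}(1+O(\epsilon))\,2r^{-2}u^2\,c^{-1/2}\psi(\beta(\sigma))\,d\sigma$. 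Term (a) is $\lesssim\epsilon\,|c^{-1/2}w_B(\xi_0(t_*),0)|$. Since $\tfrac{d}{ds}\psi(X_B(\xi,t;s))=w_F(X_B(\xi,t;s))$ and $\tfrac{d}{ds}\psi(X_F(\xi,t;s))=w_B(X_F(\xi,t;s))$, every $\psi$-value entering (c) (and later) is recovered by integrating $w_F$ along the relevant backward characteristic; so (c) is absorbed into the $\psi(\xi,0)$-part of $\mathcal V$ plus a further coupling integral of $w_F$. It remains to treat (b), i.e.\ $\epsilon\int_0^{t_*}\xi(\beta(\sigma))^{-1}v_F(\beta(\sigma);t_*)\,d\sigma$.

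The heart of the proof is this coupling term. For each $\sigma$, the forward characteristic through $\beta(\sigma)$, traced backwards inside $\Omega_B(t_*)$, exits either through $\{t=0\}$ at a point $(\eta(\sigma),0)$ or through $\{\xi=1\}$ at a time $\tau(\sigma)$, with a single crossover $\sigma_c\simeq(\xi_0(t_*)-1)/(2c_0)$ whose forward characteristic passes through the corner $(1,0)$. On each of the two $\sigma$-ranges one shows, using only (\ref{lip}), that $\sigma\mapsto\eta(\sigma)$ (resp.\ $\sigma\mapsto\tau(\sigma)$) is bi-Lipschitz with Jacobian comparable to a constant, uniformly in $\epsilon$ and in $T\simeq\exp(\kappa/\epsilon)$. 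On the first range one invokes (\ref{6.6}) to bound $v_F(\eta(\sigma),0;t_*)$ by $|c^{-1/2}w_F(\eta(\sigma),0)|$, a coupling integral $\epsilon\int\xi^{-1}v_B$ along the forward characteristic out of $(\eta(\sigma),0)$, and a $\psi$-term; on the second range one invokes (\ref{6.5}) together with $w_F=2\partial_t\psi-w_B$ at $\xi=1$ (from the boundary ODE (\ref{5.3})) to bound $v_F(1,\tau(\sigma);t_*)$ by $c^{-1/2}|2\partial_t\psi-w_B|(1,\tau(\sigma))$ and similar terms. Changing variables everywhere, the $w_F(\eta,0)$ data collapse into the $\xi^{-1}\int_1^\xi|w_F(\eta,0)|d\eta$ part of $\mathcal V$, the $\partial_t\psi(1,\cdot)$ and $\psi(1,\cdot)$ data into $\Psi(t_*)$, the residual boundary values $w_B(1,\tau)$ into $w_B(\xi_0(\tau),0)$ plus the very difference being estimated, and all the nested $v_B$-integrals (the $\xi^{-1}$ weights combining with the Jacobians) into one integral with kernel $(2\mu+\xi_0(t_*)-\eta)^{-1}$, $\mu\gtrsim 1$ a fixed constant. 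One thereby obtains the scalar Volterra inequality (\ref{6.24}),
\[v_B(\xi_0(t_*),0)\lesssim\bigl|c^{-1/2}w_B(\xi_0(t_*),0)\bigr|+\epsilon\,\mathcal V(\xi_0(t_*))+\epsilon\,\Psi(t_*)+\epsilon\kappa\int_1^{\xi_0(t_*)}\bigl(2\mu+\xi_0(t_*)-\eta\bigr)^{-1}v_B(\eta,0)\,d\eta.\]
Since $\int_1^{\xi_0(t_*)}(2\mu+\xi_0(t_*)-\eta)^{-1}d\eta=\log\bigl(1+\tfrac{\xi_0(t_*)-1}{2\mu}\bigr)\lesssim\log\xi_0(t_*)\lesssim\kappa/\epsilon$ by (\ref{3.11}), the Volterra operator has sup-norm $\lesssim\kappa^2<1$, its Neumann series converges, and $v_B(\xi_0(t_*),0)\lesssim|c^{-1/2}w_B(\xi_0(t_*),0)|+\epsilon\mathcal V(\xi_0(t_*))+\epsilon\Psi(t_*)$. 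Substituting this back into (b) and the nested integrals, multiplying by $\xi_0(t_*)$ and taking $\max_{t_*\in[0,T]}$ yields Proposition \ref{prop 6.0}: the $\epsilon\mathcal V$, $\epsilon\Psi$ pieces give $\epsilon\max\xi_0\mathcal V(\xi_0)$, $\epsilon\max\xi_0\Psi$, while re-integrating the $|c^{-1/2}w_B(\eta,0)|$ part against $(2\mu+\xi_0-\eta)^{-1}$ and multiplying by $\xi_0$ costs an extra $\log\xi_0\lesssim\kappa/\epsilon$, turning the prefactor $\epsilon\kappa$ into $\kappa^2\le\kappa$ and producing $\kappa\max\{\xi_0|c^{-1/2}w_B(\xi_0,0)|\}$; term (a) and the $O(\epsilon)$ corrections in (b)(c) are absorbed into this last term since $\epsilon\lesssim\kappa$.

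The main obstacle is the change-of-variables step. One must partition $\Omega_B(t_*)$ according to where the forward characteristic through each point of $\beta$ has its foot, and establish bi-Lipschitz and Jacobian bounds for the resulting maps that are uniform both in $\epsilon$ and over the long time $T\simeq\exp(\kappa/\epsilon)$, starting from nothing better than the Lipschitz estimate (\ref{lip}) and linear growth of $\xi$ along characteristics. It is precisely the interplay of these Jacobians with the $\xi^{-1}$-type weights that must conspire to produce the Volterra kernel $(2\mu+\xi_0-\eta)^{-1}$ with a constant which, combined with the $\log\xi_0$ bound, gives operator norm $\kappa^2<1$; matching the geometry to the weights is the delicate point, and the sharp decay rates in Theorem \ref{thm 1.3} and Corollary \ref{cor 1.4} all descend from it.
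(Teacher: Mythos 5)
Your overall route is the same as the paper's: integrate (\ref{6.1})(\ref{6.2}) along characteristics to get (\ref{6.5})--(\ref{6.7}), use the bi-Lipschitz changes of variables between forward and backward characteristics (the paper's $\sigma_1,\dots,\sigma_4$ and Lemmas \ref{lem 6.1}--\ref{lem 6.3}), feed the boundary relation $w_F=2\partial_t\psi-w_B$ back in, and collapse everything into the Volterra inequality (\ref{6.24}) with kernel $(2\mu+\xi_0(t_*)-\eta)^{-1}$, exploiting $\log\xi_0(t_*)\lesssim\kappa/\epsilon$ from (\ref{3.11}). Two inaccuracies, one harmless and one not. The harmless one: the kernel coefficient in (\ref{6.24}) is $C\epsilon$, not $\epsilon\kappa$ --- the boundary feedback term (\ref{6.21}), in which $|w_B(1,t)|$ is converted back to $v_B(\xi_0(t),0)$ via Lemma \ref{lem 6.4}, contributes at order $\epsilon$ with no extra $\kappa$; only the reflection double integral carries $\epsilon\kappa$. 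So the operator is of size $\kappa$, not $\kappa^2$, which still suffices.

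The genuine gap is your resolution of the Volterra inequality. Convergence of the Neumann series in the \emph{unweighted} sup norm gives only $\sup_{t_*}v_B\lesssim\sup_{t_*}\bigl(|c^{-1/2}w_B(\xi_0(t_*),0)|+\epsilon\mathcal{V}+\epsilon\Psi\bigr)$; it does not give the pointwise bound you display, and that pointwise bound is in fact false in general: for $w_B(\cdot,0)$ supported near $\xi=1$ the iterated kernel produces a tail of size $\epsilon(2\mu+\xi_0(t_*)-\xi_b)^{-1}$ even where the right-hand side vanishes (this is exactly the optimality discussion in Comment~4 of the introduction, and it is why the $\kappa\max\{\xi_0|c^{-1/2}w_B(\xi_0,0)|\}$ term must appear in the Proposition). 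Moreover, an unweighted sup bound cannot be re-weighted afterwards: in the difference inequality (\ref{6.25}) the coupling term is $\epsilon\int_1^{\xi_0(t_*)}(2\mu+\xi_0(t_*)-\eta)^{-1}v_B(\eta,0)\,d\eta$, and after multiplying by $\xi_0(t_*)$ you need $v_B(\eta,0)\lesssim\eta^{-1}\cdot(\text{weighted max})$ to make the integral $O(\kappa)$; with only $\sup v_B$ you lose a factor $\xi_0(t_*)\sim e^{\kappa/\epsilon}$. The missing step is precisely the weighted estimate: multiply (\ref{6.24}) by $\xi_0(t_*)$, insert $v_B(\eta,0)\le\eta^{-1}\max_\eta\{\eta\,v_B(\eta,0)\}$ (after splitting off $|c^{-1/2}w_B(\eta,0)|$), use
$$\epsilon\,\xi_0(t_*)\int_1^{\xi_0(t_*)}\bigl(2\mu+\xi_0(t_*)-\eta\bigr)^{-1}\eta^{-1}\,d\eta\lesssim\kappa$$
(the computation following (\ref{6.26}), based on (\ref{6.27})), take the maximum over $t_*$, and absorb the resulting $\kappa\max_{t_*}\xi_0(t_*)\bigl(v_B-|c^{-1/2}w_B|\bigr)$ term for $\kappa_0$ small; this yields (\ref{6.28})--(\ref{6.29}), and only then does substitution into (\ref{6.25}) give (\ref{6.30}). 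Equivalently, run your Neumann series in the weighted norm $\max_\eta\{\eta|\cdot|\}$ rather than the sup norm; your final bookkeeping sentence implicitly assumes exactly this weighted control, so the fix is local, but as written the argument does not close.
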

\begin{prop}
The following estimate of $Y(t)$ holds for $t\in[0,T]$:
$$\begin{aligned}
&\left|Y(t)-Y_0(t)\right|\\
\lesssim&\frac{1}{1+\underline{c}t}\left\{\kappa\max_{s\in[0,T]}\left\{\xi_0(s)\left|\frac{w_B}{c^\frac{1}{2}}(\xi_0(s),0)\right|\right\}+\epsilon\max_{s\in[0,T]}\xi_0(s)\mathcal{V}(\xi_0(s))+\epsilon|Y(0)|\right\},
\end{aligned}$$
where
$$Y_0(t):=\exp\left[\begin{matrix}
\Lambda_1t & \\ & \Lambda_2t  
\end{matrix}\right]Y(0)-\frac{\tilde{f}^\prime(1)}{c_0}\int_0^t\exp\left[\begin{matrix}
\Lambda_1(t-s) & \\ & \Lambda_2(t-s)
\end{matrix}\right]\left[
\begin{matrix}
w_B(\xi_0(s),0) \\ w_B(\xi_0(s),0)
\end{matrix}\right]ds.$$
Since $\psi^2|_{x=0}\simeq\varphi^2|_{x=0}$, it follows
$$\begin{aligned}
\int_0^T\varphi^2|_{x=0}dt
\lesssim&\int_0^T|Y_0(t)|^2dt+\left(\kappa\max_{\eta\in[1,\xi_0(T)]}\left\{\eta\left|\frac{w_B}{c^\frac{1}{2}}(\eta,0)\right|\right\}+\epsilon\max_{\eta\in[1,\xi_0(T)]}\eta\mathcal{V}(\eta)+\epsilon|Y(0)|\right)^2.
\end{aligned}$$
\label{prop 6.01}\end{prop}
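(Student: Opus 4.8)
\emph{Proof sketch.} I would argue directly from the variation-of-constants formula (\ref{5.6}) and compare it term by term with $Y_0$. Splitting the source as $a(s)w_B|_{x=0}(s)-\frac{\tilde f^\prime(1)}{c_0}w_B(\xi_0(s),0)=(a(s)-\frac{\tilde f^\prime(1)}{c_0})\,w_B|_{x=0}(s)+\frac{\tilde f^\prime(1)}{c_0}(w_B|_{x=0}(s)-w_B(\xi_0(s),0))$, the error $Y-Y_0$ decomposes into three pieces: (i) the Volterra term $-\int_0^t e^{\mathrm{diag}(\Lambda_i(t-s))}(\Delta Y)(s)\,ds$, which is $O(\epsilon)$ relative to $Y$ since $|\Delta Y|\lesssim\epsilon|Y|$ by (\ref{5.4}); (ii) the coefficient--mismatch term with factor $a(s)-\frac{\tilde f^\prime(1)}{c_0}=O(\epsilon)$; and (iii) the characteristic--transport term built from $w_B|_{x=0}(s)-w_B(\xi_0(s),0)$, which is exactly what Proposition \ref{prop 6.0} bounds, namely by $\xi_0(s)^{-1}\mathcal E$ with $\mathcal E:=\kappa\max\xi_0|c^{-1/2}w_B(\xi_0,0)|+\epsilon\max\xi_0\mathcal V+\epsilon\max\xi_0\Psi$ (up to the uniformly bounded factor $c^{1/2}$ and $\xi_0(s)\simeq 1+c_0 s$). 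The arithmetic backbone is two convolution estimates against the decaying exponential kernels $e^{\mathrm{Re}\Lambda_i(t-s)}$: first, $\int_0^t e^{\mathrm{Re}\Lambda_i(t-s)}(1+\underline c s)^{-1}\,ds\lesssim(1+\underline c t)^{-1}$ with \emph{no} logarithmic loss (the exponential dominates the polynomial), which is what produces the $(1+\underline c t)^{-1}$ decay in pieces (i)--(iii); and second, the purely polynomial convolution $\int_0^{t}(1+\tfrac{\underline c}{2}(t-s))^{-1}(1+\underline c s)^{-1}\,ds\lesssim(1+\underline c t)^{-1}\log(1+\overline c t)$, which is responsible for the logarithm appearing in $\Psi$.

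The genuine difficulty is that $\Psi$ inside $\mathcal E$ is built from $\psi|_{x=0}$ and $\partial_t\psi|_{x=0}$, i.e.\ from linear combinations of the components of $Y$ itself, so piece (iii) feeds $Y$ back into the estimate through a slowly decaying weight. To close this I would first bootstrap on $M:=\max_{s\in[0,T]}(1+\underline c s)|Y(s)|$, which is finite as $Y\in C^1[0,T]$. Estimating $|Y_0(s)|\lesssim(1+\underline c s)^{-1}(|Y(0)|+\max\xi_0|c^{-1/2}w_B(\xi_0,0)|)$ from the first convolution bound, inserting $|\psi|_{x=0}(t)|+|\partial_t\psi|_{x=0}(t)|\lesssim|Y(t)|\le(1+\underline c t)^{-1}M$ into $\Psi$, and using the second convolution bound, one finds $\xi_0(t_*)\Psi(t_*)\lesssim\log(1+\overline c t_*)\,M$, so by the standing hypothesis (\ref{3.11}) one has $\epsilon\max_{t_*}\xi_0(t_*)\Psi(t_*)\lesssim\epsilon\cdot\tfrac{\kappa}{\epsilon}\,M=\kappa M$. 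The resulting inequality $M\lesssim|Y(0)|+\max\xi_0|c^{-1/2}w_B(\xi_0,0)|+\epsilon\max\xi_0\mathcal V+(\epsilon+\kappa)M$ then closes for $\epsilon_0,\kappa_0$ small, giving $M\lesssim\mathcal N:=|Y(0)|+\max\xi_0|c^{-1/2}w_B(\xi_0,0)|+\epsilon\max\xi_0\mathcal V$. Feeding $M\lesssim\mathcal N$ back, each of (i)--(iii) now carries its genuine small prefactor and one obtains the stated pointwise bound for $Y-Y_0$; in carrying out (iii) one should keep the exponentially decaying $Y(0)$--part of $\Psi$ separate, since it is integrated against the slow weight without a logarithmic loss and hence retains the coefficient $\epsilon$ rather than $\kappa$.

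For the final consequence, $r|_{x=0}=R$ satisfies $|R-1|\lesssim\epsilon$ by (\ref{3.7}), so $\varphi^2|_{x=0}=R^{-2}\psi^2|_{x=0}\simeq\psi^2|_{x=0}$; writing $\psi|_{x=0}$ and the same linear combination $(\psi_0)|_{x=0}$ of $Y_0$, one has $\int_0^T\varphi^2|_{x=0}\,dt\lesssim\int_0^T|Y_0(t)|^2\,dt+\int_0^T|Y(t)-Y_0(t)|^2\,dt$. The first integral is kept as is; the second is bounded via the pointwise estimate just proved together with $\int_0^T(1+\underline c t)^{-2}\,dt\lesssim1$, which produces the squared bracket on the right-hand side. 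Finally, since $t\mapsto\xi_0(t)$ is increasing from $\xi_0(0)=1$ onto $[1,\xi_0(T)]$, every maximum over $s\in[0,T]$ may be rewritten as a maximum over $\eta\in[1,\xi_0(T)]$, which yields the asserted form. The single serious obstacle is the circular dependence of the boundary data $\psi|_{x=0}$, $\partial_t\psi|_{x=0}$ on $Y$, resolved by the $M$--bootstrap and the conversion $\epsilon\log(1+\overline c T)\lesssim\kappa$ coming from (\ref{3.11}).
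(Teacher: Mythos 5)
Your argument is correct and shares the skeleton of the paper's proof of Proposition \ref{prop 6.01}: the same three-way splitting of the source $a\,w_B|_{x=0}$ into the main term $\frac{\tilde f^\prime(1)}{c_0}w_B(\xi_0(s),0)$, the coefficient mismatch controlled by (\ref{5.4}), and the transport difference controlled by Proposition \ref{prop 6.0}; the same two convolution estimates (exponential kernel without logarithmic loss, polynomial kernel with a logarithm converted to $\kappa/\epsilon$ through (\ref{3.11})); and the same smallness absorption. The genuine difference is where you absorb: you first close a bootstrap on $M=\max_s(1+\underline{c}s)|Y(s)|$ and then feed $M\lesssim\mathcal N$ back, whereas the paper rewrites the boundary data inside $\Psi$ as $\mathcal R$ (a combination of $Y_0$) plus $\partial_t\psi|_{x=0}-\mathcal R$ (a combination of $Y-Y_0$) in (\ref{6.34}) and absorbs directly on $\max_t(1+\underline{c}t)|Y(t)-Y_0(t)|$ in (\ref{6.35})--(\ref{6.40}). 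Your preliminary $M$-bound is a clean way to tame the $\Delta Y$ term and the $\psi$-weighted pieces of $\Psi$, but note that the feed-back step is not simply ``insert $M\lesssim\mathcal N$'': if the $(Y-Y_0)$-contribution to the $\partial_t\psi$-integral in $\Psi$ is estimated through $M$, the logarithmic loss produces $\kappa|Y(0)|$ instead of $\epsilon|Y(0)|$, which would in particular spoil the improved estimate of Theorem \ref{thm 1.3} in the case $w_B|_{t=0}=0$; that contribution must instead be kept as $\kappa\max_s(1+\underline{c}s)|Y(s)-Y_0(s)|$ and absorbed into the left-hand side, exactly as in (\ref{6.35})--(\ref{6.40}). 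Your remark about separating the exponentially decaying $Y(0)$-part of $Y_0$ is the other half of this bookkeeping and matches (\ref{6.38})--(\ref{6.39}). With that one extra absorption spelled out, your argument coincides with the paper's, and your concluding $L^2_t$ step (boundedness of $\int_0^T(1+\underline{c}t)^{-2}dt$ plus the bijectivity of $\xi_0$ to pass from $s\in[0,T]$ to $\eta\in[1,\xi_0(T)]$) is precisely the paper's implicit derivation of the second display.
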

\subsection{Estimate of (\ref{6.5})}\label{subsec6.1}
Let $X_F^{(\xi)}$, $X_B^{(\xi)}$ denote the space argument of $X_F$, $X_B$ respectively. By continuity, for each $\eta\in[\xi_0(t),\xi_0(T)]$, there exists a unique $\sigma_1(\eta,t)\in[0,T-t]$ such that (figure \ref{2})
$$X_F(1,t;\sigma_1(\eta,t))=X_B(\eta,0;t+\sigma_1(\eta,t)),\quad \sigma_1(\xi_0(t),t)=0.$$
In particular, for each $t_*\in[t,T]$, $s_1(t,t_*):=\sigma_1(\xi_0(t_*),t)$ satisfies
$$X_F(1,t;s_1(t,t_*))=X_B(\xi_0(t_*),0;t+s_1(t,t_*)),$$
and thus the integrals in (\ref{6.5}) is actually taken over $\sigma\in(0,s_1(t,t_*))$. 
Using the bootstrap bounds (\ref{3.1}-\ref{3.11}), we have the following lemma for $\sigma_1$: 
\begin{lem}
For each $t\in[0,T]$, $\sigma_1(\cdot,t)$ is a strictly increasing  $C^1$ function on $(\xi_0(t),\xi_0(T))$. Moreover, for $\eta\in(\xi_0(t),\xi_0(T))$, it holds 
$$\left(2\overline{c}\right)^{-1}e^{-C\kappa}\leq\frac{d\sigma_1(\eta,t)}{d\eta}\leq\left(2\underline{c}\right)^{-1}e^{C\kappa},$$
$$1+\frac{\underline{c}e^{-C\kappa}}{\underline{c}+\overline{c}}\left(\eta-\xi_0(t_*)\right)\leq X_F^{(\xi)}(1,t;\sigma_1(\eta,t))\leq 1+\frac{\overline{c}e^{C\kappa}}{\underline{c}+\overline{c}}(\eta-\xi_0(t_*)).$$
\label{lem 6.1}\end{lem}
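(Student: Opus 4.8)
The plan is to view $\sigma_1(\eta,t)$ as the implicitly defined zero of
$$\Phi(\sigma,\eta):=X_F^{(\xi)}(1,t;\sigma)-X_B^{(\xi)}(\eta,0;t+\sigma),$$
and to read off both the $C^1$ property and the quantitative bounds from quantitative control of $\partial_\sigma\Phi$ and $\partial_\eta\Phi$. By the regularity of $(u,q,R)$ in Theorem \ref{thm 1.2}, the coefficient $cr^2\xi^{-2}$ is $C^1$ in $\xi$ with derivative jointly continuous in $(\xi,t)$, and satisfies the Lipschitz bounds (\ref{lip}); hence the flows $X_F,X_B$ depend $C^1$ on their data and $\partial_\eta X_B^{(\xi)}(\eta,0;\tau)$ solves the standard variational equation. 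Differentiating $\Phi$ and using the defining ODEs of the characteristics gives $\partial_\sigma\Phi(\sigma,\eta)=(cr^2\xi^{-2})(X_F(1,t;\sigma))+(cr^2\xi^{-2})(X_B(\eta,0;t+\sigma))$ and $\partial_\eta\Phi(\sigma,\eta)=-\partial_\eta X_B^{(\xi)}(\eta,0;t+\sigma)$. At a zero $\sigma=\sigma_1(\eta,t)$ the two spatial arguments coincide with the common value $h(\eta):=X_F^{(\xi)}(1,t;\sigma_1(\eta,t))$, so $\partial_\sigma\Phi=2(cr^2\xi^{-2})(h(\eta),t+\sigma_1)\in[2\underline{c},2\overline{c}]$ by (\ref{3.10}); in particular it never vanishes, so the implicit function theorem shows $\sigma_1(\cdot,t)$ is $C^1$ on $(\xi_0(t),\xi_0(T))$ with $\dfrac{d\sigma_1}{d\eta}=\dfrac{\partial_\eta X_B^{(\xi)}(\eta,0;t+\sigma_1)}{2(cr^2\xi^{-2})(h(\eta),t+\sigma_1)}$.

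\textbf{The core estimate on the variational derivative.} The heart of the matter is $e^{-C\kappa}\le\partial_\eta X_B^{(\xi)}(\eta,0;t+\sigma_1)\le e^{C\kappa}$. From the variational equation, $\partial_\eta X_B^{(\xi)}(\eta,0;\tau)=\exp\left(-\int_0^\tau\partial_\xi(cr^2\xi^{-2})(X_B(\eta,0;s))\,ds\right)$, so it suffices to bound $\int_0^{t+\sigma_1}\bigl|\partial_\xi(cr^2\xi^{-2})\bigr|(X_B(\eta,0;s))\,ds$. Here we use the geometric fact that along $X_B(\eta,0;\cdot)$ the spatial coordinate decreases strictly from $\eta$ at rate in $[\underline{c},\overline{c}]$, so we may change variables from $s$ to $\zeta=X_B^{(\xi)}(\eta,0;s)$, with $ds=-d\zeta/(cr^2\xi^{-2})$ and range $\zeta\in[h(\eta),\eta]\subset[1,\xi_0(T)]$; together with $|\partial_\xi(cr^2\xi^{-2})|\lesssim\epsilon\xi^{-1}$ from (\ref{lip}) this yields
$$\int_0^{t+\sigma_1}\bigl|\partial_\xi(cr^2\xi^{-2})\bigr|(X_B(\eta,0;s))\,ds\lesssim\epsilon\,\underline{c}^{-1}\int_1^{\xi_0(T)}\zeta^{-1}\,d\zeta\lesssim\epsilon\log\xi_0(T)\lesssim\kappa,$$
where in the last step $\xi_0(T)\le 1+\overline{c}T$ (trace the backward characteristic through $(1,T)$ back to $t=0$) and the standing bound $1+\overline{c}T\le\exp(\kappa/\epsilon)$ of (\ref{3.11}) are used. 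Feeding this into the formula for $\dfrac{d\sigma_1}{d\eta}$ and using $\underline{c}\le cr^2\xi^{-2}\le\overline{c}$ gives $(2\overline{c})^{-1}e^{-C\kappa}\le\dfrac{d\sigma_1}{d\eta}\le(2\underline{c})^{-1}e^{C\kappa}$; positivity in particular shows $\sigma_1(\cdot,t)$ is strictly increasing.

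\textbf{The bound on $X_F^{(\xi)}(1,t;\sigma_1)$.} Differentiating $h(\eta)=X_F^{(\xi)}(1,t;\sigma_1(\eta,t))$ by the chain rule and inserting the formula for $\dfrac{d\sigma_1}{d\eta}$, the factor $(cr^2\xi^{-2})(h(\eta),t+\sigma_1)$ cancels and leaves $h'(\eta)=\tfrac12\partial_\eta X_B^{(\xi)}(\eta,0;t+\sigma_1)\in[\tfrac12 e^{-C\kappa},\tfrac12 e^{C\kappa}]$ by the previous paragraph. Integrating from $\eta=\xi_0(t)$, where $\sigma_1(\xi_0(t),t)=0$ and hence $h(\xi_0(t))=X_F^{(\xi)}(1,t;0)=1$, gives $1+\tfrac12 e^{-C\kappa}(\eta-\xi_0(t))\le h(\eta)\le1+\tfrac12 e^{C\kappa}(\eta-\xi_0(t))$, which implies the stated inequalities after adjusting $C$, since $\underline{c}\le\overline{c}$ forces $\frac{\underline{c}}{\underline{c}+\overline{c}}\le\tfrac12\le\frac{\overline{c}}{\underline{c}+\overline{c}}$.

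\textbf{Main obstacle.} The only delicate points are (i) justifying $C^1$ dependence of the flows (hence of $\sigma_1$) from the available regularity — spatial $C^1$ and mere joint continuity in time of $cr^2\xi^{-2}$ — which is handled by the standard variational argument rather than a literal application of smooth-dependence theorems, and (ii) the bookkeeping of the balance $\epsilon\log\xi_0(T)\lesssim\epsilon\log(1+\overline{c}T)\lesssim\kappa$, which is precisely the place where the almost-global (rather than global) time scale of Theorem \ref{thm 1.3} enters; all remaining steps are routine Gronwall-type ODE estimates.
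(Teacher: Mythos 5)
Your proof is correct, and it takes a genuinely different (differential) route from the paper's (finite-difference) one. Where the paper proves monotonicity by a contradiction argument with uniqueness of integral curves, establishes the stability bound $e^{-C\kappa}(\eta_2-\eta_1)\leq X_B^{(\xi)}(\eta_2,0;t+\sigma_1(\eta_1,t))-X_B^{(\xi)}(\eta_1,0;t+\sigma_1(\eta_1,t))\leq e^{C\kappa}(\eta_2-\eta_1)$ by Gronwall on the difference of two characteristics, and then squeezes $\sigma_1(\eta_2,t)-\sigma_1(\eta_1,t)$ and the intersection point between forward/backward cones with the rough speeds $\underline{c},\overline{c}$, you instead apply the implicit function theorem to $\Phi(\sigma,\eta)=X_F^{(\xi)}(1,t;\sigma)-X_B^{(\xi)}(\eta,0;t+\sigma)$ and obtain the exact formula $\frac{d\sigma_1}{d\eta}=\partial_\eta X_B^{(\xi)}(\eta,0;t+\sigma_1)\big/\big(2(cr^2\xi^{-2})\big)$, with the variational derivative given by an explicit exponential; your bound $\epsilon\int\zeta^{-1}d\zeta\lesssim\epsilon\log\xi_0(T)\lesssim\kappa$ is the same bookkeeping as the paper's use of (\ref{3.10})(\ref{3.11}), just performed in the spatial rather than the time variable. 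What your route buys: strict monotonicity and the derivative bounds come out in one stroke from positivity of the exponential (no contradiction argument), and the last estimate is cleaner and slightly sharper — the chain rule cancels the sound speed, giving $h^\prime(\eta)=\tfrac12\partial_\eta X_B^{(\xi)}\in[\tfrac12 e^{-C\kappa},\tfrac12 e^{C\kappa}]$ and hence the constant $\tfrac12$ in place of $\frac{\underline{c}}{\underline{c}+\overline{c}}$, $\frac{\overline{c}}{\underline{c}+\overline{c}}$, which indeed implies the stated inequalities. What the paper's route buys: it only ever uses the Lipschitz bound (\ref{lip}) and Gronwall on differences, so it never needs the flow to be classically differentiable in its initial point; your variational-equation formula implicitly requires continuity (not mere boundedness) of $\partial_\xi(cr^2\xi^{-2})$ along characteristics. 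You flag this, and it is at the same level of rigor as the paper's own one-line appeal to "$H^2$ regularity of $c$" for the implicit function theorem, so I do not regard it as a gap — but if you wanted a fully self-contained argument at minimal regularity, the finite-difference version is the more robust of the two.
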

\begin{proof}
The $C^1$ regularity of $\sigma_1(\cdot,t)$ is given by the implicit function theorem and the $H^2$ regularity of $c$. First, we claim that $\sigma(\eta,t)$ is strictly increasing in $\eta$. If assume that $\eta_2>\eta_1$ and $\sigma_1(\eta_2,t)\leq\sigma_1(\eta_1,t)$, we find 
$$\begin{aligned}
&X_B^{(\xi)}(\eta_2,0;t+\sigma_1(\eta_2,t))=X_F^{(\xi)}(1,t;\sigma_1(\eta_2,t))\leq X_F^{(\xi)}(1,t;\sigma_1(\eta_1,t))\\=&X_B^{(\xi)}(\eta_1,0;t+\sigma_1(\eta_1,t))\leq X_B^{(\xi)}(\eta_1,0;t+\sigma_1(\eta_2,t)),
\end{aligned}$$
while $X_B^{(\xi)}(\eta_2,0;0)>X_B^{(\xi)}(\eta_1,0;0)$, which leads to contradiction with the uniqueness of integral curves. Hence  $\sigma_1(\eta_2,t)>\sigma_1(\eta_1,t)$ and for the same reason $X_B^{(\xi)}(\eta_2,0;\tau)>X_B^{(\xi)}(\eta_1,0;\tau)$ for $\tau$ in the common interval of $X_B(\eta_1,0;\cdot)$ and $X_B(\eta_2,0;\cdot)$ if $\eta_2>\eta_1$.
By mean value formula and (\ref{lip}) we have for $\eta_2,\eta_1\in[\xi_0(t),\xi_0(T)]$ with $\eta_2>\eta_1$  that
$$\begin{aligned}
\left|\frac{d}{d\tau}\left(X_B^{(\xi)}(\eta_2,0;\tau)-X_B^{(\xi)}(\eta_1,0;\tau)\right)\right|
=&\left|\left(cr^2\xi^{-2}\right)(X_B(\eta_1,0;\tau))-\left(cr^2\xi^{-2}\right)(X_B(\eta_2,0;\tau))\right|\\
\leq&C\epsilon X_B^{(\xi)}(\eta_1,0;\tau)^{-1}\left(X_B^{(\xi)}(\eta_2,0;\tau)-X_B^{(\xi)}(\eta_1,0;\tau)\right),
\end{aligned}$$
and thus by Gronwall's inequality
$$\begin{aligned}
&(\eta_2-\eta_1)\exp\left\{-\int_0^{t+\sigma_1(\eta_1,t)}C\epsilon X_B^{(\xi)}(\eta_1,0;\tau)^{-1}d\tau\right\}\\
\leq& X_B^{(\xi)}(\eta_2,0;t+\sigma_1(\eta_1,t))-X_B^{(\xi)}(\eta_1,0;t+\sigma_1(\eta_1,t))\\
\leq&(\eta_2-\eta_1)\exp\left\{\int_0^{t+\sigma_1(\eta_1,t)}C\epsilon X_B^{(\xi)}(\eta_1,0;\tau)^{-1}d\tau\right\}.
\end{aligned}$$
Using the rough bound (\ref{3.10}) $cr^2\xi^{-2}\geq\underline{c}$, we further obtain 
$$\begin{aligned}
&\int_0^{t+\sigma_1(\eta_1,t)} X_B^{(\xi)}(\eta_1,0;\tau)^{-1}d\tau\\
\leq&\int_0^{t+\sigma_1(\eta_1,t)}\left(X_B^{(\xi)}(\eta_1,0;t+\sigma_1(\eta_1,t))+\underline{c}(t+\sigma_1(\eta_1,t)-\tau)\right)^{-1}d\tau\\
\leq&\frac{1}{\underline{c}}\log\left(1+\underline{c}(t+\sigma_1(\eta_1,t))\right).
\end{aligned}$$
Hence by the assumption (\ref{3.11}) and that $\sigma_1(\eta_1,t)+t\leq T$, it follows that
$$\int_0^{t+\sigma_1(\eta_1,t)}C\epsilon X_B^{(\xi)}(\eta_1,0;\tau)^{-1}d\tau\leq C\kappa,$$
therefore
\begin{equation}
e^{-C\kappa}(\eta_2-\eta_1)\leq X_B^{(\xi)}(\eta_2,0;t+\sigma_1(\eta_1,t))-X_B^{(\xi)}(\eta_1,0;t+\sigma_1(\eta_1,t))\leq e^{C\kappa}(\eta_2-\eta_1).
\label{stab1}\end{equation}
\noindent By the rough bound (\ref{3.10}) again, it holds (figure \ref{2})
$$\begin{aligned}
&X_F^{(\xi)}(1,t;\sigma_1(\eta_1,t))+\underline{c}\left(\sigma_1(\eta_2,t)-\sigma_1(\eta_1,t)\right)\leq X_F^{(\xi)}(1,t;\sigma_1(\eta_2,t))\\
=&X_B^{(\xi)}(\eta_2,0;t+\sigma_1(\eta_2,t))\leq X_B^{(\xi)}(\eta_2,0;t+\sigma_1(\eta_1,t))-\underline{c}\left(\sigma_1(\eta_2,t)-\sigma_1(\eta_1,t)\right),
\end{aligned}$$
$$\begin{aligned}
&X_F^{(\xi)}(1,t;\sigma_1(\eta_1,t))+\overline{c}\left(\sigma_1(\eta_2,t)-\sigma_1(\eta_1,t)\right)\geq X_F^{(\xi)}(1,t;\sigma_1(\eta_2,t))\\
=&X_B^{(\xi)}(\eta_2,0;t+\sigma_1(\eta_2,t))\geq X_B^{(\xi)}(\eta_2,0;t+\sigma_1(\eta_1,t))-\overline{c}\left(\sigma_1(\eta_2,t)-\sigma_1(\eta_1,t)\right).
\end{aligned}$$
Combining the above inequalities yields
$$\begin{aligned}
2\underline{c}\left(\sigma_1(\eta_2,t)-\sigma_1(\eta_1,t)\right)
\leq& X_B^{(\xi)}(\eta_2,0;t+\sigma_1(\eta_1,t))-X_F^{(\xi)}(1,t;\sigma_1(\eta_1,t))\\
=&X_B^{(\xi)}(\eta_2,0;t+\sigma_1(\eta_1,t))-X_B^{(\xi)}(\eta_1,0;t+\sigma_1(\eta_1,t)))\\
\leq&e^{C\kappa}(\eta_2-\eta_1),
\end{aligned}$$
$$\begin{aligned}
2\overline{c}\left(\sigma_1(\eta_2,t)-\sigma_1(\eta_1,t)\right)
\geq& X_B^{(\xi)}(\eta_2,0;t+\sigma_1(\eta_1,t))-X_F^{(\xi)}(1,t;\sigma_1(\eta_1,t))\\
=&X_B^{(\xi)}(\eta_2,0;t+\sigma_1(\eta_1,t))-X_B^{(\xi)}(\eta_1,0;t+\sigma_1(\eta_1,t)))\\
\geq&e^{-C\kappa}(\eta_2-\eta_1),
\end{aligned}$$
which gives the estimate of $\frac{d\sigma_1(\eta,t)}{d\eta}$ since $\eta_2$, $\eta_1$ can be arbitrary.
Note that
$$\begin{aligned}
&\max\left\{1+\underline{c}\sigma_1(\eta,t),X_B^{(\xi)}(\eta,0;t)-\overline{c}\sigma_1(\eta,t)\right\}\\
\leq&X_F^{(\xi)}(1,t;\sigma_1(\eta,t))=X_B^{(\xi)}(\eta,0;t+\sigma_1(\eta,t))\\
\leq&\min\left\{1+\overline{c}\sigma_1(\eta,t),X_B^{(\xi)}(\eta,0;t)-\underline{c}\sigma_1(\eta,t)\right\},
\end{aligned}$$
then taking $\eta_2=\eta$, $\eta_1=\xi_0(t)$ in (\ref{stab1}) yields
$$e^{-C\kappa}(\eta-\xi_0(t))\leq X_B^{(\xi)}(\eta,0;t)-1\leq e^{C\kappa}(\eta-\xi_0(t)).$$
\begin{figure}[H]
\center{\includegraphics[width=14cm]  {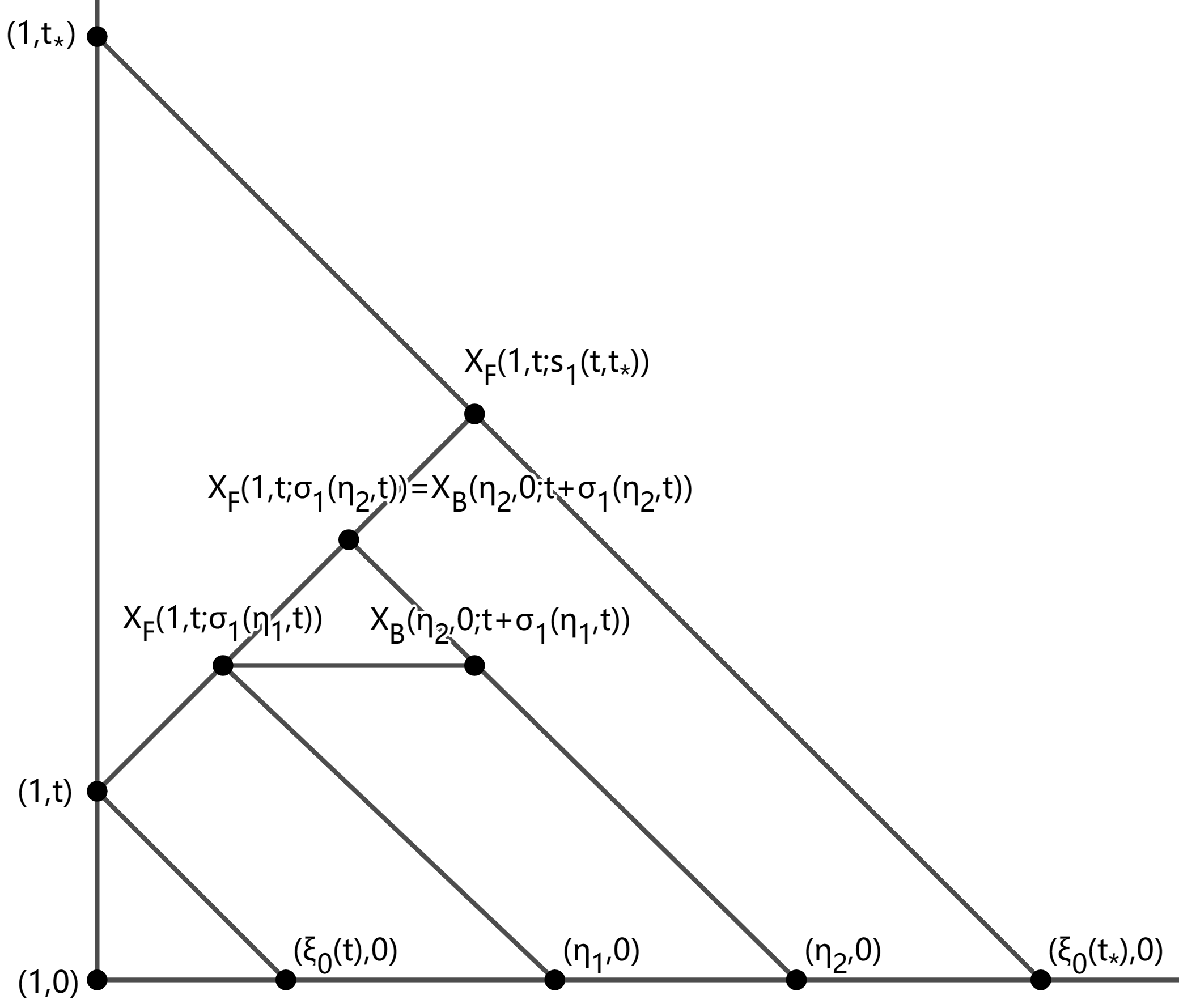}}   
\caption{Positions of the points $X_F(1,t;\sigma_1(\eta_1,t))$, $X_F(1,t;\sigma_1(\eta_2,t))$ and $X_B(\eta_2,0;t+\sigma_1(\eta_1,t))$}\label{2}  
\end{figure}
\noindent It follows
$$X_F^{(\xi)}(1,t;\sigma_1(\eta,t))\geq\left(\overline{c}+\underline{c}\right)^{-1}\left(\overline{c}+\underline{c}X_B^{(\xi)}(\eta,0;t)\right)\geq 1+\frac{\underline{c}e^{-C\kappa}}{\underline{c}+\overline{c}}(\eta-\xi_0(t_*)),$$
and
$$X_F^{(\xi)}(1,t;\sigma_1(\eta,t))\leq\left(\overline{c}+\underline{c}\right)^{-1}\left(\underline{c}+\overline{c}X_B^{(\xi)}(\eta,0;t)\right)\leq 1+\frac{\overline{c}e^{C\kappa}}{\underline{c}+\overline{c}}(\eta-\xi_0(t_*)).$$
\end{proof}
Using the bound (\ref{3.7}) and Lemma \ref{lem 6.1}, we estimate the second line in (\ref{6.5}):
\begin{equation}\begin{aligned}
&\sup_s\int_0^s\left|\left(\frac{\partial_tc+cr^2\xi^{-2}\partial_\xi c}{2c}\right)\left(X_F(1,t;\sigma)\right)\right|v_B\left(X_F(1,t;\sigma)\right)d\sigma\\
\leq&C\epsilon\int_0^{s_1(t,t_*)}X_F^{(\xi)}(1,t;\sigma)^{-1}v_B\left(X_F(1,t;\sigma)\right)d\sigma\\
\leq&\frac{Ce^{C\kappa}\epsilon}{\underline{c}}\int_{\xi_0(t)}^{\xi_0(t_*)}X_F^{(\xi)}(1,t;\sigma_1(\eta,t))^{-1}v_B\left(X_B(\eta,0;t+\sigma_1(\eta,t)\right)d\eta\\
\leq&\frac{Ce^{C\kappa}\epsilon}{\underline{c}}\int_{\xi_0(t)}^{\xi_0(t_*)}\left(1+\underline{c}\left(\overline{c}+\underline{c}\right)^{-1}e^{-C\kappa}\left(\eta-\xi_0(t)\right)\right)^{-1}v_B(\eta,0)d\eta.
\end{aligned}\label{6.8}\end{equation}
Recall that $w_B=\left(\partial_t+cr^2\partial_x\right)\psi$. We then estimate the third line in (\ref{6.5}) using (\ref{3.6}):
$$\begin{aligned}
&\sup_s\int_0^s\left|\left(2r^{-2}u^2\frac{\psi}{c^\frac{1}{2}}\right)\left(X_F(1,t;\sigma)\right)\right|d\sigma\\
\leq&\frac{C\epsilon^2}{\underline{c}^\frac{1}{2}}\int_0^{s_1(t,t_*)}X_F^{(\xi)}(1,t;\sigma)^{-4}\left|\psi\left(X_F(1,t;\sigma)\right)\right|d\sigma\\
\leq&\frac{C\epsilon^2}{\underline{c}^\frac{1}{2}}\int_0^{s_1(t,t_*)}X_F^{(\xi)}(1,t;\sigma)^{-4}\left|\psi(1,t)+\int_0^\sigma w_B\left(X_F(1,t;\tau)\right)d\tau\right|d\sigma\\
\leq&\frac{C\epsilon^2}{\underline{c}^\frac{1}{2}}\int_0^{s_1(t,t_*)}X_F^{(\xi)}(1,t;\sigma)^{-4}d\sigma\left|\psi(1,t)\right|\\
&+\frac{C\epsilon^2\overline{c}^\frac{1}{2}}{\underline{c}^\frac{1}{2}}\int_0^{s_1(t,t_*)}\int_0^\sigma X_F^{(\xi)}(1,t;\sigma)^{-4}v_B\left(X_F(1,t;\tau)\right)d\tau d\sigma,
\end{aligned}$$
and exchange the order of integration:
$$\begin{aligned}
&\int_0^{s_1(t,t_*)}\int_0^\sigma X_F^{(\xi)}(1,t;\sigma)^{-4}v_B\left(X_F(1,t;\tau)\right)d\tau d\sigma\\
=&\int_0^{s_1(t,t_*)}\left(\int_\tau^{s_1(t,t_*)}X_F^{(\xi)}(1,t;\sigma)^{-4}d\sigma\right)v_B\left(x_F(1,t;\tau)\right)d\tau.
\end{aligned}$$
Use the rough bound (\ref{3.10}) to get
$$\begin{aligned}
\int_\tau^{s_1(t,t_*)}X_F^{(\xi)}(1,t;\sigma)^{-4}d\sigma\leq&\int_\tau^{s_1(t,t_*)}\left(X_F^{(\xi)}(1,t;\tau)+\underline{c}(\sigma-\tau)\right)^{-4}d\sigma
\leq\frac{1}{3\underline{c}}X_F^{(\xi)}(1,t;\tau)^{-3}.
\end{aligned}$$
We can then proceed with Lemma \ref{lem 6.1}:
\begin{equation}\begin{aligned}
&\sup_s\int_0^s\left|\left(2r^{-2}u^2\frac{\psi}{c^\frac{1}{2}}\right)\left(X_F(1,t;\sigma)\right)\right|d\sigma\\
\leq&\frac{C\epsilon^2}{\underline{c}^\frac{3}{2}}|\psi(1,t)|+\frac{C\epsilon^2\overline{c}^\frac{1}{2}}{\underline{c}^\frac{3}{2}}\int_0^{s_1(t,t_*)}X_F^{(\xi)}(1,t;\tau)^{-3}v_B\left(X_F(1,t;\tau)\right)d\tau\\
\leq&\frac{C\epsilon^2}{\underline{c}^\frac{3}{2}}|\psi(1,t)|+\frac{C\epsilon^2\overline{c}^\frac{1}{2}}{\underline{c}^\frac{3}{2}}\frac{2\underline{c}}{e^{C\kappa}}\int_{\xi_0(t)}^{\xi_0(t_*)}X_F^{(\xi)}(1,t;\sigma_1(\eta,t))^{-3}v_B\left(X_B(\eta,0;\sigma_1(\eta,t))\right)d\eta\\
\leq&\frac{C\epsilon^2}{\underline{c}^\frac{3}{2}}|\psi(1,t)|+\frac{C\epsilon^2\overline{c}^\frac{1}{2}}{\underline{c}^\frac{5}{2}}e^{C\kappa}\int_{\xi_0(t)}^{\xi_0(t_*)}\left(1+\frac{\underline{c}}{\overline{c}+\underline{c}}e^{-C\kappa}(\eta-\xi_0(t))\right)^{-3}v_B(\eta,0)d\eta.
\end{aligned}\label{6.9}\end{equation}
\subsection{Estimate of (\ref{6.6})}
By continuity, for each $\eta\in[\xi,\xi_0(T)]$, there exists a unique $\sigma_2(\eta,\xi)\in[0,T-t]$ such that (figure \ref{3})
$$X_F(\xi,0;\sigma_2(\eta,\xi))=X_B(\eta,0;\sigma_2(\eta,\xi)),\quad \sigma_2(\xi,\xi)=0.$$
In particular, for each $t_*\leq T$ such that $\xi_0(t_*)\geq\xi$, $s_2(\xi,t_*):=\sigma_2(\xi_0(t_*),\xi)$ satisfies
$$X_F(\xi,0;s_2(\xi,t_*))=X_B(\xi_0(t_*),0;s_2(\xi,t_*)),$$
so the integrals in (\ref{6.6}) are taken over $\sigma\in(0,s_2(\xi,t_*))$.
\begin{lem}
For each $\xi\in[1,\xi_0(T)]$, $\sigma_2(\cdot,\xi)$ is a strictly increasing $C^1$ function on $(\xi,\xi_0(T))$. Moreover, for $\eta\in(\xi,\xi_0(T))$, it holds
$$\left(2\overline{c}\right)^{-1}e^{-C\kappa}\leq\frac{d\sigma_2(\eta,\xi)}{d\eta}\leq\left(2\underline{c}\right)^{-1}e^{C\kappa},$$
$$\xi+\frac{\underline{c}}{\overline{c}+\underline{c}}(\eta-\xi)\leq X_F^{(\xi)}(\xi,0;\sigma_2(\eta,\xi))\leq\xi+\frac{\overline{c}}{\overline{c}+\underline{c}}(\eta-\xi).$$
\label{lem 6.2}\end{lem}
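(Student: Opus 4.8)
The plan is to mirror the proof of Lemma \ref{lem 6.1}, the argument being in fact slightly simpler here because both characteristics entering the definition of $\sigma_2$ issue from the line $\{t=0\}$. First I would obtain the $C^1$ regularity of $\sigma_2(\cdot,\xi)$ from the implicit function theorem applied to $G(\eta,\sigma):=X_F^{(\xi)}(\xi,0;\sigma)-X_B^{(\xi)}(\eta,0;\sigma)$: at a zero of $G$ one has $\partial_\sigma G=(cr^2\xi^{-2})(X_F(\xi,0;\sigma))+(cr^2\xi^{-2})(X_B(\eta,0;\sigma))\ge 2\underline c>0$ by (\ref{3.10}), while the $H^2$ regularity of $c$ makes the flow maps $C^1$ in their initial data; positivity of $\partial_\sigma G$ also shows $G(\eta,\cdot)$ is strictly increasing. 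Strict monotonicity of $\sigma_2(\cdot,\xi)$ then follows from uniqueness of integral curves exactly as in Lemma \ref{lem 6.1}: for $\xi\le\eta_1<\eta_2$ one has $X_B^{(\xi)}(\eta_2,0;\cdot)>X_B^{(\xi)}(\eta_1,0;\cdot)$ on the common interval, and since $G(\eta_1,\cdot)<0$ before $\sigma_2(\eta_1,\xi)$ the increasing curve $X_F^{(\xi)}(\xi,0;\cdot)$ stays strictly below $X_B^{(\xi)}(\eta_2,0;\cdot)$ there, ruling out $\sigma_2(\eta_2,\xi)\le\sigma_2(\eta_1,\xi)$. One should also record in passing that for $\eta<\xi_0(T)$ the meeting time $\sigma_2(\eta,\xi)$ is $<T$, since $X_F^{(\xi)}(\xi,0;\cdot)$ lies below $X_B^{(\xi)}(\eta,0;\cdot)$ at $t=0$ and above it by time $T$, using $X_B^{(\xi)}(\eta,0;T)\le X_B^{(\xi)}(\xi_0(T),0;T)=1\le X_F^{(\xi)}(\xi,0;T)$.

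For the quantitative bounds I would reproduce the stability estimate behind (\ref{stab1}): for $\xi\le\eta_1<\eta_2$ the mean value formula together with the Lipschitz bound (\ref{lip}) yields $|\tfrac{d}{d\tau}(X_B^{(\xi)}(\eta_2,0;\tau)-X_B^{(\xi)}(\eta_1,0;\tau))|\le C\epsilon\,X_B^{(\xi)}(\eta_1,0;\tau)^{-1}(X_B^{(\xi)}(\eta_2,0;\tau)-X_B^{(\xi)}(\eta_1,0;\tau))$, and since $X_B^{(\xi)}(\eta_1,0;\tau)\ge 1+\underline c(\sigma_2(\eta_1,\xi)-\tau)$ on $[0,\sigma_2(\eta_1,\xi)]$ by (\ref{3.10}), the integral of the prefactor is $\lesssim\epsilon\,\underline c^{-1}\log(1+\underline c\,\sigma_2(\eta_1,\xi))\lesssim\kappa$ by the lifespan hypothesis (\ref{3.11}) (together with $\underline c\lesssim\overline c$ and $\sigma_2(\eta_1,\xi)\le T$); Gronwall then gives $e^{-C\kappa}(\eta_2-\eta_1)\le X_B^{(\xi)}(\eta_2,0;\tau)-X_B^{(\xi)}(\eta_1,0;\tau)\le e^{C\kappa}(\eta_2-\eta_1)$ on $[0,\sigma_2(\eta_1,\xi)]$. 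A purely geometric pinching finishes both claims. Over $[\sigma_2(\eta_1,\xi),\sigma_2(\eta_2,\xi)]$ the curve $X_F^{(\xi)}(\xi,0;\cdot)$ rises by an amount in $[\underline c\Delta\sigma,\overline c\Delta\sigma]$ and $X_B^{(\xi)}(\eta_2,0;\cdot)$ falls by an amount in $[\underline c\Delta\sigma,\overline c\Delta\sigma]$ with $\Delta\sigma:=\sigma_2(\eta_2,\xi)-\sigma_2(\eta_1,\xi)$; since they coincide at $\sigma_2(\eta_2,\xi)$, their separation at $\sigma_2(\eta_1,\xi)$, equal to $X_B^{(\xi)}(\eta_2,0;\sigma_2(\eta_1,\xi))-X_B^{(\xi)}(\eta_1,0;\sigma_2(\eta_1,\xi))\in[e^{-C\kappa}(\eta_2-\eta_1),e^{C\kappa}(\eta_2-\eta_1)]$, is pinched between $2\underline c\Delta\sigma$ and $2\overline c\Delta\sigma$, whence the bounds on $d\sigma_2(\eta,\xi)/d\eta$ after dividing by $\eta_2-\eta_1$ and letting $\eta_2\to\eta_1$. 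For the bound on $X_F^{(\xi)}(\xi,0;\sigma_2(\eta,\xi))$ I would instead take $\eta_1=\xi$, so $\sigma_2(\xi,\xi)=0$ and the separation at the initial time is \emph{exactly} $\eta-\xi$ with no stability factor; writing $d_F=X_F^{(\xi)}(\xi,0;\sigma_2(\eta,\xi))-\xi$ and $d_B=\eta-X_F^{(\xi)}(\xi,0;\sigma_2(\eta,\xi))$, one has $d_F,d_B\in[\underline c\,\sigma_2(\eta,\xi),\overline c\,\sigma_2(\eta,\xi)]$ and $d_F+d_B=\eta-\xi$, hence $d_B\le(\overline c/\underline c)d_F$ and $d_B\ge(\underline c/\overline c)d_F$, giving $\tfrac{\underline c}{\underline c+\overline c}(\eta-\xi)\le d_F\le\tfrac{\overline c}{\underline c+\overline c}(\eta-\xi)$, which is exactly the claimed two-sided bound.

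The argument is essentially routine given Lemma \ref{lem 6.1}, and the only points requiring care are (i) checking that every characteristic arc invoked stays inside $\Omega_B(T)$ (equivalently inside $\{\xi\ge1,\ 0\le t\le T\}$), where the bootstrap pointwise bounds (\ref{3.6})(\ref{3.7})(\ref{3.10})(\ref{lip}) are available, and (ii) closing the logarithmic integral against the almost-global time scale, i.e.\ using (\ref{3.11}) and $\kappa\le\kappa_0$ small to guarantee $\epsilon\log(1+\underline c\,T)\lesssim\kappa$ — this is precisely what keeps the exponential factors $e^{\pm C\kappa}$ bounded and hence the derivative bounds uniform in $\xi$ and $T$.
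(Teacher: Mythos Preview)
Your proposal is correct and follows essentially the same approach as the paper's own proof: implicit function theorem for $C^1$ regularity, a contradiction argument via uniqueness of integral curves for strict monotonicity, the Gronwall/stability estimate $e^{-C\kappa}(\eta_2-\eta_1)\le X_B^{(\xi)}(\eta_2,0;\tau)-X_B^{(\xi)}(\eta_1,0;\tau)\le e^{C\kappa}(\eta_2-\eta_1)$, and the geometric pinching using the rough speed bounds (\ref{3.10}) to obtain both the derivative bound and the two-sided estimate on $X_F^{(\xi)}(\xi,0;\sigma_2(\eta,\xi))$. Your final step, writing $d_F+d_B=\eta-\xi$ with $d_F,d_B\in[\underline c\,\sigma_2,\overline c\,\sigma_2]$, is just a repackaging of the paper's convex-combination identity $\tfrac{\overline c}{\overline c+\underline c}(\xi+\underline c\sigma_2)+\tfrac{\underline c}{\overline c+\underline c}(\eta-\overline c\sigma_2)=\xi+\tfrac{\underline c}{\overline c+\underline c}(\eta-\xi)$.
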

\begin{proof}
The implicit function theorem and the $H^2$ regularity of $c$ give the $C^1$ regularity of $\sigma_2(\cdot,\xi)$. Assume otherwise $\sigma_2(\eta_2,\xi)\leq\sigma_2(\eta_1,\xi)$ for some $\eta_2>\eta_1$. Then
$$\begin{aligned}
&X_B^{(\xi)}(\eta_2,0;\sigma_2(\eta_2,\xi))=X_F^{(\xi)}(\xi,0;\sigma_2(\eta_2,\xi))\leq X_F^{(\xi)}(\xi,0;\sigma_2(\eta_1,\xi))\\
=&X_B^{(\xi)}(\eta_1,0;\sigma_2(\eta_1,\xi))\leq X_B^{(\xi)}(\eta_1,0;\sigma_2(\eta_2,\xi)),
\end{aligned}$$
while $X_B^{(\xi)}(\eta_2,0;0)>X_B^{(\xi)}(\eta_1,0;0)$, which is contradictory with the uniqueness of integral curves. Hence $\sigma_2(\eta_2,t)>\sigma_2(\eta_1,t)$ and for the same reason $X_B^{(\xi)}(\eta_2,0;\tau)>X_B^{(\xi)}(\eta_1,0;\tau)$ for $\tau$ in the common interval of $X_B(\eta_1,t;\cdot)$ and $X_B(\eta_2,t;\cdot)$. By the same argument as in Lemma \ref{lem 6.1}, for $\eta_2,\eta_1\in[\xi,\xi_0(T)]$ with $\eta_2>\eta_1$ it holds 
$$e^{-C\kappa}(\eta_2-\eta_1)\leq X_B^{(\xi)}(\eta_2,0;\sigma_2(\eta_1,t))-X_B^{(\xi)}(\eta_1,0;\sigma_2(\eta_1,t))\leq e^{C\kappa}(\eta_2-\eta_1).$$
 Use the rough bound (\ref{3.10}) (figure \ref{3}):
$$\begin{aligned}
&X_F^{(\xi)}(\xi,0;\sigma_2(\eta_1,\xi))+\underline{c}\left(\sigma_2(\eta_2,\xi)-\sigma_2(\eta_1,\xi)\right)\leq X_F^{(\xi)}(\xi,0;;\sigma_2(\eta_2,\xi))\\
=&X_B^{(\xi)}(\eta_2,0;\sigma_2(\eta_2,\xi))\leq X_B^{(\xi)}(\eta_2,0;\sigma_2(\eta_1,\xi))-\underline{c}\left(\sigma_2(\eta_2,\xi)-\sigma_2(\eta_1,\xi)\right),
\end{aligned}$$
$$\begin{aligned}
&X_F^{(\xi)}(\xi,0;\sigma_2(\eta_1,\xi))+\overline{c}\left(\sigma_2(\eta_2,\xi)-\sigma_2(\eta_1,\xi)\right)\geq X_F^{(\xi)}(\xi,0;;\sigma_2(\eta_2,\xi))\\
=&X_B^{(\xi)}(\eta_2,0;\sigma_2(\eta_2,\xi))\geq X_B^{(\xi)}(\eta_2,0;\sigma_2(\eta_1,\xi))-\overline{c}\left(\sigma_2(\eta_2,\xi)-\sigma_2(\eta_1,\xi)\right).
\end{aligned}$$
\noindent
Combining the above inequalities yields
$$\left(2\overline{c}\right)^{-1}e^{-C\kappa}(\eta_2-\eta_1)\leq\sigma_2(\eta_2,\xi)-\sigma_2(\eta_1,\xi)\leq\left(2\underline{c}\right)^{-1}e^{C\kappa}(\eta_2-\eta_1).$$
\begin{figure}[H]
\center{\includegraphics[width=14cm]  {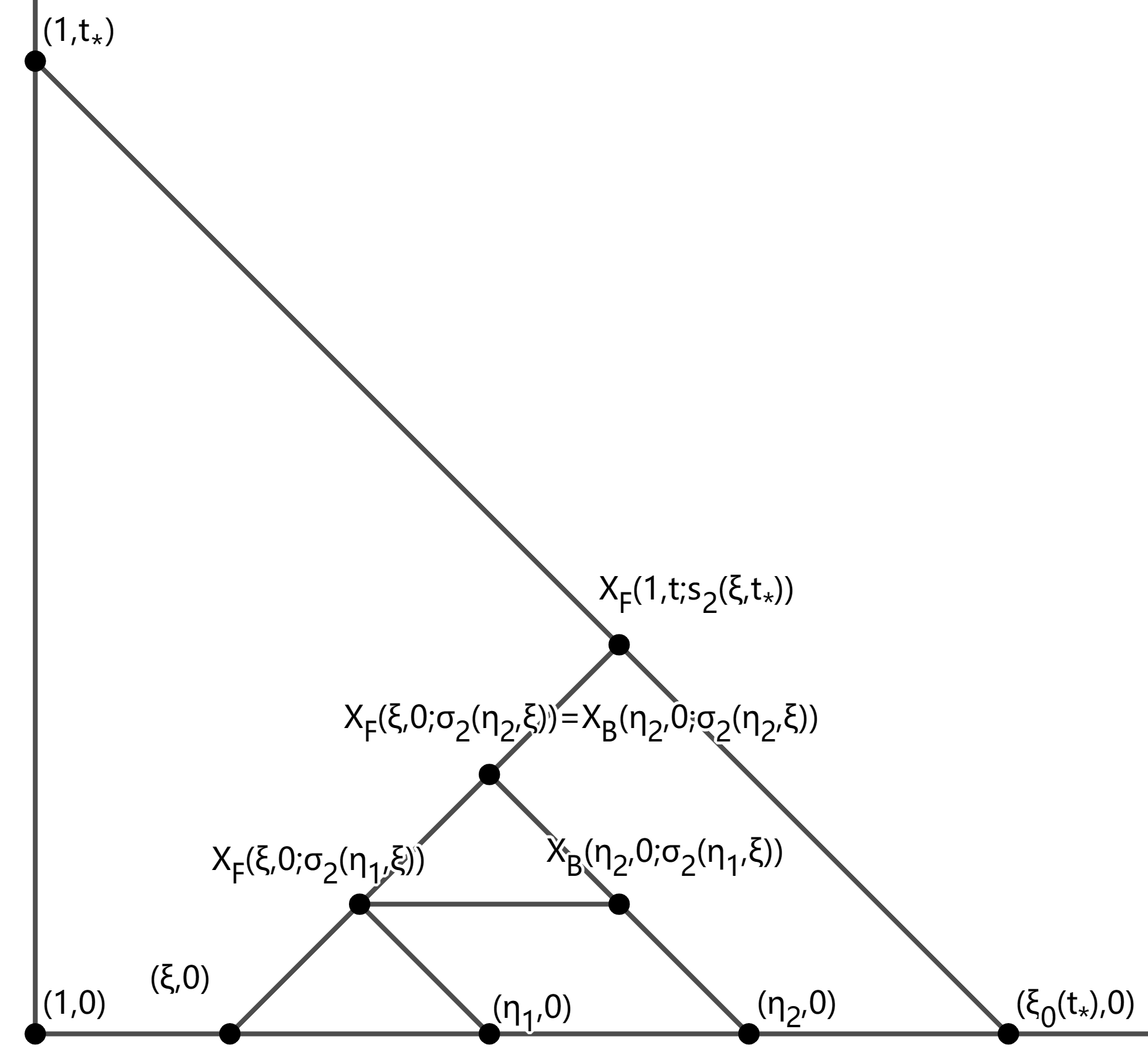}}   
\caption{Positions of the points $X_F(\xi,0;\sigma_2(\eta_1,\xi))$, $X_F(\xi,0;\sigma_2(\eta_2,\xi))$ and $X_B(\eta_2,0;\sigma_2(\eta_1,\xi))$}\label{3}   
\end{figure}
\noindent Since $\eta_2$, $\eta_1$ can be chosen arbitrarily, we obtain
$$\left(2\overline{c}\right)^{-1}e^{-C\kappa}\leq\frac{d\sigma_2(\eta,\xi)}{d\eta}\leq\left(2\underline{c}\right)^{-1}e^{C\kappa}.$$
The estimate of $X_F^{(\xi)}(\xi,0;\sigma_2(\eta,\xi))$ follows from the  following inequalities:
$$\begin{aligned}
&\max\left\{\xi+\underline{c}\sigma_2(\eta,\xi),\eta-\overline{c}\sigma_2(\eta,\xi)\right\}\\
\leq&X_F^{(\xi)}(\xi,0;\sigma_2(\eta,\xi))=X_B^{(\xi)}(\eta,0;\sigma_2(\eta,\xi))\\
\leq&\min\left\{\xi+\overline{c}\sigma_2(\eta,\xi),\eta-\underline{c}\sigma_2(\eta,\xi)\right\},
\end{aligned}$$
$$\frac{\overline{c}}{\overline{c}+\underline{c}}\left(\xi+\underline{c}\sigma_2(\eta,\xi)\right)+\frac{\underline{c}}{\overline{c}+\underline{c}}\left(\eta-\overline{c}\sigma_2(\eta,\xi)\right)=\xi+\frac{\underline{c}}{\overline{c}+\underline{c}}(\eta-\xi),$$
$$\frac{\underline{c}}{\overline{c}+\underline{c}}\left(\xi+\overline{c}\sigma_2(\eta,\xi)\right)+\frac{\overline{c}}{\overline{c}+\underline{c}}\left(\eta-\underline{c}\sigma_2(\eta,\xi)\right)=\xi+\frac{\overline{c}}{\overline{c}+\underline{c}}(\eta-\xi).$$
\end{proof}
By the bound (\ref{3.7}) and Lemma \ref{lem 6.2}, the second line in (\ref{6.6}) can be controlled by 
\begin{equation}\begin{aligned}
&\sup_s\int_0^s\left|\left(\frac{\partial_tc+cr^2\xi^{-2}\partial_\xi c}{2c}\right)\left(X_F(\xi,0;\sigma)\right)\right|v_B\left(X_F(\xi,0;\sigma)\right)d\sigma\\
\leq&C\epsilon\int_0^{s_2(\xi,t_*)}X_F^{(\xi)}(\xi,0;\sigma)^{-1}v_B\left(X_F(\xi,0;\sigma)\right)d\sigma\\
\leq&\frac{Ce^{C\kappa}\epsilon}{\underline{c}}\int_\xi^{\xi_0(T)}X_F^{(\xi)}(\xi,0;\sigma_2(\eta,\xi))^{-1}v_B\left(X_B(\eta,0;\sigma_2(\eta,\xi))\right)d\eta\\
\leq&\frac{Ce^{C\kappa}\epsilon}{\underline{c}}\int_\xi^{\xi_0(T)}\left(\xi+\frac{\underline{c}}{\overline{c}+\underline{c}}(\eta-\xi)\right)^{-1}v_B(\eta,0)d\eta.
\end{aligned}\label{6.10}\end{equation}
Use the bound (\ref{3.6}) and that $\left(\partial_t+cr^2\partial_x\right)\psi=w_B$ to estimate the third line in (\ref{6.6}):
$$\begin{aligned}
&\sup_s\int_0^s\left|\left(2r^{-2}u^2\frac{\psi}{c^\frac{1}{2}}\right)\left(X_F(\xi,0;\sigma)\right)\right|d\sigma\\
\leq&\frac{C\epsilon^2}{\underline{c}^\frac{1}{2}}\int_0^{s_2(\xi,t_*)}X_F^{(\xi)}(\xi,0;\sigma)^{-4}\left|\psi\left(X_F(\xi,0;\sigma)\right)\right|d\sigma\\
\leq&\frac{C\epsilon^2}{\underline{c}^\frac{1}{2}}\int_0^{s_2(\xi,t_*)}X_F^{(\xi)}(\xi,0;\sigma)^{-4}\left|\psi(\xi,0)+\int_0^\sigma w_B\left(X_F(\xi,0;\tau)\right)d\tau\right|d\sigma\\
\leq&\frac{C\epsilon^2}{\underline{c}^\frac{1}{2}}\int_0^{s_2(\xi,t_*)}X_F^{(\xi)}(\xi,0;\sigma)^{-4}d\sigma |\psi(\xi,0)|\\
&+\frac{C\epsilon^2\overline{c}^\frac{1}{2}}{\underline{c}^\frac{1}{2}}\int_0^{s_2(\xi,t_*)}\int_0^\sigma X_F^{(\xi)}(\xi,0;\sigma)^{-4}v_B\left(X_F(\xi,0;\tau)\right)d\tau d\sigma\\
=&\frac{C\epsilon^2}{\underline{c}^\frac{1}{2}}\int_0^{s_2(\xi,t_*)}X_F^{(\xi)}(\xi,0;\sigma)^{-4}d\sigma|\psi(\xi,0)|\\
&+\frac{C\epsilon^2\overline{c}^\frac{1}{2}}{\underline{c}^\frac{1}{2}}\int_0^{s_2(\xi,t_*)}\left(\int_\tau^{s_2(\xi,t_*)}X_F^{(\xi)}(\xi,0;\sigma)^{-4}d\sigma\right)v_B\left(X_F(\xi,0;\tau)\right)d\tau.
\end{aligned}$$
Note that the rough bound (\ref{3.10}) yields 
$$\begin{aligned}
\int_\tau^{s_2(\xi,t_*)}X_F^{(\xi)}(\xi,0;\sigma)^{-4}d\sigma\leq&\int_\tau^{s_2(\xi,t_*)}\left(X_F^{(\xi)}(\xi,0;\tau)+\underline{c}(\sigma-\tau)\right)^{-4}d\sigma\\
\leq&\frac{1}{3\underline{c}}X_F^{(\xi)}(\xi,0;\tau)^{-3}.
\end{aligned}$$
Then we proceed by Lemma \ref{lem 6.2} and obtain
\begin{equation}\begin{aligned}
&\sup_s\int_0^s\left|\left(2r^{-2}u^2\frac{\psi}{c^\frac{1}{2}}\right)\left(X_F(\xi,0;\sigma)\right)\right|d\sigma\\
\leq&\frac{C\epsilon^2}{\underline{c}^\frac{3}{2}}\xi^{-3}|\psi(\xi,0)|+\frac{C\epsilon^2\overline{c}}{\underline{c}^\frac{3}{2}}\int_0^{s_2(\xi,t_*)}X_F^{(\xi)}(\xi,0;\tau)^{-3}v_B\left(X_F(\xi,0;\tau)\right)d\tau\\
\leq&\frac{C\epsilon^2}{\underline{c}^\frac{3}{2}}\xi^{-3}|\psi(\xi,0)|+\frac{C\epsilon^2\overline{c}^\frac{1}{2}}{\underline{c}^\frac{3}{2}}\frac{2\underline{c}}{e^{C\kappa}}\int_\xi^{\xi_0(t_*)}X_F^{(\xi)}(\eta,0;\sigma_2(\eta,\xi))^{-3}v_B\left(X_B(\eta,0;\sigma_2(\eta,\xi))\right)d\eta\\
\leq&\frac{C\epsilon^2}{\underline{c}^\frac{3}{2}}\xi^{-3}|\psi(\xi,0)|+\frac{C\epsilon^2\overline{c}^\frac{1}{2}}{\underline{c}^\frac{5}{2}}e^{C\kappa}\int_\xi^{\xi_0(t_*)}\left(\xi+\underline{c}\left(\overline{c}+\underline{c}\right)^{-1}(\eta-\xi)\right)^{-3}v_B(\eta,0)d\eta.
\end{aligned}\label{6.11}\end{equation}
\subsection{Estimate of (\ref{6.7})}
In fact, we will take $\xi=\xi_0(t_*)$ in (\ref{6.7}) for the later analysis, and thus it is convenient to establish the estimate with $\xi$ replaced by $\xi_0(t_*)$ in (\ref{6.7}) with $t_*\in[0,T]$, in which case the integrals in (\ref{6.7}) is in fact taken over $\sigma\in(0,t_*)$ by the construction of $\xi_0(\cdot)$. For each $\eta\in[1,\xi_0(t_*)]$, by continuity, there exists a unique $\sigma_3(\eta,t_*)\in[0,t_*]$ such that (figure \ref{4})
$$X_F(\eta,0;\sigma_3(\eta,t_*))=X_B(\xi_0(t_*),0;\sigma_3(\eta,t_*)),\quad \sigma_3(\xi_0(t_*),t_*)=0.$$
In particular, $s_3(t_*):=\sigma_3(1,t_*)$ satisfies
$$X_F(1,0;s_3(t_*))=X_B(\xi_0(t_*),0;s_3(t_*)).$$
Analogously, for each $t\in[0,t_*]$ there exists a unique $\sigma_4(t,t_*)\in[t,t_*]$ such that (figure \ref{5})
 $$X_F(1,t;\sigma_4(t,t_*)-t)=X_B(\xi_0(t_*),0;\sigma_4(t,t_*)),\quad \sigma_4(t_*,t_*)=t_*,\quad \sigma_4(0,t_*)=s_3(t_*).$$
Recall that $X_F(1,t;\sigma_1(\eta,t))=X_B(\eta,0;t+\sigma_1(\eta,t))$ for $t\in[0,t]$ and $\eta\in[\xi_0(t),\xi_0(t_*)]$. Hence by taking $\eta=\xi_0(t_*)$, we have in particular $$\sigma_1(\xi_0(t_*),t)=\sigma_4(t,t_*)-t.$$
\begin{lem}
For each $t_*\in[0,T]$, $\sigma_3(\cdot,t_*)$ is a strictly decreasing $C^1$ function on $(1,\xi_0(t_*))$, and $\sigma_4(\cdot,t_*)$ is a strictly decreasing $C^1$ function on $(0,t_*)$. Moreover, the following estimates hold:
$$\left(2\overline{c}\right)^{-1}e^{-C\kappa}\leq-\frac{d\sigma_3(\eta,t_*)}{d\eta}\leq\left(2\underline{c}\right)^{-1}e^{C\kappa},\quad \eta\in(1,\xi_0(t_*)),$$
$$\left(2\overline{c}\right)^{-1}\underline{c}e^{-C\kappa}\leq\frac{d\sigma_4(t,t_*)}{dt}\leq(2\underline{c})^{-1}\overline{c}e^{C\kappa},\quad t\in(0,t_*),$$
$$X_B^{(\xi)}(\xi_0(t_*),0;\sigma_3(\eta,t_*))\geq\eta+\frac{\underline{c}}{\overline{c}+\underline{c}}(\xi_0(t_*)-\eta)\geq1+\frac{\underline{c}}{\overline{c}+\underline{c}}(\xi_0(t_*)-1),$$
$$X_B^{(\xi)}(\xi_0(t_*),0;\sigma_3(\eta,t_*))\leq\eta+\frac{\underline{c}}{\overline{c}+\underline{c}}(\xi_0(t_*)-\eta),$$
$$1+\frac{\underline{c}}{2}(t_*-t)\leq X_B^{(\xi)}(\xi_0(t_*),0;\sigma_4(t,t_*))\leq 1+\frac{\overline{c}}{2}(t_*-t).$$
\label{lem 6.3}\end{lem}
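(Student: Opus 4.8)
The plan is to repeat the scheme of Lemmas \ref{lem 6.1} and \ref{lem 6.2}, now with the backward characteristic $\tau\mapsto X_B(\xi_0(t_*),0;\tau)$ — a curve whose $\xi$-coordinate decreases from $\xi_0(t_*)$ to $1$ as $\tau$ runs from $0$ to $t_*$ — held fixed, and with a one-parameter family of forward characteristics sweeping across it. By definition $\sigma_3(\eta,t_*)$ is the time at which $X_F(\eta,0;\cdot)$ meets this backward curve, and $\sigma_4(t,t_*)$ the time at which the forward characteristic $s\mapsto X_F(1,t;s)$, emitted from $(1,t)$ on the boundary, meets it; equivalently $\sigma_4(t,t_*)=t+\sigma_1(\xi_0(t_*),t)$ by the relation recorded just before the lemma. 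Four ingredients, exactly as in the earlier lemmas, are needed. First, $C^1$ regularity of $\sigma_3(\cdot,t_*)$, $\sigma_4(\cdot,t_*)$ from the implicit function theorem, using the $C^1$ dependence of $X_F$ on its data (the $H^2$-regularity of $c$, hence Lipschitz regularity of $cr^2\xi^{-2}$, cf.\ (\ref{lip})) together with transversality: a forward and a backward characteristic meet with $\xi$-speeds differing by $2cr^2\xi^{-2}\geq2\underline{c}>0$ by (\ref{3.10}). Second, strict monotonicity by the non-crossing of distinct forward characteristics (uniqueness of integral curves), argued by contradiction precisely as for $\sigma_1$: the wrong ordering of two intersection times would force two forward curves to cross. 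Third, Lipschitz stability of neighbouring forward characteristics: for $\eta_2>\eta_1$, the gap $X_F^{(\xi)}(\eta_2,0;\tau)-X_F^{(\xi)}(\eta_1,0;\tau)$ satisfies $\big|\tfrac{d}{d\tau}(\cdot)\big|\lesssim\epsilon X_F^{(\xi)}(\eta_1,0;\tau)^{-1}(\cdot)$ by the mean value theorem and (\ref{lip}), and Gronwall together with the logarithmic bound $\int_0^{t_*}\epsilon X_F^{(\xi)}(\eta_1,0;\tau)^{-1}d\tau\lesssim\tfrac{\epsilon}{\underline{c}}\log(1+\underline{c}t_*)\lesssim\kappa$ — from (\ref{3.11}) — reproduces the analogue of (\ref{stab1}), $e^{-C\kappa}(\eta_2-\eta_1)\leq X_F^{(\xi)}(\eta_2,0;\sigma_3(\eta_1,t_*))-X_F^{(\xi)}(\eta_1,0;\sigma_3(\eta_1,t_*))\leq e^{C\kappa}(\eta_2-\eta_1)$. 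Fourth, a geometric sandwiching using only the rough bounds $\underline{c}\leq cr^2\xi^{-2}\leq\overline{c}$ of (\ref{3.10}).

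With these in hand the $\sigma_3$ statements are obtained verbatim as in Lemma \ref{lem 6.1}: at the two intersection times the forward curves rise with $\xi$-speed in $[\underline{c},\overline{c}]$ while $X_B(\xi_0(t_*),0;\cdot)$ falls with $\xi$-speed in $[\underline{c},\overline{c}]$, so a horizontal displacement of size $e^{\pm C\kappa}(\eta_2-\eta_1)$ corresponds to a time displacement between $(2\overline{c})^{-1}e^{-C\kappa}(\eta_2-\eta_1)$ and $(2\underline{c})^{-1}e^{C\kappa}(\eta_2-\eta_1)$, giving the bound on $-d\sigma_3/d\eta$; and balancing the forward segment from $(\eta,0)$ to the intersection against the segment of the backward curve run before the intersection — both having $\xi$-speeds in $[\underline{c},\overline{c}]$ — pins the intersection's $\xi$-coordinate between $\eta+\tfrac{\underline{c}}{\underline{c}+\overline{c}}(\xi_0(t_*)-\eta)$ and $\eta+\tfrac{\overline{c}}{\underline{c}+\overline{c}}(\xi_0(t_*)-\eta)$, the left endpoint being its value at $\eta=1$. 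For $\sigma_4$ one could invoke $\sigma_4=t+\sigma_1(\xi_0(t_*),t)$ and Lemma \ref{lem 6.1}, but since Lemma \ref{lem 6.1} controls $\sigma_1$ only in its first slot, it is cleaner to argue directly: comparing the two forward characteristics emitted from $(1,t_1)$ and $(1,t_2)$ with $t_1<t_2$, their gap at time $\tau=t_2$ equals $X_F^{(\xi)}(1,t_1;t_2-t_1)-1\in[\underline{c}(t_2-t_1),\overline{c}(t_2-t_1)]$ by (\ref{3.10}), and the Gronwall estimate above then propagates this gap with a factor $e^{\pm C\kappa}$ to the intersection with $X_B(\xi_0(t_*),0;\cdot)$; the extra factors $\underline{c}$, $\overline{c}$ in the bound for $d\sigma_4/dt$ relative to $d\sigma_3/d\eta$ are precisely this conversion of a time increment at $\xi=1$ into a horizontal increment in $[\underline{c}\,dt,\overline{c}\,dt]$. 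Finally, writing $a=\sigma_4(t,t_*)-t$ and $b=t_*-\sigma_4(t,t_*)$, the forward and backward runs joining at the intersection have equal weighted lengths with integrands in $[\underline{c},\overline{c}]$, so the intersection's $\xi$-coordinate $\mu$ satisfies $\mu-1\leq\overline{c}\min(a,b)$ and $\mu-1\geq\underline{c}\max(a,b)$; since $\min(a,b)\leq\tfrac{a+b}{2}\leq\max(a,b)$ and $a+b=t_*-t$ this gives $1+\tfrac{\underline{c}}{2}(t_*-t)\leq\mu\leq1+\tfrac{\overline{c}}{2}(t_*-t)$.

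The routine parts are the implicit-function step and the elementary interval arithmetic; the one genuinely new point — and the main obstacle — is the $\sigma_4$ analysis, where the family of forward characteristics is indexed by the \emph{emission time} on $\{\xi=1\}$ rather than by a point of $\{t=0\}$, so one must first quantify the initial separation of two such curves at the later emission time (via (\ref{3.10})) before the Gronwall stability step applies, and then keep careful track of the two different normalizations ($d\eta$ versus $dt$) when converting horizontal back into temporal displacements. Everything else transcribes directly from the proofs of Lemmas \ref{lem 6.1} and \ref{lem 6.2}, with $\Omega_B(t_*)$ and the standing assumption (\ref{3.11}) ensuring that all the logarithmic integrals stay $\lesssim\kappa$.
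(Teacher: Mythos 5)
Your proposal is correct and follows essentially the same route as the paper's proof: implicit function theorem for the $C^1$ regularity, non-crossing of integral curves for the strict monotonicity, the Gronwall stability estimate with $\int \epsilon X_F^{(\xi)}(\cdot)^{-1}d\tau\lesssim\kappa$ from (\ref{3.11}) to transfer separations along forward characteristics, and the rough bound (\ref{3.10}) to convert spatial gaps into the stated derivative and coordinate bounds, including the direct comparison of the two forward characteristics emitted from $(1,t_1)$, $(1,t_2)$ for the $\sigma_4$ case. The only cosmetic difference is where you evaluate the gap between the two forward curves ($\sigma_3(\eta_1,t_*)$ versus $\sigma_3(\eta_2,t_*)$), which does not affect the relative-speed argument.
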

\begin{proof}
The $C^1$ regularity of $\sigma_3(\cdot,t_*)$ and $\sigma_4(\cdot,t_*)$ follows from the implicit function theorem and the $H^2$ regularity of $c$. Let $\eta_1,\eta_2\in[1,\xi_0(t_*)]$ with $\eta_2>\eta_1$ and $t_1,t_2\in[0,t_*]$ with $t_2>t_1$. To show the monotonicity, assume otherwise$\sigma_3(\eta_2,t_*)\geq\sigma_3(\eta_1,t_*)$. Then 
$$\begin{aligned}
&X_F^{(\xi)}(\eta_1,0;\sigma_3(\eta_1,t_*))=X_B^{(\xi)}(\xi_0(t_*),0;\sigma_3(\eta_1,t_*))\geq X_B^{(\xi)}(\xi_0(t_*),0;\sigma_3(\eta_2,t_*))\\
=&X_F^{(\xi)}(\eta_2,0;\sigma_3(\eta_2,t_*))\geq X_F^{(\xi)}(\eta_2,0;\sigma_3(\eta_1,t_*)),
\end{aligned}$$ 
while $X_F(\eta_1,0;0)<X_F(\eta_2,0;0)$, which is impossible by the uniqueness of integral curves.
Similarly, if we assume $\sigma_4(t_1,t_*)\geq\sigma_4(t_2,t_*)$, then the inequalities
$$\begin{aligned}
&X_F^{(\xi)}(1,t_2;\sigma_4(t_2,t_*)-t_2)=X_B^{(\xi)}(\xi_0(t_*),0;\sigma_4(t_2,t_*))\geq X_B^{(\xi)}(\xi_0(t_*),0;\sigma_4(t_1,t_*))\\
=&X_F^{(\xi)}(1,t_1;\sigma_4(t_1,t_*)-t_1)\geq X_F^{(\xi)}(1,t_1;\sigma_4(t_2,t_*)-t_1)\\
=&X_F^{(\xi)}(X_F(1,t_1;t_2-t_1);\sigma_4(t_2,t_*)-t_2)),
\end{aligned}$$
$$1=X_F^{(\xi)}(1,t_2;0)<X_F^{(\xi)}(1,t_1;t_2-t_1)=X_F^{(\xi)}(X_F(1,t_1;t_2-t_1);0)$$
lead to contradiction.
By recalling (\ref{3.6})(\ref{3.7}) we have
$$\begin{aligned}
\left|\frac{d}{d\tau}\left(X_F^{(\xi)}(\eta_2,0;\tau)-X_F^{(\xi)}(\eta_1,0;\tau)\right)\right|
=&\left|\left(cr^2\xi^{-2}\right)\left(X_F(\eta_2,0;\tau)\right)-\left(cr^2\xi^{-2}\right)\left(X_F(\eta_1,0;\tau)\right)\right|\\
\leq& C\epsilon X_F^{(\xi)}(\eta_1,0;\tau)^{-1}\left(X_F^{(\xi)}(\eta_2,0;\tau)-X_F^{(\xi)}(\eta_1,0;\tau)\right).
\end{aligned}$$
By integrating the above inequality and noting that (\ref{3.11}) gives
$$\begin{aligned}
C\epsilon\int_0^{\sigma_3(\eta_2,t_*)}X_F^{(\xi)}(\eta_1,0;\tau)^{-1}d\tau\leq&C\epsilon\int_0^{\sigma_3(\eta_2,t_*)}\left(X_F^{(\xi)}(\eta_1,0;0)+\underline{c}\tau\right)^{-1}d\tau\\
\leq&\frac{C\epsilon}{\underline{c}}\log\left(1+\eta_1^{-1}\underline{c}\sigma_3(\eta_2,t_*)\right)\\
\leq&C\kappa,
\end{aligned}$$
we obtain
$$e^{-C\kappa}(\eta_2-\eta_1)\leq X_F^{(\xi)}(\eta_2,0;\sigma_3(\eta_2,t_*))-X_F^{(\xi)}(\eta_1,0;\sigma_3(\eta_1,t_*))\leq e^{C\kappa}(\eta_2-\eta_1).$$
\begin{figure}[H]
\center{\includegraphics[width=14cm]  {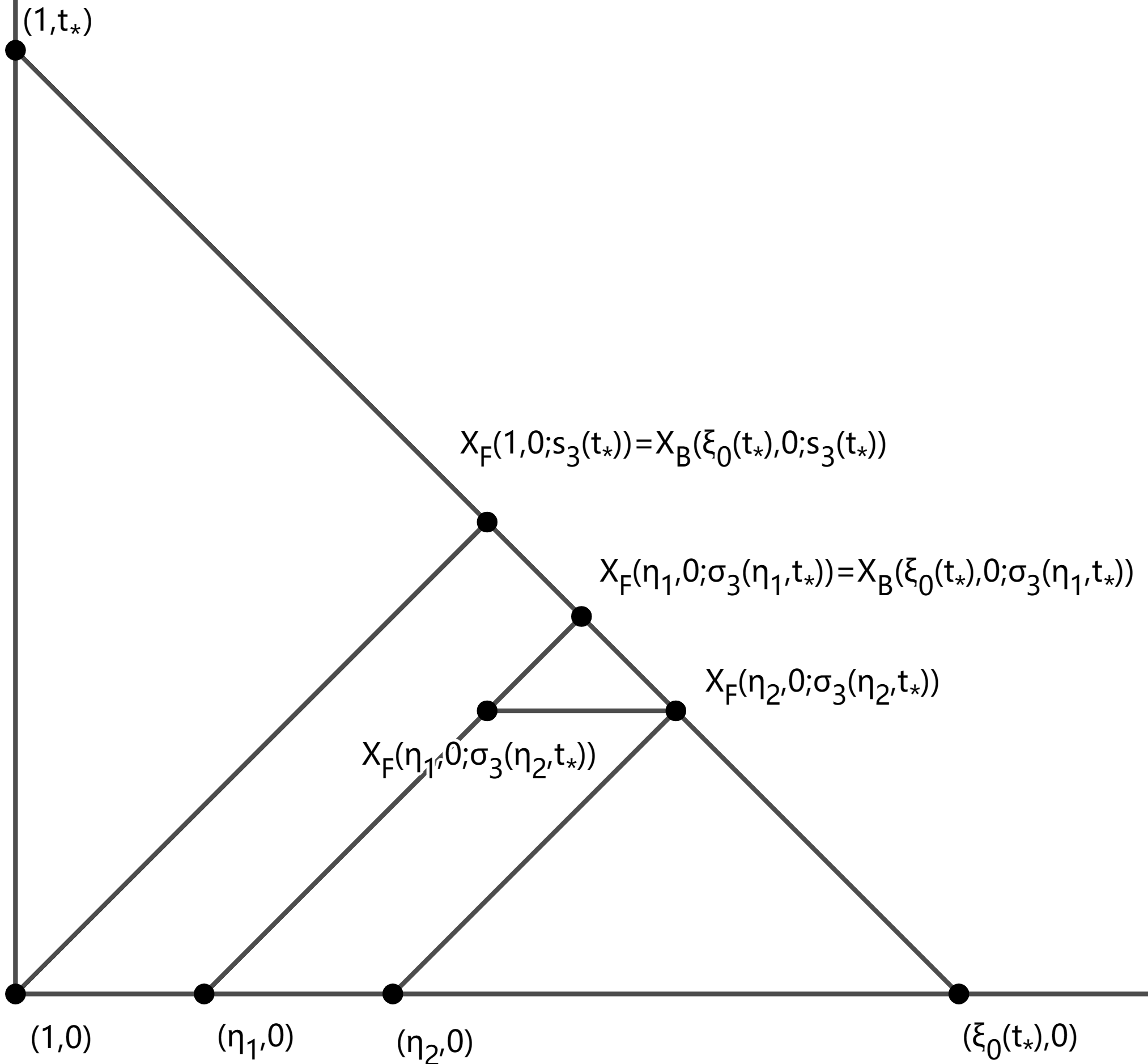}}  
\caption{Positions of the points $X_F(\eta_1,0;\sigma_3(\eta_1,t_*))$, $X_F(\eta_1,0;\sigma_3(\eta_2,t_*))$ and $X_F(\eta_2,0;\sigma_3(\eta_2,t_*))$}\label{4}   
\end{figure}
\noindent Use the rough bound (\ref{3.10}) (figure \ref{4}):
$$\begin{aligned}
&X_B^{(\xi)}(\xi_0(t_*),0;\sigma_3(\eta_2,t_*))-\overline{c}(\sigma_3(\eta_1,t_*)-\sigma_3(\eta_2,t_*))\leq X_B^{(\xi)}(\xi_0(t_*),0;\sigma_3(\eta_1,t_*))\\
=&X_F^{(\xi)}(\eta_1,0;\sigma_3(\eta_1,t_*))
\leq X_F^{(\xi)}(\eta_1,0;\sigma_3(\eta_2))+\overline{c}(\sigma_3(\eta_1,t_*)-\sigma_3(\eta_2,t_*)),
\end{aligned}$$
$$\begin{aligned}
&X_B^{(\xi)}(\xi_0(t_*),0;\sigma_3(\eta_2,t_*))-\underline{c}(\sigma_3(\eta_1,t_*)-\sigma_3(\eta_2,t_*))\geq X_B^{(\xi)}(\xi_0(t_*),0;\sigma_3(\eta_1,t_*))\\
=&X_F^{(\xi)}(\eta_1,0;\sigma_3(\eta_1,t_*))
\geq X_F^{(\xi)}(\eta_1,0;\sigma_3(\eta_2))+\underline{c}(\sigma_3(\eta_1,t_*)-\sigma_3(\eta_2,t_*)).
\end{aligned}$$
\begin{figure}[H]
\center{\includegraphics[width=14cm]  {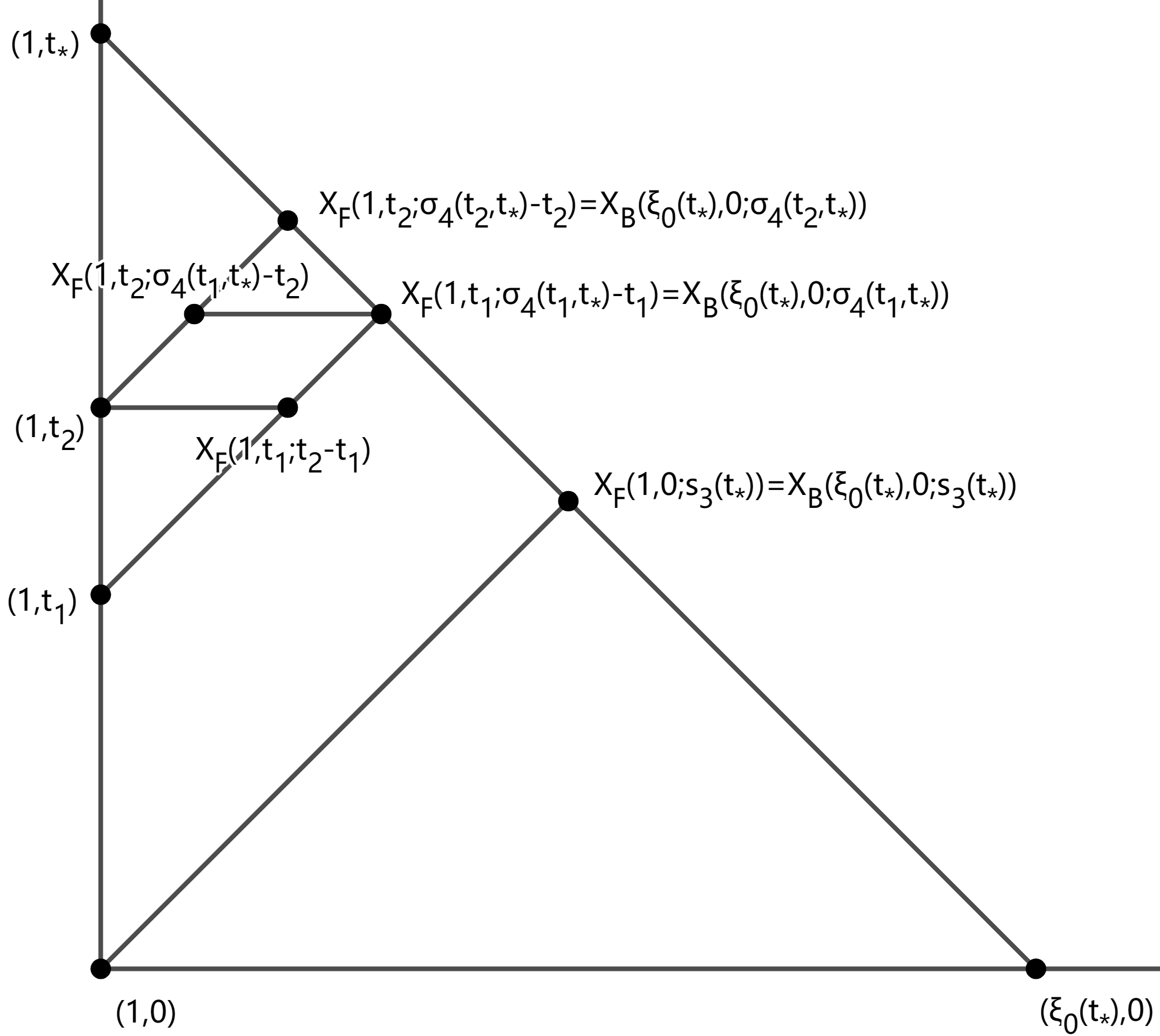}}
\caption{Positions of the points $X_F(1,t_1;t_2-t_1)$, $X_F(1,t_1;\sigma_4(t_1,t_*)-t_1)$, $X_F(1,t_2;\sigma_4(t_1,t_*)-t_2)$ and $X_F(1,t_2;\sigma_4(t_2,t_*)-t_2)$}\label{5}
\end{figure}
\noindent 
Combining the above inequalities and recalling that $X_B(\xi_0(t_*),0;\sigma_3(\eta_2,t_*))=X_F(\eta_2,0;\sigma_3(\eta_2,t_*))$, it follows
$$\left(2\overline{c}\right)^{-1}e^{-C\kappa}(\eta_2-\eta_1)\leq\sigma_3(\eta_1,t_*)-\sigma_3(\eta_2,t_*)\leq\left(2\underline{c}\right)^{-1}e^{C\kappa}(\eta_2-\eta_1),$$
which in turn gives the expected estimate of $\frac{d\sigma_3(\eta,t_*)}{d\eta}$. To obtain the estimate of $\frac{d\sigma_4(t,t_*)}{dt}$, 
using the relation
$$X_F^{(\xi)}(1,t_1;\sigma_4(t_1,t_*)-t_1)=X_F^{(\xi)}\left(X_F(1,t_1;t_2-t_1);\sigma_4(t_1,t_*)-t_2)\right),$$
it can be shown in a similar way that
$$\begin{aligned}
e^{-C\kappa}\left(X_F^{(\xi)}(1,t_1;t_2-t_1)-1\right)\leq& X_F^{(\xi)}(1,t_1;\sigma_4(t_1,t_*)-t_1)-X_F^{(\xi)}(1,t_2;\sigma_4(t_1,t_*)-t_2)\\
\leq&e^{C\kappa}\left(X_F^{(\xi)}(1,t_1;t_2-t_1)-1\right).
\end{aligned}$$
The rough bound (\ref{3.10}) yields (figure \ref{5})
$$\underline{c}(t_2-t_1)\leq X_F^{(\xi)}(1,t_1;t_2-t_1)-1\leq\overline{c}(t_2-t_1),$$
$$\begin{aligned}
&X_F^{(\xi)}(1,t_2;\sigma_4(t_1,t_*)-t_2)+\underline{c}\left(\sigma_4(t_2,t_*)-\sigma_4(t_1,t_*)\right)\leq X_F^{(\xi)}(1,t_2;\sigma_4(t_2,t_*))\\
=&X_B^{(\xi)}(\xi_0(t_*),0;\sigma_4(t_2,t_*))\leq X_B^{(\xi)}(\xi_0(t_*),0;\sigma_4(t_1,t_*))-\underline{c}\left(\sigma_4(t_2,t_*)-\sigma_4(t_1,t_*)\right),
\end{aligned}$$
$$\begin{aligned}
&X_F^{(\xi)}(1,t_2;\sigma_4(t_1,t_*)-t_2)+\overline{c}\left(\sigma_4(t_2,t_*)-\sigma_4(t_1,t_*)\right)\geq X_F^{(\xi)}(1,t_2;\sigma_4(t_2,t_*))\\
=&X_B^{(\xi)}(\xi_0(t_*),0;\sigma_4(t_2,t_*))\geq X_B^{(\xi)}(\xi_0(t_*),0;\sigma_4(t_1,t_*))-\overline{c}\left(\sigma_4(t_2,t_*)-\sigma_4(t_1,t_*)\right).
\end{aligned}$$
Combining the above inequalities and using the relation $$X_B(\xi_0(t_*),0;\sigma_4(t_1,t_*))=X_F(1,t_1;\sigma_4(t_1,t_*)-t_1),$$ we obtain
$$\left(2\overline{c}\right)^{-1}\underline{c}e^{-C\kappa}(t_2-t_1)\leq\sigma_4(t_2,t_*)-\sigma_4(t_1,t_*)\leq\left(2\underline{c}\right)^{-1}\overline{c}e^{C\kappa},$$
which gives the claimed estimate of $\frac{d\sigma_4(t,t_*)}{dt}$.
Next, we have by the rough bound (\ref{3.10})
$$\begin{aligned}
&\max\left\{\xi_0(t_*)-\overline{c}\sigma_3(\eta,t_*),\eta+\underline{c}\sigma_3(\eta,t_*)\right\}\\
\leq&X_B^{(\xi)}(\xi_0(t_*),0;\sigma_3(\eta,t_*))=X_F^{(\xi)}(\eta,0;\sigma_3(\eta,t_*))\\
\leq&\min\left\{\xi_0(t_*)-\underline{c}\sigma_3(\eta,t_*),\eta+\overline{c}\sigma_3(\eta,t_*)\right\},
\end{aligned}$$
and thus 
$$X_B^{(\xi)}(\xi_0(t_*),0;\sigma_3(\eta,t_*))\geq\eta+\frac{\underline{c}}{\overline{c}+\underline{c}}(\xi_0(t_*)-\eta)\geq1+\frac{\underline{c}}{\overline{c}+\underline{c}}(\xi_0(t_*)-1),$$
$$X_B^{(\xi)}(\xi_0(t_*),0;\sigma_3(\eta,t_*))\leq\eta+\frac{\overline{c}}{\overline{c}+\underline{c}}(\xi_0(t_*)-\eta).$$
For $X_B^{(\xi)}(\xi_0(t_*),0;\sigma_4(t,t_*)$, recall that 
$X_B(\xi_0(t_*),0;t_*)=(1,t_*)$, and thus 
$$\begin{aligned}
&1+\underline{c}(t_*-\sigma_4(t,t_*))\leq X_B(\xi_0(t_*),0;\sigma_4(t,t_*))\\=&X_B(1,t_*;\sigma_4(t,t_*)-t_*)\leq 1+\overline{c}(t_*-\sigma_4(t,t_*)).
\end{aligned}$$
It follows that
$$\begin{aligned}
&\max\left\{1+\underline{c}(t_*-\sigma_4(t,t_*)),1+\underline{c}(\sigma_4(t,t_*)-t)\right\}\\
\leq&X_F^{(\xi)}(1,t;\sigma_4(t,t_*))=X_B^{(\xi)}(\xi_0(t_*),0;\sigma_4(t,t_*))\\
\leq&\min\left\{1+\overline{c}(t_*-\sigma_4(t,t_*),1+\overline{c}(\sigma_4(t,t_*)-t)\right\}.
\end{aligned}$$
Therefore,
$$1+\frac{\underline{c}}{2}(t_*-t)\leq X_B^{(\xi)}(\xi_0(t_*),0;\sigma_4(t,t_*))\leq 1+\frac{\overline{c}}{2}(t_*-t).$$
\end{proof}
Now we estimate the second line in (\ref{6.7}) by using (\ref{3.7}), Lemma \ref{lem 6.3} and splitting the integral into two parts in which $\sigma\in(0,s_3(t_*))$ and $\sigma\in(s_3(t_*),t_*)$:
\begin{equation}\begin{aligned}
&\int_0^{s_3(t_*)}\left|\frac{\left(\partial_t-cr^2\xi^{-2}\partial_\xi\right)c}{2c}\left(X_B(\xi_0(t_*),0;\sigma\right)\right|v_F\left(X_B(\xi_0(t_*),0;\sigma);t_*\right)d\sigma\\
\leq&C\epsilon\int_0^{s_3(t_*)}X_B^{(\xi)}(\xi_0(t_*),0;\sigma)^{-1}v_F\left(X_B(\xi_0(t_*),0;\sigma);t_*\right)d\sigma\\
\leq&\frac{Ce^{C\kappa}\epsilon}{\underline{c}}\int_1^{\xi_0(t_*)}X_B^{(\xi)}(\xi_0(t_*),0;\sigma_3(\eta,t_*))^{-1}v_F\left(X_F(\eta,0;\sigma_3(\eta,t_*));t_*\right)d\eta\\
\leq&\frac{Ce^{C\kappa}\epsilon}{\underline{c}}\int_1^{\xi_0(t_*)}\left(1+\frac{\underline{c}}{\overline{c}+\underline{c}}(\xi_0(t_*)-1)\right)^{-1}v_F(\eta,0;t_*)d\eta,
\end{aligned}\label{6.12}\end{equation}
\begin{equation}\begin{aligned}
&\int_{s_3(t_*)}^{t_*}\left|\frac{\left(\partial_t-cr^2\xi^{-2}\partial_\xi\right)c}{2c}\left(X_B(\xi_0(t_*),0;\sigma\right)\right|v_F\left(X_B(\xi_0(t_*),0;\sigma);t_*\right)d\sigma\\
\leq&C\epsilon\int_{s_3(t_*)}^{t_*}X_B^{(\xi)}(\xi_0(t_*),0;\sigma)^{-1}v_F(X_B(\xi_0(t_*),0;\sigma);t_*)d\sigma\\
\leq&\frac{C\overline{c}e^{C\kappa}\epsilon}{\underline{c}}\int_0^{t_*}X_B^{(\xi)}(\xi_0(t_*),0;\sigma_4(t,t_*))^{-1}v_F\left(X_F(\eta,0;\sigma_4(t,t_*));t_*\right)dt\\
\leq&\frac{C\overline{c}e^{C\kappa}\epsilon}{\underline{c}}\int_0^{t_*}\left(1+\frac{\underline{c}}{2}(t_*-t)\right)^{-1}v_F(\eta,0;t_*)dt.
\end{aligned}\label{6.13}\end{equation}
We also split the integral in the third line of (\ref{6.7}) into two parts:
$$\begin{aligned}
&\int_0^{t_*}\left|2r^{-2}u^2\frac{\psi}{c^\frac{1}{2}}\left(X_B(\xi_0(t_*),0;\sigma)\right)\right|d\sigma\\
=&\int_0^{s_3(t_*)}\left|2r^{-2}u^2\frac{\psi}{c^\frac{1}{2}}\left(X_B(\xi_0(t_*),0;\sigma)\right)\right|d\sigma+\int_{s_3(t_*)}^{t_*}\left|2r^{-2}u^2\frac{\psi}{c^\frac{1}{2}}\left(X_B(\xi_0(t_*),0;\sigma)\right)\right|d\sigma.
\end{aligned}$$
We use $(\partial_t-cr^2\partial_x)\psi=w_F$ and (\ref{3.6})(\ref{3.10}) to estimate the first part:
$$\begin{aligned}
&\int_0^{s_3(t_*)}\left|2r^{-2}u^2\frac{\psi}{c^\frac{1}{2}}\left(X_B(\xi_0(t_*),0;\sigma)\right)\right|d\sigma\\
\leq&\frac{C\epsilon^2}{\underline{c}^\frac{1}{2}}\int_0^{s_3(t_*)}X_B^{(\xi)}(\xi_0(t_*),0;\sigma)^{-4}\left|\psi(\xi_0(t_*))+\int_0^\sigma w_F\left(X_B(\xi_0(t_*),0;\tau)\right)d\tau\right|d\sigma\\
\leq&\frac{C\epsilon^2}{\underline{c}^\frac{1}{2}}\int_0^{s_3(t_*)}X_B^{(\xi)}(\xi_0(t_*),0;\sigma)^{-4}\left|\psi(\xi_0(t_*))\right|d\sigma\\
&+\frac{C\epsilon^2\overline{c}^\frac{1}{2}}{\underline{c}^\frac{1}{2}}\int_0^{s_3(t_*)}X_B^{(\xi)}(\xi_0(t_*),0;\sigma)^{-4}\int_0^\sigma v_F\left(X_B(\xi_0(t_*),0;\tau);t_*\right)d\tau d\sigma\\
\leq&\frac{C\epsilon^2}{\underline{c}^\frac{1}{2}}\int_0^{s_3(t_*)}X_B^{(\xi)}(\xi_0(t_*),0;\sigma)^{-4}\left|\psi(\xi_0(t_*))\right|d\sigma\\
&+\frac{C\epsilon^2\overline{c}^\frac{1}{2}}{\underline{c}^\frac{1}{2}}\int_0^{s_3(t_*)}\left(\int_\tau^{s_3(t_*)}X_B^{(\xi)}(\xi_0(t_*),0;\sigma)^{-4}d\sigma\right) v_F\left(X_B(\xi_0(t_*),0;\tau);t_*\right)d\tau.
\end{aligned}$$ 
By the rough bound (\ref{3.10}) and Lemma \ref{lem 6.3}, we have
$$X_B^{(\xi)}(\xi_0(t_*),0;\sigma)\geq X_B^{(\xi)}(\xi_0(t_*),0;s_3(t_*))+\underline{c}(s_3(t_*)-\sigma),$$
$$\begin{aligned}
\int_\tau^{s_3(t_*)}X_B^{(\xi)}(\xi_0(t_*),0;\sigma)^{-4}d\sigma
\leq&\frac{1}{3\underline{c}}X_B^{(\xi)}(\xi_0(t_*),0;s_3(t_*))^{-3}\\
\leq&\frac{1}{3\underline{c}}\left(1+\frac{\underline{c}}{\overline{c}+\underline{c}}(\xi_0(t_*)-1)\right)^{-3}.
\end{aligned}$$
Therefore
\begin{equation}\begin{aligned}
&\int_0^{s_3(t_*)}\left|2r^{-2}u^2\frac{\psi}{c^\frac{1}{2}}\left(X_B(\xi_0(t_*),0;\sigma)\right)\right|d\sigma\\
\leq&\frac{C\epsilon^2}{\underline{c}^\frac{3}{2}}\left(1+\frac{\underline{c}}{\overline{c}+\underline{c}}(\xi_0(t_*)-1)\right)^{-3}\left|\psi(\xi_0(t_*))\right|\\
&+\frac{C\epsilon^2\overline{c}^\frac{1}{2}}{\underline{c}^\frac{3}{2}}\left(1+\frac{\underline{c}}{\overline{c}+\underline{c}}(\xi_0(t_*)-1)\right)^{-3}\int_0^{s_3(t_*)}v_F\left(X_B(\xi_0(t_*),0;\tau);t_*\right)d\tau\\
\leq&\frac{C\epsilon^2}{\underline{c}^\frac{3}{2}}\left(1+\frac{\underline{c}}{\overline{c}+\underline{c}}(\xi_0(t_*)-1)\right)^{-3}\left|\psi(\xi_0(t_*))\right|\\
&+\frac{C\epsilon^2\overline{c}^\frac{1}{2}}{\underline{c}^\frac{5}{2}}e^{C\kappa}\left(1+\frac{\underline{c}}{\overline{c}+\underline{c}}(\xi_0(t_*)-1)\right)^{-3}\int_1^{\xi_0(t_*)}v_F(\eta,0;t_*)d\eta.
\end{aligned}\label{6.14}\end{equation}
For the second part, we first use $(\partial_t+cr^2\partial_x)\psi=w_B$, (\ref{3.6}) and Lemma \ref{lem 6.3}:
$$\begin{aligned}
&\int_{s_3(t_*)}^{t_*}\left|2r^{-2}u^2\frac{\psi}{c^\frac{1}{2}}\left(X_B(\xi_0(t_*),0;\sigma)\right)\right|d\sigma\\
\leq&\frac{C\epsilon^2}{\underline{c}^\frac{1}{2}}\int_{s_3(t_*)}^{t_*}X_B^{(\xi)}(\xi_0(t_*),0;\sigma)^{-4}\left|\psi\left(X_B(\xi_0(t_*),0;\sigma)\right)\right|d\sigma\\
\leq&\frac{C\epsilon^2\overline{c}}{\underline{c}^\frac{3}{2}}e^{C\kappa}\int_0^{t_*}X_B^{(\xi)}(\xi_0(t_*),0;\sigma_4(t,t_*))^{-4}\left|\psi\left(X_F(1,t;\sigma_4(t,t_*)-t)\right)\right|dt\\
\leq&\frac{C\epsilon^2\overline{c}}{\underline{c}^\frac{3}{2}}e^{C\kappa}\int_0^{t_*}X_B^{(\xi)}(\xi_0(t_*),0;\sigma_4(t,t_*))^{-4}\left|\psi(1,t)+\int_0^{\sigma_4(t,t_*)-t}w_B\left(X_F(1,t;\tau)\right)d\tau\right|dt\\
\leq&\frac{C\epsilon^2\overline{c}}{\underline{c}^\frac{3}{2}}e^{C\kappa}\int_0^{t_*}\left(1+\frac{\underline{c}}{2}(t_*-t)\right)^{-4}\left|\psi(1,t)\right|dt\\
&+\frac{C\epsilon^2\overline{c}^\frac{3}{2}}{\underline{c}^\frac{3}{2}}e^{C\kappa}\int_0^{t_*}X_B^{(\xi)}(\xi_0(t_*),0;\sigma_4(t,t_*))^{-4}\int_0^{\sigma_4(t,t_*)-t}v_B\left(X_F(1,t;\tau)\right)d\tau dt.
\end{aligned}$$
We further apply Lemma \ref{lem 6.1} and recall $\sigma_4(t,t_*)-t=\sigma_1(\xi_0(t_*),t)$:
$$\begin{aligned}
\int_0^{\sigma_4(t,t_*)-t}v_B\left(X_F(1,t;\tau)\right)d\tau dt
\leq&\frac{e^{C\kappa}}{2\underline{c}}\int_{\xi_0(t)}^{\xi_0(t_*)}v_B\left(X_B(\eta,0;t+\sigma_1(\eta,t))\right)d\eta\\
=&\frac{e^{C\kappa}}{2\underline{c}}\int_{\xi_0(t)}^{\xi_0(t_*)}v_B(\eta,0)d\eta,
\end{aligned}$$
$$\begin{aligned}
&\int_0^{t_*}X_B^{(\xi)}(\xi_0(t_*),0;\sigma_4(t,t_*))^{-4}\int_0^{\sigma_4(t,t_*)-t}v_B\left(X_F(1,t;\tau)\right)d\tau dt\\
\leq&\frac{e^{C\kappa}}{2\underline{c}}\int_0^{t_*}X_B^{(\xi)}(\xi_0(t_*),0;\sigma_4(t,t_*))^{-4}\int_{\xi_0(t)}^{\xi_0(t_*)}v_B(\eta,0)d\eta dt\\
=&\frac{e^{C\kappa}}{2\underline{c}}\int_1^{\xi_0(t_*)}\left(\int_0^{\xi_0^{-1}(\eta)} X_B^{(\xi)}(\xi_0(t_*),0;\sigma_4(t,t_*))^{-4}dt\right)v_B(\eta,0)d\eta,
\end{aligned}$$
where $\xi_0^{-1}$ denotes the inverse function of $\xi_0$. Use Lemma \ref{lem 6.3} to estimate the integral in the bracket:
$$\begin{aligned}
\int_0^{\xi_0^{-1}(\eta)}X_B^{(\xi)}(\xi_0(t_*),0;\sigma_4(t,t_*))^{-4}dt
\leq&\int_0^{\xi_0^{-1}(\eta)}\left(1+\frac{\underline{c}}{2}(t_*-t)\right)^{-4}dt\\
\leq&\frac{2}{3\underline{c}}\left(1+\frac{\underline{c}}{2}\left(t_*-\xi_0^{-1}(\eta)\right)\right)^{-3}.
\end{aligned}$$
Combining the above inequalities yields
\begin{equation}\begin{aligned}
&\int_{s_3(t_*)}^{t_*}\left|2r^{-2}u^2\frac{\psi}{c^\frac{1}{2}}\left(X_B(\xi_0(t_*),0;\sigma)\right)\right|d\sigma\\
\leq&\frac{C\epsilon^2\overline{c}}{\underline{c}^\frac{3}{2}}e^{C\kappa}\int_0^{t_*}\left(1+\frac{\underline{c}}{2}(t_*-t)\right)^{-4}\left|\psi(1,t)\right|dt\\
&+\frac{C\epsilon^2\overline{c}^\frac{3}{2}}{\underline{c}^\frac{7}{2}}e^{C\kappa}\int_1^{\xi_0(t_*)}\left(1+\frac{\underline{c}}{2}\left(t_*-\xi_0^{-1}(\eta)\right)\right)^{-3}v_B(\eta,0)d\eta.
\end{aligned}\label{6.15}\end{equation}
\subsection{Collection of (\ref{6.5}-\ref{6.15}) and the estimate of backward pressure wave on the boundary}
We now collect all the bounds obtained in this section. First we note in addition that from the bound (\ref{3.6})(\ref{3.10}) and the smallness $\epsilon\leq\epsilon_0\lesssim 1$ it follows
$$
\exp\left\{\int_0^s\left|\frac{2u}{r}\left(X_F(1,t;\sigma)\right)\right|d\sigma\right\}\leq\exp\left\{C\epsilon\int_0^s\left(1+\underline{c}\sigma\right)^{-2}d\sigma\right\}\leq e^{C\epsilon}\leq 1+C\epsilon,
$$
$$
\exp\left\{\int_0^s\left|\frac{2u}{r}\left(X_F(\xi,0;\sigma)\right)\right|d\sigma\right\}\leq\exp\left\{C\epsilon\int_0^s\left(\xi+\underline{c}\sigma\right)^{-2}d\sigma\right\}\leq e^{C\epsilon}\leq 1+C\epsilon,
$$
$$
\exp\left\{\int_0^{t_*}\left|\frac{2u}{r}\left(X_B(\xi_0(t_*),0;\sigma)\right)\right|d\sigma\right\}\leq\exp\left\{C\epsilon\int_0^{t_*}\left(1+\underline{c}\sigma\right)^{-2}d\sigma\right\}\leq e^{C\epsilon}\leq 1+C\epsilon.
$$
For simplicity, we write $\mu:=\frac{\overline{c}}{\underline{c}}e^{C\kappa}>1$. Combining (\ref{6.5})(\ref{6.8})(\ref{6.9}) gives
\begin{equation}\begin{aligned}
&v_F(1,t;t_*)\\
\leq&(1+C\epsilon)\left|\frac{w_F}{c^\frac{1}{2}}(1,t)\right|
+C\epsilon\int_{\xi_0(t)}^{\xi_0(t_*)}\left(1+\frac{1}{2\mu}(\eta-\xi_0(t))\right)^{-1}v_B(\eta,0)d\eta+C\epsilon^2|\psi(1,t)|.
\end{aligned}\label{6.16}\end{equation}
Analogously, from (\ref{6.6})(\ref{6.10})(\ref{6.11}) it holds
\begin{equation}\begin{aligned}
&v_F(\xi,0;t_*)\\
\leq&(1+C\epsilon)\left|\frac{w_F}{c^\frac{1}{2}}(\xi,0)\right|+C\epsilon\int_\xi^{\xi_0(t_*)}\left(\xi+\frac{1}{2\mu}(\eta-\xi)\right)^{-1}v_B(\eta,0)d\eta+C\epsilon^2\xi^{-3}|\psi(\xi,0)|,
\end{aligned}\label{6.17}\end{equation}
Since $1+\frac{\underline{c}}{\overline{c}+\underline{c}}(\xi_0(t_*)-1)\geq\frac{\underline{c}}{\overline{c}+\underline{c}}\xi_0(t_*)$, from (\ref{6.7}), (\ref{6.12}-\ref{6.15}) it follows
\begin{equation}\begin{aligned}
&v_B(\xi_0(t_*),0)\\
\leq&(1+C\epsilon)\left|\frac{w_B}{c^\frac{1}{2}}(\xi_0(t_*),0)\right|\\
&+C\epsilon^2\xi_0(t_*)^{-3}|\psi(\xi_0(t_*))|+C\epsilon^2\int_0^{t_*}\left(1+\frac{\underline{c}}{2}(t_*-t)\right)^{-4}|\psi(1,t)|dt\\
&+C\epsilon\xi_0(t_*)^{-1}\int_1^{\xi_0(t_*)}v_F(\xi,0;t_*)d\xi+C\epsilon\int_0^{t_*}\left(1+\frac{\underline{c}}{2}(t_*-t)\right)^{-1}v_F(1,t;t_*)dt\\
&+C\epsilon^2\int_1^{\xi_0(t_*)}\left(1+\frac{\underline{c}}{2}\left(t_*-\xi_0^{-1}(\eta)\right)\right)^{-3}v_B(\eta,0)d\eta.
\end{aligned}\label{6.18}\end{equation}
We further call upon (\ref{6.4}) to obtain
\begin{equation}\begin{aligned}
&\left|\frac{w_B}{c^\frac{1}{2}}\left(1,t_*\right)-\frac{w_B}{c^\frac{1}{2}}\left(\xi_0(t_*),0\right)\right|\\
\leq&C\epsilon|w_B(\xi_0(t_*),0)|+C\epsilon^2\xi_0(t_*)^{-3}|\psi(\xi_0(t_*))|+C\epsilon^2\int_0^{t_*}\left(1+\frac{\underline{c}}{2}(t_*-t)\right)^{-4}|\psi(1,t)|dt\\
&+C\epsilon\xi_0(t_*)^{-1}\int_1^{\xi_0(t_*)}v_F(\xi,0;t_*)d\xi+C\epsilon\int_0^{t_*}\left(1+\frac{\underline{c}}{2}(t_*-t)\right)^{-1}v_F(1,t;t_*)dt\\
&+C\epsilon^2\int_1^{\xi_0(t_*)}\left(1+\frac{\underline{c}}{2}\left(t_*-\xi_0^{-1}(\eta)\right)\right)^{-3}v_B(\eta,0)d\eta.
\end{aligned}\label{6.19}\end{equation}
To obtain an explicit control of the last terms in (\ref{6.18}) and (\ref{6.19}), we have the following control of $\xi_0^{-1}$:
\begin{figure}[H]
\center{\includegraphics[width=10cm]  {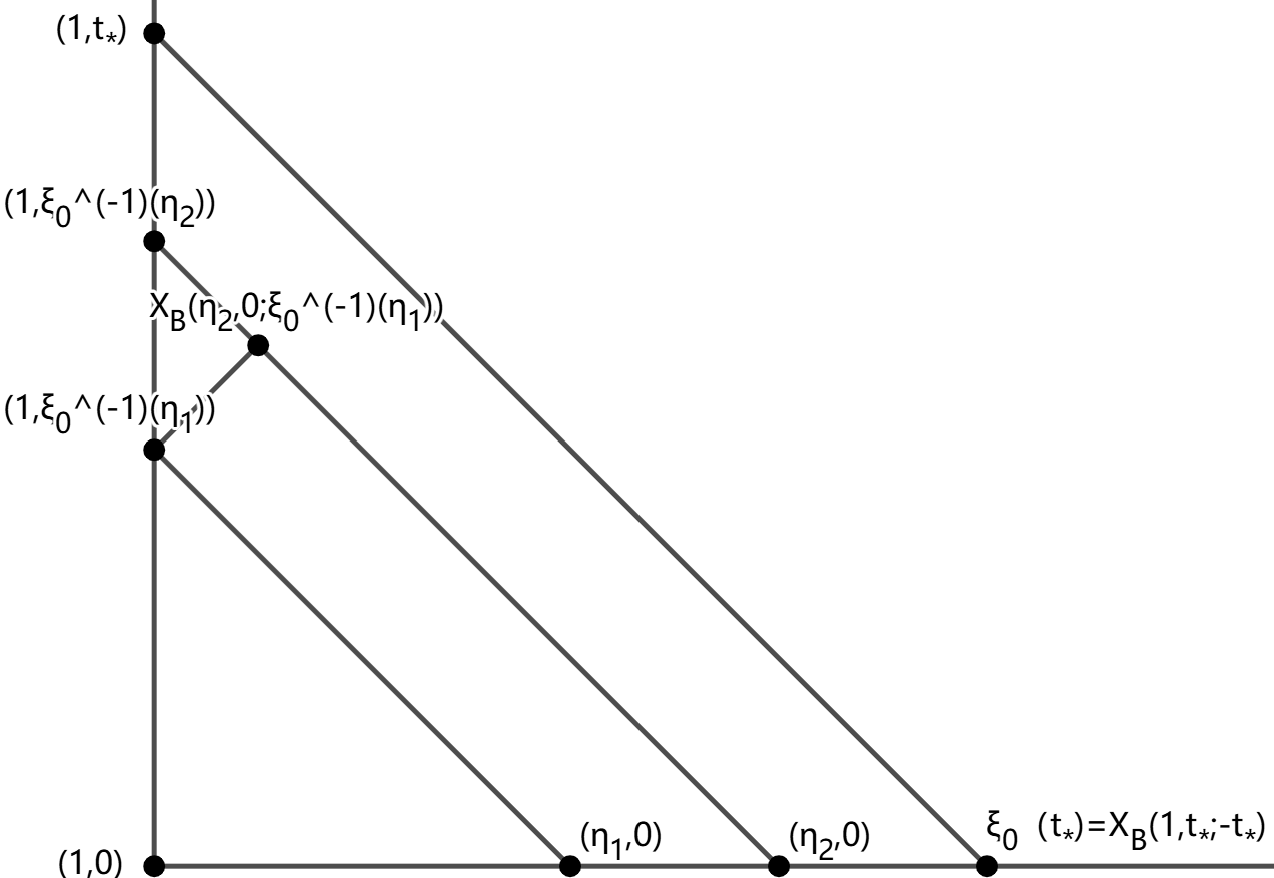}}  
\caption{Position of the point $X_B(\eta_2,0;\xi_0^{-1}(\eta_1))$ }\label{6}   
\end{figure}
\noindent 
\begin{lem}
For each $\eta_1,\eta_2\in[1,\xi_0(T)]$ with $\eta_1<\eta_2$, the following inequality holds:
$$\frac{e^{C\kappa}}{\underline{c}}(\eta_2-\eta_1)\geq\xi_0^{-1}(\eta_2)-\xi_0^{-1}(\eta_1)\geq\frac{e^{-C\kappa}}{\overline{c}}(\eta_2-\eta_1).$$
In particular, it follows that
$$\frac{e^{C\kappa}}{\underline{c}}\geq\frac{d\xi_0^{-1}}{d\eta}\geq\frac{e^{-C\kappa}}{\overline{c}},$$
$$t_*-\xi_0^{-1}(\eta)\geq\frac{e^{-C\kappa}}{\overline{c}}(\xi_0(t_*)-\eta).$$
\label{lem 6.4}\end{lem}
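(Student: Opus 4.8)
\textbf{Proof proposal for Lemma \ref{lem 6.4}.} The plan is to deduce all four assertions from a single comparison argument, reusing the stability estimate for backward characteristics already obtained in the proof of Lemma~\ref{lem 6.1} (cf. (\ref{stab1})) together with the rough speed bound (\ref{3.10}). Recall that $\xi_0(t)$ is characterized by $X_B(\xi_0(t),0;t)=(1,t)$, so writing $t_i:=\xi_0^{-1}(\eta_i)$ amounts to $X_B(\eta_i,0;t_i)=(1,t_i)$; since the flow parameter of $X_B$ equals the elapsed physical time, $s\mapsto X_B^{(\xi)}(\eta_i,0;s)$ decreases from $\eta_i$ at $s=0$ to $1$ at $s=t_i$.

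First I would check that $\xi_0$ is strictly increasing, so that $\xi_0^{-1}$ is well defined. If $t_1<t_2$, then along the curve $X_B(\xi_0(t_2),0;\cdot)$ — which reaches $\xi=1$ only at $s=t_2$ — the bound (\ref{3.10}) gives $X_B^{(\xi)}(\xi_0(t_2),0;t_1)-1=X_B^{(\xi)}(\xi_0(t_2),0;t_1)-X_B^{(\xi)}(\xi_0(t_2),0;t_2)\geq\underline{c}(t_2-t_1)>0$, so at the time slice $t=t_1$ this integral curve lies strictly to the right of $X_B(\xi_0(t_1),0;\cdot)$; since two integral curves of the Lipschitz field cannot cross, the same ordering holds at $t=0$, i.e. $\xi_0(t_2)>\xi_0(t_1)$. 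The $C^1$ regularity of $\xi_0$ (hence of $\xi_0^{-1}$, once $\xi_0'$ is seen to be bounded away from $0$) follows from the implicit function theorem and the $H^2$ regularity of $c$, exactly as in Lemmas~\ref{lem 6.1}--\ref{lem 6.3}.

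Next I would prove the quantitative bound. Fix $\eta_1<\eta_2$ and set $t_i=\xi_0^{-1}(\eta_i)$, so $t_1<t_2$. Running the Gronwall argument of Lemma~\ref{lem 6.1} for $X_B^{(\xi)}(\eta_2,0;\cdot)-X_B^{(\xi)}(\eta_1,0;\cdot)$ on $[0,t_2]\subset[0,T]$ — the accumulated factor $\int_0^{t_2}C\epsilon\,X_B^{(\xi)}(\eta_1,0;\tau)^{-1}d\tau\leq\frac{C\epsilon}{\underline{c}}\log(1+\underline{c}T)\leq C\kappa$ by (\ref{3.10}) and (\ref{3.11}) — and then evaluating at $\tau=t_1$, where $X_B^{(\xi)}(\eta_1,0;t_1)=1$, gives
$$e^{-C\kappa}(\eta_2-\eta_1)\leq X_B^{(\xi)}(\eta_2,0;t_1)-1\leq e^{C\kappa}(\eta_2-\eta_1).$$
On the other hand $X_B(\eta_2,0;\cdot)$ descends from $X_B^{(\xi)}(\eta_2,0;t_1)$ to $X_B^{(\xi)}(\eta_2,0;t_2)=1$ over the parameter interval $[t_1,t_2]$ at speed $cr^2\xi^{-2}\in[\underline{c},\overline{c}]$, whence
$$\underline{c}(t_2-t_1)\leq X_B^{(\xi)}(\eta_2,0;t_1)-1\leq\overline{c}(t_2-t_1).$$
Comparing the two displays yields $\underline{c}(t_2-t_1)\leq e^{C\kappa}(\eta_2-\eta_1)$ and $\overline{c}(t_2-t_1)\geq e^{-C\kappa}(\eta_2-\eta_1)$, i.e. the claimed inequality for $\xi_0^{-1}(\eta_2)-\xi_0^{-1}(\eta_1)=t_2-t_1$. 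Dividing by $\eta_2-\eta_1$ and letting $\eta_2\to\eta_1$ gives the pointwise bounds on $\frac{d\xi_0^{-1}}{d\eta}$, and taking $\eta_1=\eta$, $\eta_2=\xi_0(t_*)$ (legitimate since in the range of interest $\eta\leq\xi_0(t_*)$ and $\xi_0^{-1}(\xi_0(t_*))=t_*$) gives $t_*-\xi_0^{-1}(\eta)\geq\frac{e^{-C\kappa}}{\overline{c}}(\xi_0(t_*)-\eta)$.

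I do not expect a genuine obstacle here: the only delicate point is keeping the stability constant at $e^{\pm C\kappa}$, which requires the Gronwall exponent to stay $\lesssim\kappa$, and this is precisely the estimate already established in Lemma~\ref{lem 6.1} from (\ref{3.10}), (\ref{lip}), and the lifespan bound (\ref{3.11}). Everything else is bookkeeping about which characteristic is evaluated at which parameter.
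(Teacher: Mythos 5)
Your proposal is correct and follows essentially the same route as the paper: you apply the Gronwall stability estimate of Lemma \ref{lem 6.1} to the pair of backward characteristics emanating from $\eta_1,\eta_2$, evaluate at $t_1=\xi_0^{-1}(\eta_1)$ where $X_B^{(\xi)}(\eta_1,0;t_1)=1$, and then use the rough speed bound (\ref{3.10}) to convert the remaining spatial gap $X_B^{(\xi)}(\eta_2,0;t_1)-1$ into the remaining time $t_2-t_1$. The extra monotonicity check for $\xi_0$ and the explicit derivation of the derivative bounds are harmless additions to what the paper leaves implicit.
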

\begin{proof}
By simulating the proof of Lemma \ref{lem 6.1} we have
$$e^{C\kappa}(\eta_2-\eta_1)\geq X_B^{(\xi)}(\eta_2,0;\xi_0^{-1}(\eta_1))-X_B^{(\xi)}(\eta_1,0;\xi_0^{-1}(\eta_1))\geq e^{-C\kappa}(\eta_2-\eta_1).$$
Then use the rough bound (\ref{3.10}) (figure \ref{6}):
$$\begin{aligned}
\xi_0^{-1}(\eta_2)-\xi_0^{-1}(\eta_1)\geq\frac{1}{\overline{c}}\left(X_B^{(\xi)}(\eta_2,0;\xi_0^{-1}(\eta_1))-X_B^{(\xi)}(\eta_1,0;\xi_0^{-1}(\eta_1)\right)\geq \frac{e^{-C\kappa}}{\overline{c}}(\eta_2-\eta_1),
\end{aligned}$$
$$\begin{aligned}
\xi_0^{-1}(\eta_2)-\xi_0^{-1}(\eta_1)\leq\frac{1}{\underline{c}}\left(X_B^{(\xi)}(\eta_2,0;\xi_0^{-1}(\eta_1))-X_B^{(\xi)}(\eta_1,0;\xi_0^{-1}(\eta_1)\right)\leq\frac{e^{C\kappa}}{\underline{c}}(\eta_2-\eta_1).
\end{aligned}$$
\end{proof}
From the above lemma, we see that $$\left(1+\frac{\underline{c}}{2}\left(t_*-\xi_0^{-1}(\eta)\right)\right)^{-3}\geq\left(1+\frac{1}{2\mu}(\xi_0(t_*)-\eta)\right)^{-3}.$$ 
Then replacing $v_F(\eta,0;t_*)$ and $v_F(1,t;t_*)$ in (\ref{6.18}) by the bounds (\ref{6.16})(\ref{6.17}) gives
\begin{equation}\begin{aligned}
&v_B(\xi_0(t_*),0)\\
\leq&(1+C\epsilon)\left|\frac{w_B}{c^\frac{1}{2}}(\xi_0(t_*),0)\right|\\
&+C\epsilon^2\xi_0(t_*)^{-3}|\psi(\xi_0(t_*))|+C\epsilon^2\int_0^{t_*}\left(1+\frac{\underline{c}}{2}(t_*-t)\right)^{-4}|\psi(1,t)|dt\\
&+C\epsilon^3\xi_0(t_*)^{-1}\int_1^{\xi_0(t_*)}\eta^{-3}|\psi(\eta,0)|d\eta+C\epsilon^3\int_0^{t_*}\left(1+\frac{\underline{c}}{2}(t_*-t)\right)^{-1}|\psi(1,t)|dt\\
&+C\epsilon\xi_0(t_*)^{-1}\int_1^{\xi_0(t_*)}|w_F(\eta,0)|d\eta+C\epsilon\int_0^{t_*}\left(1+\frac{\underline{c}}{2}(t_*-t)\right)^{-1}|w_F(1,t)|dt\\
&+C\epsilon^2\int_1^{\xi_0(t_*)}\left(1+\frac{1}{2\mu}(\xi_0(t_*)-\eta)\right)^{-3}v_B(\eta,0)d\eta\\
&+C\epsilon^2\xi_0(t_*)^{-1}\int_1^{\xi_0(t_*)}\int_\xi^{\xi_0(t_*)}\left(\xi+\frac{1}{2\mu}(\eta-\xi)\right)^{-1}v_B(\eta,0)d\eta d\xi\\
&+C\epsilon^2\int_0^{t_*}\int_{\xi_0(t)}^{\xi_0(t_*)}\left(1+\frac{\underline{c}}{2}(t_*-t)\right)^{-1}\left(1+\frac{1}{2\mu}(\eta-\xi_0(t))\right)^{-1}v_B(\eta,0)d\eta dt.
\end{aligned}\label{6.20}\end{equation}
Since $w_F=\left(\partial_t-cr^2\xi^{-2}\partial_\xi\right)\psi=2\partial_t\psi-w_B$, it holds
\begin{equation}\begin{aligned}
&\int_0^{t_*}\left(1+\frac{\underline{c}}{2}(t_*-t)\right)^{-1}|w_F(1,t)|dt\\
\leq&2\int_0^{t_*}\left(1+\frac{\underline{c}}{2}(t_*-t)\right)^{-1}|\partial_t\psi(1,t)|dt+\overline{c}^\frac{1}{2}\int_0^{t_*}\left(1+\frac{\underline{c}}{2}(t_*-t)\right)^{-1}v_B(\xi_0(t),0)dt\\
\leq&2\int_0^{t_*}\left(1+\frac{\underline{c}}{2}(t_*-t)\right)^{-1}|\partial_t\psi(1,t)|dt\\
&+\frac{e^{C\kappa}\overline{c}^\frac{1}{2}}{\underline{c}}\int_1^{\xi_0(t_*)}\left(1+\frac{1}{2\mu}(\xi_0(t_*)-\eta)\right)^{-1}v_B(\eta,0)d\eta.
\end{aligned}\label{6.21}\end{equation}
The last inequality follows from Lemma \ref{lem 6.4}.
To simplify the last two terms in (\ref{6.20}) we compute
\begin{equation}\begin{aligned}
&\int_1^{\xi_0(t_*)}\int_\xi^{\xi_0(t_*)}\left(\xi+\frac{1}{2\mu}(\eta-\xi)\right)^{-1}v_B(\eta,0)d\eta d\xi\\
=&\int_1^{\xi_0(t_*)}\left(\int_1^\eta\left(\xi+\frac{1}{2\mu}(\eta-\xi)\right)^{-1}d\xi\right)v_B(\eta,0)d\eta\\
=&\int_1^{\xi_0(t_*)}\frac{1}{1-\frac{1}{2\mu}}\log\left(\frac{\eta}{1+\frac{1}{2\mu}\left(\eta-1\right)}\right)v_B(\eta,0)d\eta\\
\leq&C\int_1^{\xi_0(t_*)}v_B(\eta,0)d\eta,
\end{aligned}\label{6.22}\end{equation}
and use Lemma \ref{lem 6.4} to obtain
$$\begin{aligned}
&\int_0^{t_*}\int_{\xi_0(t)}^{\xi_0(t_*)}\left(1+\frac{\underline{c}}{2}(t_*-t)\right)^{-1}\left(1+\frac{1}{2\mu}(\eta-\xi_0(t))\right)^{-1}v_B(\eta,0)d\eta dt\\
=&\int_1^{\xi_0(t_*)}\left(\int_0^{\xi_0^{-1}(\eta)}\left(1+\frac{\underline{c}}{2}(t_*-t)\right)^{-1}\left(1+\frac{1}{2\mu}(\eta-\xi_0(t))\right)^{-1}dt\right)v_B(\eta,0)d\eta\\
\leq&\frac{e^{C\kappa}}{\underline{c}}\int_1^{\xi_0(t_*)}\left(\int_1^\eta\left(1+\frac{1}{2\mu}\left(\xi_0(t_*)-\xi\right)\right)^{-1}\left(1+\frac{1}{2\mu}\left(\eta-\xi\right)\right)^{-1}d\xi\right)v_B(\eta,0)d\eta\\
=&\frac{e^{C\kappa}}{\underline{c}}\int_1^{\xi_0(t_*)}\frac{4\mu^2}{\xi_0(t_*)-\eta}\left(\log\frac{2\mu+\eta-1}{2\mu}-\log\frac{2\mu+\xi_0(t_*)-1}{2\mu+\xi_0(t_*)-\eta}\right)v_B(\eta,0)d\eta\\
=&\frac{e^{C\kappa}}{\underline{c}}\int_1^{\xi_0(t_*)}\frac{4\mu^2}{\xi_0(t_*)-\eta}\log\left(1+\frac{(\eta-1)(\xi_0(t_*)-\eta)}{4\mu^2+2\mu(\xi_0(t_*)-1)}\right)v_B(\eta,0)d\eta.
\end{aligned}$$
If $\eta\in\left(1,\xi_0(t_*)-1\right)$, using the bound (\ref{3.10})(\ref{3.11}) the following holds:
$$\frac{1}{\xi_0(t_*)-\eta}\leq\frac{1+2\mu}{\xi_0(t_*)-\eta+2\mu},$$
$$\log\left(1+\frac{(\eta-1)(\xi_0(t_*)-\eta)}{2\mu(2\mu+\xi_0(t_*)-1)}\right)\leq\log\xi_0(t_*)\leq\log(1+\overline{c}t_*)\leq\frac{\kappa}{\epsilon},$$
and thus
$$\frac{4\mu^2}{\xi_0(t_*)-\eta}\log\left(1+\frac{(\eta-1)(\xi_0(t_*)-\eta)}{4\mu^2+2\mu(\xi_0(t_*)-1)}\right)\leq\frac{\kappa}{\epsilon}\frac{4\mu^2(1+2\mu)}{\xi_0(t_*)-\eta+2\mu}.$$
If $\eta\in\left[\xi_0(t_*)-1,\xi_0(t_*)\right)$, we have
$$\frac{4\mu^2}{\xi_0(t_*)-\eta}\log\left(1+\frac{(\eta-1)(\xi_0(t_*)-\eta)}{4\mu^2+2\mu(\xi_0(t_*)-1)}\right)\leq\frac{2\mu(\eta-1)}{2\mu+\xi_0(t_*)-1}\leq\frac{\kappa}{\epsilon}\frac{2\mu(2\mu+1)}{\xi_0(t_*)-\eta+2\mu}.$$
Hence we obtain in each case
$$\frac{4\mu^2}{\xi_0(t_*)-\eta}\log\left(1+\frac{(\eta-1)(\xi_0(t_*)-\eta)}{4\mu^2+2\mu(\xi_0(t_*)-1)}\right)\lesssim\frac{\kappa}{\epsilon}\frac{1}{\xi_0(t_*)-\eta+2\mu},$$
and thus 
\begin{equation}\begin{aligned}
&\int_0^{t_*}\int_{\xi_0(t)}^{\xi_0(t_*)}\left(1+\frac{\underline{c}}{2}(t_*-t)\right)^{-1}\left(1+\frac{1}{2\mu}(\eta-\xi_0(t)))\right)^{-1}v_B(\eta,0)d\eta dt\\
&\leq\frac{Ce^{C\kappa}}{\underline{c}}\frac{\kappa}{\epsilon}\int_1^{\xi_0(t_*)}\frac{1}{\xi_0(t_*)-\eta+2\mu}v_B(\eta,0)d\eta.
\end{aligned}\label{6.23}\end{equation}
From (\ref{6.21})(\ref{6.22})(\ref{6.23}) we see that all the terms on the right-hand side of (\ref{6.20}) involving $v_B$ can be controlled by 
$$C\epsilon(\kappa+1)\int_1^{\xi_0(t_*)}\left(2\mu+\xi_0(t_*)-\eta\right)^{-1}v_B(\eta,0)d\eta.$$
By recalling that
$$\mathcal{V}(\xi):=\left|w_B(\xi,0)\right|+\xi^{-1}\int_1^\xi|w_F(\eta,0)|d\eta+\epsilon\xi^{-3}|\psi(\xi,0)|+\epsilon^2\xi^{-1}\int_1^\xi\eta^{-3}|\psi(\eta,0)|d\eta,$$
$$\begin{aligned}
\Psi(t_*):=&\int_0^{t_*}\left(1+\frac{\underline{c}}{2}(t_*-t)\right)^{-1}|\partial_t\psi(1,t)|dt+\epsilon\int_0^{t_*}\left(1+\frac{\underline{c}}{2}(t_*-t)\right)^{-4}|\psi(1,t)|dt\\&+\epsilon^2\int_0^{t_*}\left(1+\frac{\underline{c}}{2}(t_*-t)\right)^{-1}|\psi(1,t)|dt,
\end{aligned}$$
and classifying the terms on the right-hand side of (\ref{6.20}), in view of $\kappa\leq\kappa_0\lesssim1$, we obtain 
\begin{equation}\begin{aligned}
v_B(\xi_0(t_*),0)\leq&\left|\frac{w_B}{c^\frac{1}{2}}(\xi_0(t_*),0)\right|+C\epsilon\mathcal{V}(\xi_0(t_*))+C\epsilon\Psi(t_*)\\
&+C\epsilon\int_1^{\xi_0(t_*)}\left(2\mu+\xi_0(t_*)-\eta\right)^{-1}v_B(\eta,0)d\eta.
\end{aligned}\label{6.24}\end{equation}
From (\ref{6.19}) and in the same manner, there is also
\begin{equation}\begin{aligned}
&\left|\frac{w_B}{c^\frac{1}{2}}\left(1,t_*\right)-\frac{w_B}{c^\frac{1}{2}}\left(\xi_0(t_*),0\right)\right|\\
\leq& C\epsilon\mathcal{V}(\xi_0(t_*))+C\epsilon\Psi(t_*)+C\epsilon\int_1^{\xi_0(t_*)}\left(2\mu+\xi_0(t_*)-\eta\right)^{-1}v_B(\eta,0)d\eta.
\end{aligned}\label{6.25}\end{equation}
Multiplying (\ref{6.24}) by $\xi_0(t_*)$ yields
\begin{equation}\begin{aligned}
&\xi_0(t_*)\left(v_B(\xi_0(t_*),0)-\left|\frac{w_B}{c^\frac{1}{2}}(\xi_0(t_*),0)\right|\right)\\
\lesssim&\epsilon\xi_0(t_*)\left(\mathcal{V}(\xi_0(t_*))+\Psi(t_*)\right)\\
&+\epsilon\xi_0(t_*)\int_1^{\xi_0(t_*)}\left(2\mu+\xi_0(t_*)-\eta\right)^{-1}\left(v_B(\eta,0)-\left|\frac{w_B}{c^\frac{1}{2}}(\eta,0)\right|\right)d\eta\\
&+\epsilon\xi_0(t_*)\int_1^{\xi_0(t_*)}\left(2\mu+\xi_0(t_*)-\eta\right)^{-1}\left|\frac{w_B}{c^\frac{1}{2}}(\eta,0)\right|d\eta.
\end{aligned}\label{6.26}\end{equation}
By recalling that (\ref{3.10})(\ref{3.11}), we obtain 
\begin{equation}\log\xi_0(t_*)\leq\log\xi_0(T)\leq\log(1+\overline{c}T)\leq\frac{\kappa}{\epsilon},
\label{6.27}\end{equation} 
and thus 
$$\begin{aligned}
&\epsilon\xi_0(t_*)\int_1^{\xi_0(t_*)}\left(2\mu+\xi_0(t_*)-\eta\right)^{-1}\eta^{-1}d\eta\\
=&\frac{\epsilon\xi_0(t_*)}{\xi_0(t_*)+2\mu}\left(\log\xi_0(t_*)+\log\frac{2\mu-1+\xi_0(t_*)}{2\mu}\right)\\
\lesssim&\kappa.
\end{aligned}$$
Therefore, taking maximum in (\ref{6.26}) over $t_*\in[0,T]$ yields
$$\begin{aligned}
&\max_{t_*\in[0,T]}\left\{\xi_0(t_*)\left(v_B(\xi_0(t_*),0)-\left|\frac{w_B}{c^\frac{1}{2}}(\xi_0(t_*),0)\right|\right)\right\}\\
\lesssim&\epsilon\max_{t_*\in[0,T]}\xi_0(t_*)\mathcal{V}(\xi_0(t_*))+\epsilon\max_{t_*\in[0,T]}\xi_0(t_*)\Psi(t_*)+\kappa\max_{t_*\in[0,T]}\left\{\xi_0(t_*)\left|\frac{w_B}{c^\frac{1}{2}}(\xi_0(t_*),0)\right|\right\}\\
&+\kappa\max_{t_*\in[0,T]}\left\{\xi_0(t_*)\left(v_B(\xi_0(t_*),0)-\left|\frac{w_B}{c^\frac{1}{2}}(\xi_0(t_*),0)\right|\right)\right\}.
\end{aligned}$$
We remark here that since $\xi_0$ is a bijection from $[0,T]$ to $[1,\xi_0(T)]$ it holds
$$\max_{t_*\in[0,T]}\xi_0(t_*)\mathcal{V}(\xi_0(t_*))=\max_{\eta\in[1,\xi(T)]}\eta\mathcal{V}(\eta),$$
$$\max_{t_*\in[0,T]}\left\{\xi_0(t_*)\left|\frac{w_B}{c^\frac{1}{2}}(\xi_0(t_*),0)\right|\right\}=\max_{\eta\in[1,\xi(T)]}\left\{\eta\left|\frac{w_B}{c^\frac{1}{2}}(\eta,0)\right|\right\}.$$
By choosing $\kappa_0$ small, we arrive at
\begin{equation}\begin{aligned}
&\max_{t_*\in[0,T]}\left\{\xi_0(t_*)\left(v_B(\xi_0(t_*),0)-\left|\frac{w_B}{c^\frac{1}{2}}(\xi_0(t_*),0)\right|\right)\right\}\\
\lesssim&\epsilon\max_{t_*\in[0,T]}\xi_0(t_*)\mathcal{V}(\xi_0(t_*))+\epsilon\max_{t_*\in[0,T]}\xi_0(t_*)\Psi(t_*)+\kappa\max_{t_*\in[0,T]}\left\{\xi_0(t_*)\left|\frac{w_B}{c^\frac{1}{2}}(\xi_0(t_*),0)\right|\right\},
\end{aligned}\label{6.28}\end{equation}
and it follows
\begin{equation}\begin{aligned}
&\max_{t_*\in[0,T]}\xi_0(t_*)v_B(\xi_0(t_*),0)\\
\lesssim&\epsilon\max_{t_*\in[0,T]}\xi_0(t_*)\mathcal{V}(\xi_0(t_*))+\epsilon\max_{t_*\in[0,T]}\xi_0(t_*)\Psi(t_*)+\max_{t_*\in[0,T]}\left\{\xi_0(t_*)\left|\frac{w_B}{c^\frac{1}{2}}(\xi_0(t_*),0)\right|\right\}.
\end{aligned}\label{6.29}\end{equation}
Furthermore, multiplying (\ref{6.25}) by $\xi_0(t_*)$, taking maximum over $t_*\in[0,T]$ and applying the bound (\ref{6.29}) to the $v_B$ term gives
\begin{equation}\begin{aligned}
&\max_{t_*\in[0,T]}\left\{\xi_0(t_*)\left(\left|\frac{w_B}{c^\frac{1}{2}}\left(1,t_*\right)-\frac{w_B}{c^\frac{1}{2}}\left(\xi_0(t_*),0\right)\right|\right)\right\}\\
\lesssim&\epsilon\max_{t_*\in[0,T]}\xi_0(t_*)\mathcal{V}(\xi_0(t_*))+\epsilon\max_{t_*\in[0,T]}\xi_0(t_*)\Psi(t_*)+\kappa\max_{t_*\in[0,T]}\left\{\xi_0(t_*)\left|\frac{w_B}{c^\frac{1}{2}}(\xi_0(t_*),0)\right|\right\},
\end{aligned}\label{6.30}\end{equation}
which proves Proposition \ref{prop 6.0}.
\subsection{The decay estimate of $\psi(1,t)$.}
Recall from (\ref{5.4})(\ref{5.5}) that 
$W(s)=\left[aw_B|_{x=0},aw_B|_{x=0}\right]^{T}(s)$ and that $a:=\left.\left(\frac{\tilde{f}^\prime(R)}{c}-\frac{\rho\partial_tu}{c}\right)\right|_{x=0}$ with $\left|a(t)-\frac{\tilde{f}^\prime(1)}{c_0}\right|\lesssim\epsilon$. We write
$$\begin{aligned}
aw_B|_{x=0}(s)=&\frac{\tilde{f}^\prime(1)}{c_0}w_B(\xi_0(s),0)+\frac{\tilde{f}^\prime(1)}{c_0}\left(w_B(1,s)-w_B(\xi_0(s),0)\right)+\left(a(s)-\frac{\tilde{f}^\prime(1)}{c_0}\right)w_B(1,s).
\end{aligned}$$
The last two terms on the right-hand side are regarded as perturbation, and from (\ref{6.29})(\ref{6.30}) they satisfy
\begin{equation}\begin{aligned}
&\left|\int_0^t\exp\left(\Lambda_i(t-s)\right)\left(w_B(1,s)-w_B(\xi_0(s),0)\right)ds\right|\\
\lesssim&\left(\int_0^t\exp\left(\text{Re} \Lambda_i(t-s)\right)\xi_0(s)^{-1}ds\right)\left(\kappa\max_{s\in[0,T]}\left\{\xi_0(s)\left|\frac{w_B}{c^\frac{1}{2}}(\xi_0(s),0)\right|\right\}\right.\\
&\left.+\epsilon\max_{s\in[0,T]}\xi_0(s)\mathcal{V}(\xi_0(s))+\epsilon\max_{s\in[0,T]}\xi_0(s)\Psi(s)\right)\\
\lesssim&\frac{1}{1+\underline{c}t}\left(\kappa\max_{s\in[0,T]}\left\{\xi_0(s)\left|\frac{w_B}{c^\frac{1}{2}}(\xi_0(s),0)\right|\right\}+\epsilon\max_{s\in[0,T]}\xi_0(s)\mathcal{V}(\xi_0(s))\right.\\
&\left.+\epsilon\max_{s\in[0,T]}\xi_0(s)\Psi(s)\right),
\end{aligned}\label{6.31}\end{equation}
\begin{equation}\begin{aligned}
&\left|\int_0^t\exp\left(\Lambda_i(t-s)\right)\left(a(s)-\frac{\tilde{f}^\prime(1)}{c_0}\right)w_B(1,s)ds\right|\\
\lesssim&\epsilon\left(\int_0^t\exp\left(\text{Re}\Lambda_i(t-s)\right)\xi_0(s)^{-1}ds\right)\left(\max_{s\in[0,T]}\left\{\xi_0(s)\left|\frac{w_B}{c^\frac{1}{2}}(\xi_0(s),0)\right|\right\}\right.\\
&\left.+\epsilon\max_{s\in[0,T]}\xi_0(s)\mathcal{V}(\xi_0(s))+\epsilon\max_{s\in[0,T]}\xi_0(s)\Psi(s)\right)\\
\lesssim&\frac{1}{1+\underline{c}t}\left(\epsilon\max_{s\in[0,T]}\left\{\xi_0(s)\left|\frac{w_B}{c^\frac{1}{2}}(\xi_0(s),0)\right|\right\}+\epsilon^2\max_{s\in[0,T]}\xi_0(s)\mathcal{V}(\xi_0(s))\right.\\
&\left.+\epsilon^2\max_{s\in[0,T]}\xi_0(s)\Psi(s)\right),
\end{aligned}\label{6.32}\end{equation}
where $i=1,2$ and we used (\ref{3.10}) and $\text{Re}\Lambda_i<0$ to deduce the inequality
$$\int_0^t\exp\left(\Lambda_i(t-s)\right)\xi_0(s)^{-1}ds\leq\int_0^t\exp\left(\text{Re}\Lambda_i(t-s)\right)\left(1+\underline{c}s\right)^{-1}ds\lesssim\frac{1}{1+\underline{c}t}.$$
Using the above inequality again, we control the second term on the right-hand side of (\ref{5.6}) by
\begin{equation}\begin{aligned}
&\left|\int_0^t\exp\left[\begin{matrix}
\Lambda_1(t-s) & \\ & \Lambda_2(t-s)
\end{matrix}\right]\left(\Delta Y\right)(s)ds\right|\\
\lesssim&\epsilon\left(\int_0^t\exp\left[\max_{i=1,2}\left\{\text{Re}\Lambda_i\right\}(t-s)\right]\left(1+\underline{c}s\right)^{-1}(s)ds\right)\left(\max_{s\in[0,t]}(1+\underline{c}s)\left(|\partial_t\psi(1,s)|+|\psi(1,s)|\right)\right)\\
\lesssim&\frac{\epsilon}{1+\underline{c}t}\left(\max_{s\in[0,t]}(1+\underline{c}s)\left(|\partial_t\psi(1,s)|+|\psi(1,s)|\right)\right)
\end{aligned}\label{6.33}\end{equation}
To eliminate the $\Psi(s)$ terms in (\ref{6.31})(\ref{6.32}) by $\psi(1,t)$ and $\partial_t\psi(1,t)$, we recall the notation (\ref{calR}) and compute
\begin{equation}\begin{aligned}
&\xi_0(t)\Psi(t)\\
\leq&\xi_0(t)\int_0^t\left(1+\frac{\underline{c}}{2}(t-s)\right)^{-1}|\mathcal{R}(s)|ds\\
&+\xi_0(t)\left(\int_0^t\left(1+\frac{\underline{c}}{2}(t-s)\right)^{-1}(1+\underline{c}s)^{-1}ds\right)\max_{s\in[0,t]}(1+\underline{c}s)|\partial_t\psi(1,s)-\mathcal{R}(s)|\\
&+\epsilon\xi_0(t)\left(\int_0^t\left(1+\frac{\underline{c}}{2}(t-s)\right)^{-4}(1+\underline{c}s)^{-1}ds\right)\max_{s\in[0,t]}(1+\underline{c}s)|\psi(1,s)|\\
&+\epsilon^2\xi_0(t)\left(\int_0^t\left(1+\frac{\underline{c}}{2}(t-s)\right)^{-1}(1+\underline{c}s)^{-1}ds\right)\max_{s\in[0,t]}(1+\underline{c}s)|\psi(1,s)|\\
\lesssim&\xi_0(t)\int_0^t\left(1+\frac{\underline{c}}{2}(t-s)\right)^{-1}|\mathcal{R}(s)|ds+\frac{\kappa}{\epsilon}\max_{s\in[0,t]}(1+\underline{c}s)|\partial_t\psi(1,s)-\mathcal{R}(s)|\\
&+\epsilon\max_{s\in[0,t]}(1+\underline{c}s)|\psi(1,s)|+\epsilon\kappa\max_{s\in[0,t]}(1+\underline{c}s)|\psi(1,s)|.
\end{aligned}\label{6.34}\end{equation}
Now applying (\ref{6.31}-\ref{6.34}) to (\ref{5.6}) yields
$$\begin{aligned}
&\left|Y(t)-Y_0(t)\right|\\
\lesssim&\frac{1}{1+\underline{c}t}\left\{\kappa\max_{s\in[0,T]}\left\{\xi_0(s)\left|\frac{w_B}{c^\frac{1}{2}}(\xi_0(s),0)\right|\right\}+\epsilon\max_{s\in[0,T]}\xi_0(s)\mathcal{V}(\xi_0(s))\right.\\
&\left.+\epsilon\xi_0(t)\int_0^t\left(1+\frac{\underline{c}}{2}(t-s)\right)^{-1}|\mathcal{R}(s)|ds+\kappa\max_{s\in[0,t]}(1+\underline{c}s)|\partial_t\psi(s)-\mathcal{R}(s)|\right.\\
&\left.+\epsilon^2\max_{s\in[0,t]}(1+\underline{c}s)|\psi(s)|\right\},
\end{aligned}$$
where $Y_0(t)$ denotes the principle part of $Y(t)$, namely
$$Y_0(t):=\exp\left[\begin{matrix}
\Lambda_1t & \\ & \Lambda_2t  
\end{matrix}\right]Y(0)-\frac{\tilde{f}^\prime(1)}{c_0}\int_0^t\exp\left[\begin{matrix}
\Lambda_1(t-s) & \\ & \Lambda_2(t-s)
\end{matrix}\right]\left[
\begin{matrix}
w_B(\xi_0(s),0) \\ w_B(\xi_0(s),0)
\end{matrix}\right]ds.$$
This automatically yields $$\mathcal{R}(t)=\left[\begin{matrix}
\frac{\Lambda_1}{\Lambda_1-\Lambda_2} & -\frac{\Lambda_2}{\Lambda_1-\Lambda_2}
\end{matrix}\right]Y_0(t),\quad \partial_t\psi(t)-\mathcal{R}(t)=\left[\begin{matrix}
\frac{\Lambda_1}{\Lambda_1-\Lambda_2} & -\frac{\Lambda_2}{\Lambda_1-\Lambda_2}
\end{matrix}\right](Y(t)-Y_0(t)).$$
Then the above inequality can be rewritten as
\begin{equation}\begin{aligned}
&\left|Y(t)-Y_0(t)\right|\\
\lesssim&\frac{1}{1+\underline{c}t}\left\{\kappa\max_{s\in[0,T]}\left\{\xi_0(s)\left|\frac{w_B}{c^\frac{1}{2}}(\xi_0(s),0)\right|\right\}+\epsilon\max_{s\in[0,T]}\xi_0(s)\mathcal{V}(\xi_0(s))\right.\\
&\left.+\epsilon\xi_0(t)\int_0^t\left(1+\frac{\underline{c}}{2}(t-s)\right)^{-1}|Y_0(s)|ds+\epsilon^2\max_{s\in[0,t]}(1+\underline{c}s)|Y_0(s)|\right.\\
&\left.+(\kappa+\epsilon^2)\max_{s\in[0,t]}(1+\underline{c}s)|Y(s)-Y_0(s)|\right\}.
\end{aligned}\label{6.35}\end{equation} 
Multiplying (\ref{6.35}) by $1+\underline{c}t$ and taking maximum over $[0,T]$ yields
$$\begin{aligned}
&\max_{t\in[0,T]}(1+\underline{c}t)|Y(t)-Y_0(t)|\\
\lesssim&\kappa\max_{s\in[0,T]}\left\{\xi_0(s)\left|\frac{w_B}{c^\frac{1}{2}}(\xi_0(s),0)\right|\right\}+\epsilon\max_{s\in[0,T]}\xi_0(s)\mathcal{V}(\xi_0(s))+\epsilon^2\max_{s\in[0,T]}(1+\underline{c}s)|Y_0(s)|\\
&+\epsilon\max_{t\in[0,T]}\left\{\xi_0(t)\int_0^t\left(1+\frac{\underline{c}}{2}(t-s)\right)^{-1}|Y_0(s)|ds\right\}\\
&+(\kappa+\epsilon^2)\max_{t\in[0,T]}(1+\underline{c}t)|Y(t)-Y_0(t)|.
\end{aligned}$$
For small enough $\kappa_0$ and $\epsilon_0$, this shows
$$\begin{aligned}
&\max_{t\in[0,T]}(1+\underline{c}t)|Y(t)-Y_0(t)|\\
\lesssim&\kappa\max_{s\in[0,T]}\left\{\xi_0(s)\left|\frac{w_B}{c^\frac{1}{2}}(\xi_0(s),0)\right|\right\}+\epsilon\max_{s\in[0,T]}\xi_0(s)\mathcal{V}(\xi_0(s))\\
&+\epsilon\max_{t\in[0,T]}\left\{\xi_0(t)\int_0^t\left(1+\frac{\underline{c}}{2}(t-s)\right)^{-1}|Y_0(s)|ds\right\}+\epsilon^2\max_{s\in[0,T]}(1+\underline{c}s)|Y_0(s)|.
\end{aligned}$$
For the last two terms, we have
\begin{equation}\begin{aligned}
|Y_0(t)|\lesssim&\exp\left(\max_{i=1,2}\left\{\text{Re}\Lambda_i\right\}t\right)|Y(0)|\\&+\int_0^t\exp\left(\max_{i=1,2}\left\{\text{Re}\Lambda_i\right\}(t-s)\right)\xi_0(s)^{-1}ds\max_{s\in[0,T]}\left\{\xi_0(s)\left|\frac{w_B}{c^\frac{1}{2}}(\xi_0(s),0)\right|\right\}\\
\lesssim&\exp\left(\max_{i=1,2}\left\{\text{Re}\Lambda_i\right\}t\right)|Y(0)|+\frac{1}{1+\underline{c}t}\max_{s\in[0,T]}\left\{\xi_0(s)\left|\frac{w_B}{c^\frac{1}{2}}(\xi_0(s),0)\right|\right\},
\end{aligned}\label{6.38}\end{equation}
and thus
\begin{equation}\begin{aligned}
&\epsilon\max_{t\in[0,T]}\left\{\xi_0(t)\int_0^t\left(1+\frac{\underline{c}}{2}(t-s)\right)^{-1}|Y_0(s)|ds\right\}+\epsilon^2\max_{s\in[0,T]}(1+\underline{c}s)|Y_0(s)|\\
\lesssim&\epsilon|Y(0)|+\epsilon^2\max_{s\in[0,T]}\left\{\xi_0(s)\left|\frac{w_B}{c^\frac{1}{2}}(\xi_0(s),0)\right|\right\}\\
&+\epsilon\max_{t\in[0,T]}\left\{\xi_0(t)\int_0^t\left(1+\frac{\underline{c}}{2}(t-s)\right)^{-1}\left(1+\underline{c}s\right)^{-1}ds\right\}\max_{s\in[0,T]}\left\{\xi_0(s)\left|\frac{w_B}{c^\frac{1}{2}}(\xi_0(s),0)\right|\right\}\\
\lesssim&\epsilon|Y(0)|+(\kappa+\epsilon^2)\max_{s\in[0,T]}\left\{\xi_0(s)\left|\frac{w_B}{c^\frac{1}{2}}(\xi_0(s),0)\right|\right\}.
\end{aligned}\label{6.39}\end{equation}
Here we used that
$$\begin{aligned}
\xi_0(t)\int_0^t\left(1+\frac{\underline{c}}{2}(t-s)\right)^{-1}\left(1+\underline{c}s\right)^{-1}ds
=\xi_0(t)\frac{2}{3\underline{c}+\underline{c}^2t}\left(\log(1+\underline{c}t)+\log\left(1+\frac{\underline{c}}{2}t\right)\right)
\lesssim\frac{\kappa}{\epsilon}.
\end{aligned}$$
Therefore, by noting that $|w_B(\eta)|\leq\mathcal{V}(\eta)$, 
\begin{equation}\begin{aligned}
&\max_{t\in[0,T]}(1+\underline{c}t)|Y(t)-Y_0(t)|\\
\lesssim&(\kappa+\epsilon^2)\max_{s\in[0,T]}\left\{\xi_0(s)\left|\frac{w_B}{c^\frac{1}{2}}(\xi_0(s),0)\right|\right\}+\epsilon\max_{s\in[0,T]}\xi_0(s)\mathcal{V}(\xi_0(s))+\epsilon|Y(0)|\\
\lesssim&\kappa\max_{s\in[0,T]}\left\{\xi_0(s)\left|\frac{w_B}{c^\frac{1}{2}}(\xi_0(s),0)\right|\right\}+\epsilon\max_{s\in[0,T]}\xi_0(s)\mathcal{V}(\xi_0(s))+\epsilon|Y(0)|.
\end{aligned}\label{6.40}\end{equation} 
By substituting the last three terms on the right-hand side of (\ref{6.35}) by the bounds in (\ref{6.39})(\ref{6.40}), we obtain
\begin{equation}\begin{aligned}
&\left|Y(t)-Y_0(t)\right|\\
\lesssim&\frac{1}{1+\underline{c}t}\left\{\kappa\max_{s\in[0,T]}\left\{\xi_0(s)\left|\frac{w_B}{c^\frac{1}{2}}(\xi_0(s),0)\right|\right\}+\epsilon\max_{s\in[0,T]}\xi_0(s)\mathcal{V}(\xi_0(s))+\epsilon|Y(0)|\right\},
\end{aligned}\label{6.41}\end{equation}
which proves Proposition \ref{prop 6.01}.
\section{Close the bootstrap}\label{sec7}
With the help of the bounds obtained in the previous sections, we shall complete the bootstrap argument in this section.
\subsection{Proof of almost global existence}
To prove the bound (\ref{3.12}), recall the energy identity (\ref{kee}), and we obtain by dropping the iteration subscripts that
\begin{equation}\begin{aligned}
&\frac{1}{2}\frac{d}{dt}\left[\int_0^\infty((\partial_t^ju)^2+c^2(\partial_t^jq)^2)dx+(c^2r^2)|_{x=0}f'(R)^{-1}|\partial_t^jf(R)|^2\right]=\sum_{l=1}^5nl_l^j(t),\end{aligned}\label{7.1}\end{equation}
where we denote
$$nl_1^j(t):=\int_0^\infty\left(\frac{1}{2}\partial_tc^2(\partial_t^jq)^2-r^2\partial_xc^2\partial_t^ju\partial_t^jq\right)dx,$$
$$nl_2^j(t):=\int_0^\infty[\partial_t^j,c^2r^2\partial_x]q
\partial_t^judx,\quad nl_3^j(t):=\int_0^\infty c^2[\partial_t^j,\partial_xr^2]u_k\partial_t^jqdx,$$
$$nl_4^j(t):=\frac{1}{2}\partial_t\left[(c^2r^2)|_{x=0}f'(R)^{-1}\right]|\partial_t^jf(R)|^2,$$
$$nl_5^j(t):=-(c^2r^2)|_{x=0}[\partial_t^j,f'(R)^{-1}]\partial_tf(R)\partial_t^jf(R).$$
From (\ref{4.8}-\ref{4.12}) and (\ref{3.7}-\ref{3.9}), it follows 
\begin{equation}
\sum_{j=0}^2\left(|\partial_t^ju|^2+|\partial_t^jq|^2\right)\simeq\sum_{j=0}^2\left(|\partial_t^{j+1}\varphi|^2+|cr^2\partial_x\partial_t^j\varphi|^2\right),
\label{7.2}\end{equation}
and thus
$$\begin{aligned}
&\sum_{j=0}^2\left\{\int_0^T\int_0^\infty\xi^{-1}\left((\partial_t^{1+j}\varphi)^2+(cr^2\partial_x\partial_t^j\varphi)^2\right)dxdt+\int_0^T\int_0^\infty\xi^{-3}\left(\partial_t^j\varphi\right)^2dxdt\right\}\\
\gtrsim&\sum_{j=0}^2\int_0^T\int_0^\infty\xi^{-1}\left((\partial_t^ju)^2+(\partial_t^j q)^2\right)dxdt,
\end{aligned}$$
with which we further deduce from (\ref{4.1}) that
\begin{equation}\begin{aligned}
\sum_{j=0}^2\int_0^T\int_0^\infty\xi^{-1}\left((\partial_t^ju)^2+(\partial_t^j q)^2\right)dxdt
\lesssim\max\left\{\log_2 T,1\right\}\sum_{j=0}^2\max_{0\leq t\leq T}e^j(t)+\int_0^T\varphi^2|_{x=0}dt
\end{aligned}\label{7.3}\end{equation}
Using the above inequality with (\ref{3.7})(\ref{3.11}), we estimate 
\begin{equation}\begin{aligned}
\int_0^T |nl_1^j(t)|dt\lesssim&\left(\|\xi\partial_tc^2\|_{L^\infty}+\|\xi r^2\partial_xc^2\|_{L^\infty}\right)\int_0^T\int_0^\infty\xi^{-1}\left((\partial_t^ju)^2+(\partial_t^jq)^2\right)dxdt\\
\lesssim&\epsilon\left(\max\left\{\log_2T,1\right\}\sum_{j=0}^2\max_{0\leq t\leq T}e^j(t)+\int_0^T\varphi^2|_{x=0}dt\right)\\
\lesssim&\kappa\sum_{j=0}^2\max_{0\leq t\leq T}e^j(t)+\epsilon\int_0^T\varphi^2|_{x=0}dt.
\end{aligned}\label{7.4}
\end{equation}
Next, we compute
$$[\partial_t,r^2c^2\partial_x]q=\partial_t(r^2c^2)\partial_xq=\frac{\partial_t(c^2r^2)}{c^2r^2}\partial_tu,$$
$$\begin{aligned}
&[\partial_t^2,r^2c^2\partial_x]q=[\partial_t^2,r^2c^2]\partial_xq=[\partial_t^2,r^2c^2](r^{-2}c^{-2}\partial_tu)\\
=&2\frac{\partial_t(r^2c^2)}{r^2c^2}\partial_t^2u+\frac{\partial_t^2(r^2c^2)}{r^2c^2}\partial_tu+2\partial_t(r^2c^2)\partial_t(r^{-2}c^{-2})\partial_tu,
\end{aligned}$$
$$\begin{aligned}
\frac{\partial_t^2(r^2c^2)}{r^2c^2}=&\frac{\partial_t^2c^2}{c^2}+2\frac{\partial_tr^2\partial_tc^2}{r^2c^2}+\frac{\partial_t^2r^2}{r^2}\\
=&-(\gamma+1)\rho\partial_t^2q+(\gamma+1)(\gamma+2)\rho^2(\partial_tq)^2+2\frac{\partial_tu}{r}+2\frac{u^2}{r^2}+8\frac{u\partial_tc}{rc}.
\end{aligned}$$
Hence by (\ref{3.6})(\ref{3.7}), 
$$\begin{aligned}
\left|[\partial_t^2,r^2c^2\partial_x]q\right|\lesssim\epsilon\xi^{-1}\left(|\partial_t^2u|+|\partial_t^2q|+|\partial_tu|\right).
\end{aligned}$$
From (\ref{7.3})(\ref{3.11}), it follows
\begin{equation}\begin{aligned}
\int_0^T|nl_2^j(t)|dt\lesssim&\epsilon\sum_{i=0}^j\int_0^T\int_0^\infty\xi^{-1}\left((\partial_t^ju)^2+(\partial_t^jq)^2\right)dxdt\\
\lesssim&\kappa\sum_{j=0}^2\max_{0\leq t\leq T}e^j(t)+\epsilon\int_0^T\varphi^2|_{x=0}dt.
\end{aligned}\label{7.5}\end{equation}
Similarly, to control $nl_3^j$, we compute
$$[\partial_t,\partial_xr^2]u=\partial_x[\partial_t,r^2]u=\partial_x(2ru^2)=4\frac{u}{r}\partial_tq-6\frac{u^2}{\rho r^2},$$
$$\begin{aligned}
\relax[\partial_t^2,\partial_x r^2]u=\partial_x[\partial_t^2,r^2]u
=\partial_x[2u^3+4ru\partial_tu]=\partial_x\left[4\frac{u}{r}\partial_t(r^2u)-6u^3\right],
\end{aligned}$$
$$\begin{aligned}
\partial_x\left[\frac{u}{r}\partial_t(r^2u)\right]=&\frac{u}{r}\partial_t^2q+4\partial_x\left(\frac{u}{r}\right)\partial_t(r^2u)\\
=&\frac{u}{r}\partial_t^2q+r^{-1}\left(\partial_tq-3\rho^{-1}r^{-1}u\right)\left(\partial_tu+2r^{-1}u^2\right),
\end{aligned}$$
$$\partial_xu^3=3\frac{u^2}{r^2}\left(\partial_tq-2\rho^{-1}r^{-1}u\right).$$
By (\ref{7.3})(\ref{3.11}), we then obtain
\begin{equation}\begin{aligned}
\int_0^T|nl_3^j(t)|dt\lesssim&\epsilon\sum_{i=0}^2\int_0^T\int_0^\infty\xi^{-1}\left((\partial_t^ju)^2+(\partial_t^jq)^2\right)dxdt\\
\lesssim&\kappa\sum_{j=0}^2\max_{0\leq t\leq T}e^j(t)+\epsilon\int_0^T\varphi^2|_{x=0}dt.
\end{aligned}\label{7.6}\end{equation}
Recall that $f(R)=q|_{x=0}$ and $\rho^{\gamma-1}-1=-\left(\frac{Ca\gamma}{2(\gamma-1)}\right)^{-1}\left(\partial_x\varphi-\frac{1}{2}u^2\right)$, from which we write
$$q=\rho^{-1}-1=-\frac{\rho^{-1}-1}{\rho^{\gamma-1}-1}\left(\frac{Ca\gamma}{2(\gamma-1)}\right)^{-1}\left(\partial_x\varphi-\frac{1}{2}u^2\right).$$
In view of (\ref{3.6})(\ref{3.7}), this shows
$$\begin{aligned}
|f(R)|&\lesssim\left|\left.\left(\partial_t\varphi-\frac{1}{2}u^2\right)\right|_{x=0}\right|\leq\left|\partial_t\varphi|_{x=0}\right|+C\epsilon\left|\left(cr^2\partial_x\varphi\right)|_{x=0}\right|\\
\leq&\left|\left.\partial_t\varphi\right|_{x=0}\right|+C\epsilon\left|\left.\left(cr^2\varphi+\frac{c}{\rho r}\varphi\right)\right|_{x=0}\right|+C\epsilon|\varphi|_{x=0}|.
\end{aligned}$$
For derivatives of $f(R)$, we have
$$\partial_tq=-(\gamma-1)^{-1}\rho^{-\gamma}\partial_t\left(\rho^{\gamma-1}-1\right)=\left(\frac{Ca\gamma}{2}\right)^{-1}\rho^{-\gamma}\left(\partial_t^2\varphi-u\partial_tu\right),$$ 
$$\begin{aligned}
|\partial_tf(R)|\lesssim&|\partial_t^2\varphi|_{x=0}|+C\epsilon\left|\partial_tu|_{x=0}\right|\\
=&|\partial_t^2\varphi|_{x=0}|+C\epsilon\left|\partial_t\left.\left(\rho r^2\partial_x\varphi\right)\right|_{x=0}\right|\\
\leq& |\partial_t^2\varphi|_{x=0}|+C\epsilon\left|\rho r^2\partial_x\partial_t\varphi|_{x=0}\right|+C\epsilon\left|\left.\frac{\partial_t(\rho r^2)}{\rho r^2}\right|_{x=0}\right|\cdot\left|\left(\rho r^2\partial_x\varphi\right)|_{x=0}\right|\\
\leq&|\partial_t^2\varphi|_{x=0}|+C\epsilon\left|\left.\left(cr^2\partial_x\partial_t\varphi+\frac{c}{\rho r}\partial_t\varphi\right)\right|_{x=0}\right|+C\epsilon\left|\partial_t\varphi|_{x=0}\right|\\
&+C\epsilon\left|\left.\left(cr^2\partial_x\varphi+\frac{c}{\rho r}\varphi\right)\right|_{x=0}\right|+C\epsilon|\varphi_{x=0}|.
\end{aligned}$$
Similarly, by separating the top derivatives and using (\ref{3.6})(\ref{3.7}), we continue and obtain
$$\begin{aligned}
|\partial_t^2f(R)|
=&|\partial_t^2q|_{x=0}|\\
=&\left(\frac{Ca\gamma}{2}\right)^{-1}\left|\left.\partial_t\left(\rho^{-\gamma}\left(\partial_t^2\varphi-u\partial_tu\right)\right)\right|_{x=0}\right|\\
\lesssim&\left|\left.\left(\partial_t^3\varphi-(\partial_tu)^2-u\partial_t^2u\right)\right|_{x=0}\right|+\left|\left.\partial_t\rho^{-\gamma}\right|_{x=0}\right|\cdot\left|\left.\left(\partial_t^2\varphi-u\partial_tu\right)\right|_{x=0}\right|\\
\lesssim&\left|\left.\left(\partial_t^3\varphi-(\partial_tu)^2-u\partial_t^2u\right)\right|_{x=0}\right|\\
&+\epsilon\left(\sum_{j=0}^2\left|\partial_t^j\varphi|_{x=0}\right|+\sum_{j=0}^1\left|\left.\left(cr^2\partial_x\partial_t^j\varphi+\frac{c}{\rho r}\partial_t^j\varphi\right)\right|_{x=0}\right|\right)\\
\lesssim&|\partial_t^3\varphi|_{x=0}|+\epsilon|\partial_t^2u|_{x=0}|+\epsilon\left(\sum_{j=0}^2\left|\partial_t^j\varphi|_{x=0}\right|+\sum_{j=0}^1\left|\left.\left(cr^2\partial_x\partial_t^j\varphi+\frac{c}{\rho r}\partial_t^j\varphi\right)\right|_{x=0}\right|\right),
\end{aligned}$$
$$\begin{aligned}
|\partial_t^2u|_{x=0}|
=&\left|\left.\partial_t^2\left(\rho r^2\partial_x\varphi\right)\right|_{x=0}\right|\\
=&\left|\left.\left(\rho r^2\partial_x\partial_t^2\varphi+2\partial_t(\rho r^2)\partial_x\partial_t\varphi+\partial_t^2(\rho r^2)\partial_x\varphi\right)\right|_{x=0}\right|\\
\leq&\left|\left.\left(\rho r^2\partial_x\partial_t^2\varphi\right)\right|_{x=0}\right|+C\epsilon\left|\left.\left(\rho r^2\partial_x\partial_t\varphi\right)\right|_{x=0}\right|\\
&+\left|\left.\left(\rho^{-1}\partial_t^2\rho+4\frac{u}{\rho r}\partial_t\rho+\frac{2u^2}{r^2}+\frac{2\partial_tu}{r}\right)\right|_{x=0}\right|\cdot\left|\left.\left(\rho r^2\partial_x\varphi\right)\right|_{x=0}\right|\\
\lesssim&\left|\left.\left(\rho r^2\partial_x\partial_t^2\varphi\right)\right|_{x=0}\right|\\
&+\epsilon\left(\left|\left.\left(\rho r^2\partial_x\partial_t\varphi\right)\right|_{x=0}\right|+\left|\left.\left(\rho r^2\partial_x\varphi\right)\right|_{x=0}\right|+|\partial_t^2\rho|_{x=0}|+|\partial_t\rho|_{x=0}|\right)\\
\lesssim&\left|\left.\left(\rho r^2\partial_x\partial_t^2\varphi\right)\right|_{x=0}\right|\\
&+\epsilon\left(\left|\left.\left(\rho r^2\partial_x\partial_t\varphi\right)\right|_{x=0}\right|+\left|\left.\left(\rho r^2\partial_x\varphi\right)\right|_{x=0}\right|+|\partial_t^2q|_{x=0}|+|\partial_tq|_{x=0}|\right)\\
\lesssim&\sum_{j=0}^2\left|\left.\left(cr^2\partial_x\partial_t^j\varphi+\frac{c}{\rho r}\partial_t^j\varphi\right)\right|_{x=0}\right|+\sum_{j=0}^2\left|\partial_t^j\varphi|_{x=0}\right|+\epsilon|\partial_t^2q|_{x=0}|.
\end{aligned}$$
Combining the above two inequalities yields
$$|\partial_t^2f(R)|\lesssim|\partial_t^3\varphi|_{x=0}|+\epsilon\sum_{j=0}^2\left|\left.\left(cr^2\partial_x\partial_t^j\varphi+\frac{c}{\rho r}\partial_t^j\varphi\right)\right|_{x=0}\right|+\epsilon\sum_{j=0}^2\left|\partial_t^j\varphi|_{x=0}\right|+\epsilon^2|\partial_t^2f(R)|.$$
For small enough $\epsilon$, this gives
$$|\partial_t^2f(R)|\lesssim|\partial_t^3\varphi|_{x=0}|+\epsilon\sum_{j=0}^2\left|\left.\left(cr^2\partial_x\partial_t^j\varphi+\frac{c}{\rho r}\partial_t^j\varphi\right)\right|_{x=0}\right|+\epsilon\sum_{j=0}^2\left|\partial_t^j\varphi|_{x=0}\right|,$$
and thus for $j=0,1,2$
\begin{equation}
|\partial_t^jf(R)|\lesssim|\partial_t^{1+j}\varphi|_{x=0}|+\epsilon\sum_{i=0}^j\left|\left.\left(cr^2\partial_x\partial_t^j\varphi+\frac{c}{\rho r}\partial_t^j\varphi\right)\right|_{x=0}\right|+\epsilon\sum_{i=0}^j\left|\partial_t^j\varphi|_{x=0}\right|
\label{7.7}\end{equation}
We return to the estimate of $nl_4^j$. Since $c^2=c_0^2(1+q)^{-\gamma-1}$ and $q|_{x=0}=f(R)$, there exists a function $f_1$ which is smooth near $R=1$ such that $(cr^2)|_{x=0}f'(R)^{-1}=f_1(R)$, and thus
$$\left|\partial_t\left[(cr^2)|_{x=0}f'(R)^{-1}\right]\right|=\left|\partial_tf_1(R)\right|=\left|f_1^\prime(R)\right||u|_{x=0}|\lesssim\epsilon.$$
It then follows from (\ref{7.7})(\ref{4.1})(\ref{7.2})(\ref{3.11}) that
\begin{equation}\begin{aligned}
\int_0^T|nl_4^j(t)|dt\lesssim&\epsilon\left(\max\left\{\log
_2T,1\right\}\sum_{j=0}^2\max_{0\leq t\leq T}e^j(t)+\int_0^T\varphi^2|_{x=0}dt\right)\\
\lesssim&\kappa\sum_{j=0}^2\max_{0\leq t\leq T}e^j(t)+\epsilon\int_0^T\varphi^2|_{x=0}dt.
\end{aligned}\label{7.8}\end{equation}
The $nl_5^j$ terms can also be treated by direct computation.
$$\left|[\partial_t,f^\prime(R)^{-1}]\partial_tf(R)\cdot\partial_tf(R)\right|=\left|\partial_t(f^\prime(R)^{-1})\right|\cdot|\partial_tf(R)|^2\lesssim\epsilon|\partial_tf(R)|^2,$$
$$[\partial_t^2,f^\prime(R)^{-1}]\partial_tf(R)\cdot\partial_t^2f(R)=2\partial_t(f^\prime(R)^{-1})|\partial_t^2f(R)|^2+\partial_t^2(f^\prime(R))^{-1}\partial_tf(R)\partial_t^2f(R),$$
By Leibniz rule,
$$\begin{aligned}
\partial_t^2(f^\prime(R)^{-1})=&\partial_t^2\left(\frac{1}{f^{\prime}}\circ f^{-1}\circ f(R)\right)=\partial_t\left[\left(\frac{1}{f^\prime}\circ f^{-1}\right)^\prime\circ f(R)\cdot\partial_t f(R)\right]\\
=&\left(\frac{1}{f^\prime}\circ f^{-1}\right)^\prime\circ f(R)\cdot\partial_t^2 f(R)+\left(\frac{1}{f^\prime}\circ f^{-1}\right)^{\prime\prime}\circ f(R)\cdot(\partial_tf(R))^2.
\end{aligned}$$
Since $|\partial_t R|=|u|_{x=0}|\lesssim\epsilon$, it follows
$$\left|[\partial_t^2,f^\prime(R)^{-1}]\partial_tf(R)\cdot\partial_t^2f(R)\right|\lesssim\epsilon\left((\partial_t^2 f(R))^2+(\partial_t f(R))^2\right),$$
and by (\ref{7.7})(\ref{4.1})(\ref{3.11}),
\begin{equation}\begin{aligned}
\int_0^T|nl_5^j(t)|dt
\lesssim&\epsilon\left(\left|\partial_t^{1+j}\varphi|_{x=0}\right|+\epsilon\sum_{i=0}^j\left|\left.\left(cr^2\partial_x\partial_t^j\varphi+\frac{c}{\rho r}\partial_t^j\varphi\right)\right|_{x=0}\right|+\epsilon\sum_{i=0}^j\left|\partial_t^j\varphi|_{x=0}\right|\right)\\
\lesssim&\epsilon\left(\max\left\{\log
_2T,1\right\}\sum_{j=0}^2\max_{0\leq t\leq T}e^j(t)+\int_0^T\varphi^2|_{x=0}dt\right)\\
\lesssim&\kappa\sum_{j=0}^2\max_{0\leq t\leq T}e^j(t)+\epsilon\int_0^T\varphi^2|_{x=0}dt.
\end{aligned}\label{7.9}\end{equation}
Integrating (\ref{7.1}) over $[0,T]$ and using (\ref{7.4})(\ref{7.5})(\ref{7.6})(\ref{7.8})(\ref{7.9}) yields
$$e^j(t)-e^j(0)\lesssim\kappa\sum_{j=0}^2\max_{0\leq t\leq T}e^j(t)+\epsilon\int_0^T\varphi^2|_{x=0}dt,$$
and it follows by taking maximum
$$\sum_{j=0}^2\max_{0\leq t\leq T}e^j(t)\leq\sum_{j=0}^2e^j(0)+C\kappa\sum_{j=0}^2\max_{0\leq t\leq T}e^j(t)+C\epsilon\int_0^T\varphi^2|_{x=0}dt.$$
Hence for small enough $\kappa_0$ and $\kappa\leq\kappa_0$ such that $C\kappa_0\leq\frac{1}{2}$ , we obtain
\begin{equation}
\sum_{j=0}^2\max_{0\leq t\leq T}e^j(t)\leq2\sum_{j=0}^2e^j(0)+C\epsilon\int_0^T\varphi^2|_{x=0}dt.
\label{7.10}\end{equation}
To prove (\ref{3.12}), it remains to bound $\int_0^T\varphi^2|_{x=0}dt$. To this end, we call upon Proposition \ref{prop 6.01} and use the following lemma.
\begin{lem}
Under the condition (\ref{1.18}) and for $\xi\geq1$, we have the bounds
$$|Y(0)|\lesssim\epsilon,\quad \int_1^\infty|w_B(\eta,0)|^2d\eta\lesssim\epsilon^2,$$
$$\max_{\eta\in[1,\xi_0(T)]}\left\{\eta\left|w_B(\eta,0)\right|\right\}\lesssim\epsilon^\frac{1}{2}\tilde{\epsilon}^\frac{1}{2},\quad \max_{\xi\in[1,\xi_0(T)]}\int_1^\xi|w_F(\eta,0)|d\eta\lesssim\epsilon^\frac{1}{2}\left(\epsilon^\frac{1}{2}+\tilde{\epsilon}^\frac{1}{2}\right),$$
$$\xi^{-2}|\psi(\xi,0)|\lesssim\epsilon\xi^{-1},\quad \int_1^\xi\eta^{-3}|\psi(\eta,0)|d\eta\lesssim\epsilon.$$
In particular,
$$\epsilon\max_{\eta\in[1,\xi_0(T)]}\eta\mathcal{V}(\eta)\lesssim\epsilon^\frac{3}{2}\tilde{\epsilon}^\frac{1}{2}+\epsilon^2.$$
\label{lem 7.1}\end{lem}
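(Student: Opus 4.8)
The plan is to reduce every quantity appearing in the statement to an explicit functional of the initial data $(u_{in},q_{in},R_{in})$, and then to estimate those functionals by weighted one–dimensional Sobolev inequalities in the $\xi$–variable, the extra weight supplied by \eqref{1.18} being used only to recover integrability and decay at large $\xi$. First I would write down the initial profiles explicitly: from $\rho r^2\partial_x\varphi=u$ and $\varphi|_{x=\infty}=0$, $\varphi_0(x):=\varphi(x,0)=-\int_x^\infty(1+q_{in})r_{in}^{-2}u_{in}\,dy$; from the Bernoulli relation \eqref{rho-phi}, $\partial_t\varphi|_{t=0}=\tfrac12u_{in}^2-\tfrac{Ca\gamma}{2(\gamma-1)}((1+q_{in})^{1-\gamma}-1)$; and, using $\psi=r\varphi$, $\partial_tr=u$, $r^2\partial_xr=1+q$ and $r^2\partial_x\varphi=(1+q)u$,
\[
\psi|_{t=0}=r_{in}\varphi_0,\qquad \partial_t\psi|_{t=0}=u_{in}\varphi_0+r_{in}\partial_t\varphi|_{t=0},\qquad cr^2\partial_x\psi|_{t=0}=c_0(1+q_{in})^{\frac{1-\gamma}{2}}(\varphi_0+r_{in}u_{in}),
\]
whence $w_B|_{t=0}=\partial_t\psi|_{t=0}+cr^2\partial_x\psi|_{t=0}$, $w_F|_{t=0}=\partial_t\psi|_{t=0}-cr^2\partial_x\psi|_{t=0}$, and $Y(0)$ are explicit. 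Each of these I would split into its linearisation (replace $(1+q_{in})^{\bullet}$ by $1$ and $\partial_t\varphi|_{t=0}$ by $c_0^2q_{in}$), e.g. $w_B|_{t=0}=c_0\varphi_0+r_{in}(c_0^2q_{in}+c_0u_{in})+(\mathrm{quad})$, plus quadratic remainders which, by the pointwise bounds \eqref{3.6} and $r_{in}\simeq\xi$, are $\lesssim\epsilon^2\xi^{-3/2}$ for terms carrying a factor $\varphi_0$ and $\lesssim\epsilon^2\xi^{-1}$ for $r_{in}$ times a product of two of $q_{in},u_{in}$.

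Next I would assemble the weighted toolbox. Lemma \ref{Lem 2.1} with $r=\xi$ gives $|q_{in}|+|u_{in}|\lesssim\epsilon\xi^{-1}$ and $\|q_{in}\|_{L^2}+\|u_{in}\|_{L^2}\lesssim\epsilon$. Writing $dx=\xi^2d\xi$ and using Cauchy–Schwarz in $\xi$, $|\varphi_0(\xi)|\lesssim\xi^{-1/2}\|u_{in}\|_{L^2}\lesssim\epsilon\xi^{-1/2}$; with the heavier weight coming from \eqref{1.18}, $|\varphi_0(\xi)|\lesssim\xi^{-5/2}\|\xi^2u_{in}\|_{L^2}\lesssim\tilde\epsilon\xi^{-5/2}$, so by interpolation $|\varphi_0(\xi)|\lesssim\epsilon^{1/2}\tilde\epsilon^{1/2}\xi^{-3/2}$, and by the dual Hardy inequality $\int_1^\infty\varphi_0(\eta)^2d\eta\lesssim\int_1^\infty\eta^2u_{in}^2d\eta=\|u_{in}\|_{L^2}^2\lesssim\epsilon^2$. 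Interpolating $\|(q_{in},u_{in})\|_{L^2}\lesssim\epsilon$ against $\|\xi^2(q_{in},u_{in})\|_{L^2}\lesssim\tilde\epsilon$ gives $\|\xi(q_{in},u_{in})\|_{L^2(dx)}\lesssim\epsilon^{1/2}\tilde\epsilon^{1/2}$. The delicate pointwise bound $\xi^2(|q_{in}(\xi)|+|u_{in}(\xi)|)\lesssim\epsilon^{1/2}\tilde\epsilon^{1/2}$ I would obtain by a localised Agmon argument: for $\mu\in[\xi,2\xi]$, $q_{in}(\xi)^2\le q_{in}(\mu)^2+2\int_\xi^{2\xi}|q_{in}\partial_\nu q_{in}|\,d\nu$; averaging in $\mu$ over $[\xi,2\xi]$ and multiplying by $\xi^4$, the weights are essentially constant on $[\xi,2\xi]$, so $\xi^4q_{in}(\xi)^2\lesssim\|\xi^3q_{in}\|_{L^2(d\xi)}\big(\|\xi q_{in}\|_{L^2(d\xi)}+\|\xi\partial_\xi q_{in}\|_{L^2(d\xi)}\big)=\|\xi^2q_{in}\|_{L^2(dx)}\big(\|q_{in}\|_{L^2(dx)}+\|\partial_\xi q_{in}\|_{L^2(dx)}\big)\lesssim\epsilon\tilde\epsilon$, and likewise for $u_{in}$.

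Then the six bounds follow by substitution and bookkeeping. One gets $|Y(0)|\lesssim|\psi(0,0)|+|\partial_t\psi(0,0)|\lesssim|\varphi_0(0)|+\|u_{in}\|_{L^\infty}^2+\|q_{in}\|_{L^\infty}\lesssim\epsilon$; for $\int_1^\infty|w_B(\eta,0)|^2d\eta$ the $\varphi_0$–part is $\lesssim\int_1^\infty\varphi_0^2d\eta\lesssim\epsilon^2$ and the linear $r_{in}(c_0^2q_{in}\pm c_0u_{in})$–part is $\simeq\int_1^\infty\eta^2(q_{in}^2+u_{in}^2)d\eta=\|q_{in}\|_{L^2}^2+\|u_{in}\|_{L^2}^2\lesssim\epsilon^2$; for $\eta|w_B(\eta,0)|$ the $\varphi_0$–part is $\lesssim\eta^{-1/2}\epsilon^{1/2}\tilde\epsilon^{1/2}$ and the linear part is $\simeq\eta^2(|q_{in}|+|u_{in}|)\lesssim\epsilon^{1/2}\tilde\epsilon^{1/2}$; for $\int_1^\xi|w_F(\eta,0)|d\eta$ the $\varphi_0$–part is $\lesssim\int_1^\infty|\varphi_0|d\eta\lesssim\epsilon^{1/2}\tilde\epsilon^{1/2}$ and the linear part is $\lesssim\int_1^\infty\eta(|q_{in}|+|u_{in}|)d\eta\lesssim\|\xi^2(q_{in},u_{in})\|_{L^2(d\xi)}\lesssim\epsilon^{1/2}\tilde\epsilon^{1/2}$ (here the quadratic remainder $r_{in}u_{in}^2\lesssim\eta u_{in}^2\le\eta^2u_{in}^2$ is controlled by $\|u_{in}\|_{L^2}^2$, not by the pointwise bound, to keep it integrable); finally $\xi^{-2}|\psi(\xi,0)|=\xi^{-2}r_{in}|\varphi_0|\lesssim\epsilon\xi^{-3/2}\lesssim\epsilon\xi^{-1}$ and $\int_1^\xi\eta^{-3}|\psi(\eta,0)|d\eta\lesssim\epsilon\int_1^\infty\eta^{-5/2}d\eta\lesssim\epsilon$. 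The ``in particular'' then follows by inserting these into the definition of $\mathcal V$, since the two non–leading weighted terms of $\eta\mathcal V(\eta)$ are $O(\epsilon^2)$, so $\epsilon\max_\eta\eta\mathcal V(\eta)\lesssim\epsilon^{3/2}\tilde\epsilon^{1/2}+\epsilon^2$.

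The one step I expect to be the real obstacle is the uniform-in-$\eta$ bound $\eta|w_B(\eta,0)|\lesssim\epsilon^{1/2}\tilde\epsilon^{1/2}$, i.e. $\xi^2(|q_{in}|+|u_{in}|)\in L^\infty([1,\infty))$, which must hold for $\eta$ as large as $\xi_0(T)\le\exp(\kappa_0/\epsilon)$. The naive route — interpolating a weighted $L^\infty$ bound through $\|\xi\partial_\xi q_{in}\|_{L^2}$ and $\|\xi^3\partial_\xi q_{in}\|_{L^2}$ — breaks down because $\int_1^\infty\xi^6(\partial_\xi q_{in})^2d\xi=\|\xi^2\partial_\xi q_{in}\|_{L^2(dx)}^2$ is not controlled by the $\epsilon$– and $\tilde\epsilon$–norms (the boundary term at infinity in the natural integration by parts does not vanish), so the global weighted Agmon inequality is unavailable. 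The localised (dyadic) Agmon estimate of the second paragraph is what circumvents this, and this is precisely — and the only place — where the auxiliary hypothesis \eqref{1.18} enters, in accordance with Comment 5.
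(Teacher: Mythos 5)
Your proposal is correct, and its overall architecture is the same as the paper's: reduce $Y(0)$, $w_B(\cdot,0)$, $w_F(\cdot,0)$, $\psi(\cdot,0)$ to $\varphi(\cdot,0)$, $u_{in}$, $q_{in}$ via $u=\rho r^2\partial_x\varphi$, (\ref{rho-phi}) and $\psi=r\varphi$, then use $\|u_{in}\|_{L^2}+\|q_{in}\|_{L^2}\lesssim\epsilon$, Cauchy--Schwarz/Hardy for $\varphi(\cdot,0)$, and the weighted norm (\ref{1.18}) only for the $\epsilon^{\frac12}\tilde\epsilon^{\frac12}$ bounds. The one genuinely different step is the mechanism behind the key pointwise bound $\xi^2(|u_{in}|+|q_{in}|)\lesssim\epsilon^{\frac12}\tilde\epsilon^{\frac12}$: you prove it by a dyadic, localised Agmon argument on $[\xi,2\xi]$, whereas the paper applies the global one-dimensional Agmon inequality directly to the weighted functions $\xi^2u_{in}$, $\xi^2q_{in}$ in the mass variable $x$, namely $(\xi^2u_{in})^2(x)\le 2\int_0^\infty|\xi^2u_{in}|\,|\partial_x(\xi^2u_{in})|\,dx\le 2\|\xi^2u_{in}\|_{L^2}\|\tilde{L}_0^1u_{in}\|_{L^2}\lesssim\epsilon\tilde\epsilon$, and analogously for $q_{in}$, where $\partial_x(\xi^2q_{in})=c_0^{-2}L_0^1q_{in}+2\xi^{-1}q_{in}$ is again $\epsilon$-controlled. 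So your diagnosis that the ``global weighted Agmon inequality is unavailable'' applies only to the particular interpolation (through $\|\xi^3\partial_\xi q_{in}\|_{L^2(d\xi)}$) that you considered; differentiating the weighted product itself in $x$ produces exactly the operators $\tilde{L}_0^1$, $L_0^1$ whose $L^2$ norms define $\epsilon$, with no problematic boundary term, and is shorter than your localisation. Your dyadic version is nonetheless valid, so either route closes the lemma; the rest of your bookkeeping (interpolated decay of $\varphi(\cdot,0)$, dual Hardy for $\int\varphi^2$, the $L^2(dx)$ treatment of $r_{in}u_{in}^2$ inside $\int_1^\xi|w_F|$) matches the paper's, and in places gives slightly stronger decay than needed.

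One point to tighten: in the $\max_\eta\eta|w_B(\eta,0)|$ estimate you bound the quadratic remainders crudely by $\epsilon^2\xi^{-1}$ (using (\ref{3.6}) twice); after multiplication by $\eta$ this leaves a term of size $\epsilon^2$, which is not $\lesssim\epsilon^{\frac12}\tilde\epsilon^{\frac12}$ unless $\tilde\epsilon\gtrsim\epsilon^{3}$, and that is not assumed. The fix is immediate and is what the paper does implicitly: estimate one factor of each quadratic term pointwise by $\epsilon\xi^{-1}$ and the other by the weighted bound $\xi^2(|u_{in}|+|q_{in}|)\lesssim\epsilon^{\frac12}\tilde\epsilon^{\frac12}$ (equivalently, absorb the quadratic terms into the linear ones before invoking the weighted bound), which yields a contribution $\lesssim\epsilon^{\frac32}\tilde\epsilon^{\frac12}$ and preserves the stated estimate.
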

\begin{proof}
Recall that $\rho r^2\partial_x\varphi=u$. By the bootstrap bound (\ref{3.2})(\ref{3.8}) at $t=0$ and a H\"{o}lder inequality, it holds
$$|\varphi(x,0)|\leq\int_{x}^\infty\left|(1+q_{in})\frac{u_{in}}{r_{in}^2}(y)\right|dy\lesssim\|u_{in}\|_{L^2}\lesssim\epsilon.$$
From (\ref{rho-phi}), we also have
$$|\partial_t\varphi(x,0)|\lesssim\|u_{in}\|_{L^\infty}^2+\|q_{in}\|_{L^\infty}\lesssim\epsilon.$$
From $\psi=r\varphi$, $\partial_t\psi=r\partial_t\varphi+u\varphi$, and $r_{in}|_{\xi=1}=R_{in}\simeq 1$, it follows that
$$|Y(0)|\lesssim|\partial_t\psi|_{\left\{\xi=1,t=0\right\}}|+|\psi|_{\left\{\xi=1,t=0\right\}}|
\lesssim|\partial_t\varphi|_{\left\{x=0,t=0\right\}}|+|\varphi|_{\left\{x=0,t=0\right\}}|\lesssim\epsilon,$$
$$\xi^{-2}|\psi(\xi,0)|=\xi^{-2}r_{in}(x)|\varphi(x,0)|\lesssim\epsilon\xi^{-1},$$
$$\int_1^\xi\eta^{-3}|\psi(\eta,0)|d\eta\lesssim\epsilon\int_1^\xi\eta^{-2}d\eta\lesssim\epsilon.$$
Using (\ref{3.2})(\ref{3.8}) again, we compute
$$\begin{aligned}
\xi|w_B(\xi,0)|=&\xi\left|(\partial_t+cr^2\partial_x)(r\varphi)(x,0)\right|\\
\leq&\xi r_{in}(x)\left(|\partial_t\varphi(x,0)|+|cr^2\partial_x\varphi(x,0)|\right)+\xi\left|\left(u_{in}(x)+\frac{c(x,0)}{\rho_{in}(x)}\right)\varphi(x,0)\right|\\
\lesssim&\xi^2(|q_{in}(x)|+|u_{in}(x)|)+\xi|\varphi(x,0)|.
\end{aligned}$$
Similarly,
$$\xi|w_F(\xi,0)|\lesssim\xi^2(|q_{in}(x)|+|u_{in}(x)|)+\xi|\varphi(x,0)|.$$
Then by condition (\ref{1.18}),
$$\begin{aligned}
\xi^2|u_{in}(x)|\leq\left(2\int_0^\infty\xi^2|u_{in}(\xi)|\left|\partial_x(\xi^2 u_{in})\right|dx\right)^{\frac{1}{2}}
\lesssim\|\tilde{L}_0^1u_{in}\|_{L^2}^\frac{1}{2}\|\xi^2u_{in}\|_{L^2}^\frac{1}{2}
\lesssim\epsilon^\frac{1}{2}\tilde{\epsilon}^\frac{1}{2},
\end{aligned}$$
$$\begin{aligned}
\xi^2|q_{in}(x)|\leq\left(2\int_0^\infty\xi^2|q_{in}(\xi)|\left|\partial_x(\xi^2q_{in})\right|dx\right)^{\frac{1}{2}}
\lesssim\left(\|L_0^1q_{in}\|_{L^2}+\|q_{in}\|_{L^2}\right)^\frac{1}{2}\|\xi^2q_{in}\|_{L^2}^\frac{1}{2}
\lesssim\epsilon^\frac{1}{2}\tilde{\epsilon}^\frac{1}{2},
\end{aligned}$$
$$\xi|\varphi(x,0)|\leq\xi\int_x^\infty(1+q_{in})\frac{|u_{in}|}{r_{in}^2}(y)dy\lesssim\xi\epsilon^\frac{1}{2}\tilde{\epsilon}^\frac{1}{2}\int_x^\infty(1+3y)^{-\frac{4}{3}}dy\lesssim\epsilon^\frac{1}{2}\tilde{\epsilon}^\frac{1}{2}.$$
Hence we obtain $\xi|w_B(\xi,0)|\lesssim\epsilon^\frac{1}{2}\tilde{\epsilon}^\frac{1}{2}$. Next, for the bound on $\int_1^\infty|w_B(\eta,0)|^2d\eta$, we use a change of variable and Hardy inequality to obtain
$$\begin{aligned}
\int_1^\infty|w_B(\eta,0)|^2d\eta\lesssim&\int_1^\infty\left(|\eta q_{in}(x(\eta))|^2+|\eta u_{in}(x(\eta))|^2+|\varphi(x(\eta),0)|^2\right)d\eta\\
\lesssim&\int_0^\infty\left(|q_{in}(x)|^2+|u_{in}(x)|^2+|\xi^{-1}\varphi(x,0)|^2\right)dx\\
\lesssim&\|u_{in}\|_{L^2}^2+\|q_{in}\|_{L^2}^2\\
\lesssim&\epsilon^2.
\end{aligned}$$
It remains to control $\max\limits_{\xi\in[1,\xi_0(T)]}\int_1^\xi|w_F(\eta,0)|d\eta$, for which we use
$$\begin{aligned}
&\max_{\xi\in[1,\xi_0(T)]}\int_1^\xi|w_F(\eta,0)|d\eta\lesssim\left(\int_1^\infty|\eta w_F(\eta,0)|^2d\eta\right)^\frac{1}{2}\\
\lesssim&\left(\int_1^\infty|\eta^2q_{in}(x(\eta))|^2d\eta\right)^\frac{1}{2}+\left(\int_1^\infty|\eta^2u_{in}(x(\eta))|^2d\eta\right)^\frac{1}{2}+\left(\int_1^\infty|\eta\varphi(x(\eta),0)|^2d\eta\right)^\frac{1}{2}.
\end{aligned}$$
By a change of variable and H\"{o}lder, 
$$\left(\int_1^\infty|\eta^2q_{in}(x(\eta))|^2d\eta\right)^\frac{1}{2}=\|\xi q_{in}\|_{L^2}\leq\|q_{in}\|_{L^2}^\frac{1}{2}\|\xi^2q_{in}\|_{L^2}^\frac{1}{2}\leq\epsilon^\frac{1}{2}\tilde{\epsilon}^\frac{1}{2},$$
and similarly,
$$\left(\int_1^\infty|\eta^2u_{in}(x(\eta))|^2d\eta\right)^\frac{1}{2}\leq\epsilon^\frac{1}{2}\tilde{\epsilon}^\frac{1}{2}.$$
For the last term, integration by parts gives
$$\begin{aligned}
\int_1^\infty|\eta\varphi(x(\eta),0)|^2d\eta
\lesssim&|\varphi(0,0)|^2+\int_1^\infty|\eta\varphi(x(\eta),0)||\eta^2\partial_\eta\varphi(x(\eta),0)|d\eta\\
\lesssim&|\varphi(0,0)|^2+\left(\int_1^\infty|\eta\varphi(x(\eta),0)|^2d\eta\right)^\frac{1}{2}\left(\int_1^\infty|\eta^2u_{in}(x(\eta),0)|^2d\eta\right)^\frac{1}{2},
\end{aligned}$$
where we used that $\rho r^2\simeq\xi^2$ by (\ref{3.2})(\ref{3.8}). Hence by Cauchy-Schwartz 
$$\int_1^\infty|\eta\varphi(x(\eta),0)|^2d\eta\lesssim\epsilon^2+\epsilon\tilde{\epsilon}.$$
Collecting the above bounds yields
$$\max_{\xi\in[1,\xi_0(T)]}\int_1^\xi|w_F(\eta,0)|d\eta\lesssim\left(\int_1^\infty|\eta w_F(\eta,0)|^2d\eta\right)^\frac{1}{2}\lesssim\epsilon^\frac{1}{2}\left(\epsilon^\frac{1}{2}+\tilde{\epsilon}^\frac{1}{2}\right).$$
\end{proof}
Now we combine Proposition \ref{prop 6.01} with Lemma \ref{7.1} in view of $\epsilon\leq\epsilon_0$, $\tilde{\epsilon}\leq\tilde{\epsilon}_0$, $\kappa\leq\kappa_0$ to deduce
$$\begin{aligned}
&\int_0^T\varphi^2|_{x=0}dt\\
\leq&C\int_0^T|Y_0(t)|^2dt+C\left(\epsilon|Y(0)|+\kappa\max_{\eta\in[1,\xi_0(T)]}\left\{\eta\left|\frac{w_B}{c^\frac{1}{2}}(\eta,0)\right|\right\}+\epsilon\max_{\eta\in[1,\xi_0(T)]}\eta\mathcal{V}(\eta)\right)^2\\
\leq&C\int_0^T|Y_0(t)|^2dt+C\left(\epsilon^2+\kappa\epsilon^\frac{1}{2}\tilde{\epsilon}^\frac{1}{2}+\epsilon^\frac{3}{2}\tilde{\epsilon}^\frac{1}{2}\right)^2\\
\leq&C\int_0^T|Y_0(t)|^2dt+C\epsilon\left(\epsilon_0^\frac{3}{2}+\kappa_0\tilde{\epsilon}_0^\frac{1}{2}+\epsilon_0\tilde{\epsilon}_0^\frac{1}{2}\right)^2.
\end{aligned}$$
Meanwhile, by Young inequality, Lemma {\ref{lem 6.4} and Lemma {\ref{lem 7.1} again we obtain
$$\begin{aligned}
&\left\|\int_0^t\exp\left(\max_{i=1,2}\left\{\text{Re}\Lambda_i\right\}(t-s)\right)|w_B(\xi_0(s),0)|ds\right\|_{L^2[0,T]}^2\\
\lesssim&\int_0^T|w_B(\xi_0(t),0)|^2dt\lesssim\int_0^{\xi_0(T)}|w_B(\eta,0)|^2d\eta\lesssim\epsilon^2,
\end{aligned}$$
and thus $\int_0^T|Y_0(t)|^2dt\lesssim\epsilon^2$, which further gives
$$C\epsilon\int_0^T\varphi^2|_{x=0}dt\leq C\epsilon^2\left(\epsilon_0+\left(\epsilon_0^\frac{3}{2}+\kappa_0\tilde{\epsilon}_0^\frac{1}{2}+\epsilon_0\tilde{\epsilon}_0^\frac{1}{2}\right)^2\right).$$
Hence for sufficiently small $\epsilon_0,\tilde{\epsilon}_0, \kappa_0$ such that $C\left(\epsilon_0+\left(\epsilon_0^\frac{3}{2}+\kappa_0\tilde{\epsilon}_0^\frac{1}{2}+\epsilon_0\tilde{\epsilon}_0^\frac{1}{2}\right)^2\right)\leq\tilde{B}$, we obtain 
$$C\epsilon\int_0^T\varphi^2|_{x=0}dt\leq\tilde{B}\epsilon^2,$$
which together with (\ref{7.10}) yields (\ref{3.12}). Therefore, the solution can be continued up to $[0,T_\epsilon]$ with $1+\overline{c}T_\epsilon\geq\exp\left(\frac{\kappa}{\epsilon}\right)$ for all $\kappa\leq\kappa_0$. In particular, choosing $\kappa=\kappa_0$ gives $T\geq T_0$.
\subsection{Estimate of bubble radius}
Now assume that $1+\overline{c}T=\exp\left(\frac{\kappa}{\epsilon}\right)$ for $\kappa\leq\kappa_0$ and $t\in[0,T]$.
To retrieve an estimate of $R(t)$, recall the relation (\ref{rho-phi}). For simplicity, denote \begin{equation}
g(q):=-\frac{Ca\gamma}{2(\gamma-1)}\left((1+q)^{1-\gamma}-1\right)
\label{7.11}\end{equation} 
so that
$$g(q)=\partial_t\varphi-\frac{1}{2}u^2=r^{-1}\partial_t\psi-\frac{u}{r^2}\psi-\frac{1}{2}u^2.$$
From the boundary condition (\ref{dbce}), it follows that 
\begin{equation}
F(R)=\partial_t\psi|_{x=0}-\left.\left(\frac{u}{r}\psi+\frac{1}{2}ru^2\right)\right|_{x=0},
\label{7.12}\end{equation}
where
\begin{equation}
F(R)=R\cdot g\circ f(R).
\label{7.13}\end{equation}
Write $$\partial_t\psi|_{x=0}=\frac{\Lambda_1}{\Lambda_1-\Lambda_2}\left.\left(\partial_t\psi-\Lambda_2\psi\right)\right|_{x=0}-\frac{\Lambda_2}{\Lambda_1-\Lambda_2}\left.\left(\partial_t\psi-\Lambda_1\psi\right)\right|_{x=0},$$ 
and from (\ref{6.41}) we obtain
\begin{equation}\begin{aligned}
&\left|\partial_t\psi|_{x=0}-\mathcal{R}(t)\right|\\
\lesssim&\frac{1}{1+\underline{c}t}\left\{\kappa\max_{s\in[0,T]}\left\{\xi_0(s)\left|\frac{w_B}{c^\frac{1}{2}}(\xi_0(s),0)\right|\right\}+\epsilon\max_{s\in[0,T]}\xi_0(s)\mathcal{V}(\xi_0(s))+\epsilon|Y(0)|\right\}\\
\lesssim&\frac{1}{1+\underline{c}t}\left(\kappa\epsilon^\frac{1}{2}\tilde{\epsilon}^\frac{1}{2}+\epsilon^2+\epsilon^\frac{3}{2}\tilde{\epsilon}^\frac{1}{2}\right),
\end{aligned}\label{7.14}\end{equation}
where we used Lemma \ref{lem 7.1} and recall that
$$\begin{aligned}
\mathcal{R}(t):=&\frac{\Lambda_1}{\Lambda_1-\Lambda_2}e^{\Lambda_1t}\left.\left(\partial_t\psi-\Lambda_2\psi\right)\right|_{\left\{x=0,t=0\right\}}-\frac{\Lambda_2}{\Lambda_1-\Lambda_2}e^{\Lambda_2t}\left.\left(\partial_t\psi-\Lambda_1\psi\right)\right|_{\left\{x=0,t=0\right\}}\\
&-\frac{\tilde{f}^\prime(1)}{c_0}\int_0^t\left(\frac{\Lambda_1}{\Lambda_1-\Lambda_2}e^{\Lambda_1(t-s)}-\frac{\Lambda_2}{\Lambda_1-\Lambda_2}e^{\Lambda_2(t-s)}\right)w_B(\xi_0(s),0)ds.
\end{aligned}$$
To control the nonlinearity $\left.\left(\frac{u}{r}\psi+\frac{1}{2}ru^2\right)\right|_{x=0}$, we use
$$u=\rho r^2\partial_x\varphi=\rho r\partial_x\psi-r^{-2}\psi=\frac{\rho}{cr}\left(w_B-\partial_t\psi-\frac{c}{\rho r}\psi\right)$$
to obtain
$$\frac{u}{r}\psi+\frac{1}{2}ru^2=\frac{\rho^2}{2c^2r}\left(w_B-\partial_t\psi-\frac{c}{\rho r}\psi\right)\left(w_B-\partial_t\psi+\frac{c}{\rho r}\psi\right).$$
In view of (\ref{3.2})(\ref{3.10})(\ref{6.38})(\ref{6.41}) and Lemma \ref{lem 7.1}, it holds
\begin{equation}\begin{aligned}
&\left|\partial_t\psi|_{x=0}\right|+\left|\left.\left(\frac{c}{\rho r}\psi\right)\right|_{x=0}\right|\\
\lesssim&|Y(t)-Y_0(t)|+|Y_0(t)|\\
\lesssim&\frac{1}{1+\underline{c}t}\left\{\kappa\max_{s\in[0,T]}\left\{\xi_0(s)\left|\frac{w_B}{c^\frac{1}{2}}(\xi_0(s),0)\right|\right\}+\epsilon\max_{s\in[0,T]}\xi_0(s)\mathcal{V}(\xi_0(s))+\epsilon|Y(0)|\right\}\\
&+|Y_0(t)|\\
\lesssim&\frac{1}{1+\underline{c}t}\left(\kappa\epsilon^\frac{1}{2}\tilde{\epsilon}^\frac{1}{2}+\epsilon^\frac{3}{2}\tilde{\epsilon}^\frac{1}{2}+\epsilon^2\right)+\frac{1}{1+\underline{c}t}\left(\epsilon^\frac{1}{2}\tilde{\epsilon}^\frac{1}{2}+\epsilon\right)\\
\lesssim&\frac{\epsilon^\frac{1}{2}}{1+\underline{c}t}\left(\epsilon^\frac{1}{2}+\tilde{\epsilon}^\frac{1}{2}\right).
\end{aligned}\label{7.15}\end{equation}
Meanwhile, by (\ref{6.29})(\ref{6.30})
$$\begin{aligned}
|w_B(1,t)|
\lesssim&\frac{1}{1+\underline{c}t}\left(\epsilon\max_{\eta\in[1,\xi_0(T)]}\eta\mathcal{V}(\eta)+\epsilon\max_{t_*\in[0,T]}\xi_0(t_*)\Psi(t_*)+\max_{\eta\in[1,\xi_0(T)]}\left\{\eta\left|\frac{w_B}{c^\frac{1}{2}}(\eta,0)\right|\right\}\right).
\end{aligned}$$
From (\ref{6.34})(\ref{6.39})(\ref{6.40}), we obtain 
\begin{equation}\begin{aligned}
&\epsilon\max_{t_*\in[0,T]}\xi_0(t_*)\Psi(t_*)\\
\lesssim&\epsilon|Y(0)|+(\kappa+\epsilon^2)\max_{s\in[0,T]}\left\{\xi_0(s)\left|\frac{w_B}{c^\frac{1}{2}}(\xi_0(s),0)\right|\right\}+(\kappa+\epsilon^2)\epsilon\max_{s\in[0,T]}\xi_0(s)\mathcal{V}(\xi_0(s)).
\end{aligned}\label{7.16}\end{equation}
In view of $\epsilon,\kappa\lesssim 1$, it follows
\begin{equation}\begin{aligned}
|w_B(1,t)|\lesssim&\frac{1}{1+\underline{c}t}\left(\epsilon\max_{\eta\in[1,\xi_0(T)]}\eta\mathcal{V}(\eta)+\epsilon|Y(0)|+\max_{\eta\in[1,\xi_0(T)]}\left\{\eta\left|\frac{w_B}{c^\frac{1}{2}}(\eta,0)\right|\right\}\right)\\
\lesssim&\frac{1}{1+\underline{c}t}\left(\epsilon^\frac{1}{2}\tilde{\epsilon}^\frac{1}{2}+\epsilon^2\right).
\end{aligned}\label{7.17}\end{equation}
Combining (\ref{7.15})(\ref{7.17}) yields
$$
\left|\left.\left(\frac{u}{r}\psi+\frac{1}{2}ru^2\right)\right|_{x=0}\right|\lesssim\frac{\epsilon}{(1+\underline{c}t)^2}(\epsilon+\tilde{\epsilon}).
$$
Therefore, (\ref{7.12})(\ref{7.14}) give
\begin{equation}
\left|F(R)-\mathcal{R}(t)\right|\lesssim\frac{\kappa}{1+\underline{c}t}\epsilon^\frac{1}{2}\tilde{\epsilon}^\frac{1}{2}+\frac{\epsilon^\frac{3}{2}}{1+\underline{c}t}\left(\epsilon^\frac{1}{2}+\tilde{\epsilon}^\frac{1}{2}\right)+\frac{\epsilon}{(1+\underline{c}t)^2}(\epsilon+\tilde{\epsilon}).
\label{7.18}\end{equation}
Now we set $T=t$ so that $\kappa=\epsilon\log(1+\overline{c}t)$. Then (\ref{7.18}) becomes
$$
\left|F(R)-\mathcal{R}(t)\right|\lesssim\frac{\log(1+\overline{c}t)}{1+\underline{c}t}\epsilon^\frac{3}{2}\tilde{\epsilon}^\frac{1}{2}+\frac{\epsilon^\frac{3}{2}}{1+\underline{c}t}\left(\epsilon^\frac{1}{2}+\tilde{\epsilon}^\frac{1}{2}\right)+\frac{\epsilon}{(1+\underline{c}t)^2}(\epsilon+\tilde{\epsilon}),
$$
which is exactly the desired estimate of Theorem \ref{thm 1.3}.
If we assume additionally $w_B|_{t=0}=0$, then (\ref{7.14})(\ref{7.15})(\ref{7.17}) can be improved to
$$\left|\partial_t\psi|_{x=0}-\mathcal{R}(t)\right|\lesssim\frac{1}{1+\underline{c}t}\left(\epsilon^2+\epsilon^\frac{3}{2}\tilde{\epsilon}^\frac{1}{2}\right),$$
$$\begin{aligned}
\left|\partial_t\psi|_{x=0}\right|+\left|\left.\left(\frac{c}{\rho r}\psi\right)\right|_{x=0}\right|\lesssim\frac{1}{1+\underline{c}t}\left(\epsilon^\frac{3}{2}\tilde{\epsilon}^\frac{1}{2}+\epsilon^2\right)+\exp\left(\max_{i=1,2}\left\{\text{Re}\Lambda_i\right\}t\right)\epsilon,
\end{aligned}$$
$$|w_B(1,t)|\lesssim\frac{1}{1+\underline{c}t}\left(\epsilon^\frac{3}{2}\tilde{\epsilon}^\frac{1}{2}+\epsilon^2\right).$$
Hence 
$$\left|\left.\left(\frac{u}{r}\psi+\frac{1}{2}ru^2\right)\right|_{x=0}\right|\lesssim\left(\frac{1}{1+\underline{c}t}\left(\epsilon^2+\epsilon^\frac{3}{2}\tilde{\epsilon}^\frac{1}{2}\right)+\exp\left(\max_{i=1,2}\left\{\text{Re}\Lambda_i\right\}t\right)\epsilon\right)^2,$$
$$\left|F(R)-\mathcal{R}(t)\right|\lesssim\frac{\epsilon^\frac{3}{2}}{1+\underline{c}t}\left(\epsilon^\frac{1}{2}+\tilde{\epsilon}^\frac{1}{2}\right)+\exp\left(2\max_{i=1,2}\left\{\text{Re}\Lambda_i\right\}t\right)\epsilon^2\lesssim\frac{\epsilon^\frac{3}{2}}{1+\underline{c}t}\left(\epsilon^\frac{1}{2}+\tilde{\epsilon}^\frac{1}{2}\right).$$
\subsection{Proof of Corollary \ref{cor 1.4}}
Now we assume additionally $u_{in}$, $q_{in}$ are compactly supported in $[0,x_b)$. From the relations $u=\rho r^2\partial_x\varphi$, $\psi=r\varphi$ and (\ref{rho-phi}), it follows $w_B(\cdot,0)=\left.\left(\partial_t+cr^2\xi^{-2}\partial_\xi\right)\psi\right|_{t=0}$ is compactly supported in $\xi\in[1,\xi_b)$ with $\xi_b=(1+x_b)^\frac{1}{3}$. Suppose $(x,t)\in D_b$ with $D_b$ given by (\ref{1.19}). Define $T$ such that $T=t+\frac{\xi-1}{\underline{c}}$ if $t\geq\frac{1}{\underline{c}}$ and $T=\frac{\xi}{\underline{c}}$ if $t<\frac{1}{\underline{c}}$ so that $T\leq T_0$ and $(\xi,t)$ lies in the backward acoustic cone determined by the point $(1,T)$ on the time axis. Let $\kappa\leq\kappa_0$ be such that $1+\overline{c}T=\exp\left(\frac{\kappa}{\epsilon}\right)$, so we always have $\kappa\geq\epsilon$.
Note that $\xi_0$ is a bijection from $[0,T]$ to $[1,\xi_0(T)]$. Hence taking (\ref{7.16}) back to (\ref{6.28}) yields that for $\eta\in[1,\xi_0(T)]$
\begin{equation}\begin{aligned}
&v_B(\eta,0)-\left|\frac{w_B}{c^\frac{1}{2}}\left(\eta,0\right)\right|\\
\lesssim&\eta^{-1}\left\{\epsilon\max_{s\in[0,T]}\xi_0(s)\mathcal{V}(\xi_0(s))+\kappa\max_{s\in[0,T]}\left\{\xi_0(s)\left|\frac{w_B}{c^\frac{1}{2}}(\xi_0(s),0)\right|\right\}+\epsilon|Y(0)|\right\}.
\end{aligned}\label{7.19}\end{equation} 
For any $t^\prime\leq T$ with $\xi_0(t^\prime)\geq\xi_b$, using (\ref{7.19}) in (\ref{6.16}) gives that 
$$\begin{aligned}
&v_F(1,t^\prime;T)-\left|\frac{w_F}{c^\frac{1}{2}}(1,t^\prime)\right|\\
\lesssim&\epsilon\left|\frac{w_F}{c^\frac{1}{2}}(1,t^\prime)\right|+\epsilon^2\left|\psi(1,t^\prime)\right|\\
&+\epsilon\left(\max_{\eta\in[\xi_0(t^\prime),\xi_0(T)]}\eta v_B(\eta,0)\right)\int_{\xi_0(t^\prime)}^{\xi_0(T)}\left(1+\frac{1}{2\mu}\left(\eta-\xi_0(t^\prime)\right)\right)^{-1}\eta^{-1}d\eta\\
\lesssim&\epsilon\left|\frac{w_F}{c^\frac{1}{2}}(1,t^\prime)\right|+\epsilon^2\left|\psi(1,t^\prime)\right|+\epsilon\int_{\xi_0(t^\prime)}^{\xi_0(T)}\left(1+\frac{1}{2\mu}\left(\eta-\xi_0(t)\right)\right)^{-1}\eta^{-1}d\eta\\
&\cdot\left\{\epsilon\max_{s\in[0,T]}\xi_0(s)\mathcal{V}(\xi_0(s))+\kappa\max_{s\in[0,T]}\left\{\xi_0(s)\left|\frac{w_B}{c^\frac{1}{2}}(\xi_0(s),0)\right|\right\}+\epsilon|Y(0)|\right\}.
\end{aligned}$$
We compute for $\xi_0(t^\prime)>2\mu$  to get
$$\begin{aligned}
&\int_{\xi_0(t^\prime)}^{\xi_0(T)}\left(1+\frac{1}{2\mu}\left(\eta-\xi_0(t^\prime)\right)\right)^{-1}\eta^{-1}d\eta\\
=&\frac{2\mu}{\xi_0(t^\prime)-2\mu}\int_{\xi_0(t^\prime)}^{\xi_0(T)}\left(\frac{1}{\eta-\xi_0(t^\prime)+2\mu}-\frac{1}{\eta}\right)d\eta\\
=&\frac{2\mu}{\xi_0(t^\prime)-2\mu}\log\frac{\left(\xi_0(T)-\xi_0(t^\prime)+2\mu\right)\xi_0(t^\prime)}{2\mu\xi_0(T)}\\
=&\frac{2\mu}{\xi_0(t^\prime)-2\mu}\log\left(1+\frac{1}{2\mu\xi_0(T)}\left(\xi_0(T)-\xi_0(t^\prime)\right)\left(\xi_0(t^\prime)-2\mu\right)\right).
\end{aligned}$$
If $\xi_0(t^\prime)\in(2\mu,2\mu+1]$, 
$$\begin{aligned}
\frac{2\mu}{\xi_0(t^\prime)-2\mu}\log\left(1+\frac{1}{2\mu\xi_0(T)}\left(\xi_0(T)-\xi_0(t^\prime)\right)\left(\xi_0(t^\prime)-2\mu\right)\right)
\leq\frac{\xi_0(T)-\xi_0(t^\prime)}{\xi_0(T)}\leq 1\lesssim\frac{1}{\xi_0(t^\prime)}.
\end{aligned}$$ 
If $\xi_0(t^\prime)>2\mu+1$, then 
$$\begin{aligned}
&\frac{2\mu}{\xi_0(t^\prime)-2\mu}\log\left(1+\frac{1}{2\mu\xi_0(T)}\left(\xi_0(T)-\xi_0(t^\prime)\right)\left(\xi_0(t^\prime)-2\mu\right)\right)\\\leq&\frac{2\mu}{\xi_0(t^\prime)-2\mu}\log\left(1+\frac{(\xi_0(T)-2\mu)^2}{8\mu\xi_0(T)}\right)\\
\lesssim&\frac{\kappa}{\epsilon}\frac{1}{\xi_0(t^\prime)}.
\end{aligned}$$
For $\xi_0(t^\prime)\leq 2\mu$, the estimate is much simpler:
$$\int_{\xi_0(t^\prime)}^{\xi_0(T)}\left(1+\frac{1}{2\mu}\left(\eta-\xi_0(t^\prime)\right)\right)^{-1}\eta^{-1}d\eta\leq2\mu\int_{\xi_0(t^\prime)}^{\xi_0(T)}\eta^{-2}d\eta\leq 2\mu\xi_0(t^\prime)^{-1}.$$
Hence in each case we have
$$\int_{\xi_0(t^\prime)}^{\xi_0(T)}\left(1+\frac{1}{2\mu}\left(\eta-\xi_0(t^\prime)\right)\right)^{-1}\eta^{-1}d\eta\lesssim\frac{\kappa}{\epsilon}\xi_0(t^\prime)^{-1},$$
and it follows
\begin{equation}\begin{aligned}
&v_F(1,t^\prime;T)-\left|\frac{w_F}{c^\frac{1}{2}}(1,t^\prime)\right|\\
\lesssim&\epsilon\left|\frac{w_F}{c^\frac{1}{2}}(1,t^\prime)\right|+\epsilon^2|\psi(1,t^\prime)|\\
&+\kappa\xi_0(t^\prime)^{-1}\left\{\epsilon\max_{s\in[0,T]}\xi_0(s)\mathcal{V}(\xi_0(s))+\kappa\max_{s\in[0,T]}\left\{\xi_0(s)\left|\frac{w_B}{c^\frac{1}{2}}(\xi_0(s),0)\right|\right\}+\epsilon|Y(0)|\right\}.
\end{aligned}\label{7.20}\end{equation}
Since $w_F=-w_B+2\partial_t\psi$, it holds $|w_F(1,t^\prime)|\leq|w_B(1,t^\prime)|+2|\partial_t\psi(1,t^\prime)|$, and thus
\begin{equation}
\left|\frac{w_F}{c^\frac{1}{2}}(1,t^\prime)\right|\lesssim v_B(1,t^\prime)+|Y(t^\prime)|=v_B(\xi_0(t^\prime),0)+|Y(t^\prime)|.
\label{7.21}\end{equation}
Denote $\Lambda:=\max\left\{\text{Re}\Lambda_1,\text{Re}\Lambda_2\right\}$. Since $w_B$ is compactly supported in $[1,\xi_b)$, we have $w_B(\xi_0(s),0)=0$ for $s\geq T_b$. It then follows from (\ref{6.41}) that
\begin{equation}\begin{aligned}
|Y(t^\prime)|\leq&e^{\Lambda t^\prime}\left(|Y(0)|+\frac{|\tilde{f}^\prime(1)|}{c_0}\int_0^{T_b}e^{-\Lambda s}|w_B(\xi_0(s),0)|ds\right)\\
&+\frac{C}{1+\underline{c}t^\prime}\left\{\epsilon\max_{s\in[0,T]}\xi_0(s)\mathcal{V}(\xi_0(s))+\kappa\max_{s\in[0,T]}\left\{\xi_0(s)\left|\frac{w_B}{c^\frac{1}{2}}(\xi_0(s),0)\right|\right\}+\epsilon|Y(0)|\right\}.
\end{aligned}\label{7.22}\end{equation}
Since $(x,t)\in D_b$, $1+\underline{c}t\geq\xi+\xi_b-1\geq\xi$, and thus there exists a unique $T_\xi(t)\in[t-\frac{\xi-1}{\underline{c}},t]$ such that $X_F(1,T_\xi(t);t-T_\xi(t))=(\xi,t)$ by continuity, cf. (figure \ref{7}).
Since $u=\rho r^2\partial_x\varphi$ and $\varphi=r^{-1}\psi$, it holds $u=\rho r\partial_x\psi-r^{-2}\psi$. In view of $r\simeq\xi$ and $\underline{c}\leq c\leq\overline{c}$, it follows 
\begin{equation}\begin{aligned}
|u(x,t)|\lesssim&\xi^{-1}\left(|w_B(\xi,t)|+|w_F(\xi,t)|\right)+\xi^{-2}|\psi(\xi,t)|\\
\lesssim&\xi^{-1}\left(v_B(\xi,t)+v_F(\xi,t;T)\right)+\xi^{-2}|\psi(\xi,t)|\\
=&\xi^{-1}\left(v_B(X_B(\xi,t;-t))+v_F(1,T_\xi(t);T)\right)+\xi^{-2}|\psi(\xi,t)|.
\end{aligned}\label{7.23}\end{equation} 
To control $|\psi(\xi,t)|$, we recall from Subsection \ref{subsec6.1} that $X_F(1,t;\sigma_1(\eta,t))=X_B(\eta,0;t+\sigma_1(\eta,t))$ and use Lemma \ref{lem 6.1} to get
$$\begin{aligned}
|\psi(\xi,t)|=&|\psi(X_F(1,T_\xi(t);t-T_\xi(t))|\\
\leq&|\psi(1,T_\xi(t))|+\int_0^{t-T_\xi(t)}|w_B\left(X_F(1,T_\xi(t);\tau)\right)|d\tau\\
\lesssim&|Y(T_\xi(t))|+\int_0^{t-T_\xi(t)}v_B\left(X_F(1,T_\xi(t);\tau)\right)d\tau\\
\lesssim&|Y(T_\xi(t))|+\int_{\xi_0(T_\xi(t))}^{X_B^{(\xi)}(\xi,t;-t)}v_B(\eta,0)d\eta.
\end{aligned}$$
\noindent Since $T_\xi(t)\geq t-\frac{\xi-1}{\underline{c}}$ and $(x,t)\in D_b$, it holds $$\xi_0(T_\xi(t))\geq 1+\underline{c}T_\xi(t)\geq\underline{c}t+2-\xi\geq\xi_b.$$
Hence it follows from (\ref{7.19}) and $w_B(\eta,0)=0$, $\eta\geq\xi_b$ that
$$\begin{aligned}
|\psi(\xi,t)|\lesssim&|Y(T_\xi(t))|+\int_{\xi_0(T_\xi(t))}^{X_B^{(\xi)}(\xi,t;-t)}\eta^{-1}d\eta\\
&\cdot\left\{\epsilon\max_{s\in[0,T]}\xi_0(s)\mathcal{V}(\xi_0(s))+\kappa\max_{s\in[0,T]}\left\{\xi_0(s)\left|\frac{w_B}{c^\frac{1}{2}}(\xi_0(s),0)\right|\right\}+\epsilon|Y(0)|\right\}.
\end{aligned}$$
We claim that for all $s\in[0,T_\xi(t)]$, 
$$X_B^{(\xi)}(\xi,t;T_\xi(t)-t-s)-X_B^{(\xi)}(1,T_\xi(t);-s)\leq e^{C\kappa}\left(X_B^{(\xi)}(\xi,t;T_\xi(t)-t)-1\right).$$
\begin{figure}[H]
\center{\includegraphics[width=10cm]  {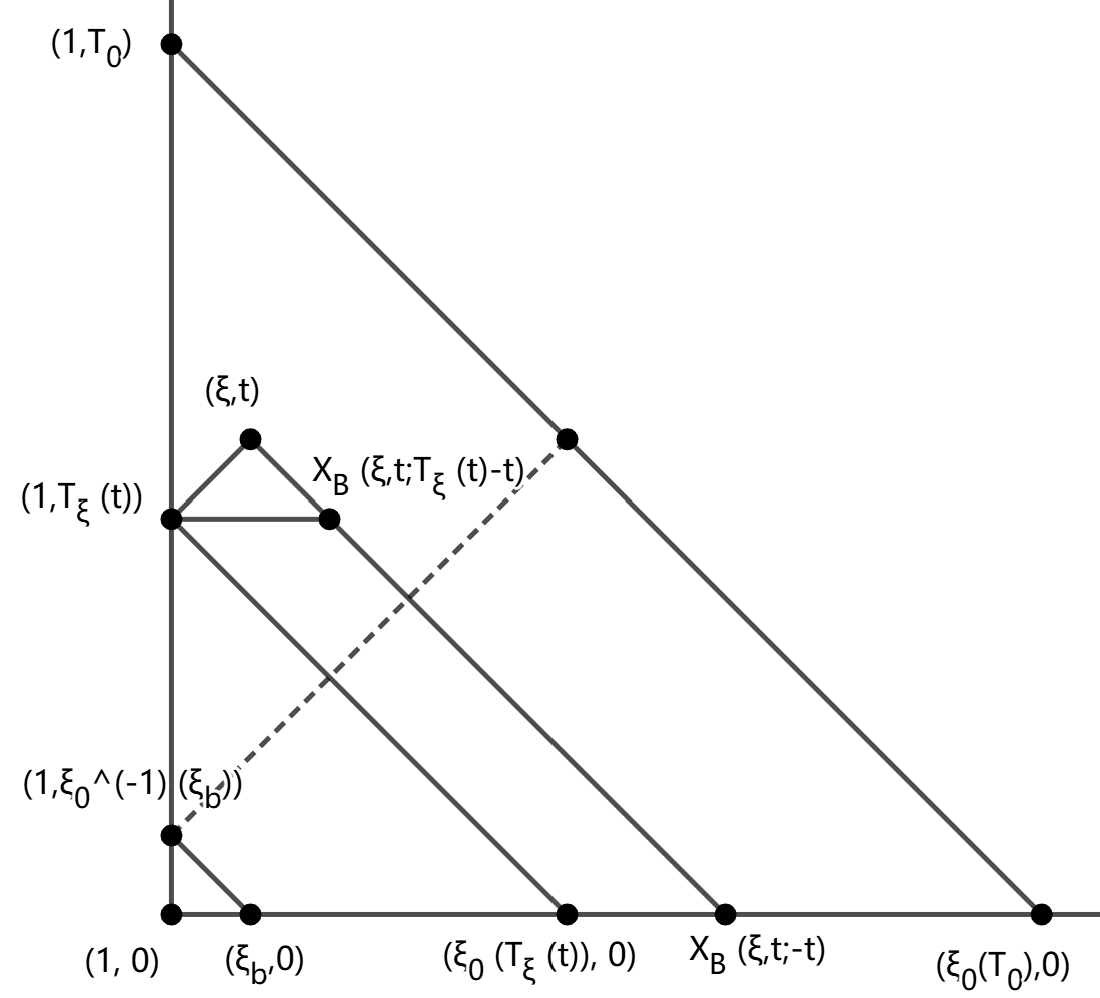}}  
\caption{Position of the point $X_B(\xi,t;T_\xi(t)-t)$}\label{7}  
\end{figure}
\noindent
In particular, taking $s=T_\xi(t)$ yields
$$\begin{aligned}
&X_B^{(\xi)}(\xi,t;-t)-\xi_0(T_\xi(t))\leq e^{C\kappa}\left(X_B^{(\xi)}(\xi,t;T_\xi(t)-t)-1\right)\\
\leq&e^{C\kappa}\left(\xi-1+\overline{c}(t-T_\xi(t)\right)\leq e^{C\kappa}\frac{\overline{c}+\underline{c}}{\underline{c}}(\xi-1).
\end{aligned}$$ 
Note that $X_B^{(\xi)}(\xi,t;T_\xi(t)-t-\tau)=X_B^{(\xi)}\left(X_B(\xi,t;T_\xi(t)-t);-\tau\right)$. Using (\ref{3.6})(\ref{3.7}) and a mean value formula, we obtain
$$\begin{aligned}
&\left|\frac{d}{d\tau}\left(X_B^{(\xi)}(\xi,t;T_\xi(t)-t-\tau)-X_B^{(\xi)}(1,T_\xi(t);-\tau)\right)\right|\\
\leq&C\epsilon X_B^{(\xi)}(1,T_\xi(t);-\tau)^{-1}\left(X_B^{(\xi)}(\xi,t;T_\xi(t)-t-\tau)-X_B^{(\xi)}(1,T_\xi(t);-\tau)\right)\\
\leq&C\epsilon(1+\underline{c}\tau)^{-1}\left(X_B^{(\xi)}(\xi,t;T_\xi(t)-t-\tau)-X_B^{(\xi)}(1,T_\xi(t);-\tau)\right).
\end{aligned}$$
Gronwall's inequality then yields 
$$\begin{aligned}
X_B^{(\xi)}(\xi,t;T_\xi(t)-t-s)-X_B^{(\xi)}(1,T_\xi(t);-s)
\leq&\exp\left\{\int_0^s C\epsilon(1+\underline{c}\tau)^{-1}d\tau\right\}\left(X_B^{(\xi)}(\xi,t;T_\xi(t)-t)-1\right)\\
\leq&e^{C\kappa}\left(X_B^{(\xi)}(\xi,t;T_\xi(t)-t)-1\right),
\end{aligned}$$
which proves the claim. Therefore
$$\begin{aligned}
\int_{\xi_0(T_\xi(t))}^{X_B^{(\xi)}(\xi,t;-t)}\eta^{-1}d\eta
\leq\log\left(1+\xi_0(T_\xi(t))^{-1}e^{C\kappa}\frac{\overline{c}+\underline{c}}{\underline{c}}(\xi-1)\right)\lesssim\xi_0(T_\xi(t))^{-1}(\xi-1),
\end{aligned}$$
and 
\begin{equation}\begin{aligned}
|\psi(\xi,t)|\lesssim&|Y(T_\xi(t))|+\xi_0(T_\xi(t))^{-1}(\xi-1)\\
&\cdot\left\{\epsilon\max_{s\in[0,T]}\xi_0(s)\mathcal{V}(\xi_0(s))+\kappa\max_{s\in[0,T]}\left\{\xi_0(s)\left|\frac{w_B}{c^\frac{1}{2}}(\xi_0(s),0)\right|\right\}+\epsilon|Y(0)|\right\}.
\end{aligned}\label{7.24}\end{equation}
Now we take $\eta=X_B(\xi,t;-t)$ in (\ref{7.19}), $t^\prime=T_\xi(t)$ in (\ref{7.20}), and collect the resulted bounds as well as (\ref{7.24}). It then follows from (\ref{7.23}) that 
$$\begin{aligned}
|u(x,t)|\lesssim&\xi^{-1}\left|\frac{w_F}{c^\frac{1}{2}}(1,T_\xi(t))\right|+\xi^{-2}|Y(T_\xi(t))|+\epsilon^2\xi^{-1}|\psi(1,T_\xi(t))|\\
&+\left(\xi^{-1}X_B^{(\xi)}(\xi,t;-t)^{-1}+\xi^{-2}(\xi-1)\xi_0(T_\xi(t))^{-1}+\kappa\xi^{-1}\xi_0(T_\xi(t))^{-1}\right)\\
&\cdot\left\{\epsilon\max_{s\in[0,T]}\xi_0(s)\mathcal{V}(\xi_0(s))+\kappa\max_{s\in[0,T]}\left\{\xi_0(s)\left|\frac{w_B}{c^\frac{1}{2}}(\xi_0(s),0)\right|\right\}+\epsilon|Y(0)|\right\}.
\end{aligned}$$
Next, we proceed with the help of (\ref{7.21}) and (\ref{7.19}):
\begin{equation}\begin{aligned}
|u(x,t)|
\lesssim&\xi^{-1}|Y(T_\xi(t)|+\left(\xi^{-1}X_B^{(\xi)}(\xi,t;-t)^{-1}+\xi^{-2}(\xi-1)\xi_0(T_\xi(t))^{-1}+\xi^{-1}\xi_0(T_\xi(t))^{-1}\right)\\
&\cdot\left\{\epsilon\max_{s\in[0,T]}\xi_0(s)\mathcal{V}(\xi_0(s))+\kappa\max_{s\in[0,T]}\left\{\xi_0(s)\left|\frac{w_B}{c^\frac{1}{2}}(\xi_0(s),0)\right|\right\}+\epsilon|Y(0)|\right\}\\
\lesssim&\xi^{-1}|Y(T_\xi(t)|+\xi^{-1}\xi_0(T_\xi(t))^{-1}\\
&\cdot\left\{\epsilon\max_{s\in[0,T]}\xi_0(s)\mathcal{V}(\xi_0(s))+\kappa\max_{s\in[0,T]}\left\{\xi_0(s)\left|\frac{w_B}{c^\frac{1}{2}}(\xi_0(s),0)\right|\right\}+\epsilon|Y(0)|\right\},
\end{aligned}\label{7.25}\end{equation}
where we used that $\xi^{-2}(\xi-1)\leq\xi^{-1}$ and $\xi_0(T_\xi(t))\leq X_B^{(\xi)}(\xi,t;-t)$.
Since $u_{in}$, $q_{in}$ are compactly supported in $[1,x_b)$, it holds $\tilde{\epsilon}\lesssim\xi_b^2\epsilon$. From Lemma \ref{lem 7.1} it follows
$$\epsilon\max_{s\in[0,T]}\xi_0(s)\mathcal{V}(\xi_0(s))+\epsilon\max_{s\in[0,T]}\left\{\xi_0(s)\left|\frac{w_B}{c^\frac{1}{2}}(\xi_0(s),0)\right|\right\}+\epsilon|Y(0)|\lesssim\kappa\epsilon\xi_b+\epsilon^2\xi_b\lesssim\kappa\epsilon\xi_b.$$
In view of (\ref{7.22}) and the facts that $T_\xi(t)\geq t-\frac{\xi-1}{\underline{c}}$, $\xi_0(T_\xi(t))\geq1+\underline{c}T_\xi(t)\geq2+\underline{c}t-\xi$, (\ref{7.25}) gives 
$$\begin{aligned}
|u(x,t)|\lesssim\xi^{-1}e^{\Lambda(t-\frac{\xi-1}{\underline{c}})}\left(|Y(0)|+\frac{|\tilde{f}^\prime(1)|}{c_0}\int_0^{T_b}e^{-\Lambda s}|w_B(\xi_0(s),0)|ds\right)+\frac{\kappa\epsilon\xi_b}{\xi(2+\underline{c}t-\xi)}.
\end{aligned}$$
Hence for $t\geq\frac{1}{\underline{c}}$ we obtain the desired bound of $u$
\begin{equation}\begin{aligned}
|u(x,t)|\lesssim&\xi^{-1}e^{\Lambda(t-\frac{\xi-1}{\underline{c}})}\left(|Y(0)|+\frac{|\tilde{f}^\prime(1)|}{c_0}\int_0^{T_b}e^{-\Lambda s}|w_B(\xi_0(s),0)|ds\right)\\
&+\xi^{-1}(2+\underline{c}t-\xi)^{-1}\log\left(1+\overline{c}\left(t+\frac{\xi-1}{\underline{c}}\right)\right)\epsilon^2\xi_b.
\end{aligned}\label{7.26}\end{equation}
If assume additionally $w_B|_{t=0}=0$, then 
$$\epsilon\max_{s\in[0,T]}\xi_0(s)\mathcal{V}(\xi_0(s))+\epsilon\max_{s\in[0,T]}\left\{\xi_0(s)\left|\frac{w_B}{c^\frac{1}{2}}(\xi_0(s),0)\right|\right\}+\epsilon|Y(0)|\lesssim\epsilon^2\xi_b,$$
and thus
\begin{equation}\begin{aligned}
|u(x,t)|\lesssim&\xi^{-1}e^{\Lambda(t-\frac{\xi-1}{\underline{c}})}\left(|Y(0)|+\frac{|\tilde{f}^\prime(1)|}{c_0}\int_0^{T_b}e^{-\Lambda s}|w_B(\xi_0(s),0)|ds\right)\\
&+\xi^{-1}(2+\underline{c}t-\xi)^{-1}\epsilon^2\xi_b.
\end{aligned}\label{7.27}\end{equation}
In view of $\rho\simeq1$, $|u|\lesssim\epsilon$, relation (\ref{rho-phi}) and that $\partial_t\varphi=r^{-1}\partial_t\psi-\frac{u}{r^2}\psi$, we can bound $q$ by
$$\begin{aligned}
|q(x,t)|\lesssim&\xi^{-1}|\partial_t\psi(\xi,t)|+|u(x,t)|\left(|u(x,t)|+\xi^{-2}|\psi(\xi,t)|\right)\\
\lesssim&\xi^{-1}\left(|w_B(\xi,t)|+|w_F(\xi,t)|\right)+|u(x,t)|\left(|u(x,t)|+\xi^{-2}|\psi(\xi,t)|\right)\\
\lesssim&\xi^{-1}\left(v_B(X_B(\xi,t;-t))+v_F(1,\xi_0(T_\xi(t);T)\right)+\xi^{-2}|\psi(\xi,t)|.
\end{aligned}$$
Hence $|q(x,t)|$ can be controlled by the same bound as in (\ref{7.26}) or (\ref{7.27}).
\section*{Declarations}
\noindent \textbf{Funding}: This work was supported by National Natural Science Foundation of China Grant No. 12271497.\vspace{1mm}\\
No potential conflict of interest was reported by the author.
\section*{Data availability statements}
Data availability is not applicable to this article since the article has no associated data.
\begin{appendices}
\section{Detailed calculation of identity (\ref{4.3}) } In this appendix we write down the explicit calculation of (\ref{4.3}), for which we consider the following model question.\\
Assume that $\varphi$ satisfies for $x>0$, $t\in[0,T]$ that
\begin{equation}
\partial_t^2\varphi-c^2\partial_x(r^4\partial_x\varphi)=F,
\label{ap1}\end{equation}
and the modified momentum density are defined as
$$P_0=A\partial_t\varphi\cdot cr^2\partial_x\varphi+\frac{1}{2}B\varphi\partial_t\varphi,$$
$$P_1=\frac{1}{2}A\left[\left(\partial_t\varphi\right)^2+\left(cr^2\partial_x\varphi\right)^2\right]+\frac{1}{2}B\varphi\cdot cr^2\partial_x\varphi-\frac{1}{4}cr^2\partial_x B\varphi^2,$$
where $A$ and $B$ are undetermined weight functions.
We compute:
$$\begin{aligned}
\partial_x\left(cr^2P_1\right)=&\frac{1}{2}\partial_x\left[Acr^2\left(\left(\partial_t\varphi\right)^2+\left(cr^2\partial_x\varphi\right)^2\right)\right]+\frac{1}{2}B\left(cr^2\partial_x\varphi\right)^2+\frac{1}{2}B\varphi\partial_x\left(c^2r^4\partial_x\varphi\right)\\
&+\frac{1}{2}cr^2\partial_xB\cdot\varphi\cdot cr^2\partial_x\varphi-\frac{1}{4}\partial_x\left(c^2r^4\partial_xB\right)\varphi^2-\frac{1}{2}cr^2\partial_xB\cdot\varphi\cdot cr^2\partial_x\varphi\\
=&\frac{1}{2}\partial_x\left(cr^2A\right)\left(\left(\partial_t\varphi\right)^2+\left(cr^2\partial_x\varphi\right)^2\right)+\frac{1}{2}A cr^2\partial_x\left(\left(\partial_t\varphi\right)^2+\left(cr^2\partial_x\varphi\right)^2\right)\\
&+\frac{1}{2}B\left(cr^2\partial_x\varphi\right)^2+\frac{1}{2}B\varphi\partial_x\left(c^2r^4\partial_x\varphi\right)-\frac{1}{4}\partial_x\left(c^2r^4\partial_xB\right)\varphi^2,
\end{aligned}$$
$$\begin{aligned}
-\partial_tP_0=&-\partial_t\left(A\partial_t\varphi\cdot cr^2\partial_x\varphi\right)-\frac{1}{2}(\partial_tB)\varphi\partial_t\varphi-\frac{1}{2}B\left(\partial_t\varphi\right)^2-\frac{1}{2}B\varphi\partial_t^2\varphi\\
=&-(\partial_tA)\partial_t\varphi\cdot cr^2\partial_x\varphi-A\partial_t\left(\partial_t\varphi\cdot cr^2\partial_x\varphi\right)-\frac{1}{2}B\left(\partial_t\varphi\right)^2\\
&-\frac{1}{2}B\varphi\partial_t^2\varphi-\frac{1}{2}(\partial_tB)\varphi\partial_t\varphi.
\end{aligned}$$
Note that by \ref{ap1} we have 
$$\begin{aligned}
&\frac{1}{2}cr^2\partial_x\left(\left(\partial_t\varphi\right)^2+\left(cr^2\partial_x\varphi\right)^2\right)-\partial_t\left(\partial_t\varphi\cdot cr^2\partial_x\varphi\right)\\
=&\partial_t\varphi\left(cr^2\partial_x\partial_t\varphi-\partial_t\left(cr^2\partial_x\varphi\right)\right)+cr^2\partial_x\varphi\left(cr^2\partial_x\left(cr^2\partial_x\varphi\right)-\partial_t^2\varphi\right)\\
=&-\frac{\partial_t(cr^2)}{cr^2}\partial_t\varphi\cdot cr^2\partial_x\varphi+cr^2\partial_x\varphi\left(cr^2\partial_x\left(\frac{c}{r^2}\right)r^4\partial_x\varphi-F\right)\\
=&-c\partial_xr^2\left(cr^2\partial_x\varphi\right)^2
+r^2\partial_xc\left(cr^2\partial_x\varphi\right)^2-\frac{\partial_t(cr^2)}{cr^2}\partial_t\varphi\cdot cr^2\partial_x\varphi-cr^2\partial_x\cdot F,
\end{aligned}$$
and 
$$\begin{aligned}
\partial_x\left(c^2r^4\partial_x\varphi\right)-\partial_t^2\varphi=2\left(r^2\partial_xc\right)cr^2\partial_x\varphi-F.
\end{aligned}$$
Hence
$$\begin{aligned}
&\partial_x\left(cr^2P_1\right)-\partial_tP_0\\
=&\left(\frac{\partial_x\left(cr^2A\right)}{2}-\frac{B}{2}\right)\left(\partial_t\varphi\right)^2+\left(\frac{\partial_x\left(cr^2A\right)}{2}-Ac\partial_xr^2+\frac{B}{2}\right)\left(cr^2\partial_x\varphi\right)^2\\
&-\frac{1}{4}\partial_x\left(c^2r^4\partial_xB\right)\varphi^2+r^2\partial_xc\cdot A\left(cr^2\partial_x\varphi\right)^2-A\frac{\partial_t(cr^2)}{cr^2}\partial_t\varphi\cdot cr^2\partial_x\varphi\\
&+Br^2\partial_xc\cdot\varphi\cdot cr^2\partial_x\varphi-\left(Acr^2\partial_x\varphi+\frac{1}{2}B\varphi\right)F-\partial_tA\cdot\partial_t\varphi\cdot cr^2\partial_x\varphi-\frac{\partial_tB}{2}\varphi\partial_t\varphi.
\end{aligned}$$
Therefore we obtain (\ref{4.3}) by substituting $\varphi$ by $\partial_t^j\varphi$, $F$ by $[\partial_t^j,c^2\partial_xr^4\partial_x]\varphi$, $A$ by $M_k$ and $B$ by $N_k$ in view of (\ref{4.2}).
\end{appendices}
\bibliography{Bubble}
\end{document}